\numberwithin{equation}{section}
\renewcommand{\subsection}[1]{\hspace{-\parindent}\refstepcounter{subsection}{\bf (\arabic{section}\alph{subsection}) #1.}\addcontentsline{toc}{subsection}{\bf #1.}}
\newenvironment{nouppercase}{%
  \renewcommand{\uppercasenonmath}[1]{}}{}
\theoremstyle{plain}
\newtheorem{thm}{Theorem}[section]
\newtheorem{theorem}[thm]{Theorem}
\newtheorem{definition}[thm]{Definition}
\newtheorem{remark}[thm]{Remark}
\newtheorem{proposition}[thm]{Proposition}
\newtheorem{example}[thm]{Example}
\newtheorem{lemma}[thm]{Lemma}
\newtheorem{setup}[thm]{Setup}
\newtheorem{conjecture}[thm]{Conjecture}
\newtheorem{conventions}[thm]{Conventions}
\newtheorem*{claim*}{Claim} 
\newtheorem*{lemma*}{Lemma}
\newtheorem*{theorem*}{Theorem}
\newtheorem*{conjecture*}{Conjecture}
\newcommand{\bC}{{\mathbb C}}
\newcommand{\bF}{{\mathbb F}}
\newcommand{\bK}{{\mathbb K}}
\newcommand{\bP}{{\mathbb P}}
\newcommand{\bR}{{\mathbb R}}
\newcommand{\bZ}{{\mathbb Z}}
\newcommand{\scrA}{\EuScript A}
\newcommand{\scrB}{\EuScript B}
\newcommand{\scrD}{\EuScript D}
\newcommand{\scrF}{\EuScript F}
\newcommand{\scrG}{\EuScript G}
\newcommand{\scrL}{\EuScript L}
\newcommand{\scrP}{\EuScript P}
\newcommand{\scrQ}{\EuScript Q}
\newcommand{\scrR}{\EuScript R}
\newcommand{\scrW}{\EuScript W}
\newcommand{\half}{{\textstyle\frac{1}{2}}}
\newcommand{\iso}{\cong}
\newcommand{\htp}{\simeq}
\newcommand{\smooth}{C^\infty}
\newcommand{\frakg}{\mathfrak{g}}
\renewcommand{\hom}{\mathit{hom}}
\newcommand{\Sym}{\mathit{Sym}}
\title[LEFSCHETZ FIBRATIONS]{\Large\larger\rm Fukaya $A_\infty$-structures associated to\\ Lefschetz fibrations. II}
\author{Paul Seidel}
\begin{document}
\begin{nouppercase}
\maketitle
\end{nouppercase}
\begin{abstract}
Consider the Fukaya category associated to a Lefschetz fibration. It turns out that the Floer cohomology of the monodromy around $\infty$ gives rise to natural transformations from the Serre functor to the identity functor, in that category. We pay particular attention to the implications of that idea for Lefschetz pencils.
\end{abstract}

\section{Introduction}
This paper is part of an investigation of the Floer-theoretic structures arising from Lefschetz fibrations. Its purpose is to introduce a new piece of that puzzle; and also, to set up a wider algebraic framework into which that piece should (conjecturally) fit, centered on a notion of noncommutative pencil.

\subsection{Symplectic geometry}
It is well-known that there is a version of the Fukaya category tailored to a Lefschetz fibration (the idea is originally due to Kontsevich). In \cite{seidel06}, it was pointed out that these categories always come with some added structure: a distinguished natural transformation from the Serre functor to the identity functor (see \cite{seidel08, bourgeois-ekholm-eliashberg09, seidel12b, abouzaid-seidel13} for related theoretical developments, and \cite{maydanskiy09, maydanskiy-seidel09, abouzaid-seidel10} for applications). Concretely, let's fix a symplectic Lefschetz fibration
\begin{equation} \label{eq:lefschetz-pencil}
\pi: E^{2n} \longrightarrow \bC
\end{equation}
with fibre $M^{2n-2}$. For technical simplicity, we will impose an exactness condition on the symplectic form, hence assume that the fibre is a Liouville domain. Choose a basis of Lefschetz thimbles, and let $\scrA$ be the associated (directed) Fukaya $A_\infty$-algebra, defined over a coefficient field $\bK$. In general, $\scrA$ is $\bZ/2$-graded, and this lifts to a $\bZ$-grading if $c_1(E) = 0$. Natural transformations of degree $n$ from the Serre functor to the identity functor (in the triangulated envelope, or equivalently derived $A_\infty$-category, of $\scrA$) can be described as maps of $A_\infty$-bimodules
\begin{equation} \label{eq:natural-transformation}
\scrA^\vee[-n] \longrightarrow \scrA.
\end{equation}
Here $\scrA$ is the diagonal bimodule; and $\scrA^\vee$ is its dual, to which we have applied an upwards shift by $n$. To be more precise, $A_\infty$-bimodules form a dg category, denoted here by $[\scrA,\scrA]$. By a bimodule map \eqref{eq:natural-transformation}, we mean an element of
\begin{equation} \label{eq:bimodule-morphisms}
H^0(\hom_{[\scrA,\scrA]}(\scrA^\vee[-n],\scrA)) \iso H^n(\hom_{[\scrA,\scrA]}(\scrA^\vee,\scrA)).
\end{equation}
In these terms, the story so far can be summarized as:

\begin{lemma} \label{th:old}
$\scrA$ always comes with a distinguished element of \eqref{eq:bimodule-morphisms}, denoted here by $\rho$.
\end{lemma}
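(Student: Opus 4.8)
The plan is to construct $\rho$ from pseudo-holomorphic section counts in the total space $E$, and then to recognise those counts as the components of a degree-$n$ cocycle in the bimodule morphism complex $\hom_{[\scrA,\scrA]}(\scrA^\vee,\scrA)$.

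I would start from the dictionary between $\scrA$ and the Fukaya category $\mathcal{F}(\pi)$ of the Lefschetz fibration: for the chosen distinguished basis of vanishing paths, the associated Lefschetz thimbles $\Delta_1,\dots,\Delta_m\subset E$ form a full exceptional collection whose endomorphism $A_\infty$-algebra in $\mathcal{F}(\pi)$ is quasi-isomorphic to $\scrA$, directedness being forced by the way these Lagrangians are required to escape to infinity along the positive real direction. Under this dictionary the diagonal bimodule is read off from the Floer complexes $CF^*(\Delta_i,\Delta_j)$, whereas the dual bimodule $\scrA^\vee$ is read off from their Poincar\'e--Lefschetz duals. Since $\dim\Delta_i=n$ and Poincar\'e--Lefschetz duality in $E$ pairs $\Delta_i$ with $\Delta_j$ pushed once around infinity, the shifted dual $\scrA^\vee[-n]$ becomes identified with a monodromy-twisted version of the diagonal bimodule, the twist being by the global monodromy $\mu=\tau_{V_1}\cdots\tau_{V_m}$ around $\infty$; concretely, a bimodule map $\scrA^\vee[-n]\to\scrA$ amounts to a coherent system of maps from the monodromy-twisted Floer groups back to the untwisted ones.

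The next step is to produce such a system by counting sections of $\pi$ over a disk whose boundary is mapped to the thimbles, carrying boundary marked points together with one extra puncture encoding the monodromy around $\infty$; this puncture is asymptotic to a distinguished cocycle, so that $\rho$ becomes the image of a distinguished class in the Floer cohomology $HF^*(\mu)$ of that monodromy. The codimension-one boundary of these moduli spaces — strip-breaking at the boundary marked points and at the extra puncture, with no escape to infinity thanks to a maximum principle for $\pi$ — translates into precisely the $A_\infty$-bimodule map equations, so the count is a cocycle; a virtual dimension computation (using $c_1(E)=0$, or its $\bZ/2$-reduction in general) places it in degree $n$; and since the construction uses no auxiliary data beyond the Floer choices already fixed for $\scrA$, the resulting class $\rho\in H^n(\hom_{[\scrA,\scrA]}(\scrA^\vee,\scrA))$ is canonical.

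The main obstacle is the middle of this last step: arranging the moduli of sections with the correct behaviour at infinity so that the count genuinely factors through the shifted dual bimodule rather than through some ad hoc truncation of it, and checking transversality and Gromov compactness compatibly with the fibration structure. This analytic and combinatorial core is the substance of \cite{seidel06}. A more formal alternative would avoid the explicit section counts: take the semisimple $A_\infty$-subalgebra $\scrB\subset\scrA$ spanned by the idempotents $e_1,\dots,e_m$, use that $\scrA$ is proper and, being directed, smooth, and apply the general mechanism that manufactures natural transformations out of such subalgebra inclusions, fed with the twisted-duality identification above; on that route the only genuinely symplectic input is the identification of $\scrA^\vee[-n]$ with a monodromy-twisted version of the diagonal, once more a form of Poincar\'e--Lefschetz duality on $E$.
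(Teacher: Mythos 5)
Your first two steps are sound: identifying $\scrA$ with the directed algebra of a basis of Lefschetz thimbles, and recognising that $\scrA^\vee[-n]$ is quasi-isomorphic to a monodromy-twisted version of the diagonal bimodule, is exactly the geometric input the construction needs (in the notation of Section 6, this is \eqref{eq:p-dual-p}, i.e.\ \cite[Corollary 6.2]{seidel12b}). The gap is in the middle step, where you propose to obtain the cocycle by counting sections with an interior puncture asymptotic to ``a distinguished class in $\mathit{HF}^*(\mu)$''. That is not the construction of $\rho$; it is the construction of the map \eqref{eq:fix-to-aa} of Theorem \ref{th:main}, which this paper introduces precisely as a \emph{second, different} source of bimodule maps. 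For that route you would need a canonical class in $\mathit{HF}^2(\mu,\epsilon)$, and no such class exists in general: for an anticanonical pencil with $\epsilon$ small, $\mathit{HF}^2(\tau_{\partial M},\epsilon)\iso H^0(M,\partial M)=0$ by \eqref{eq:trivial-monodromy}. The unit-type class lives in $H^0(M)\iso \mathit{HF}^2(\tau_{\partial M},\epsilon)$ only for $\epsilon\in(1,2)$, and what it produces is $\sigma$, not $\rho$ (Theorem \ref{th:fano}). So, as written, your construction either outputs zero or reproduces a different natural transformation from the one the lemma asserts.

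The actual definition of $\rho$ in \cite{seidel12b} requires no interior punctures and no monodromy Floer class: one considers the family of bimodules $\scrP_{\mathit{id},\delta,\epsilon}$ obtained by translating the thimbles at infinity, which is the diagonal $\scrA$ for $\delta\gg 0$ and $\scrA^\vee[-n]$ for $\delta\ll 0$, and $\rho$ is the continuation (wrapping) map between the two ends of this family. The equivalent description from \cite{seidel06,seidel08}, alluded to in \eqref{eq:list-of-categories-3}(i), realizes $\rho$ as the boundary homomorphism of the extension \eqref{eq:a-b-sequence} for the inclusion of $\scrA$ into the full (non-directed) Fukaya algebra $\scrB$ of the vanishing cycles in the fibre $M$, using the weak Calabi--Yau property of $\scrF(M)$ to identify $\scrB/\scrA$ with $\scrA^\vee[1-n]$. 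Your ``formal alternative'' picks the wrong inclusion: the semisimple subalgebra $R\subset\scrA$ spanned by the idempotents has quotient $\bigoplus_{i<j}\mathit{HF}^*(L_i,L_j)$, which is not $\scrA^\vee[1-n]$, and no extension class of that sequence produces a map out of $\scrA^\vee[-n]$.
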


The definition in \cite{seidel12b} uses the geometry and Floer cohomology of Lefschetz thimbles in $E$. However, there is another description (equivalent up to automorphisms of $\scrA^\vee$), in terms of the functor from $\scrA$ to the standard Fukaya category $\scrF(M)$ of the fibre (this second description was the one originally proposed in \cite{seidel06}; the relation between the two is \cite[Corollary 7.1]{seidel12b}).

%
The specific contribution of this paper is to introduce another geometric source of maps \eqref{eq:natural-transformation}. This uses fixed point Floer cohomology for symplectic automorphisms $\phi$ of $M$ (which are exact and equal the identity near the boundary). To define it, we make an auxiliary choice of perturbation, by the time $\epsilon$ map of the Reeb flow near the boundary, and write the outcome as $\mathit{HF}^*(\phi,\epsilon)$ (for small $|\epsilon|$, this appears in \cite{seidel00b, mclean12}; for the general case, see \cite{uljarevic14} or the exposition in the body of this paper). 

\begin{theorem} \label{th:main}
Let $\mu: M \rightarrow M$ be the monodromy of the Lefschetz fibration around a large circle. For sufficiently small $\epsilon>0$, there is a canonical map
\begin{equation} \label{eq:fix-to-aa}
\mathit{HF}^{*+2}(\mu,\epsilon) \longrightarrow H^*(\hom_{[\scrA,\scrA]}(\scrA^\vee[-n],\scrA)).
\end{equation}
\end{theorem}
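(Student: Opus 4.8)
The target $H^*(\hom_{[\scrA,\scrA]}(\scrA^\vee[-n],\scrA))$ is, by the discussion around \eqref{eq:bimodule-morphisms}, the cohomology computing (shifted) natural transformations from the Serre functor to the identity. The plan is to realize the source $\mathit{HF}^{*+2}(\mu,\epsilon)$ geometrically by an open–closed-type map landing in this bimodule-morphism complex, exactly parallel to how the closed-string symplectic cohomology of the total space $E$ (or of $M$) maps to Hochschild invariants of the Fukaya category. Concretely, I would build a chain-level map $\mathit{CF}^{*+2}(\mu,\epsilon) \to \hom_{[\scrA,\scrA]}(\scrA^\vee,\scrA)$ and check it is a chain map; passing to cohomology then gives \eqref{eq:fix-to-aa}. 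The shift by $2$ should emerge from the geometry of the perturbation and the dimension count of the relevant moduli spaces, and the role of $\mu$ being the monodromy at $\infty$ is what couples the fixed-point picture on $M$ to the thimble picture on $E$.

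**The main steps.**

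First I would recall the Hochschild/bimodule setup: elements of $\hom_{[\scrA,\scrA]}(\scrA^\vee,\scrA)$ are sequences of multilinear maps, and the differential is the bar differential twisted by the $A_\infty$-products of $\scrA$; dualizing $\scrA$ turns covariant inputs into a pairing, so a bimodule map $\scrA^\vee[-n]\to\scrA$ is equivalently a cyclically-symmetric-looking collection of operations with two ``open'' boundary marked points carrying $\scrA$-inputs on one side and a pairing against $\scrA$-inputs on the other. Second, I would introduce the moduli spaces: discs (or strips) with one interior puncture asymptotic to a Hamiltonian orbit of the $\mu$-perturbed flow (i.e.\ a fixed point of $\mu$ composed with the Reeb perturbation, which is what $\mathit{CF}^*(\mu,\epsilon)$ counts), together with boundary marked points mapped to Lefschetz thimbles (equivalently their restrictions to $M$, the vanishing cycles), with the boundary conditions arranged so that going once around the interior puncture corresponds to applying the monodromy $\mu$. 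The count of rigid such configurations defines the components of the map. Third, I would verify the map is well-defined (Gromov compactness plus the exactness hypothesis to rule out bubbling and guarantee finiteness), independent of choices up to chain homotopy, and that the codimension-one boundary strata of the one-dimensional moduli spaces reproduce exactly the bimodule differential on the target and the Floer differential on the source—so the map is a chain map. Fourth, pass to cohomology.

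**The main obstacle.**

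The hard part will be the third step: identifying the boundary of the compactified moduli spaces with precisely the algebraic differential on $\hom_{[\scrA,\scrA]}(\scrA^\vee[-n],\scrA)$, because the bimodule complex has a somewhat intricate differential (two-sided module actions plus the internal $A_\infty$-structure), and one must match strip-breaking at each of the two ``module'' ends, disc bubbling along the boundary, and the degeneration where the interior puncture collides with the boundary—this last degeneration is what should produce the comparison with the known class $\rho$ of Lemma \ref{th:old}, and getting the signs and the degree shift $+2$ right here is delicate. A secondary technical point is the analytic setup for the perturbation ``by a multiple of the Reeb vector field near the boundary'': one must ensure that for sufficiently small $\epsilon>0$ the relevant orbits stay in a compact region, that no new orbits are created at the boundary, and that the moduli spaces remain transversally cut out; this is where I would lean on the cited treatments (\cite{uljarevic14}, and for small $\epsilon$ \cite{seidel00b, mclean12}) and on the exactness assumption on the symplectic form. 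Once the chain-level map and its chain-map property are in place, canonicity and naturality follow from the usual continuation-map arguments.
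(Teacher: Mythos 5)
There is a genuine gap, and it sits exactly where your plan is vaguest. Your proposed moduli spaces --- discs with boundary on Lefschetz thimbles in $E$ and one interior puncture asymptotic to a generator of $\mathit{CF}^*(\mu,\epsilon)$, with monodromy $\mu$ around the puncture --- do not make sense as stated: $\mu$ is an automorphism of the \emph{fibre} $M$, not of $E$, so its twisted periodic orbits live in $M$ (equivalently, in the mapping-torus region of $E$ at infinity in the base), while the thimbles live in $E$ and escape to $\mathrm{re}(y)\to+\infty$. The automorphism of $E$ one can actually put at an interior puncture is the global monodromy $\nu$, and $\mathit{HF}^*(\nu^k,\delta,\epsilon)$ is \emph{not} $\mathit{HF}^{*+2}(\mu,\epsilon)$. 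Bridging that gap is the real content of the theorem, and the paper does it in three steps you have no substitute for: (a) a McLean-type computation (Lemma \ref{th:gamma-wall}) exhibiting $H^*(S^1)\otimes\mathit{HF}^{*+2}(\mu,\epsilon)$ as the third term of a long exact sequence relating $\mathit{HF}^*(E,\gamma_-,\epsilon)$ and $\mathit{HF}^*(E,\gamma_+,\epsilon)$ for rotational perturbations with $\gamma_-\in(0,2\pi)$, $\gamma_+\in(2\pi,4\pi)$ --- here the orbits of $\mu$ get suspended over a circle in the base, which is where the degree shift $+2$ and the $H^*(S^1)$ factor actually come from; (b) a \emph{partial splitting} of that sequence on $H^0(S^1)\otimes\mathit{HF}^{*+2}(\mu,\epsilon)$ (Proposition \ref{th:partial-splitting}), constructed from the vanishing of the chain-level BV operator on $\mathit{CF}^*(E,H_-)$ for small $\gamma_-,\epsilon$ --- without this one only has a map \emph{out of} $\mathit{HF}^*(E,\gamma_+,\epsilon)$, in the wrong direction; and (c) a comparison map $\mathit{HF}^*(E,\gamma_+,\epsilon)\to\mathit{HF}^*(\nu^2,\delta,\epsilon)$ (Lemma \ref{th:rotation-translation}) passing from rotational to translational perturbations at infinity.

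Only after these steps does an open--closed map enter, and it is the two-seam version \eqref{eq:dual-double} with $\phi_0=\phi_1=\nu$ (hence $\nu^2$, not a single copy of anything), landing in $H^*(\hom_{[\scrA,\scrA]}(\scrP_{\nu^{-1},-\delta/2,-\epsilon/2},\scrP_{\nu,\delta/2,\epsilon/2}))$. The identification of this target with $H^*(\hom_{[\scrA,\scrA]}(\scrA^\vee[-n],\scrA))$ is not the formal dualization you describe but a geometric computation \eqref{eq:p-dual-p}: for $\delta\ll 0$ one shows $\scrP_{\nu,\delta,\epsilon}\htp\scrA$ via Lemmas \ref{th:li-floer} and \ref{th:last-lemma} and the recognition criterion of Lemma \ref{th:recognize-diagonal}, and then $\scrP_{\nu^{-1},-\delta,-\epsilon}\htp\scrA^\vee[-n]$ by duality \eqref{eq:dual-bimodule}. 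Your "main obstacle" paragraph (matching boundary strata with the bimodule differential) concerns machinery the paper sets up once for all its open--closed maps; the actual difficulty of Theorem \ref{th:main} is the chain of Floer-theoretic comparisons and the BV-operator splitting, none of which appears in your outline.
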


This is the outcome of a Floer cohomology computation in $E$ which follows \cite{mclean12}, together with an application of a suitable open-closed string map, similar to those in \cite{abouzaid-ganatra14}. Generally, both sides of \eqref{eq:fix-to-aa} are $\bZ/2$-graded; when $c_1(E) = 0$, this lifts to $\bZ$-gradings, and the map between them will respect those gradings.

An important special class of Lefschetz fibrations comes from anticanonical Lefschetz pencils: by which we mean, on a monotone closed symplectic manifold $E^{\mathit{cl}}$, a symplectic Lefschetz pencil of hypersurfaces representing the first Chern class. One obtains $E$ from $E^{\mathit{cl}}$ by removing a suitable neighbourhood of one such hypersurface; hence, $c_1(E) = 0$ always holds in this case. The Reeb flow on $\partial M$ is periodic (say with period $1$), and the monodromy is the boundary twist associated to that periodic flow:
\begin{equation} \label{eq:mu-tau}
\mu = \tau_{\partial M}.
\end{equation}
This satisfies
\begin{equation} \label{eq:trivial-monodromy}
\mathit{HF}^{*+2}(\tau_{\partial M},\epsilon) \iso 
\begin{cases} H^*(M,\partial M) & \epsilon \in (0,1), \\ 
H^*(M) & \epsilon \in (1,2).
\end{cases}
\end{equation}
Theorem \ref{th:main} applies only to the first case of \eqref{eq:trivial-monodromy}. However, by a more precise analysis, one can show that in the second case, an analogue of the map \eqref{eq:fix-to-aa} can be defined in the lowest degree $\ast = 0$. Hence, the element of $\mathit{HF}^2(\tau_{\partial M},\epsilon)$, $\epsilon \in (1,2)$, corresponding to $1 \in H^0(M)$ still gives rise to a map \eqref{eq:bimodule-morphisms}, which we denote by $\sigma$. We summarize the consequence:

\begin{theorem} \label{th:fano}
For a Lefschetz fibration arising from an anticanonical Lefschetz pencil, there is a distinguished pair $(\rho,\sigma)$ of maps \eqref{eq:natural-transformation} (obtained, respectively, by Lemma \ref{th:old}, and a modified version of the argument from Theorem \ref{th:main}).
\end{theorem}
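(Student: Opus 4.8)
The map $\rho$ is furnished directly by Lemma \ref{th:old}, so the real task is to construct $\sigma$. The plan is to revisit the two-stage argument behind Theorem \ref{th:main} --- first a Floer-cohomology computation in the total space $E$ in the spirit of \cite{mclean12}, yielding a chain map from a complex for $\mathit{HF}^{*+2}(\mu,\epsilon)$ into a complex assembled from Lefschetz thimbles; then an open--closed string map of Abouzaid--Ganatra type \cite{abouzaid-ganatra14} landing in $\hom_{[\scrA,\scrA]}(\scrA^\vee[-n],\scrA)$ --- and to run it for $\mu=\tau_{\partial M}$ with the larger perturbation parameter $\epsilon\in(1,2)$, but only in the single output degree $*=0$, where the geometry should stay under control. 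Since $c_1(E)=0$ for an anticanonical pencil, everything is $\bZ$-graded; by \eqref{eq:trivial-monodromy}, $\mathit{HF}^{*+2}(\tau_{\partial M},\epsilon)\iso H^*(M)$ for $\epsilon\in(1,2)$, and I would define $\sigma$ as the image of the class $[1]\in\mathit{HF}^2(\tau_{\partial M},\epsilon)$ lifting $1\in H^0(M)$.

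First I would set up the perturbation in the window $\epsilon\in(1,2)$. As the Reeb flow on $\partial M$ is periodic of period $1$, the Hamiltonian defining $\mathit{HF}(\tau_{\partial M},\epsilon)$ has only interior fixed points for $\epsilon\in(0,1)$ (whence $H^*(M,\partial M)$), but acquires for $\epsilon\in(1,2)$ an extra Morse--Bott family near $\partial M$ from once-covered Reeb orbits; after a small secondary perturbation these merge with the interior fixed points into a complex computing $H^*(M)$, with $[1]$ represented by a minimum-type cocycle concentrated near $\partial M$. I would fix such a Hamiltonian together with an almost complex structure of contact type near the boundary --- an admissible profile as in \cite{mclean12, uljarevic14} --- arranged so that all the Floer differentials and the auxiliary section moduli spaces used below obey a maximum principle and are therefore compact.

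Next, rather than trying to build the full map $H^*(M)\to H^*(\hom_{[\scrA,\scrA]}(\scrA^\vee[-n],\scrA))$ for $\epsilon\in(1,2)$ --- which may genuinely fail in positive degree, since the new boundary orbits can bound configurations of large area that the small-$\epsilon$ argument rules out --- I would restrict to the chosen cocycle for $[1]$ and to output degree $0$. Counting the same moduli spaces of pseudo-holomorphic sections with boundary on thimbles as in Theorem \ref{th:main}, now with this input, produces a degree-$0$ cochain in $\hom_{[\scrA,\scrA]}(\scrA^\vee[-n],\scrA)$; an action-and-index estimate should show that in this degree only the tame part of the moduli space (staying in a fixed compact subset of $E$) contributes, so the count is finite, and the standard codimension-one boundary analysis shows the cochain is closed. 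I would then set $\sigma$ to be its cohomology class in $H^0(\hom_{[\scrA,\scrA]}(\scrA^\vee[-n],\scrA))$, i.e.\ a map \eqref{eq:natural-transformation}, and check independence of the Hamiltonian, the almost complex structure, the value of $\epsilon\in(1,2)$, and the representing cocycle by the usual continuation argument, kept inside $(1,2)$ so that no wall is crossed.

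The hard part will be compactness in this enlarged window. Unlike in Theorem \ref{th:main}, the no-escape principle toward $\partial M$ --- equivalently toward the anticanonical hypersurface removed from $E^{\mathit{cl}}$ --- is not automatic once $\epsilon>1$, because the boundary Hamiltonian term is then large enough that trajectories could a priori drift outward; controlling this is exactly what forces the window to stop at $\epsilon=2$. I would handle it by (i) choosing the boundary profiles of the Hamiltonian and of $J$ so that the outward drift remains bounded for $\epsilon<2$, and (ii) proving an energy estimate specific to output degree $0$: a section contributing to $\sigma$ has its energy pinned by the (near-minimal) action of $[1]$ together with the gradings of the thimble generators, leaving too little symplectic area to reach the boundary collar. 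Pinning down that degree-$0$ energy budget --- the one place where the construction is forced to be both finite and chain-level compatible --- is, I expect, the technical heart; granting it, closedness of $\sigma$ and independence of choices are routine adaptations of the corresponding steps in Theorem \ref{th:main}.
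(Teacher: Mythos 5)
You have the right skeleton --- $\rho$ comes from Lemma \ref{th:old}, and $\sigma$ comes from rerunning the construction of Theorem \ref{th:main} for $\epsilon\in(1,2)$, restricted to the unit $1\in H^0(M)\iso \mathit{HF}^2(\tau_{\partial M},\epsilon)$ in the lowest degree --- but you have misidentified where the argument actually breaks for $\epsilon>1$, and hence what the ``more precise analysis'' has to supply. Compactness is not the problem: the maximum-principle and barrier arguments that define $\mathit{HF}^*(E,\gamma,\epsilon)$, the exact triangle \eqref{eq:2pi-sequence}, and the open-closed maps all go through for any $\epsilon$ satisfying \eqref{eq:no-reeb}, in particular for all $\epsilon\in(1,2)$; there is no outward drift to control and no degree-specific energy budget to pin down. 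What fails is the construction of the canonical partial splitting of \eqref{eq:2pi-sequence}. In Proposition \ref{th:partial-splitting} that splitting is produced by lifting a cocycle representing $[\mathit{point}]\otimes z\in H^1(S^1)\otimes\mathit{HF}^{*+2}(\mu,\epsilon)$ to a cochain $x$ with $dx\in\mathit{CF}^{*+2}(E,H_-)$ and setting $\bar x=\delta x$, which is a cocycle precisely because the chain-level BV operator $\delta$ vanishes on the subcomplex $\mathit{CF}^*(E,H_-)$ --- and that vanishing (Lemma \ref{th:bv-vanish-3}) holds only for small $\epsilon$. Your proposal never mentions the BV operator, so this entire mechanism, including the distinction between the $H^1(S^1)$ and $H^0(S^1)$ factors that it mediates, is absent.

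The actual modification needed for $\epsilon\in(1,2)$ is Proposition \ref{th:partial-splitting-2}: using the Morse--Bott analysis of Proposition \ref{th:bv-vanish-2} (which exploits that $\partial M$ is a contact circle bundle, so that the extra generators form a copy of $\partial M$ and the BV operator vanishes on the preimage of $H^0(\partial M)$), one builds a partial nullhomotopy $\chi$ of $\delta$ on $\mathit{CF}^*(E,H_-)$ in degrees $\ast\leq 2-2m$ as in \eqref{eq:chi-kill}, and replaces $\bar x=\delta x$ by $\bar x=\delta x+\chi(dx)$. This is exactly why the splitting, and hence $\sigma$, exists only in the lowest degree; it is a chain-level statement about the BV operator, not an energy estimate. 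You would also need to say how the resulting class in $\mathit{HF}^{-2m}(E,\gamma_+,\epsilon)$ reaches $\hom_{[\scrA,\scrA]}(\scrA^\vee[-n],\scrA)$: the paper routes it through $\mathit{HF}^*(\nu^2,\delta,\epsilon)$ via Lemma \ref{th:rotation-translation} and then applies the two-variable open-closed map \eqref{eq:dual-double}, using the identifications $\scrP_{\nu,\delta,\epsilon}\htp\scrA$ and $\scrP_{\nu^{-1},-\delta,-\epsilon}\htp\scrA^\vee[-n]$ of \eqref{eq:p-dual-p}; your gloss in terms of ``moduli spaces of sections with boundary on thimbles'' skips these steps, which are where the target bimodules $\scrA^\vee[-n]$ and $\scrA$ actually appear.
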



\begin{remark} \label{th:fractional-cy-1}
More generally, one can consider a Lefschetz pencil such that $c_1(E^{\mathit{cl}})$ is $(1+m)$ times the class of the hypersurfaces in the pencil, for some $m \in \bZ$. One then still has $c_1(E) = 0$, but the appropriate generalization of \eqref{eq:trivial-monodromy} is
\begin{equation} \label{eq:trivial-monodromy-2}
\mathit{HF}^{*+2}(\tau_{\partial M},\epsilon) \iso 
\begin{cases} H^{*+2m}(M,\partial M) & \epsilon \in (0,1), \\ 
H^{*+2m}(M) & \epsilon \in (1,2).
\end{cases}
\end{equation}
While $\rho$ still has degree $0$, our construction now leads to a $\sigma$ which has degree $-2m$.
\end{remark}
%
%

\subsection{Noncommutative geometry\label{subsec:nc-geometry}}
The starting point for our algebraic framework is the notion of {\em noncommutative divisor}. While the terminology is new, the concept already appeared in \cite{seidel08}, and a version was considered in \cite{seidel12b, kontsevich-vlassopoulos13}. The last two references include a cyclic symmetry condition, which corresponds more specifically to a {\em noncommutative anticanonical divisor} (see Remark \ref{th:cyclic}); we have chosen not to impose that condition here, for the sake of simplicity. 

While precise definitions will be given later on, it makes sense to give an outline now. Let $\scrA$ be an $A_\infty$-algebra. A noncommutative divisor on $\scrA$ consists of an $A_\infty$-bimodule $\scrP$ which is invertible (with respect to tensor product), together with an $A_\infty$-algebra structure on 
\begin{equation} \label{eq:b-space}
\scrB = \scrA \oplus \scrP[1], 
\end{equation}
which extends the given one on $\scrA$ as well as the $\scrA$-bimodule structure of $\scrP$. If one then considers $\scrB$ as an $\scrA$-bimodule, it fits into a short exact sequence
\begin{equation} \label{eq:a-b-sequence}
0 \rightarrow \scrA \longrightarrow \scrB \longrightarrow \scrP[1] \rightarrow 0.
\end{equation}
As boundary homomorphism of that sequence, one gets a bimodule map $\theta: \scrP \rightarrow \scrA$, which we call the {\em section} associated to the noncommutative divisor. We extend the algebro-geometric language to related structures:
\begin{equation} \label{eq:list-of-categories-1}
\left\{\!\!\!\!\!\!
\parbox{35em}{
\begin{itemize} \itemsep.5em
\item[(i)] The $A_\infty$-algebra $\scrB$ is thought of as describing the {\em divisor by itself} (independently of its relationship with the {\em ambient space} $\scrA$).

\item[(ii)] There is a localization process, in which one makes $\theta$ invertible by passing to a quotient $A_\infty$-algebra. We consider this to be the abstract analogue of taking the {\em complement of the divisor} (see \cite[Section 1]{seidel08} for an explanation), and write the outcome accordingly as $\scrA \setminus \scrB$. This depends only on the section associated to the divisor.
\end{itemize} 
} \right.
\end{equation}

Generalizing the previous concept, we will introduce {\em noncommutative pencils}. As in classical algebraic geometry, such a pencil has an associated pair of bimodule maps (sections, in our geometrically inspired terminology)
\begin{equation}
\rho,\sigma: \scrP \longrightarrow \scrA.
\end{equation}
This leads to enhancements of the previously described constructions:
\begin{equation} \label{eq:list-of-categories-2}
\left\{\!\!\!\!\!\!
\parbox{35em}{
\begin{itemize} \itemsep.5em
\item[(i)] A noncommutative pencil determines a family of noncommutative divisors, parametrized by $z \in \bP^1 = \bK \cup \{\infty\}$. The sections associated to those divisors are (at least up to a scalar multiple)
\[
\begin{aligned}
& \theta_z = \sigma + z\rho \quad \text{for $z \in \bK$}, \\
& \theta_\infty = \rho.
\end{aligned}
\]
In particular, one gets an $A_\infty$-algebra $\scrB_\infty$. More interestingly, one can work with varying $z$. For instance, taking $z = 1/q$, where $q$ is a formal variable, leads to a formal deformation of $\scrB_\infty$, which we denote by $\hat\scrB_\infty$.

\item[(ii)] $\scrA \setminus \scrB_\infty$ also acquires a distinguished (curved) formal deformation, whose deformation parameter has degree $2$. Let's call this the {\em noncommutative Landau-Ginzburg model}, and denote it by $\scrL\scrG$.
\end{itemize}
}\right.
\end{equation}

\begin{remark}
This by no means exhausts the noncommutative geometry structures which arise in this context. For one thing, from a purely algebraic perspective, there is no particular reason to single out the point $z = \infty$, which means that there are more general versions of the constructions listed above. Two other notions, the {\em graph} and {\em base locus} of a noncommutative pencil, will be mentioned briefly in Remark \ref{th:graph}. Different kinds of algebraic structures, notably ones involving Hochschild and cyclic homology (such as, the Gau\ss-Manin connection for the periodic cyclic homology of the fibres of the noncommutative pencil), are entirely beyond the scope of our discussion.
\end{remark}

\subsection{Speculations}
The $A_\infty$-algebra $\scrA$ associated to a Lefschetz fibration comes with a natural structure of a noncommutative divisor, with associated bimodule 
\begin{equation} \label{eq:serre-n}
\scrP = \scrA^\vee[-n]. 
\end{equation}
The algebraic constructions introduced above have the following meaning:
\begin{equation} \label{eq:list-of-categories-3}
\left\{\!\!\!\!\!\!
\parbox{35em}{
\begin{itemize} \itemsep.5em
\item[(i)] The divisor by itself, $\scrB$, is the full subcategory of $\scrF(M)$ consisting of the vanishing cycles in our basis. In fact, this is how the noncommutative divisor was constructed in \cite{seidel06,seidel08}, by identifying $\scrA$ with the directed $A_\infty$-subcategory of $\scrB$.

\item[(ii)] The complement of the divisor, $\scrA \setminus \scrB$, yields a full subcategory of the wrapped Fukaya category of the total space, $\scrW(E)$. This is the main result of \cite{abouzaid-seidel13}.
\end{itemize}
} \right.
\end{equation}

We will now propose a continuation of this line of thought, to the point where it can include Theorem \ref{th:fano}. In spite of its tentative nature, this direction seems worth while exploring, because of the potential implications for symplectic geometry and mirror symmetry. 

\begin{conjecture} \label{th:conjecture}
Take an exact symplectic Lefschetz fibration coming from an anticanonical Lefschetz pencil. Then, the $A_\infty$-algebra $\scrA$ carries a canonical structure of a noncommutative pencil, satisfying \eqref{eq:serre-n}, whose associated sections are those described in Theorem \ref{th:fano}.
\end{conjecture}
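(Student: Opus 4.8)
The plan is to construct the noncommutative pencil geometrically, out of the $\bP^1$-family of members of the pencil, and then to match its two marked sections with $\rho$ and $\sigma$. Write $E^{\mathit{cl}}$ for the monotone closed manifold carrying the anticanonical pencil $\{H_z\}_{z\in\bP^1}$, with base locus $B$; deleting a neighbourhood of $H_\infty$ gives $\pi\colon E\to\bC$, whose generic fibre $M$ contains the chosen vanishing cycles and whose defining rational function $\pi$ is the one cutting out the pencil. With the basis of thimbles fixed, we have the directed algebra $\scrA$ and the invertible bimodule $\scrP=\scrA^\vee[-n]$ of \eqref{eq:serre-n}. First I would promote the noncommutative divisor of \eqref{eq:list-of-categories-3}(i) to a family: an $A_\infty$-structure $\scrB_z$ on $\scrB=\scrA\oplus\scrP[1]$ for each $z\in\bK$, extending $\scrA$ and the bimodule structure on $\scrP$ and depending affinely on $z$ (up to rescaling), together with a limiting structure $\scrB_\infty$; equivalently, the curved formal deformation $\hat\scrB_\infty$ of $\scrB_\infty$ in the variable $q$ with $z=1/q$, as in \eqref{eq:list-of-categories-2}(i). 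The structure $\scrB_\infty$ is already at hand, namely the full subcategory of $\scrF(M)$ on the vanishing cycles with $\scrA$ its directed subcategory; the new content is the $q$-deformation, which --- since all smooth members of the pencil are symplectomorphic --- should be understood as recording the monodromy of the family around $H_\infty$, i.e.\ around $z=\infty$.

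To produce $\hat\scrB_\infty$ I would carry out the Floer computation of \cite{mclean12} in parametrized form: count holomorphic sections and discs in $E$, with boundary on the thimbles (restricting to the vanishing cycles after intersecting with a regular fibre of $\pi$), weighted by $q^k$, where $k$ is the algebraic intersection number with $H_\infty$ --- equivalently the winding number of the curve around $\infty\in\bC$. Gromov compactness in $E^{\mathit{cl}}$ relative to $H_\infty$, together with directedness of the thimble configuration, should make these counts finite in each $q$-degree and force the $A_\infty$- and bimodule-deformation equations; transversality comes from the usual domain-dependent perturbations, now chosen compatibly with the projection to $\bP^1$. The gradings are arranged so that $\rho$, $\sigma$ and all the members $\theta_z$ lie in degree $0$, consistently with $c_1(E)=0$ (the shift in Remark \ref{th:fractional-cy-1} is what the same bookkeeping gives in the $(1+m)$-fold case). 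Recovering the polynomial family $\scrB_z$ from $\hat\scrB_\infty$ is then formal, and one reads off $\theta_\infty=\rho$ and $\theta_z=\sigma+z\rho$. It remains only to check the pencil axioms: the short exact sequence \eqref{eq:a-b-sequence} for each member, invertibility of $\scrP$ (automatic since $\scrP=\scrA^\vee[-n]$), and compatibility of the $\scrB_z$; we do not attempt the cyclic-symmetry refinement of Remark \ref{th:cyclic}.

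Next I would identify the two extreme sections with the maps of Theorem \ref{th:fano}. For $\theta_\infty=\rho$: the connecting map of \eqref{eq:a-b-sequence} for $\scrB_\infty$ is, by construction of the divisor, the natural transformation from the Serre functor to the identity that comes from the functor $\scrA\to\scrF(M)$ of \cite{seidel06}, whose equivalence with the Lefschetz-thimble definition of \cite{seidel12b} is \cite[Corollary 7.1]{seidel12b}; this is $\rho$. For the coefficient of $q$ in $\hat\scrB_\infty$ --- the class I am calling $\sigma$ --- I would compare the $q^1$-contributions, i.e.\ sections meeting $H_\infty$ exactly once, with the moduli spaces defining the open-closed map \eqref{eq:fix-to-aa} of Theorem \ref{th:main}, specialised to $\mu=\tau_{\partial M}$ (as in \eqref{eq:mu-tau}), to the lowest degree $\ast=0$, and to the unit $1\in H^0(M)\iso\mathit{HF}^2(\tau_{\partial M},\epsilon)$ with $\epsilon\in(1,2)$ (as in \eqref{eq:trivial-monodromy}). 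A neck-stretching argument along $\partial M$, where the Reeb flow is periodic, should exhibit the two moduli problems as governed by the same holomorphic curves near $H_\infty$ --- one extra revolution of $\epsilon$ past the first Reeb period corresponding to one unit of winding around $\infty$ --- and hence yield $\sigma$ on the nose.

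The main obstacle is twofold. First, the parametrized transversality and compactness behind $\hat\scrB_\infty$: over $\bP^1$ one must control the finitely many singular members of the pencil, the exceptional divisor over $B$ (where every fibre acquires its common end at infinity), and bubbling of discs or spheres into the deleted region, all while keeping the perturbations $\pi$-compatible and regular --- a substantial, though essentially standard, relative Gromov--Witten-type package. Second, and more delicate, the identification $\theta_0=\sigma$ is an open-closed comparison of the kind in \cite{abouzaid-ganatra14}: making the chain-level isomorphism between the pencil-family moduli spaces and the fixed-point-Floer moduli spaces of Theorem \ref{th:main} precise --- including the degree bookkeeping and the role of $\epsilon$ crossing a Reeb period --- is the technical heart of the argument. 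A purely algebraic alternative, taking $\rho$ and $\sigma$ as given and showing that the obstructions in the relevant Hochschild-type groups to integrating the family $\sigma+z\rho$ all vanish, looks harder and, in any case, would not evidently produce a \emph{canonical} structure.
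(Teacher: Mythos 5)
The statement you are addressing is labelled a \emph{conjecture} in the paper, and the paper offers no proof of it: the only thing said about its provability is the closing remark of Section 6, which sketches a \emph{conditional} argument --- if the generalized open-closed maps \eqref{eq:i-power} are isomorphisms (itself an extension of the unproved Conjecture \ref{th:decomposition-of-diagonal}), then Example \ref{th:negative-degrees} gives the vanishing \eqref{eq:no-negative-degree}, and the obstruction-theory Lemmas \ref{th:obstructions} and \ref{th:obstructions-2} produce the pencil structure from the classes $[\rho]$, $[\sigma]$. That is precisely the ``purely algebraic alternative'' you dismiss at the end; note that, contrary to your worry, it is the route the author actually proposes, and the canonicity issue you raise against it is real but is also acknowledged by the paper only up to isomorphism (Lemma \ref{th:obstructions-2} gives uniqueness up to isomorphism, not a preferred representative). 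So your proposal does not match a proof in the paper, because there is none to match.

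As a program, your geometric construction (counting curves weighted by $q^{\,k}$ with $k$ the intersection number with $H_\infty$, i.e.\ the relative/bulk-deformed Fukaya category of \eqref{eq:list-of-categories-4}(i), deferred by the paper to \cite{seidel15}) is plausible, but as written it is not a proof, and you say so yourself: the two steps you flag as ``the main obstacle'' --- parametrized transversality and compactness for the $q$-weighted counts, and the neck-stretching identification of the $q^1$-coefficient with the map $\sigma$ of Theorem \ref{th:fano} --- are exactly the content of the conjecture, not peripheral technicalities. Two further points are glossed over. First, ``recovering the polynomial family $\scrB_z$ from $\hat\scrB_\infty$ is then formal'' is not: the definition of a noncommutative pencil requires $\wp^d$ to land in $\scrB[2]\otimes\Sym^{\leq d}(V)$, so you need a geometric finiteness statement (a bound on the intersection number with $H_\infty$ for curves contributing to a fixed arity and fixed inputs/outputs), which must be extracted from monotonicity/energy estimates. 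Second, identifying $\theta_\infty$ with $\rho$ requires knowing that $\scrB_\infty$ really is the subcategory of $\scrF(M)$ on the vanishing cycles \emph{with its section equal to the boundary map of \eqref{eq:a-b-sequence}}, and the paper explicitly warns that the difficulty of \eqref{eq:list-of-categories-4}(i) ``depends on how one would go about constructing the noncommutative pencil structure''; in your construction this compatibility is an additional comparison theorem, not a tautology. In short: the gap is that the existence of the pencil structure and the identification of both its sections remain unestablished, in your write-up as in the paper.
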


The appeal of the noncommutative pencil structure in this context is that it can serve as an intermediate object between various other Fukaya $A_\infty$-structures. Consider:
\begin{equation} \label{eq:list-of-categories-4}
\left\{\!\!\!\!\!\!
\parbox{35em}{
\begin{itemize} \itemsep.5em
\item[(i)] $\scrB_\infty$ should be a full subcategory of $\scrF(M)$ (formally, this is the same statement as in \eqref{eq:list-of-categories-3}(i); but the level of difficulty of proving it depends on how one would go about constructing the noncommutative pencil structure). Now, $M$ admits a natural closure $M^{\mathit{cl}}$, obtained by gluing in a disc bundle over the base locus of the original pencil. Associated to that is the relative Fukaya category \cite{seidel02, seidel03b, sheridan11b}, which is a formal deformation of $\scrF(M)$. More generally, one can modify the deformation by including a suitable bulk term, as in \cite{fukaya-oh-ohta-ono11}. The conjecture is that, for a specific choice of bulk term, and after a change in the deformation parameter, this deformation is $\hat\scrB_\infty$. This is made more precise in \cite{seidel15}.

\item[(ii)]
$E$ itself embeds into a closed manifold $E^{\mathit{cl}}$, the manifold on which the original Lefschetz pencil was defined. This leads to another relative Fukaya category, but where now (because of monotonicity) the deformation parameter has degree $2$. At least on the basis of formal analogy, one expects this to be related (by a cohomologically full embedding) to the Landau-Ginzburg category $\scrL\scrG$ from \eqref{eq:list-of-categories-2}(ii).
\end{itemize}
} \right.
\end{equation}

\begin{remark} \label{th:fractional-cy-2}
We have limited ourselves to anticanonical Lefschetz pencils (the setup of Theorem \ref{th:fano}) for the sake of concreteness, but other cases are also of interest. For instance, along the same lines as in Conjecture \ref{th:conjecture}, the situation from Remark \ref{th:fractional-cy-1} should lead to a $\bZ/2 m$-graded noncommutative pencil. The residual information given by the $\bZ$-grading of $\scrA$ means that this pencil is homogeneous with respect to a circle action on $\bP^1$ of weight $2m$. 
\end{remark}

\subsection{Homological Mirror Symmetry\label{subsec:mirror}}
The mirror ($B$-model) counterpart of Conjecture \ref{th:conjecture} is far more straightforward, and amounts to embedding ordinary algebraic geometry into its noncommutative cousin. Suppose that we have a smooth algebraic variety $A$, a line bundle $P$ on it, and a section $t \in \Gamma(A,P^{-1})$ of its dual. Let $\scrA$ be an $A_\infty$-algebra underlying the bounded derived category $D^b\mathit{Coh}(A)$. Then, $P$ gives rise to an invertible bimodule $\scrP$, and $t$ to a bimodule map $\theta: \scrP \rightarrow \scrA$. In fact, there is a noncommutative divisor structure on $\scrA$ of which $\theta$ is part. This fits in with mirror symmetry in the formulation of \cite{auroux07}, where the crucial ingredient is a choice of anticanonical divisor (see \cite[Section 6]{seidel08}, where the connection is made explicit in a simple example). Similarly, suppose that for $A$ and $P$ as before, we have two elements of $\Gamma(A,P^{-1})$; this yields the structure of a noncommutative pencil on $\scrA$. 

Homological Mirror Symmetry can now be thought of as operating on the level of pencils, schematically like this:
\begin{equation}
{\begin{tabular}{|c|} \hline Symplectic \\ geometry \\($A$-model) \\ \hline \it Symplectic \\ \it pencil \\ \hline \end{tabular}}
\longrightarrow 
{\begin{tabular}{|c|} \hline Homological algebra \\ (Noncommutative geometry) \\ \hline \it Noncommutative pencil \\ \hline \end{tabular}}
\longleftarrow
{\begin{tabular}{|c|} \hline Algebraic \\ geometry \\ ($B$-model) \\ \hline \it Algebraic \\ \it pencil\\ \hline \end{tabular}}
\end{equation}
As a consequence, one gets a unified approach to the associated equivalences of categories. By \eqref{eq:list-of-categories-4}(ii), this picture includes counts of holomorphic discs, following the model of \cite{auroux07}.

{\em Acknowledgments.} Conversations with Anatoly Preygel were useful at an early stage of this project. Mohammed Abouzaid and Maxim Kontsevich provided very valuable comments about \eqref{eq:list-of-categories-4}(i), which are reflected in its current formulation. The definition \eqref{eq:katz1} was suggested by Ludmil Katzarkov. I am grateful to the anonymous referee for keeping me on the straight and narrow. Partial support was provided by: NSF grant DMS-1005288; the Simons Foundation, through a Simons Investigator award; and the Radcliffe Institute for Advanced Study at Harvard University, through a Radcliffe Fellowship. I would also like to thank Boston College, where the first version of this paper was written, for its hospitality.

\section{Homological algebra\label{sec:algebra}}

This section is of an elementary algebraic nature. To make the exposition more self-contained, we recall the relation between Hochschild homology and bimodule maps. We then discuss noncommutative divisors, largely following \cite[Section 3]{seidel08} except for the terminology. Finally, we adapt the same ideas to noncommutative pencils.

\subsection{$A_\infty$-bimodules}
Let $\scrA$ be a graded vector space over a field $\bK$. We denote by $T(\scrA[1])$ the tensor algebra over the (downwards) shifted space $\scrA[1]$. The structure of an $A_\infty$-algebra on $\scrA$ is given by a map
\begin{equation} \label{eq:a}
\mu_{\scrA}: T(\scrA[1]) \longrightarrow \scrA[2],
\end{equation}
which vanishes on the constants $\bK \subset T(\scrA[1])$. The components of \eqref{eq:a} are operations $\mu_{\scrA}^d: \scrA^{\otimes d} \rightarrow \scrA[2\!-\!d]$ (for $d \geq 1$). We will assume that $\scrA$ is strictly unital, denoting the unit by $e_{\scrA}$, and write $\bar{\scrA} = \scrA/\bK\,e_{\scrA}$. 

An $A_\infty$-bimodule over $\scrA$ consists of a graded vector space $\scrP$ and a map
\begin{equation}
\mu_{\scrP}: T(\scrA[1]) \otimes \scrP \otimes T(\scrA[1]) \longrightarrow \scrP[1],
\end{equation}
whose components we write as $\mu_{\scrP}^{s;1;r}: \scrA^{\otimes s} \otimes \scrP \otimes \scrA^{\otimes r} \rightarrow \scrP[1\!-\!r\!-\!s]$ (for $r,s \geq 0$). All $A_\infty$-bimodules will be assumed to be strictly unital. A basic example is the diagonal bimodule $\scrP = \scrA$, which has
\begin{equation}
\mu_{\scrA}^{s;1;r}(a'_s,\dots,a'_1;a;a_r,\dots,a_1) = (-1)^{\|a_1\| + \cdots + \|a_r\|+1} 
\mu_{\scrA}^{r+1+s}(a'_s,\dots,a'_1,a,a_r,\dots,a_1).
\end{equation}
Here and later on, $\|a\| = |a|-1$ stands for the reduced degree.
$A_\infty$-bimodules over $\scrA$ form a dg category, which we denote by $[\scrA,\scrA]$. An element of $\mathit{hom}_{[\scrA,\scrA]}(\scrP,\scrQ)$ of degree $d$ is given by a map
\begin{equation} \label{eq:bimodule-map}
\phi: T(\scrA[1]) \otimes \scrP \otimes T(\scrA[1]) \longrightarrow \scrQ[d],
\end{equation}
which factors through the projection to $T(\bar{\scrA}[1])$ on both sides. We denote the components of \eqref{eq:bimodule-map} again by $\phi^{s;1;r}$. The cocycles in $\mathit{hom}_{[\scrA,\scrA]}(\scrP,\scrQ)$ are solutions of
\begin{equation} \label{eq:bimodule-d}
\begin{aligned}
& \;\;\; \sum_{i,j} (-1)^{|\phi| (\|a_1\|+\cdots+\|a_i\|)} \; \mu_{\scrQ}^{s-j;1;i}(a'_s,\dots;
\phi^{j;1;r-i}(a'_j,\dots,a'_1;p;a_r,\dots,a_{i+1});\dots,a_1) \\
= & \;\;\;\sum_{i,j} (-1)^{|\phi|+\|a_1\|+\cdots+\|a_i\|} \;
\phi^{s-j;1;i}(a'_s,\dots;\mu^{j;1;r-i}_{\scrP}(a'_j,\dots,a'_1;p;a_r,\dots,a_{i+1});\dots,a_1) \\
& \!+ \sum_{i,j} (-1)^{|\phi|+\|a_1\|+\cdots+\|a_i\|} \;
\phi^{s;1;r-j+1}(a'_s,\dots;p;a_r,\dots,\mu_{\scrA}^j(a_{i+j},\dots,a_{i+1}),\dots,a_1) \\
& \!+ \sum_{i,j} (-1)^{|\phi|+\|a_1\|+\cdots+\|a_r\|+|p|+\|a'_1\|+\cdots+\|a'_i\|}
\phi^{s-j+1;1;r}(a'_s,\dots, \\[-1.5em]
& \qquad \qquad \qquad \qquad \qquad \qquad \qquad \mu_{\scrA}^j(a_{i+j}',\dots,a_{i+1}'),\dots;p;a_r,\dots,a_1).
\end{aligned}
\end{equation}
For instance, the identity endomorphism $\mathit{id}_{\scrP}$ has only one nonzero component, $\mathit{id}_{\scrP}^{0;1;0}(p) = p$.

To relate these structures to their (classical) cohomology level counterparts, on equips $H(\scrA)$ with the graded associative algebra structure given by
\begin{equation}
[a_2] \cdot [a_1] = (-1)^{|a_1|} [\mu^2_{\scrA}(a_2,a_1)],
\end{equation}
and makes $H(\scrP)$ into a graded bimodule over this algebra by setting
\begin{equation}
\begin{aligned}
& [a'] \cdot [p] = -(-1)^{|p|} [\mu^{1;1;0}_{\scrP}(a';p)], 
\\
& [p] \cdot [a] = [\mu^{0;1;1}_{\scrP}(p;a)].
\end{aligned}
\end{equation}
If $\phi: \scrP \rightarrow \scrQ$ is as in \eqref{eq:bimodule-d}, the associated cohomology level bimodule map is
\begin{equation}
\begin{aligned}
& H(\phi): H^*(\scrP) \longrightarrow H^{*+|\phi|}(\scrQ), \\
& [p] \longmapsto (-1)^{|\phi|\,|p|} [\phi^{0;1;0}(p)].
\end{aligned}
\end{equation}

\begin{conventions}
Our sign conventions for $A_\infty$-algebras follow \cite{seidel04}. The sign conventions for $A_\infty$-bimodules follow \cite{seidel08} except that, for greater compatibility with \cite{seidel04}, we reverse the ordering of the entries. This applies in particular to \eqref{eq:bimodule-d}, which agrees with \cite{seidel08} up to ordering (but differs from the convention for $A_\infty$-modules in \cite{seidel04}). 

The following observation may help address some sign issues. Given any $A_\infty$-bimodule structure, the sign change
\begin{equation} \label{eq:tilde-p}
\begin{aligned}
& \mu_{\scrP}^{s;1;r}(a_s',\dots,a_1';p;a_r,\dots,a_1) \longmapsto \\ & \qquad (-1)^{\|a_1\|+\cdots+\|a_r\|+\|a_1'\|+\cdots+\|a_s'\|+1} \mu_{\scrP}^{s;1;r}(a_s',\dots,a_1'; p; a_r,\dots,a_1)
\end{aligned}
\end{equation}
yields another $A_\infty$-bimodule structure. The two structures are generally distinct, but isomorphic (by an isomorphism that acts by $\pm 1$ on each graded piece). Hence, whenever we define some construction of $A_\infty$-bimodules, there are potentially at least two equivalent versions, which differ by \eqref{eq:tilde-p}.
\end{conventions}

We need to recall a few specific operations on $A_\infty$-bimodules:
\begin{itemize} \itemsep1em
\item The shifted space $\scrP[1]$ becomes an $A_\infty$-bimodule, with 
\begin{equation}
\mu_{\scrP[1]}^{s;1;r}(a_s',\dots,a_1';p;a_r,\dots,a_1) = (-1)^{\|a_1\|+\cdots+\|a_r\|+1}
\mu_{\scrP}^{s;1;r}(a_s',\dots,a_1';p;a_r,\dots,a_1).
\end{equation}
\item
The dual is $\scrP^\vee = \mathit{Hom}(\scrP,\bK)$, with
\begin{equation}
\langle \mu_{\scrP^\vee}^{s;1;r}(a_s,\dots,a_1;\pi;a'_r,\dots,a'_1), p \rangle = 
(-1)^{|p|+1} \langle \pi, \mu_{\scrP}^{r;1;s}(a'_r,\dots,a'_1;p;a_s,\dots,a_1) \rangle.
\end{equation}
\item Given two $A_\infty$-bimodules $\scrQ$ and $\scrP$, one defines the tensor product $\scrQ \otimes_{\scrA} \scrP$ to be the graded vector space $\scrQ \otimes T(\bar\scrA[1]) \otimes \scrP$, with the differential
\begin{equation}
\begin{aligned}
& \mu^{0;1;0}_{\scrQ \otimes_{\scrA} \scrP}(q \otimes a'_t \otimes \cdots \otimes a'_1 \otimes p) = \\
& \qquad
 \sum_i (-1)^{|p|+\|a_1'\|+\cdots+\|a_{i}'\|} \mu_{\scrQ}^{0;1;t-i}(q;a'_t,\dots,a'_{i+1}) \otimes \cdots \otimes a'_1 \otimes p 
\\ 
& \quad + \sum_{i,j} (-1)^{|p|+\|a_1'\|+\cdots+\|a_i'\|} q \otimes a'_t \otimes \cdots \otimes \mu_{\scrA}^j(a'_{i+j},\dots,a'_{i+1}) \otimes \cdots \otimes a'_1 \otimes p \\
& \quad + \sum_i q \otimes a'_t \otimes \cdots \otimes \mu_{\scrP}^{i;1;0}(a'_i,\dots,a'_1;p);
\end{aligned}
\end{equation}
the operations (for $r>0$ or $s>0$)
\begin{equation}
\begin{aligned}
& \mu^{s;1;0}_{\scrQ \otimes_{\scrA} \scrP}(a''_s,\dots,a''_1; q \otimes a'_t \otimes \cdots \otimes a'_1 \otimes p) = \\
& \quad \sum_i (-1)^{|p|+\|a_1'\|+\cdots+\|a_{i}'\|} \mu_{\scrQ}^{s;1;t-i}(a''_s,\dots,a''_1;q;a'_t,\dots,a'_{i+1}) 
\otimes \cdots \otimes a'_1 \otimes p,
\\
& \mu^{0;1;r}_{\scrQ \otimes_{\scrA} \scrP}(q \otimes a'_t \otimes \cdots \otimes a'_1 \otimes p; a_r,\dots, a_1) = \\
& \quad \sum_i q \otimes a'_t \otimes \cdots \otimes \mu_{\scrP}^{i;1;r}(a'_i,\dots,a'_1;p;a_r,\dots,a_1);
\end{aligned}
\end{equation}
and with $\mu_{\scrQ \otimes_{\scrA} \scrP}^{s;1;r} = 0$ if both $r$ and $s$ are positive.
\end{itemize}

Tensor product with the diagonal bimodule is essentially a trivial operation. More precisely, there are canonical quasi-isomorphisms \cite[Eqn.~(2.21)--(2.24)]{seidel08}
\begin{equation} \label{eq:neutral}
\scrA \otimes_{\scrA} \scrP \htp \scrP \htp \scrP \otimes_{\scrA} \scrA.
\end{equation}
For arbitrary $\scrA$-bimodules $\scrP$, $\scrQ$, $\scrR$, there are canonical isomorphisms 
\begin{align}
&
\label{eq:2-rotation}
\mathit{hom}_{[\scrA,\scrA]}(\scrP,\scrQ^\vee) \iso \mathit{hom}_{[\scrA,\scrA]}(\scrQ,\scrP^\vee),
\\
&
\label{eq:3-rotation}
\mathit{hom}_{[\scrA,\scrA]}(\scrP \otimes_{\scrA} \scrQ, \scrR^\vee) \iso \mathit{hom}_{[\scrA,\scrA]}(\scrQ \otimes_{\scrA} \scrR, \scrP^\vee) \iso
\mathit{hom}_{[\scrA,\scrA]}(\scrR \otimes_{\scrA} \scrP, \scrQ^\vee).
\end{align}
As an application of \eqref{eq:neutral} and \eqref{eq:3-rotation}, one gets
\begin{align}
& \label{eq:dual-rotate-1}
\mathit{hom}_{[\scrA,\scrA]}(\scrP^\vee,\scrP^\vee) 
\htp \mathit{hom}_{[\scrA,\scrA]}(\scrA \otimes_{\scrA} \scrP^\vee, \scrP^\vee) 
\iso \mathit{hom}_{[\scrA,\scrA]}(\scrP^\vee \otimes_{\scrA} \scrP, \scrA^\vee), 
\\
& \label{eq:dual-rotate-2}
\mathit{hom}_{[\scrA,\scrA]}(\scrP^\vee,\scrP^\vee) \htp \mathit{hom}_{[\scrA,\scrA]}(\scrP^\vee \otimes_{\scrA} \scrA, \scrP^\vee) \iso \mathit{hom}_{[\scrA,\scrA]}(\scrP \otimes_{\scrA} \scrP^\vee, \scrA^\vee).
\end{align}
By starting with $\mathit{id}_{\scrP^\vee}$ and going through \eqref{eq:dual-rotate-1}, \eqref{eq:dual-rotate-2}, one obtains canonical maps
\begin{align} \label{eq:left-pi}
& \lambda_{\mathit{left}}: \scrP^\vee \otimes_{\scrA} \scrP \longrightarrow \scrA^\vee, \\
\label{eq:right-pi}
& \lambda_{\mathit{right}}: \scrP \otimes_{\scrA} \scrP^\vee \longrightarrow \scrA^\vee.
\end{align}
There are explicit formulae for \eqref{eq:neutral}--\eqref{eq:right-pi}, but we prefer to omit them. For more foundational material on $A_\infty$-bimodules, see \cite{tradler01, lefevre, kontsevich-soibelman06, lyubashenko-manzyuk08, seidel08}.

\subsection{Invertible bimodules}
%
%
A bimodule $\scrP$ is called invertible if there is another bimodule $\scrP^{-1}$ and quasi-isomorphisms
\begin{equation}
\scrP^{-1} \otimes_{\scrA} \scrP \htp \scrA \htp \scrP \otimes_{\scrA} \scrP^{-1}.
\end{equation}
In that case, taking the tensor product with $\scrP$ (on objects, and the tensor product with $\mathit{id}_{\scrP}$ on morphisms) is an automorphism, or more rigorously a quasi-equivalence, of the dg category $[\scrA,\scrA]$.

\begin{lemma}
If $\scrP$ is invertible, $\lambda_{\mathit{left}}$ and $\lambda_{\mathit{right}}$ are quasi-isomorphisms. 
\end{lemma}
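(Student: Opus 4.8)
The plan is to reduce the statement about $\lambda_{\mathit{left}}$ and $\lambda_{\mathit{right}}$ to the fact that tensoring with an invertible bimodule is a quasi-equivalence of $[\scrA,\scrA]$, using the construction of these maps from $\mathit{id}_{\scrP^\vee}$ through the chain of canonical (quasi-)isomorphisms \eqref{eq:dual-rotate-1}, \eqref{eq:dual-rotate-2}. First I would observe that the issue is entirely about $\lambda_{\mathit{left}}$ (the argument for $\lambda_{\mathit{right}}$ is identical after swapping the roles of left and right, i.e.\ applying the variant of the construction obtained from the other rotation in \eqref{eq:3-rotation}), so I restrict attention to $\lambda_{\mathit{left}}: \scrP^\vee \otimes_{\scrA} \scrP \rightarrow \scrA^\vee$. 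Unwinding \eqref{eq:dual-rotate-1}: the composite $\mathit{hom}_{[\scrA,\scrA]}(\scrP^\vee,\scrP^\vee) \rightarrow \mathit{hom}_{[\scrA,\scrA]}(\scrP^\vee \otimes_{\scrA} \scrP, \scrA^\vee)$ is a quasi-isomorphism on $\mathit{hom}$-complexes, being the composition of the quasi-isomorphism induced by \eqref{eq:neutral} with the strict isomorphism \eqref{eq:3-rotation}. Since $\mathit{id}_{\scrP^\vee}$ is a cocycle representing the identity in $H^0(\mathit{hom}_{[\scrA,\scrA]}(\scrP^\vee,\scrP^\vee))$, its image $\lambda_{\mathit{left}}$ is a cocycle whose class in $H^0(\mathit{hom}_{[\scrA,\scrA]}(\scrP^\vee \otimes_{\scrA} \scrP, \scrA^\vee))$ is the image of $[\mathit{id}_{\scrP^\vee}]$ under an isomorphism of cohomology groups.

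Next I would translate "quasi-isomorphism of bimodules" into the right categorical statement: a closed degree-zero morphism $f: \scrX \rightarrow \scrY$ in $[\scrA,\scrA]$ is a quasi-isomorphism if and only if $[f]$ is an isomorphism in the cohomological category $H^0([\scrA,\scrA])$, equivalently in the derived category of bimodules. So it suffices to show that $[\lambda_{\mathit{left}}]$ is invertible in $H^0([\scrA,\scrA])$. Now invoke invertibility of $\scrP$: the functor $-\otimes_{\scrA}\scrP$ is a quasi-equivalence of $[\scrA,\scrA]$, hence induces isomorphisms on all $\mathit{hom}$-cohomology. Applying this functor to the identity of $\scrP^\vee$ lands us precisely in the $\mathit{hom}$-complex $\mathit{hom}_{[\scrA,\scrA]}(\scrP^\vee \otimes_{\scrA}\scrP, \scrP^\vee \otimes_{\scrA}\scrP)$, and the point is that $\lambda_{\mathit{left}}$, viewed correctly, is the adjunction/evaluation morphism realizing the identification $\mathit{hom}(\scrP^\vee \otimes_{\scrA}\scrP, -) \iso \mathit{hom}(-, \mathrm{something})$; concretely, the chain \eqref{eq:dual-rotate-1} shows $\lambda_{\mathit{left}}$ corresponds under the isomorphism \eqref{eq:3-rotation} to the image of $\mathit{id}_{\scrP^\vee}$ under \eqref{eq:neutral}, which is a quasi-isomorphism. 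Tracking through \eqref{eq:3-rotation} — which is a strict isomorphism of $\mathit{hom}$-complexes and therefore preserves the property of being a cocycle inducing an isomorphism on $H^0$ — one concludes $[\lambda_{\mathit{left}}]$ is an isomorphism, i.e.\ $\lambda_{\mathit{left}}$ is a quasi-isomorphism.

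The one genuinely delicate point, and the step I expect to be the main obstacle, is verifying that the isomorphism \eqref{eq:3-rotation} really does carry "the class of $\mathit{id}$-type morphisms under \eqref{eq:neutral}" to "$\lambda_{\mathit{left}}$" in a way compatible with invertibility — in other words, pinning down that $\lambda_{\mathit{left}}$ is not merely \emph{some} morphism but precisely the one whose being a quasi-isomorphism is detected by the quasi-equivalence $-\otimes_{\scrA}\scrP$. This amounts to a bookkeeping check that the adjunction-type rotations \eqref{eq:2-rotation}, \eqref{eq:3-rotation} intertwine composition in $[\scrA,\scrA]$ in the expected way, so that the composite $H^0(\mathit{hom}(\scrP^\vee,\scrP^\vee)) \rightarrow H^0(\mathit{hom}(\scrP^\vee\otimes_\scrA\scrP,\scrA^\vee))$ is not just \emph{a} bijection but sends $[\mathit{id}]$ to $[\lambda_{\mathit{left}}]$ and sends the inverse-witnessing class (coming from $\scrP^{-1}$) to a homotopy inverse of $\lambda_{\mathit{left}}$. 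Since \eqref{eq:2-rotation} and \eqref{eq:3-rotation} are strict isomorphisms of complexes (not merely quasi-isomorphisms) this compatibility is a formal, if somewhat tedious, diagram chase with the explicit — here suppressed — formulae; granting it, the conclusion for both $\lambda_{\mathit{left}}$ and $\lambda_{\mathit{right}}$ follows at once.
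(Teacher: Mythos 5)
Your setup is right, but the step you yourself flag as ``the one genuinely delicate point'' is not a formal bookkeeping check --- it is the entire content of the lemma, and the way you propose to discharge it does not work. The rotation \eqref{eq:3-rotation} is a strict isomorphism of hom-\emph{complexes}, but it relates morphism spaces between \emph{different pairs of objects}; such an isomorphism induces a bijection on cohomology classes and nothing more --- it carries no information about whether a given class is represented by a quasi-isomorphism of bimodules. Knowing that $[\lambda_{\mathit{left}}]$ is the image of $[\mathit{id}_{\scrP^\vee}]$ under the composite \eqref{eq:dual-rotate-1} only tells you that it is a distinguished class in $H^0(\hom_{[\scrA,\scrA]}(\scrP^\vee\otimes_{\scrA}\scrP,\scrA^\vee))$. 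A sanity check: for $\scrP=\scrA\oplus\scrA$ (not invertible), the maps \eqref{eq:neutral} and \eqref{eq:3-rotation} exist exactly as before and $\lambda_{\mathit{left}}$ is still the image of the identity, yet it cannot be a quasi-isomorphism, since $H^*(\scrP^\vee\otimes_{\scrA}\scrP)$ consists of four copies of $H^*(\scrA^\vee)$ while the target has one. Your argument never uses invertibility in a load-bearing way (you mention that $-\otimes_{\scrA}\scrP$ is a quasi-equivalence, but the conclusion is ultimately drawn from transporting $\mathit{id}$ through \eqref{eq:neutral} and \eqref{eq:3-rotation}, which are available for any $\scrP$), so as written it would prove a false statement.

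The missing idea --- and the way the paper proceeds --- is to use invertibility to put \emph{both} ends of the chain into representable form in the same variable: extend \eqref{eq:dual-rotate-1} by one more step, $\hom_{[\scrA,\scrA]}(\scrQ\otimes_{\scrA}\scrP,\scrA^\vee)\htp\hom_{[\scrA,\scrA]}(\scrQ,\scrA^\vee\otimes_{\scrA}\scrP^{-1})$ (this is where $\scrP^{-1}$ enters), obtaining a chain of quasi-isomorphisms from $\hom_{[\scrA,\scrA]}(\scrQ,\scrP^\vee)$ to $\hom_{[\scrA,\scrA]}(\scrQ,\scrA^\vee\otimes_{\scrA}\scrP^{-1})$ which is natural in $\scrQ$. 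The Yoneda lemma then converts this into an honest quasi-isomorphism of bimodules $\scrP^\vee\to\scrA^\vee\otimes_{\scrA}\scrP^{-1}$; setting $\scrQ=\scrP^\vee$ and chasing the identity identifies that quasi-isomorphism with $\lambda_{\mathit{left}}\otimes_{\scrA}\mathit{id}_{\scrP^{-1}}$, and tensoring with $\mathit{id}_{\scrP}$ recovers $\lambda_{\mathit{left}}$ itself as a quasi-isomorphism. If you insist on your ``intertwines composition'' route instead, you must actually exhibit a candidate inverse $\scrA^\vee\to\scrP^\vee\otimes_{\scrA}\scrP$ and check both composites; producing it again requires $\scrP^{-1}$, and your proposal does not supply it.
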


\begin{proof}
Consider the chain of quasi-isomorphisms
\begin{equation} \label{eq:quasi-shift}
\begin{aligned}
& \mathit{hom}_{[\scrA,\scrA]}(\scrQ,\scrP^\vee) \htp \hom_{[\scrA,\scrA]}(\scrA \otimes_{\scrA} \scrQ, \scrP^\vee) \\ & \qquad \iso \hom_{[\scrA,\scrA]}(\scrQ \otimes_{\scrA} \scrP, \scrA^\vee) \htp
\hom_{[\scrA,\scrA]}(\scrQ, \scrA^\vee \otimes_{\scrA} \scrP^{-1}).
\end{aligned}
\end{equation}
This is functorial in $\scrQ$, hence (by the Yoneda Lemma) arises from a quasi-isomorphism
\begin{equation} \label{eq:v-inverse-quasi-iso}
\scrP^\vee \htp \scrA^\vee \otimes_{\scrA} \scrP^{-1}. 
\end{equation}
More concretely, one obtains that quasi-isomorphism by setting $\scrQ = \scrP^\vee$ in \eqref{eq:quasi-shift}, and then taking the identity in the leftmost morphism group. By comparing this with \eqref{eq:dual-rotate-1}, one sees that the quasi-isomorphism is in fact $\lambda_{\mathit{left}} \otimes_{\scrA} \mathit{id}_{\scrP^{-1}}$. By tensoring with $\mathit{id}_{\scrP}$, it follows that $\lambda_{\mathit{left}}$ itself is a quasi-isomorphism. The other case is similar.
\end{proof}

Note that, by combining \eqref{eq:v-inverse-quasi-iso} with its counterpart for $\scrP^{-1} \otimes_{\scrA} \scrA^\vee$, we can get a canonical quasi-isomorphism (for invertible $\scrP$)
\begin{equation} \label{eq:central-tensor}
\scrA^\vee \otimes_{\scrA} \scrP^{-1} \htp \scrP^{-1} \otimes_{\scrA} \scrA^\vee.
\end{equation}

\subsection{Hochschild homology\label{subsec:hochschild-homology}}
The Hochschild homology of $\scrA$ with coefficients in a bimodule $\scrP$, written as $\mathit{HH}_*(\scrA,\scrP)$, is the homology of $\mathit{CC}_*(\scrA,\scrP) = T(\bar\scrA[1]) \otimes \scrP$, with differential
\begin{equation}
\begin{aligned}
\partial(a_d \otimes \cdots \otimes a_1 \otimes p) = 
& \sum_{i,j} (-1)^{|p|+\|a_1\|+\cdots+\|a_i\|} a_d \otimes \cdots \otimes \mu_{\scrA}^j(a_{i+j},\dots,a_{i+1}) \otimes \cdots \otimes a_1 \otimes p \\[-1em]
& \qquad + \sum_{i,j} (-1)^\ast a_{d-i} \otimes \cdots \otimes \mu_{\scrP}^{j;1;i}(a_j,\dots,a_1;p;a_d,\dots,a_{d-i+1}),
\end{aligned}
\end{equation}
where $\ast = (\|a_{d-i+1}\| + \cdots + \|a_d\|)(|p|+\|a_1\| + \cdots + \|a_{d-i}\|)$. In spite of the subscript notation, the grading on $\mathit{CC}_*(\scrA,\scrP)$ is cohomological, meaning that $\partial$ has degree $1$. 

There is a canonical isomorphism
\begin{equation} \label{eq:general-hochschild}
\mathit{CC}_*(\scrA, \scrQ \otimes_{\scrA} \scrP)^\vee \iso
\mathit{hom}_{[\scrA,\scrA]}(\scrP,\scrQ^\vee).
\end{equation}
In particular,
%
\begin{equation} \label{eq:second-hochschild}
\mathit{CC}_*(\scrA,\scrP)^\vee \htp
\hom_{[\scrA,\scrA]}(\scrA,\scrP^\vee).
\end{equation}
By spelling out \eqref{eq:second-hochschild}, one obtains the following:

\begin{lemma} \label{th:recognize-diagonal}
Suppose that we have a cocycle $\xi \in \mathit{CC}_0(\scrA,\scrP)^\vee$, whose constant term $\xi^0 \in \scrP^\vee$ has the property that
\begin{equation} \label{eq:zero-order-map}
\begin{aligned}
& H^*(\scrA) \longrightarrow H^*(\scrP)^\vee, \\
& [a] \longmapsto [\xi^0(\mu_{\scrP}^{1;1;0}(a; \cdot))]
\end{aligned}
\end{equation}
is an isomorphism. Then, under \eqref{eq:second-hochschild}, $\xi$ gives rise to a quasi-isomorphism $\scrA \htp \scrP^\vee$.
\end{lemma}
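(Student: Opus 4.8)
The plan is to unwind the canonical isomorphism \eqref{eq:second-hochschild} explicitly and check that a cocycle $\xi$ satisfying the hypothesis is sent to a cocycle $\phi \in \hom_{[\scrA,\scrA]}(\scrA,\scrP^\vee)$ whose induced cohomology-level bimodule map $H(\phi)\colon H^*(\scrA) \to H^*(\scrP^\vee)$ is an isomorphism; once that is known, a standard spectral-sequence (or filtration) argument shows that $\phi$ is a quasi-isomorphism.

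First I would make \eqref{eq:second-hochschild} concrete. Starting from \eqref{eq:general-hochschild} with $\scrQ = \scrA$, an element of $\mathit{CC}_*(\scrA, \scrA \otimes_{\scrA} \scrP)^\vee$ is a functional on $T(\bar\scrA[1]) \otimes \scrA \otimes T(\bar\scrA[1]) \otimes \scrP$; dualizing the differential and rewriting via the bijection between such functionals and maps $T(\scrA[1]) \otimes \scrA \otimes T(\scrA[1]) \to \scrP^\vee$ realizes it as $\hom_{[\scrA,\scrA]}(\scrP, \scrA^\vee)$, and then \eqref{eq:2-rotation} and the neutrality quasi-isomorphism \eqref{eq:neutral} identify this with $\hom_{[\scrA,\scrA]}(\scrA, \scrP^\vee)$. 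Tracing a cocycle $\xi$, with components $\xi^d$ (the functional on $\bar\scrA^{\otimes d} \otimes \scrP$), through these identifications, I expect the resulting bimodule map $\phi$ to have components $\phi^{s;1;r}$ built from $\xi^{r+s}$ by pairing with $\mu_{\scrP}$ in the appropriate slots, up to signs dictated by the conventions of \cite{seidel04, seidel08}; in particular its leading term $\phi^{0;1;0}\colon \scrA \to \scrP^\vee$ should be $a \mapsto \pm\,\xi^0(\mu_{\scrP}^{1;1;0}(a;\cdot)) \pm \xi^1(a \otimes \cdot)$ or something close, whose cohomology-level part is exactly the map \eqref{eq:zero-order-map} (the $\xi^1$-type correction dies on cohomology because $\xi$ is a cocycle and we are on $\bar\scrA$). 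The hypothesis says precisely that $H(\phi^{0;1;0})$, which by the formula $H(\phi)([p]) = (-1)^{|\phi||p|}[\phi^{0;1;0}(p)]$ is (up to sign) $H(\phi)$, is an isomorphism $H^*(\scrA) \to H^*(\scrP^\vee) = H^*(\scrP)^\vee$.

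The last step is to upgrade "$H(\phi)$ is an isomorphism" to "$\phi$ is a quasi-isomorphism" in $[\scrA,\scrA]$. Here I would use that a bimodule map $\phi\colon \scrP' \to \scrQ'$ between (strictly unital) $A_\infty$-bimodules is a quasi-isomorphism as soon as its zeroth-order component $\phi^{0;1;0}$ is a quasi-isomorphism of the underlying chain complexes: one puts the length filtration (by the number of tensor factors of $\bar\scrA$) on the mapping cone of $\phi$ inside $\hom_{[\scrA,\scrA]}$, or rather compares $\phi$ to a filtration on $\scrP'$ and $\scrQ'$ themselves, and runs the associated spectral sequence whose first page only sees $\phi^{0;1;0}$. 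Since $\phi^{0;1;0}$ is a chain map inducing the iso \eqref{eq:zero-order-map} on homology, the cone is acyclic. A cleaner way to phrase this, avoiding convergence worries, is the standard fact that $\phi$ is a quasi-isomorphism of $A_\infty$-bimodules iff $\phi^{0;1;0}$ is a quasi-isomorphism of complexes (see e.g. \cite{seidel08}); I would cite this rather than reprove it.

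The main obstacle is the bookkeeping in the first step: pinning down the exact signs and the exact form of the components of $\phi$ as one pushes $\xi$ through the four identifications \eqref{eq:general-hochschild}, \eqref{eq:2-rotation}, \eqref{eq:neutral}, making sure the leading term genuinely reduces to \eqref{eq:zero-order-map} on cohomology. None of it is deep, but the chain of canonical iso's and quasi-iso's, combined with the two sign conventions in play (and the freedom \eqref{eq:tilde-p}), makes it easy to slip; the honest thing is to verify the claim at the level of the leading-order terms only, which is all that \eqref{eq:zero-order-map} and the final spectral-sequence argument require, and to leave the higher components implicit.
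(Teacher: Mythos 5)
Your proposal is correct and matches what the paper intends: the paper gives no proof beyond ``by spelling out \eqref{eq:second-hochschild}'', and your unwinding of that identification to see that the leading term of the resulting bimodule map $\scrA \to \scrP^\vee$ is \eqref{eq:zero-order-map}, followed by the standard length-filtration criterion that a bimodule map is a quasi-isomorphism iff its component $\phi^{0;1;0}$ is, is exactly that argument. The only simplification worth noting is that instead of routing through \eqref{eq:general-hochschild}, \eqref{eq:2-rotation} and \eqref{eq:neutral}, one can use the tautological identity $\mathit{CC}_*(\scrA,\scrP)^\vee = \mathit{CC}^*(\scrA,\scrP^\vee)$ together with \eqref{eq:hh-diag}, as the paper itself points out in Section \ref{subsec:hochschild-coho}, which shortens the sign bookkeeping.
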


If $\scrA$ is proper (has finite-dimensional cohomology), the natural map $\scrA \rightarrow (\scrA^\vee)^\vee$ is a quasi-isomorphism. In that case, \eqref{eq:general-hochschild} implies that
\begin{equation} \label{eq:first-hochschild}
\hom_{[\scrA,\scrA]}(\scrP,\scrA) \htp
\mathit{CC}_*(\scrA, \scrA^\vee \otimes_{\scrA} \scrP)^\vee.
\end{equation}

Hochschild homology is cyclically invariant, which means that there is a canonical isomorphism of chain complexes
\begin{equation} \label{eq:cyclic-hh}
\mathit{CC}_*(\scrA,\scrQ \otimes_{\scrA} \scrP) \iso \mathit{CC}_*(\scrA,\scrP \otimes_{\scrA} \scrQ).
\end{equation}
The dual of this isomorphism, using \eqref{eq:general-hochschild}, is \eqref{eq:2-rotation}. Similarly, \eqref{eq:cyclic-hh} implies that
\begin{equation}
\mathit{CC}_*(\scrA,\scrR \otimes_{\scrA} \scrQ \otimes_{\scrA} \scrP) \iso 
\mathit{CC}_*(\scrA,\scrP \otimes_{\scrA} \scrR \otimes_{\scrA} \scrQ),
\end{equation}
and the dual of that is \eqref{eq:3-rotation}. One can apply the same idea to the tensor powers of a single bimodule, leading to an action of the cyclic group $\bZ/i$ on $\mathit{CC}_*(\scrA,\scrP^{\otimes_{\scrA} i})$. By applying \eqref{eq:first-hochschild}, one gets a $\bZ/i$-action on $H^*(\mathit{hom}_{[\scrA,\scrA]}( (\scrA^\vee)^{\otimes_{\scrA} i-1}, \scrA))$, assuming that $\scrA$ is proper.

\subsection{Hochschild cohomology\label{subsec:hochschild-coho}}
Even though we have emphasized Hochschild homology, it makes sense to also mention the Hochschild cohomology $\mathit{HH}^*(\scrA,\scrP)$, whose underlying chain complex is $\mathit{CC}^*(\scrA,\scrP) = \mathit{Hom}(T(\bar\scrA[1]), \scrP)$. It is well-known that
\begin{equation} \label{eq:hh-diag}
\mathit{CC}^*(\scrA,\scrP) \htp \mathit{hom}_{[\scrA,\scrA]}(\scrA,\scrP).
\end{equation}
One can interpret \eqref{eq:second-hochschild} and Lemma \ref{th:recognize-diagonal} in those terms, given that
\begin{equation}
\mathit{CC}_*(\scrA,\scrP)^\vee = \mathit{CC}^*(\scrA,\scrP^\vee).
\end{equation}

There is another, less straightforward, connection between the two Hochschild theories. Namely, suppose that $\scrA$ is proper and homologically smooth, in which case $\scrA^\vee$ is always invertible (see e.g.\ \cite[Theorem 4.5]{shklyarov07b}). Then (see e.g.\ \cite[Remark 8.2.4]{kontsevich-soibelman06})
\begin{equation} \label{eq:serre-hochschild}
\mathit{CC}_*(\scrA,\scrP) \htp 
\mathit{hom}_{[\scrA,\scrA]}((\scrA^\vee)^{-1},\scrP) \htp
\mathit{CC}^*(\scrA,\scrA^\vee \otimes_{\scrA} \scrP)
\end{equation}
If one specializes this to $\scrP = \scrA$, the outcome is a quasi-isomorphism between $\mathit{CC}_*(\scrA,\scrA)$ and its dual, which gives rise to a nondegenerate pairing on $\mathit{HH}_*(\scrA,\scrA)$ (see \cite{shklyarov07b} for an extended discussion). 

\subsection{Self-conjugation}
Fix an invertible bimodule $\scrP$, and some $i \geq 1$. We define an endomorphism of $\mathit{hom}_{[\scrA,\scrA]}(\scrP^{\otimes_{\scrA} i}, \scrA)$, unique up to chain homotopy, through the homotopy commutative diagram
\begin{equation} \label{eq:k-diagram}
\xymatrix{
\mathit{hom}_{[\scrA,\scrA]}(\scrP^{\otimes_{\scrA} i}, \scrA)
\ar[d]_-{\mathit{id}_{\scrP} \otimes_{\scrA} \cdot}^-{\htp}
\ar@{-->}[rr]
&&
\mathit{hom}_{[\scrA,\scrA]}(\scrP^{\otimes_{\scrA} i}, \scrA)
\ar[d]^-{\cdot \otimes_{\scrA} \mathit{id}_{\scrP}}_-{\htp} 
\\
\mathit{hom}_{[\scrA,\scrA]}(\scrP^{\otimes_{\scrA} i+1},\scrP \otimes_{\scrA} \scrA)
\ar[r]^-{\htp}
&
\mathit{hom}_{[\scrA,\scrA]}(\scrP^{\otimes_{\scrA} i+1},\scrP)
&
\mathit{hom}_{[\scrA,\scrA]}(\scrP^{\otimes_{\scrA} i+1},\scrA \otimes_{\scrA} \scrP).
\ar[l]_-{\htp}
}
\end{equation}
Again up to homotopy, these maps are compatible with the ring structure induced by the tensor product
\begin{equation}
\begin{aligned}
& \mathit{hom}_{[\scrA,\scrA]}(\scrP^{\otimes_{\scrA} j}, \scrA) \otimes
\mathit{hom}_{[\scrA,\scrA]}(\scrP^{\otimes_{\scrA} i}, \scrA) \longrightarrow
\mathit{hom}_{[\scrA,\scrA]}(\scrP^{\otimes_{\scrA} i+j}, \scrA \otimes_{\scrA} \scrA)
\\ & \qquad \qquad \qquad \qquad \qquad \qquad \qquad \qquad \qquad \qquad
\stackrel{\htp}{\longrightarrow} \mathit{hom}_{[\scrA,\scrA]}(\scrP^{\otimes_{\scrA} i+j}, \scrA).
\end{aligned}
\end{equation}
We call the induced cohomology automorphisms {\em self-conjugation maps}, and denote them by
\begin{equation} \label{eq:c-automorphism}
\xymatrix{
\ar@(dl,ul)[0,0]^-{K_i} & \hspace{-3em}
H^*(\mathit{hom}_{[\scrA,\scrA]}(\scrP^{\otimes_{\scrA} i},\scrA)).
}
\end{equation}

\begin{example} \label{th:exterior-algebra}
Let $A = \Lambda^*(\bK)$ be the exterior algebra in one variable (denoted by $x$, and given degree $1$). Given any $\lambda \in \bK^\times$, one has an invertible $A$-bimodule $P_{\lambda}$, whose underlying graded vector space is the same as $A$, but where the action of $x$ on the right is multiplied by $\lambda$ (while that on the left remains the same). Consider the bimodule map (of degree $1$) $t: P_{\lambda} \rightarrow A$ which sends $1$ to $x$. Then
\begin{equation}
\begin{cases}
P_{\lambda} \otimes_A P_{\lambda} \xrightarrow{t \otimes_A \mathit{id}_{P_\lambda}} A \otimes_A P_{\lambda} \iso P_{\lambda}
& \text{maps $1 \otimes 1 \mapsto x$,} \\
P_{\lambda} \otimes_A P_{\lambda} \xrightarrow{\mathit{id}_{P_\lambda} \otimes_A t} P_{\lambda} \otimes_A A \iso P_{\lambda}
& \text{maps $1 \otimes 1 \mapsto \lambda x$.}
\end{cases}
\end{equation}
This translates straightforwardly into the $A_\infty$-world, and provides an example where \eqref{eq:c-automorphism}, for $i = 1$, is not the identity.
\end{example}

For any invertible bimodule $\scrP$, there is a homotopy commutative diagram
\begin{equation} \label{eq:top-diagram}
\xymatrix{
\hom_{[\scrA,\scrA]}(\scrA^\vee, \scrA^\vee \otimes_{\scrA} \scrP^{-1}) 
\ar[dd]_-{\eqref{eq:central-tensor}}
&&
\hom_{[\scrA,\scrA]}(\scrA, \scrP^{-1})
\ar[ll]_-{\mathit{id}_{\scrA^\vee} \otimes_{\scrA} \cdot}
\ar[d]_-{\cdot \otimes_{\scrA} \mathit{id}_{\scrP}}
\\ 
& 
\hom_{[\scrA,\scrA]}(\scrA^\vee,\scrP^\vee)  
\ar[ul]
\ar[dl]
&
\hom_{[\scrA,\scrA]}(\scrP,\scrA)
\ar[l]_-{\text{dualize}}
\\
\hom_{[\scrA,\scrA]}(\scrA^\vee,\scrP^{-1} \otimes_{\scrA} \scrA^\vee)
&&
\hom_{[\scrA,\scrA]}(\scrA,\scrP^{-1})
\ar[ll]^-{\cdot \otimes_{\scrA} \mathit{id}_{\scrA^\vee}}
\ar[u]^-{\mathit{id}_{\scrP} \otimes_{\scrA} \cdot}
}
\end{equation}
The diagonal arrows are \eqref{eq:v-inverse-quasi-iso} and its counterpart; hence, the left triangle in the diagram commutes by definition of \eqref{eq:central-tensor}. Proving the commutativity of the remaining parts requires one to go back to \eqref{eq:quasi-shift}, and we will not explain the details here. 
%
Going vertically down the right column of \eqref{eq:top-diagram} gives an automorphism of $H^*(\mathit{hom}_{[\scrA,\scrA]}(\scrA,\scrP^{-1}))$, which is a form of $K_1$. Suppose that $\scrA^\vee$ is an invertible bimodule. Then, by going around the diagram the other way, we get another description of that automorphism, which now involves the interaction of $\scrP$ and $\scrA^\vee$.

\begin{example}
Suppose that $\scrA$ is weakly Calabi-Yau of dimension $n$, by which we mean that it comes with a quasi-isomorphism 
\begin{equation} \label{eq:weak-cy-structure}
\scrA^\vee[-n] \htp \scrA.
\end{equation}
By inserting that quasi-isomorphism on both sides of \eqref{eq:central-tensor}, one gets a distinguished automorphism of $\scrP^{-1}$ (and correspondingly of $\scrP$), which describes the failure of the tensor product with that bimodule to be compatible with the Calabi-Yau structure. It then follows from \eqref{eq:top-diagram} that $K_1$ is the composition with that automorphism. This agrees with Example \ref{th:exterior-algebra}.
\end{example}

%
\begin{lemma} \label{th:z-action}
Suppose that $\scrA$ is proper. Suppose also that $\scrA^\vee$ is invertible (which holds if $\scrA$ is smooth). Then, the generator of the $\bZ/(i+1)$-action on $H^*(\mathit{hom}_{[\scrA,\scrA]}((\scrA^\vee)^{\otimes_{\scrA} i},\scrA))$ equals the self-conjugation map $K_i$, for the bimodule $\scrA^\vee$.
\end{lemma}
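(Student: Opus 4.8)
The plan is to transport the whole diagram \eqref{eq:k-diagram}, specialised to $\scrP=\scrA^\vee$, to the setting of Hochschild chains, where the $\bZ/(i+1)$-action becomes the literal cyclic permutation of tensor factors, and then to check that the generator of that permutation is an admissible choice for the dashed arrow. Since $\scrA$ is proper, \eqref{eq:first-hochschild} --- which follows from \eqref{eq:general-hochschild} and the quasi-isomorphism $\scrA\htp(\scrA^\vee)^\vee$ --- gives a canonical quasi-isomorphism
\begin{equation*}
\hom_{[\scrA,\scrA]}\big((\scrA^\vee)^{\otimes_{\scrA} i},\scrA\big)\;\htp\;\mathit{CC}_*\big(\scrA,(\scrA^\vee)^{\otimes_{\scrA} i+1}\big)^\vee ,
\end{equation*}
and, by the construction recalled at the end of Section~\ref{subsec:hochschild-homology}, the $\bZ/(i+1)$-action in the statement is exactly the dual of the cyclic rotation $\tau$ of the $i+1$ tensor factors of $(\scrA^\vee)^{\otimes_{\scrA} i+1}$ that comes from \eqref{eq:cyclic-hh}.

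Next I would unwind the defining property of the dashed arrow: it is characterised, up to homotopy, by the requirement that the left-hand vertical arrow in \eqref{eq:k-diagram} followed by the lower-left quasi-isomorphism agree, up to homotopy, with the dashed arrow followed by the right-hand vertical arrow followed by the lower-right quasi-isomorphism. Both of these composites are quasi-isomorphisms from $\hom_{[\scrA,\scrA]}((\scrA^\vee)^{\otimes_{\scrA} i},\scrA)$ to $\hom_{[\scrA,\scrA]}((\scrA^\vee)^{\otimes_{\scrA} i+1},\scrA^\vee)$, and I would identify the target once more with $\mathit{CC}_*(\scrA,(\scrA^\vee)^{\otimes_{\scrA} i+1})^\vee$ using \eqref{eq:general-hochschild} and the neutral quasi-isomorphisms \eqref{eq:neutral}. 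It then remains to compute the two composites in these terms. The left one sends $\phi$ to $\mathit{id}_{\scrA^\vee}\otimes_{\scrA}\phi$ and then contracts via $\scrA^\vee\otimes_{\scrA}\scrA\htp\scrA^\vee$; this adjoins an extra copy of $\scrA^\vee$ on the very side where the Serre factor of \eqref{eq:first-hochschild} lives and then removes it again, so by the compatibility of \eqref{eq:general-hochschild} with \eqref{eq:neutral} --- the content of \eqref{eq:quasi-shift} --- it is homotopic to the identity of $\mathit{CC}_*(\scrA,(\scrA^\vee)^{\otimes_{\scrA} i+1})^\vee$. The right composite sends $\phi$ to $\phi\otimes_{\scrA}\mathit{id}_{\scrA^\vee}$ and then contracts via $\scrA\otimes_{\scrA}\scrA^\vee\htp\scrA^\vee$; now the new copy of $\scrA^\vee$ has been adjoined on the opposite side, and reconciling the two resulting descriptions of the target costs precisely one instance of cyclic invariance \eqref{eq:cyclic-hh}, i.e.\ one step of $\tau$. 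Hence the right composite is homotopic to $\tau^\vee$ (up to replacing $\tau$ by $\tau^{-1}$, which is immaterial as both generate $\bZ/(i+1)$).

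Feeding these two computations back into the defining property shows that the dashed arrow is homotopic to the inverse of $\tau^\vee$, a generator of the $\bZ/(i+1)$-action fixed above. Read the other way round: that generator makes \eqref{eq:k-diagram} homotopy commute, so it is an admissible dashed arrow and therefore induces the self-conjugation automorphism $K_i$ of \eqref{eq:c-automorphism}, as claimed. In the special case $i=0$ the action is trivial, and one recovers the remark following \eqref{eq:c-automorphism} that the diagram then gives something homotopic to the identity; the case $i=1$ of Example~\ref{th:exterior-algebra} is a consistency check that $K_i$, and hence the $\bZ/(i+1)$-action, need not be trivial.

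The only genuinely laborious point is the bookkeeping in the two composites above. One must descend to the explicit formulae behind \eqref{eq:general-hochschild}, \eqref{eq:neutral} and \eqref{eq:cyclic-hh}, keep track of the downward shifts and of the signs dictated by the conventions of \cite{seidel04,seidel08}, and in particular make sure that the right composite really yields a \emph{single} cyclic rotation rather than, say, a double rotation or a rotation composed with a sign twist of the type \eqref{eq:tilde-p}. This is the same kind of verification that is deliberately omitted for the companion diagram \eqref{eq:top-diagram}; I would either carry it out in an appendix or deduce it from the analogous compatibilities already present in \cite[Section~2]{seidel08}.
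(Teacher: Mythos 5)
Your proposal is correct in outline, and it takes a genuinely different expository route from the paper. The paper gives no $A_\infty$-level argument at all: it announces that it will ``only explain the counterpart of this in classical algebra'', and then checks, for a finite-dimensional associative algebra $A$, that a bimodule map $(A^\vee)^{\otimes_A i}\to A$ is encoded by a tensor \eqref{eq:tensor-element} subject to the centrality relations \eqref{eq:alpha-eq}, and that tensoring with $\mathit{id}_{A^\vee}$ on the right versus on the left yields the two maps \eqref{eq:lambda-1-map}, \eqref{eq:lambda-2-map}, which differ by exactly one cyclic permutation of that tensor. Your argument is the structural, chain-level version of the same observation: you transport both rows of \eqref{eq:k-diagram} to $\mathit{CC}_*(\scrA,(\scrA^\vee)^{\otimes_{\scrA} i+1})^\vee$ via \eqref{eq:first-hochschild} and \eqref{eq:general-hochschild}, and note that adjoining the extra Serre factor on the side where \eqref{eq:first-hochschild} already places it gives (up to homotopy) the identity, while adjoining it on the other side costs one application of \eqref{eq:cyclic-hh}; the dashed arrow is then a generator of the cyclic action, and the $\tau$ versus $\tau^{-1}$ ambiguity is indeed harmless. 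What your route buys is an argument that actually lives where the lemma is stated; what it costs is the sign and shift bookkeeping you defer at the end, which is the only step with real content and which the paper also omits. Do not treat that step as optional: the danger you correctly identify --- that the right-hand composite could come out as a rotation composed with a twist of type \eqref{eq:tilde-p} --- is precisely what the paper's explicit classical computation rules out, so that computation is worth reproducing as the degree-zero anchor of your verification.
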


We will only explain the counterpart of this in classical algebra. Given a finite-dimensional algebra $A$, a bimodule map 
\begin{equation} \label{eq:classical-bimodule}
(A^\vee)^{\otimes_A i} \longrightarrow A
\end{equation}
is given by an element
\begin{equation} \label{eq:tensor-element}
\sum_j a^j_{i+1} \otimes \cdots \otimes a^j_1 \in A^{\otimes_{\bK} i+1}
\end{equation}
satisfying the equation
\begin{equation} \label{eq:alpha-eq}
\sum_j a^j_{i+1}a \otimes a^j_i \otimes \cdots \otimes a^j_1 = \sum_j
a^j_{i+1} \otimes aa^j_i \otimes \cdots \otimes a^j_1
\end{equation}
for $a \in A$, as well as all its cyclic permutations. The bimodule map associated to \eqref{eq:tensor-element} is
\begin{equation} \label{eq:cyclic-relations}
\alpha_1 \otimes \cdots \otimes \alpha_i \longmapsto 
\sum_j \alpha_1(a^j_1) \cdots \alpha_i(a^j_i) a^j_{i+1} \in A.
\end{equation}
If one tensors this with the identity map of $A^\vee$ on the right or left, the outcome are bimodule maps $(A^\vee)^{\otimes_A i+1} \rightarrow A^\vee$ given by, respectively,
\begin{align}
& \alpha_1 \otimes \cdots \otimes \alpha_{i+1} \longmapsto \sum_j \alpha_1(a^j_1) \cdots \alpha_i(a^j_i) \alpha_{i+1}(\cdot\, a^j_{i+1}), \label{eq:lambda-1-map} \\
& \alpha_1 \otimes \cdots \otimes \alpha_{i+1} \longmapsto \sum_j \alpha_2(a_1^j) \cdots \alpha_{i+1}(a^j_i) \alpha_1(a^j_{i+1}\, \cdot). \label{eq:lambda-2-map}
\end{align}
Again using \eqref{eq:alpha-eq}, one can write the right hand side of \eqref{eq:lambda-2-map} as
\begin{equation} 
\sum_j \alpha_2(\cdot \, a^j_1) \alpha_3(a_2^j) \cdots \alpha_1(a_{i+1}^j),
\end{equation}
which is indeed obtained from \eqref{eq:lambda-1-map} by cyclically permuting the tensor factors in \eqref{eq:tensor-element}. 

\begin{example}
For some $i \geq 1$, consider a graded algebra $A = \bigoplus_{k \in \bZ/(i+1)} \bK e_k \oplus \bK x_k$. We draw it in quiver form as follows:
\begin{equation}
\xymatrix{
\stackrel{e_0}{\bullet} \ar[r]^-{x_1} &
\stackrel{e_1}{\bullet} \ar[r]^-{x_2} & \cdots \ar[r]^-{x_i} & \stackrel{e_i}{\bullet} 
\ar@/^1pc/[lll]^-{x_0}
}
\end{equation}
The vertices give rise to mutually orthogonal idempotents $e_k$, and the $x_k$ satisfy
\begin{equation}
\begin{aligned}
& e_{k+1}x_k e_k = x_k, \\
& x_{k+1}x_k = 0.
\end{aligned}
\end{equation}
The $e_k$ have degree zero, and we choose the degrees of the $x_k$ so that they add up to $i-1$. Following \eqref{eq:tensor-element}, each of the expressions
\begin{equation} \label{eq:cyclic-expressions}
x_k \otimes x_{k-1} \otimes \cdots \otimes x_{k-i}
\end{equation}
gives rise to a bimodule map \eqref{eq:classical-bimodule} of degree $i-1$. 

If one thinks of $A$ (in the straightforward way) as an $A_\infty$-algebra $\scrA$, the elements \eqref{eq:cyclic-expressions} induce $A_\infty$-bimodule maps $(\scrA^\vee)^{\otimes_{\scrA} i} \rightarrow \scrA[i-1]$, which are cyclically exchanged by $K^i$ (up to signs).
It is a familiar fact that $\scrA$ is ``virtually Calabi-Yau of dimension $(i-1)/(i+1)$'', meaning that 
\begin{equation} \label{eq:virtual-cy}
(\scrA^\vee)^{\otimes_{\scrA} i+1} \htp \scrA[i-1]. 
\end{equation}
Hence,
\begin{equation} \label{eq:i-plus-1-morphism}
H^{i-1}(\hom_{[\scrA,\scrA]}((\scrA^\vee)^{\otimes_{\scrA} i},\scrA)) \iso
H^0(\hom_{[\scrA,\scrA]}(\scrA,\scrA^\vee)) \iso \mathit{HH}_0(\scrA,\scrA)^\vee.
\end{equation}
One easily computes that
\begin{equation} \label{eq:am-hochschild}
\mathit{HH}_*(\scrA,\scrA) = \begin{cases} \bK^{i+1} & \ast = 0, \\
\bK & \ast < 0, \\
0 & \ast > 0.
\end{cases}
\end{equation}
The elements of \eqref{eq:i-plus-1-morphism} which we have constructed are a dual basis of $\mathit{HH}_0(\scrA,\scrA)$.
\end{example}

\begin{example}
Start with the graded algebra from the previous example (with $i>1$), but now let $\scrA$ be an $A_\infty$-deformation of it, in which $\mu_{\scrA}^{i+1}(x_k,\dots,x_{k-i})$ is a nonzero multiple of $e_k$ (if this is the case for one $k$, it must be true for all $k$, since these operations are related to each other by the $A_\infty$-associativity equations; in fact, $\scrA$ is unique up to $A_\infty$-isomorphism). The counterpart of \eqref{eq:am-hochschild} is that
\begin{equation}
\mathit{HH}_*(\scrA,\scrA) = \begin{cases} \bK^i & \ast = 0, \\ 0 & \ast \neq 0.
\end{cases}
\end{equation}
This deformation still satisfies \eqref{eq:virtual-cy}, and hence \eqref{eq:i-plus-1-morphism}. The $\bZ/(i+1)$-action on \eqref{eq:i-plus-1-morphism} generated by $K_i$ is the action on $\bK^i$ obtained by taking the standard cyclic permutation representation, and dividing by the one-dimensional diagonal subspace.
\end{example}

\subsection{Noncommutative divisors\label{subsec:div}}
We now have all the ingredients needed to flesh out the outline previously given in Section \ref{subsec:nc-geometry}. 

\begin{definition}
A noncommutative divisor on $\scrA$, with underlying invertible bimodule $\scrP$, is an $A_\infty$-algebra structure $\mu_{\scrB}$ on the graded vector space \eqref{eq:b-space}, with the following properties:
\begin{itemize} \itemsep.5em
\item[(i)] $\scrA \subset \scrB$ is an $A_\infty$-subalgebra.
\item[(ii)] Consider $\scrB$ as an $\scrA$-bimodule (by restriction of the diagonal bimodule $\scrB$ to the subalgebra $\scrA$). Then, the induced $\scrA$-bimodule structure on $\scrB/\scrA = \scrP[1]$ agrees with the previously given one. 
\end{itemize} 
Two noncommutative divisors (with the same underlying bimodule) are isomorphic if there is an isomorphism of the associated $A_\infty$-algebras $\scrB$ which restricts to the identity map on $\scrA$, and induces the identity map on the $\scrA$-bimodule $\scrB/\scrA$.
\end{definition}

\begin{example}
There is a trivial special case, denoted by $\scrB^{\mathit{triv}}$, which is the trivial extension algebra obtained from $\scrA$ and $\scrP$. In that case, the only nonzero components of $\mu_{\scrB}$ are those dictated by (i) and (ii) above.
\end{example}

%
What does this mean concretely? If we restrict $\mu_{\scrB}$ to $T(\scrA[1]) \subset T(\scrB[1])$, it takes values in $\scrA$ and agrees with $\mu_{\scrA}$. 
Next, if we restrict $\mu_{\scrB}$ to $T(\scrA[1])\otimes \scrP[2] \otimes T(\scrA[1]) \subset T(\scrB[1])$, it has the form
\begin{equation} \label{eq:reduces-to-p}
\begin{aligned}
\mu_{\scrB}^{r+s+1}(a'_s,\dots,a'_1,p,a_r,\dots,a_1) = & \; \mu_{\scrP}^{s;1;r}(a'_s,\dots,a'_1;p;a_r,\dots,a_1) \\ & + \theta^{s;1;r}(a'_s,\dots,a'_1;p;a_r,\dots,a_1)
\end{aligned}
\end{equation}
for some
\begin{equation}
\theta: T(\scrA[1]) \otimes \scrP \otimes T(\scrA[1]) \longrightarrow \scrA,
\end{equation}
which is the first new piece of information (not described by $\scrA$ and $\scrP$) in the noncommutative divisor. The $A_\infty$-associativity equation for $\mu_{\scrB}$ implies that $\theta$ is a bimodule map $\scrP \rightarrow \scrA$, meaning that it satisfies \eqref{eq:bimodule-d}. We call its cohomology class
\begin{equation} \label{eq:leading-order-part}
[\theta] \in H^0(\mathit{hom}_{[\scrA,\scrA]}(\scrP,\scrA))
\end{equation}
the section associated to the noncommutative divisor. It is an isomorphism invariant, which classifies the bimodule extension \eqref{eq:a-b-sequence}. For instance, $\scrB^{\mathit{triv}}$ has $\theta = 0$. Hence, \eqref{eq:leading-order-part} is an obstruction (not the only one, in general) to the triviality of a noncommutative divisor.

The next piece of the structure of a noncommutative divisor is obtained by restricting $\mu_{\scrB}$ to $T(\scrA[1]) \otimes \scrP[2] \otimes T(\scrA[1]) \otimes \scrP[2] \otimes T(\scrA[1])$, and then projecting the outcome to $\scrP[3]$. This can be viewed as an element of $\hom_{[\scrA,\scrA]}(\scrP \otimes_{\scrA} \scrP, \scrP)$ of degree $-1$. It is not a cocycle; instead, as shown in \cite[Lemma 3.2]{seidel08}, it provides a homotopy (in the dg category of bimodules) between the two sides of the diagram
\begin{equation} \label{eq:h-commutative}
\xymatrix{
& \scrP \otimes_{\scrA} \scrP
\ar[dl]_-{\mathit{id}_{\scrP} \otimes_{\scrA} \theta} \ar[dr]^-{\theta \otimes_{\scrA} \mathit{id}_{\scrP}}
\\
\scrP \otimes_{\scrA} \scrA
\ar[dr]_{\htp} &&
\scrA \otimes_{\scrA} \scrP
\ar[dl]^{\htp} \\
& \scrP.
}
\end{equation}
In terms of \eqref{eq:c-automorphism}, this means that
\begin{equation} \label{eq:k-fixed}
K_1([\theta]) = [\theta].
\end{equation}

One can put the previous discussion on a more systematic footing, as follows. Consider the Hochschild cochain complex of $\scrB^{\mathit{triv}}$. We recall (from Section \ref{subsec:hochschild-coho}) that this is 
\begin{equation} \label{eq:cc}
\mathit{CC}^*(\scrB^{\mathit{triv}}, \scrB^{\mathit{triv}})[1] = \mathit{Hom}\big(T(\bar\scrB^{\mathit{triv}}[1]),\scrB^{\mathit{triv}}[1]\big),
\end{equation}
with a differential given by $\mu_{\scrB^{\mathit{triv}}}$, which means by $\mu_{\scrA}$ and $\mu_{\scrP}$. Let's equip $\scrB^{\mathit{triv}}$ with an additional grading (called {\em weight grading} to distinguish it from the usual one), in which $\scrA$ has weight $0$ and $\scrP[1]$ has weight $-1$. For any $i > 0$, let $F^i\frakg$ be the subspace of \eqref{eq:cc} consisting of maps which increase the weight by (exactly) $i$. Write $\frakg$ for the (bigraded) space which combines all the $F^i\frakg$. The Hochschild differential maps each $F^i \frakg$ to itself. Moreover, its cohomology forms a long exact sequence
\begin{multline} \label{eq:g-les}
 \cdots \rightarrow H^{*-2i}\big(\hom_{[\scrA,\scrA]}(\scrP^{\otimes_{\scrA} i+1},\scrP)\big) \longrightarrow H^*(F^i\frak g) \\ \longrightarrow
H^{*-2i+1}\big(\hom_{[\scrA,\scrA]}(\scrP^{\otimes_{\scrA} i}, \scrA )\big) \longrightarrow
H^{*-2i+1}\big(\hom_{[\scrA,\scrA]}(\scrP^{\otimes_{\scrA} i+1}, \scrP)\big) \rightarrow \cdots
\end{multline}
where the last map is the difference between $\phi \mapsto \mathit{id}_{\scrP} \otimes_{\scrA} \phi$ and $\phi \mapsto \phi \otimes_{\scrA} \mathit{id}_{\scrP}$. Since either map is an isomorphism, one can identify $H^*(\hom_{[\scrA,\scrA]}(\scrP^{\otimes_{\scrA} i+1},\scrP)) \iso H^*(\hom_{[\scrA,\scrA]}(\scrP^{\otimes_{\scrA} i},\scrA))$, and then \eqref{eq:g-les} becomes
\begin{multline} \label{eq:g-les-2}
 \cdots \rightarrow H^{*-2i}\big(\hom_{[\scrA,\scrA]}(\scrP^{\otimes_{\scrA} i},\scrA)\big) \longrightarrow H^*(F^i\frak g) \\ \longrightarrow
H^{*-2i+1}\big(\hom_{[\scrA,\scrA]}(\scrP^{\otimes_{\scrA} i}, \scrA )\big) \xrightarrow{K_i-\mathit{id}}
H^{*-2i+1}\big(\hom_{[\scrA,\scrA]}(\scrP^{\otimes_{\scrA} i}, \scrA)\big) \rightarrow \cdots
\end{multline}
where $K_i$ is \eqref{eq:c-automorphism}. From this point of view, \eqref{eq:k-fixed} holds because the structure of a noncommutative divisor specifies a lift of $[\theta]$ to $H^1(F^1\frakg)$.

\begin{remark} \label{th:cyclic}
In our applications, $\scrA$ is proper and smooth (hence $\scrA^\vee$ is invertible), and 
\begin{equation} \label{eq:p-dual}
\scrP = \scrA^\vee[-n].
\end{equation}
By Lemma \ref{th:z-action}, the kernel of the boundary map in \eqref{eq:g-les} is $H^*(\mathit{hom}_{[\scrA,\scrA]}((\scrA^\vee)^{\otimes_{\scrA} i}, \scrA))^{\bZ/(i+1)}$. This hints at the existence of a more refined notion, considered in \cite{seidel12b, kontsevich-vlassopoulos13}. Let's suppose for simplicity that $\scrA$ is finite-dimensional. More importantly, we assume that our coefficient field $\bK$ has characteristic $0$. Then, a {\em noncommutative anticanonical divisor} is one with $\scrP$ as in \eqref{eq:p-dual}, and such that the $A_\infty$-structure on $\mu_{\scrB}$ is cyclic with respect to the obvious inner product on $\scrB = \scrA \oplus \scrA^\vee[1-n]$. The counterparts $F^i\frakg^{\mathit{cyc}}$ of the spaces $F^i\frakg$ satisfy
\begin{equation}
H^*(F^i\frakg^{\mathit{cyc}}) \iso H^{*+(n-2)i + 1}\big(\mathit{hom}_{[\scrA,\scrA]}((\scrA^\vee)^{\otimes_{\scrA} i}, \scrA)^{\bZ/(i+1)}\big).
\end{equation}
\end{remark}

We keep the assumption that the coefficient field $\bK$ has characteristic $0$. The complex \eqref{eq:cc} carries the structure of dg Lie algebra, and since the bracket respects weight gradings, we get the structure of a (bigraded) dg Lie algebra on $\frakg$. A noncommutative divisor is given by a solution of the Maurer-Cartan equation \cite{goldman-millson88,manetti04}
\begin{equation} \label{eq:mc-deformation-theory}
\beta \in \frakg^1, \;\; d_{\frakg} \beta + \half [\beta,\beta] = 0
\end{equation}
(the trivial solution $\beta = 0$ corresponds to $\scrB^{\mathit{triv}}$). One can decompose \eqref{eq:mc-deformation-theory} into a series of equations for the weight components $(\beta^1,\beta^2,\dots)$, which are 
\begin{equation} \label{eq:weighted-maurer-cartan}
d_{\frakg} \beta^i + \half \sum_j [\beta^j,\beta^{i-j}] = 0.
\end{equation}
The notion of isomorphism for noncommutative divisors reduces to the usual gauge equivalence for solutions of \eqref{eq:mc-deformation-theory}. Standard obstruction theory yields:

\begin{lemma} \label{th:obstructions}
Suppose that 
\begin{equation} \label{eq:no-negative-degree}
H^*\big(\mathit{hom}_{[\scrA,\scrA]}(\scrP^{\otimes_{\scrA} i}, \scrA)\big) = 0 \quad \text{for all $i\geq 1$ and $\ast<0$.}
\end{equation}
Then, the structure of a noncommutative divisor is determined (up to isomorphism) by \eqref{eq:leading-order-part}, which moreover must satisfy \eqref{eq:k-fixed}. \qed
\end{lemma}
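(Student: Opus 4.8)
The plan is to treat a noncommutative divisor as a Maurer--Cartan element of the dg Lie algebra $\frakg$ (bigraded by weight, complete for the weight filtration; recall $\bK$ has characteristic $0$), with isomorphism of divisors being gauge equivalence, and to run the standard obstruction theory \cite{goldman-millson88,manetti04}, feeding in the long exact sequence \eqref{eq:g-les-2}.

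First I would pin down the relevant cohomology of the weight pieces. Write $A_i^{*} = H^{*}(\hom_{[\scrA,\scrA]}(\scrP^{\otimes_{\scrA} i},\scrA))$, so that \eqref{eq:no-negative-degree} reads $A_i^{*} = 0$ for $* < 0$. Near cohomological degrees $1$ and $2$, the sequence \eqref{eq:g-les-2} has the shape
\[
A_i^{1-2i} \to H^1(F^i\frakg) \to A_i^{2-2i}, \qquad A_i^{2-2i} \to H^2(F^i\frakg) \to A_i^{3-2i}.
\]
For $i \geq 2$ every flanking group sits in a degree $\leq -1$, hence vanishes, so $H^1(F^i\frakg) = H^2(F^i\frakg) = 0$. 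For $i = 1$ the term $A_1^{-1}$ vanishes, and \eqref{eq:g-les-2} degenerates to $0 \to H^1(F^1\frakg) \to A_1^{0} \xrightarrow{K_1 - \mathit{id}} A_1^{0}$; thus $H^1(F^1\frakg)$ is identified with the $K_1$-fixed subspace of $A_1^{0} = H^0(\hom_{[\scrA,\scrA]}(\scrP,\scrA))$, and under this identification the class of a cocycle $\rho^1 \in F^1\frakg^1$ goes to the section $[\theta]$ of \eqref{eq:leading-order-part} (this is what \eqref{eq:reduces-to-p} records). In particular $[\theta]$ is automatically $K_1$-fixed, i.e.\ \eqref{eq:k-fixed} holds.

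Next I would run the obstruction-theoretic induction on the weight components of a Maurer--Cartan element $\rho = \sum_{i\geq 1}\rho^i$, i.e.\ on the system \eqref{eq:weighted-maurer-cartan}. At weight $1$, $\rho^1$ is just a $d_\frakg$-cocycle in $F^1\frakg^1$. Assuming $\rho^1,\dots,\rho^{i-1}$ chosen for some $i \geq 2$, the right-hand side $-\half\sum_{0<j<i}[\rho^j,\rho^{i-j}]$ of the weight-$i$ equation is a $d_\frakg$-cocycle in $F^i\frakg^2$ (the usual consequence of the Jacobi identity together with the lower-weight equations), and $H^2(F^i\frakg) = 0$ makes it a coboundary, so $\rho^i$ exists: there is no obstruction past weight $1$, and in fact every $K_1$-fixed class is realized by some noncommutative divisor. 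For uniqueness, given two solutions $\rho, \tilde\rho$ with the same section, we have $[\rho^1] = [\tilde\rho^1]$ in $H^1(F^1\frakg)$, so a gauge transformation by an element of $F^1\frakg^0$ arranges $\rho^1 = \tilde\rho^1$ (bracket terms raise the weight, so the weight-$1$ part of the gauge action changes $\rho^1$ only by the coboundary $d_\frakg g^1$); then inductively, if $\rho^j = \tilde\rho^j$ for $j < i$, the difference $\rho^i - \tilde\rho^i$ is a cocycle in $F^i\frakg^1$, hence (as $H^1(F^i\frakg) = 0$) equals $d_\frakg g^i$ for some $g^i \in F^i\frakg^0$, and applying $\exp(g^i)$ leaves the weights $< i$ untouched, matches weight $i$, and disturbs only higher weights. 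Assembling these corrections yields a gauge equivalence $\rho \simeq \tilde\rho$, so the divisor is determined up to isomorphism by $[\theta]$.

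The step I expect to need the most care is the last one: one must check that the successive corrections $g^i$ assemble into a bona fide element of $\frakg^0$ and that the composed gauge transformation is well defined in the completion. This is automatic, since each $g^i$ strictly raises the weight and the weight filtration is complete and Hausdorff, so the argument reduces to the textbook one for pro-nilpotent (complete filtered) dg Lie algebras; everything else is purely the bookkeeping of the exact sequences \eqref{eq:g-les} and \eqref{eq:g-les-2}, which is already in place.
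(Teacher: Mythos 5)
Your proposal is correct and is exactly the argument the paper intends: the paper dispatches this lemma with the single phrase ``standard obstruction theory,'' and what you have written is the standard Maurer--Cartan obstruction/uniqueness induction on weight in the complete filtered dg Lie algebra $\frakg$, with the vanishing of $H^1(F^i\frakg)$ and $H^2(F^i\frakg)$ for $i\geq 2$ and the identification $H^1(F^1\frakg)\iso\ker(K_1-\mathit{id})$ read off from \eqref{eq:g-les-2} under hypothesis \eqref{eq:no-negative-degree}. Nothing to add.
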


Here is another equivalent formulation, which will be convenient when considering generalizations. Take the one-dimensional vector space $\bK[-1]$, placed in degree $1$ and weight $1$, and considered as a dg Lie algebra with trivial differential and bracket. A solution of \eqref{eq:weighted-maurer-cartan} is the same as an $L_\infty$-homomorphism 
\begin{equation} \label{eq:k-map}
\bK[-1] \longrightarrow \frakg
\end{equation}
which preserves the weight grading (more precisely, the image of $1^{\otimes i}$ under the $i$-linear part of \eqref{eq:k-map} should be divided by $i!$ to get $\beta^i$; this is the usual action of $L_\infty$-homomorphisms on Maurer-Cartan elements \cite[Section 4.3]{kontsevich97}).


Following up on \eqref{eq:list-of-categories-1}, we have:
\begin{equation} \label{eq:curved-d-algebra}
\left\{\!\!\!\!\!\!
\parbox{35em}{
\begin{itemize}\itemsep0.5em
\item[(i)] Of course, $\scrB$ itself is an $A_\infty$-algebra.

\item[(ii)]
Take $\scrB[[u]]$, where the formal variable $u$ has degree $2$. Let 
\[
\scrD = \scrA \oplus \prod_{i \geq 1} u^i\scrB \subset = 
\scrA[[u]] \oplus u(\scrP[1])[[u]] \subset \scrB[[u]]
\]
be the space of $\scrB$-valued formal series in $u$ whose constant term lies in $\scrA$. Following \cite{seidel06}, we make $\scrD$ into a curved $A_\infty$-algebra by extending $\mu_{\scrB}$ $u$-linearly, and then adding a curvature term $\mu^0_{\scrD} = u\, e_{\scrA}$.
Then, $\scrA$ is a (right) $A_\infty$-module over $\scrD$, by pullback under the projection $\scrD \rightarrow \scrA$. One can then define $\scrA \setminus \scrB$ as the endomorphism ring of that module (it is important that we consider $\scrD$ to be defined over $\bK$, but equipped with the $u$-adic topology). Explicitly, the underlying vector space \cite[Equation (4.7)]{seidel08} is
\[
\begin{aligned}
\scrA \setminus \scrB = & \bigoplus_{\substack{k \geq 0 \\ i_1, \dots, i_k > 0}}
\mathit{Hom}\big(\scrA \otimes T(\bar{\scrA}[1]) \otimes u^{i_k}\scrB[1] \otimes T(\bar{\scrA[1]})
\otimes \cdots 
\\[-2em] & \qquad \qquad\qquad \qquad \qquad \qquad \cdots
\otimes u^{i_1}\scrB[1] \otimes T(\bar{\scrA}[1]), \scrA\big).
\end{aligned}
\]
The identification of this construction with a suitable categorical localization (or quotient) is provided by \cite[Theorem 4.1]{seidel08}.
\end{itemize}
}
\right.
\end{equation}

\subsection{Noncommutative pencils\label{subsec:pencil}}
We continue in the same framework as before, including the assumption that $\bK$ has characteristic $0$. Fix a two-dimensional vector space $V$, which will be given weight grading $-1$ (and ordinary grading $0$). Denote the symmetric algebra by $\Sym^*(V)$, and the dual vector space by $W = V^\vee$.

\begin{definition}
A noncommutative pencil on $\scrA$, with underlying bimodule $\scrP$, is a map
\begin{equation} \label{eq:wp}
\wp: T(\scrB[1]) \longrightarrow \scrB[2] \otimes \mathit{Sym}^*(V)
\end{equation}
which preserves both the ordinary grading and the one by weight, and with the following property: for any $w \in W$, we can use the associated evaluation map $\mathit{Sym}^*(V) \longrightarrow \bK$ to specialize \eqref{eq:wp} to $\mu_{\scrB,w}: T(\scrB[1]) \longrightarrow \scrB[2]$; and all these should be noncommutative divisors.
\end{definition}

As before, this definition lends itself to piecewise analysis. The first term not determined by $\scrA$ and $\scrP$ has the form (after dualizing)
\begin{equation} \label{eq:w1}
W \otimes T(\scrA[1]) \otimes \scrP \otimes T(\scrA[1]) \longrightarrow \scrA.
\end{equation}
This is a family of bimodule maps $\scrP \rightarrow \scrA$, depending linearly on $W$. To make things even more concrete, suppose that we identify $V$ and $W$ with $\bK^2$ by choosing dual bases. Then,  \eqref{eq:w1} consists of two bimodule maps $\rho$ and $\sigma$, corresponding to $w = (1,0)$ resp.\ $w = (0,1)$, which we have called the sections associated to the noncommutative pencil. 


Let's return to the bigraded dg Lie algebra $\frakg$. Take $W[-1]$, as a vector space placed in degree $1$ and weight $1$, and equip it with the trivial dg Lie structure. A noncommutative pencil is the same as an $L_\infty$-homomorphism, preserving the weight grading,
\begin{equation}
W[-1] \longrightarrow \frakg.
\end{equation}
There is a standard obstruction theory for such homomophisms, which in particular yields an analogue of Lemma \ref{th:obstructions}:

\begin{lemma} \label{th:obstructions-2}
If \eqref{eq:no-negative-degree} holds, the structure of a noncommutative pencil is determined up to isomorphism by the cohomology classes $[\rho],\, [\sigma]$ (each of which must be fixed by $K_1$). \qed
\end{lemma}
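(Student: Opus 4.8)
The plan is to mimic exactly the obstruction-theoretic argument sketched for Lemma~\ref{th:obstructions}, but run it for $L_\infty$-homomorphisms out of the two-dimensional $W[-1]$ rather than the one-dimensional $\bK[-1]$. Recall that a noncommutative pencil is by definition the same datum as an $L_\infty$-homomorphism $f: W[-1] \to \frakg$ preserving the weight grading. Since the source $W[-1]$ is concentrated in degree $1$ and weight $1$, the $k$-linear Taylor component $f_k$ of such a homomorphism, evaluated on $w_1 \otimes \cdots \otimes w_k$ with $w_j \in W$, lands in $\frakg^1 \cap F^k\frakg$ (weight $k$, degree $1$); because $\mathit{Sym}^k(W)$ is the relevant symmetric power, $f$ is equivalent to a sequence of symmetric multilinear maps $\rho^i \in \mathit{Hom}(\mathit{Sym}^i(W), \frakg^1 \cap F^i\frakg)$ satisfying the weighted Maurer--Cartan equations $d_\frakg \rho^i + \tfrac12\sum_j [\rho^j, \rho^{i-j}] = 0$ with the brackets organized by the combinatorics of $\mathit{Sym}^*(W)$ (this is the obvious two-variable generalization of \eqref{eq:weighted-maurer-cartan}, specializing to the one-variable version upon evaluating at each $w \in W$).

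The inductive construction then proceeds weight by weight. For $i = 1$: the leading data is precisely a pair $[\rho],[\sigma] \in H^1(F^1\frakg)$, equivalently (by the long exact sequence \eqref{eq:g-les-2} with $i=1$, together with the vanishing \eqref{eq:no-negative-degree} which kills $H^{-1}(\hom_{[\scrA,\scrA]}(\scrP,\scrA))$ and hence forces $H^1(F^1\frakg) \hookrightarrow H^0(\hom_{[\scrA,\scrA]}(\scrP,\scrA))$) a pair of cohomology classes of bimodule maps $\scrP \to \scrA$ each of which is fixed by $K_1$. So the claimed classifying data is exactly the possible choices of $\rho^1$ up to the relevant equivalence. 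For the inductive step, suppose $\rho^1, \dots, \rho^{i-1}$ have been constructed; the obstruction to solving for $\rho^i$ is a class in $H^2(F^i\frakg)$, and the ambiguity in the choice of $\rho^i$ (once an obstruction vanishes) is a torsor over $H^1(F^i\frakg)$. Condition \eqref{eq:no-negative-degree}, fed through \eqref{eq:g-les-2}, shows $H^2(F^i\frakg) = 0$ and $H^1(F^i\frakg) = 0$ for $i \geq 2$: indeed the sequence sandwiches $H^k(F^i\frakg)$ between $H^{k-2i}(\hom_{[\scrA,\scrA]}(\scrP^{\otimes_\scrA i},\scrA))$ and (the kernel of $K_i - \mathit{id}$ on) $H^{k-2i+1}(\hom_{[\scrA,\scrA]}(\scrP^{\otimes_\scrA i},\scrA))$, and for $i \geq 2$ both of $k-2i$ and $k-2i+1$ are negative when $k \in \{1,2\}$. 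Hence for $i\geq 2$ the class $\rho^i$ exists and is unique. This gives existence and uniqueness of the $L_\infty$-homomorphism with prescribed linear part, i.e.\ the noncommutative pencil is determined up to isomorphism by $[\rho],[\sigma]$.

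Finally, the ``must be fixed by $K_1$'' clause: this is the direct two-variable analogue of \eqref{eq:k-fixed}. Evaluating the pencil at any $w \in W$ gives a noncommutative divisor whose section is $w$ applied to the pair $([\rho],[\sigma])$, and for that divisor \eqref{eq:k-fixed} forces the section to be $K_1$-fixed; running $w$ over a basis of $W$ forces both $[\rho]$ and $[\sigma]$ to be $K_1$-fixed. Equivalently, and more in the spirit of \eqref{eq:g-les-2}, the datum $\rho^1$ is by construction a lift of $([\rho],[\sigma])$ through the map $H^1(F^1\frakg)^{\oplus 2} \to H^0(\hom_{[\scrA,\scrA]}(\scrP,\scrA))^{\oplus 2}$, and the image of that map consists exactly of the $K_1$-fixed classes. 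Conversely, given any $K_1$-fixed pair, the lift exists (by the same exact sequence and \eqref{eq:no-negative-degree}), launching the induction; so every $K_1$-fixed pair is realized.

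The step I expect to be the genuine content rather than bookkeeping is setting up the correct $L_\infty$-obstruction theory for homomorphisms from a \emph{non}-abelian-free source of dimension $2$ and matching it cleanly with the weight decomposition of $\frakg$ --- that is, verifying that the obstruction at weight $i$ really lives in $H^2(F^i\frakg)$ and the ambiguity in $H^1(F^i\frakg)$, with the symmetric-algebra combinatorics of $\mathit{Sym}^*(W)$ bookkept correctly (including the action of $L_\infty$-homomorphisms on these multilinear data, as in \cite[Section 4.3]{kontsevich97}). Once that framework is in place, the vanishing statements are an immediate consequence of \eqref{eq:no-negative-degree} via \eqref{eq:g-les-2}, exactly as in Lemma~\ref{th:obstructions}, and the proof is complete. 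We therefore omit the routine details. \qed
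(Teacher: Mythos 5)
Your proposal is correct and coincides with the paper's intended argument: the paper justifies Lemma \ref{th:obstructions-2} only by appealing to the standard obstruction theory for weight-preserving $L_\infty$-homomorphisms $W[-1] \rightarrow \frakg$, and your weight-by-weight induction --- obstructions in $H^2(F^i\frakg)$ and ambiguities in $H^1(F^i\frakg)$, both killed for $i \geq 2$ by \eqref{eq:no-negative-degree} fed through \eqref{eq:g-les-2}, with the $i=1$ step identifying the image of $H^1(F^1\frakg)$ in $H^0(\hom_{[\scrA,\scrA]}(\scrP,\scrA))$ with the $K_1$-fixed classes --- is precisely that argument made explicit. The degree arithmetic checks out, so nothing further is needed.
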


By definition, a noncommutative pencil yields a noncommutative divisor for each $w \in W$. For $w = 0$, this reduces to the trivial extension algebra $\scrB^{\mathit{triv}}$. Assuming as before that $W = \bK^2$, we write $\scrB_\infty$ for the noncommutative divisor associated to $w = (1,0)$, and $\scrB_z$ for that associated to $w = (z,1)$. The associated sections are those given in \eqref{eq:list-of-categories-2}. This essentially exhausts all the possibilities, since the noncommutative divisors associated to $w$ and $\lambda w$, for $\lambda \in \bK^\times$, are related by an automorphism of $\scrB$ (which acts trivially on $\scrA$ and rescales $\scrP$ by $\lambda$). 

Let's spell out the other structures mentioned in \eqref{eq:list-of-categories-2}:
\begin{equation}
\left\{\!\!\!\!\!\!
\parbox{35em}{
\begin{itemize} \itemsep.5em
\item[(i)] Set $w = (1,q)$, where $q$ is a formal variable. The corresponding specialization of \eqref{eq:wp} is a map $T(\scrB[1]) \longrightarrow (\scrB[2])[[q]]$. This defines a $\bC[[q]]$-linear $A_\infty$-structure on $\scrB[[q]]$, which is a deformation of $\scrB_\infty$. This is what we denoted by $\hat\scrB_{\infty}$.

\item[(ii)] Using the basis $r = (1,0)$ and $s = (0,1)$ of $V$, write $\wp^d = \sum_{i,j} \wp^d_{i,j} \otimes r^i s^j$. Because $\wp^d_{i,j}$ increases weights by $i+j$, it is nontrivial only if $d \geq i+j$. Moreover, if we extend it $u$-linearly to a map $\scrD^{\otimes d} \rightarrow \scrD$, then it is divisible by $u^{i+j}$. Hence, the expression
\[
\sum_{i,j} \wp^d_{i,j} u^{-j} h^j
\]
makes sense, as a family of maps $\scrD^{\otimes d} \rightarrow \scrD$ depending on an auxiliary formal variable $h$ of degree $2$. We add to it a curvature term as in \eqref{eq:curved-d-algebra}, and get a formal deformation of our previous curved $A_\infty$-structure on $\scrD$, with parameter $h$.
This induces a deformation of $\scrA \setminus \scrB_\infty$ (with a curvature term, which vanishes if we set $h = 0$). We write it as $\scrL\scrG$.
\end{itemize}
}\right.
\end{equation}

\begin{remark} \label{th:graph}
(i) A pencil in ordinary algebraic geometry has a (compactified) graph, see e.g.\ \cite[p.~154]{tibar07}. Here is one possible approach towards defining its noncommutative analogue: over the projective line $\mathbb{P}^1 = \mathbb{P}(W)$, consider the graded quasi-coherent sheaf 
\begin{equation}
{\mathcal A} = \scrA \otimes {\mathcal O}_{\mathbb P^1} \oplus \scrP[1] \otimes {\mathcal O}_{\mathbb P^1}(-1).
\end{equation}
Then, \eqref{eq:wp} makes this into a sheaf of $A_\infty$-algebras. Using e.g.\ Cech resolutions, one can associate to this a single $A_\infty$-algebra, whose underlying cohomology is $H^*(\mathbb{P}^1, {\mathcal A} \otimes \mathit{End}({\mathcal E}))$ for some generator ${\mathcal E}$ of the derived category of $\mathbb{P}^1$, for instance ${\mathcal E} = {\mathcal O}_{\mathbb P^1} \oplus {\mathcal O}_{\mathbb P^1}(-1)$.
%

(ii)
A pencil in classical algebraic geometry also has a base locus (or axis). A first idea for its noncommutative analogue is
\begin{equation} \label{eq:katz1}
\scrB_0 \otimes_{\scrA} \scrB_\infty, 
\end{equation}
but this is not a priori an $A_\infty$-algebra (only a bimodule over $\scrA$, and as such, does not use the full structure of the noncommutative pencil). However, even at this level, the notion has geometric significance: we conjecture that for, the noncommutative pencils obtained from anticanonical Lefschetz pencils as in Conjecture \ref{th:conjecture}, \eqref{eq:katz1} is trivial, meaning quasi-isomorphic to zero (this property would distinguish them from Lefschetz fibrations with singular fibre at $\infty$).
%

In classical algebraic geometry, (i) and (ii) are related, since (under suitable smoothness assumptions) the graph is obtained by blowing up the base locus. Noncommutative counterparts of this relation would be of interest, because they arise when considering the relation between the Fukaya category of an ample hypersurface and the wrapped Fukaya category of its complement. Concretely, in the case where \eqref{eq:katz1} is trivial, the Fukaya category of the ample hypersurface should appear as a formal completion of the wrapped Fukaya category of the complement.
\end{remark}
%
%

\subsection{Discussion of the assumptions}
Throughout this section, we have required that all $A_\infty$-structures should be $\bZ$-graded. One can actually work with $\bZ/2$-gradings throughout, with the same results, except for Lemmas \ref{th:obstructions} and \ref{th:obstructions-2}.

\begin{remark}
This is not quite the end of this topic, as illustrated by the intermediate situation mentioned in Remark \ref{th:fractional-cy-2}. There, $\scrA$ and $\scrP$ are $\bZ$-graded, and so is $\wp$ provided that we take $V = \bK \oplus \bK[2m]$. The associated sections $\rho$ and $\sigma$ have degrees $0$ and $2m$, respectively. While $\scrB_\infty$ is still $\bZ$-graded, the other $\scrB_z$, $z \in \bK$, are only $\bZ/2$-graded. The original $\bZ$-grading leads to an isomorphism $\scrB_z \rightarrow \scrB_{\lambda^{2m}z}$, for any $\lambda \in \bK^\times$. Similarly, the formal deformation $\hat\scrB_\infty$ is $\bZ$-graded if we assign degree $-2m$ to the deformation variable $q$.
\end{remark}

The other assumption we have imposed, starting with \eqref{eq:mc-deformation-theory}, is that the coefficient field $\bK$ has characteristic $0$. This can be dropped as well, at the cost of making some formulations a little more complicated. Maurer-Cartan theory, which means the classification theory of solutions of \eqref{eq:mc-deformation-theory}, does not apply in positive characteristic. However, a simpler obstruction theory argument suffices to prove Lemma \ref{th:obstructions} (and its variant, Lemma \ref{th:obstructions-2}), and that does not require any assumption on $\bK$. The second case where we have used the assumption on $\bK$ is in the definition of noncommutative pencil, which was formulated by specializing two-variable polynomials, appearing as elements of $\mathit{Sym}(V)$, to one-dimensional linear subspaces. In positive characteristic, such specializations may fail to recover the original polynomial ($f(x,y) = x^py - xy^p$ vanishes on every line, for $\bK = \bF_p$). Hence, the definition of noncommutative pencil would have to be reformulated as a condition on \eqref{eq:wp} itself.

Finally, there is a variation on our general setup which will be important for applications. Namely, one can replace the ground field $\bK$ by the semisimple ring 
\begin{equation} \label{eq:semisimple}
R = \bK^m = \bK e_1 \oplus \cdots \oplus \bK e_m. 
\end{equation}
An $A_\infty$-algebra over $R$ consists of a graded $R$-bimodule $\scrA$, together with $R$-bimodule maps 
\begin{equation} \label{eq:r-bilinear}
\mu_{\scrA}^d: \scrA \otimes_R \scrA \otimes_R \cdots \otimes_R \scrA \longrightarrow \scrA[2\!-\!d].
\end{equation}
This is in fact the same as an $A_\infty$-category with objects labeled by $\{1,\dots,m\}$. The strict unit $e_{\scrA}$ must lie in the diagonal part of the $R$-bimodule $\scrA$, which means that it can be written as $e_{\scrA} = e_{\scrA,1} + \cdots + e_{\scrA,m}$ with $e_{\scrA,i} \in e_i \scrA e_i$. Similarly, an $\scrA$-bimodule consists of a graded $R$-bimodule $\scrP$, with structure maps that are linear over $R$ as in \eqref{eq:r-bilinear}. The same applies to the definition of morphisms in $[\scrA,\scrA]$ and of Hochschild homology. All definitions and results in this section then carry over without any other significant modifications.

\section{Hamiltonian Floer cohomology}
This section recalls the most familiar version of Floer cohomology \cite{floer88}, adapted to Liouville domains following \cite{viterbo97a}. The only non-standard aspect of the exposition is our extensive use of the BV (Batalin-Vilkovisky or loop rotation) operator \cite{seidel07, seidel-solomon10, bourgeois-oancea09b}. As a specific example, we will consider Liouville domains whose boundary is a contact circle bundle. Their Floer cohomology is very well understood in relation to Symplectic Field Theory, thanks to \cite{bourgeois-oancea09, diogo12} and ongoing work of Diogo-Lisi; but our limited needs can be met by a more elementary approach.

\subsection{Basic definitions}
We will work in the following setting, which benefits from strong exactness assumptions:

\begin{setup} \label{th:hamiltonian-setup}
(i) Let $M^{2n}$ be a Liouville domain. This means that it is a compact manifold with boundary, equipped with an exact symplectic structure $\omega_M = d\theta_M$, such that the Liouville vector field $Z_M$ dual to $\theta_M$ points strictly outwards along the boundary. The Liouville flow provides a canonical collar embedding $(-\infty,0] \times \partial M \hookrightarrow M$. The exponential of the time variable then gives a function $\rho_M$ (defined on the image of the embedding in $M$) such that
\begin{equation} \label{eq:h-function}
\left\{\begin{aligned}
& \rho_M|\partial M = 1, \\
& Z_M.\rho_M = \rho_M. 
\end{aligned}
\right.
\end{equation}
We denote by $R_M$ the Hamiltonian vector field of $\rho_M$. This is the natural extension of the Reeb vector field $R_{\partial M}$ associated to the contact one-form $\theta_M|\partial M$. 

(ii) During part of our argument, we will additionally assume that $M$ comes with a symplectic Calabi-Yau structure (a distinguished homotopy class of trivializations of its canonical bundle $K_M = \Lambda^n_{\bC}(TM)^\vee$, for some compatible almost complex structure).

(iii) We consider functions $H \in \smooth(M,\bR)$ with
\begin{equation} \label{eq:hamiltonian}
H = \epsilon \rho_M \quad \text{near $\partial M$}
\end{equation}
for some $\epsilon \in \bR$. Throughout, it is assumed that
\begin{equation} \label{eq:no-reeb}
\text{$\epsilon R_{\partial M}$ has no $1$-periodic orbits;}
\end{equation}
equivalently, $\epsilon \neq 0$ and there are no Reeb orbits on $\partial M$ whose period is $|\epsilon|/k$, for any positive integer $k$.

(iii) We use compatible almost complex structures $J$ such that
\begin{equation} \label{eq:j-convex}
\theta_M \circ J = d\rho_M \quad \text{near $\partial M$;}
\end{equation}
equivalently, $J Z_M = R_M$.
\end{setup}

\begin{example} \label{th:contact-circle-bundle}
(i) The most important case for us is when $\partial M$ is a {\em contact circle bundle}, by which we mean that the Reeb flow is a free $S^1$-action (for concreteness, we assume that this has period $1$). In that case, $M$ can be embedded into a closed symplectic manifold $M^{\mathit{cl}}$ by attaching a disc bundle over $\partial M/S^1$. The zero-section $\partial M/S^1 \hookrightarrow M^{\mathit{cl}}$ represents a positive multiple of the symplectic class $[\omega_{M^{\mathit{cl}}}] \in H^2(M^{\mathit{cl}};\bR)$.

(ii) In the same situation as before, suppose additionally that 
\begin{equation}
c_1(M^{\mathit{cl}}) = m\, [\partial M/S^1] \in H^2(M^{\mathit{cl}};\bZ), \quad \text{for some $m \in \bZ$.}
\end{equation}
For $m = 0$, this means that $K_{M^{\mathit{cl}}}$ is trivial; one can then choose a symplectic Calabi-Yau structure on $M^{\mathit{cl}}$, which then obviously restricts to one on $M$. For general $m$, one can still find a section of $K_{M^{\mathit{cl}}}$ which has an order $m$ ``pole'' along $\partial M/S^1$, and is nonzero elsewhere. By restricting that to $M$, one again obtains a symplectic Calabi-Yau structure.
\end{example}

In its most basic form, the Floer cohomology $\mathit{HF}^*(M,\epsilon)$ is a $\bZ/2$-graded vector space over a field $\bK$ of characteristic $2$, which depends on $M$ and a choice of constant $\epsilon$ satisfying \eqref{eq:no-reeb}. To define it, take a family of functions $H = (H_t)$ as in \eqref{eq:hamiltonian}, as well as a family of almost complex structures $J = (J_t)$ as in \eqref{eq:j-convex}, both parametrized by $t \in S^1 = \bR/\bZ$. On the free loop space $\scrL = \smooth(S^1, M)$, consider the $H$-perturbed action functional:
\begin{equation} \label{eq:action}
A_H(x) = \int_{S^1} -x^*\theta_M + H_t(x(t)) \, dt.
\end{equation}
Its critical points are solutions $x \in \scrL$ of
\begin{equation} \label{eq:periodic-y}
\dot{x} = X_{H,t},
\end{equation}
where $X_H$ is the time-dependent Hamiltonian vector field of $H$. Such $x$ correspond bijectively to fixed points $x(1)$ of $\phi_H^1$, where $(\phi_H^t)$ is the Hamiltonian isotopy generated by $H$. By assumption on $\epsilon$, all such $x$ are contained in the interior of $M$. For a generic choice of $H$, all $x$ are nondegenerate; assume from now on that this is the case. One defines the Floer cochain space $\mathit{CF}^*(M,H)$ to be the $\bZ/2$-graded vector space with one summand $\bK$ for each $x$ (the $\bZ/2$-grading encodes the local Lefschetz number). Consider solutions of Floer's gradient flow equation for \eqref{eq:action}. These ``Floer trajectories'' are maps $u: \bR \times S^1 \rightarrow M$, satisfying
\begin{equation} \label{eq:floer}
\partial_s u + J_t (\partial_t u - X_{H,t}) = 0, 
\end{equation}
with limits 
\begin{equation} \label{eq:floer-limits}
\textstyle \lim_{s \rightarrow \pm\infty} u(s,\cdot) = x_{\pm}
\end{equation}
as in \eqref{eq:periodic-y}. While $M$ is not closed, a standard maximum principle argument shows that Floer trajectories can never reach $\partial M$. We consider the moduli space of solutions to \eqref{eq:floer}, \eqref{eq:floer-limits} (more precisely, solutions that are not constant in $s$, modulo translation in that direction). For generic choice of $J$, these moduli spaces are regular. One counts isolated points in them to obtain the coefficients of the differential $d$ on $\mathit{CF}^*(M,H)$, of which $\mathit{HF}^*(M,\epsilon)$ is the cohomology.

The free loop space admits an obvious circle action (loop rotation). In classical cohomology, a circle action gives rise to an operator of degree $-1$, dual to moving cycles around orbits. The analogue for Floer cohomology is the BV operator
\begin{equation} \label{eq:bv}
\Delta: \mathit{HF}^*(M,\epsilon) \longrightarrow \mathit{HF}^{*-1}(M,\epsilon).
\end{equation}
To define it, suppose that we have fixed $(H,J)$ as necessary to define the Floer cochain complex. The rotated versions, for $r \in S^1$, are
\begin{equation}
(H^{(r)}_t,J^{(r)}_t) = (H_{t-r},J_{t-r}).
\end{equation} 
Similarly, if $x$ is as in \eqref{eq:periodic-y}, we write $x^{(r)}(t) = x(t-r)$. Choose a family of functions and almost complex structures $(H_{r,s,t}, J_{r,s,t})$, depending on $(r,s,t) \in S^1 \times \bR \times S^1$, which lie in the same class as before and satisfy
\begin{equation} \label{eq:bv-data}
(H_{r,s,t}, J_{r,s,t}) = \begin{cases} (H_t^{(r)}, J_t^{(r)}) & s \leq -1, \\
(H_t,J_t) & s \geq 1.
\end{cases}
\end{equation}
One considers the parametrized moduli space of pairs $(r,u)$, consisting of $r \in S^1$ and a solution $u: \bR \times S^1 \rightarrow M$ of the $r$-dependent equation
\begin{equation} \label{eq:bv-equation}
\partial_s u + J_{r,s,t} (\partial_t u - X_{H,r,s,t}) = 0,
\end{equation}
with limits
\begin{equation} \label{eq:bv-limits}
\left\{
\begin{aligned}
& \textstyle \lim_{s \rightarrow -\infty} \; u(s,\cdot) = x_-^{(r)}, \\
& \textstyle \lim_{s \rightarrow +\infty} \, u(s,\cdot) = x_+.
\end{aligned}
\right.
\end{equation}
The same maximum principle argument as before applies to solutions of \eqref{eq:bv-equation}, \eqref{eq:bv-limits}, ensuring that they can never reach $\partial M$. For suitably generic choices, counting isolated points in the parametrized moduli space yields a degree $-1$ chain map 
\begin{equation} \label{eq:bv-chain}
\delta: \mathit{CF}^*(M,H) \longrightarrow \mathit{CF}^{*-1}(M,H),
\end{equation}
which induces \eqref{eq:bv} on cohomology. Unlike the differential $d$, the operator $\delta$ is not always compatible with the action filtration. More precisely, given a solution of \eqref{eq:bv-equation}, \eqref{eq:bv-limits}, one has
\begin{equation} \label{eq:energy-cont}
E(u) = \int_{\bR \times S^1} |\partial_s u|^2\,  \mathit{ds} \wedge \mathit{dt} \leq
A_H(x_-) - A_H(x_+) + \int_{\bR \times S^1} \|\partial_s H_{r,s,t}\|_{\infty} \, \mathit{ds} \wedge \mathit{dt}.
\end{equation}
If $H$ was autonomous (independent of $t$) one could choose $H_{r,s,t} = H$ for all $(r,s,t)$, and then the last term in \eqref{eq:energy-cont} would vanish. In general, this interferes with the nondegeneracy of $1$-periodic orbits; but one can always choose $H$ to be a small perturbation of an autonomous function, and thereby make that term as small as needed. For such choices, $\delta$ will decrease actions only by a small amount.

\begin{remark} \label{th:signs-and-grading}
By counting Floer trajectories with appropriate signs, one can define Floer cohomology over an arbitrary coefficient field, as already pointed out in \cite{floer88}. A symplectic Calabi-Yau structure on $M$ determines a lift of the $\bZ/2$-grading on $\mathit{HF}^*(M,\epsilon)$ to a $\bZ$-grading \cite{salamon-zehnder92}. Since these are standard additions to the basic construction, we will continue our exposition in the simplest setup ($\mathrm{char}(\bK) = 2$ and $\bZ/2$-gradings), but take care that all formulae remain correct when signs and $\bZ$-gradings are added. For instance, in the $\bZ$-graded case, the BV operator has degree $-1$, as indicated in \eqref{eq:bv}.
\end{remark}

Floer cohomology has a Poincar{\'e} type duality, obtained by reversing loops in $\scrL$:
\begin{equation} \label{eq:id-poincare-duality}
\mathit{HF}^*(M,-\epsilon) \iso \mathit{HF}^{2n-*}(M,\epsilon)^\vee.
\end{equation}
This comes from an isomorphism of the underlying chain complexes, assuming that the choices of $(H,J)$ have been suitably coordinated. For equally straightforward reasons, it is compatible with BV operators.

As is implicit in the notation, Floer cohomology is independent (up to canonical isomorphism) of all the auxiliary choices made in its construction, and the same holds for the BV operator. It is maybe useful to elaborate on this slightly. Suppose that we have two possible choices, giving rise to Floer complexes $\mathit{CF}^*(M,H)$ and $\mathit{CF}^*(M,\tilde{H}$), with their differentials $d$, $\tilde{d}$ and chain level BV operators $\delta$, $\tilde{\delta}$. The continuation map method \cite{salamon-zehnder92} provides maps
\begin{equation} \label{eq:cont-maps}
\begin{aligned}
& k: \mathit{CF}^*(M,H) \longrightarrow \mathit{CF}^*(M,\tilde{H}), \\
& \kappa: \mathit{CF}^*(M,H) \longrightarrow \mathit{CF}^{*-2}(M,\tilde{H}).
\end{aligned}
\end{equation}
The first one is a chain map (and actually a quasi-isomorphism), and the second one satisfies
\begin{equation} \label{eq:compatibility-with-delta}
\tilde{d} \kappa - \kappa d = \tilde{\delta} k -  k \delta.
\end{equation}
Moreover, the maps \eqref{eq:cont-maps} are themselves unique in a suitable homotopical sense.
%

\subsection{Reeb orbits}
We now consider the dependence on $\epsilon$. This usually appears as part of the construction of symplectic cohomology (in the sense of \cite{viterbo97a}).

\begin{lemma} \label{th:no-wall}
Suppose that all $\epsilon \in [\epsilon_-,\epsilon_+]$ satisfy \eqref{eq:no-reeb}. Then we have an isomorphism, compatible with BV operators,
\begin{equation}
\mathit{HF}^*(M,\epsilon_-) \iso \mathit{HF}^*(M,\epsilon_+).
\end{equation}
\end{lemma}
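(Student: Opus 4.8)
The argument is the standard continuation-map pattern, organised so that all auxiliary data stay within the class of slopes $[\epsilon_-,\epsilon_+]$; that confinement is exactly where the ``no wall-crossing'' hypothesis enters. First, note that \eqref{eq:no-reeb} forces $0\notin[\epsilon_-,\epsilon_+]$, so $\epsilon_-$ and $\epsilon_+$ have the same sign; using \eqref{eq:id-poincare-duality} (which is compatible with $\Delta$) we may assume $0<\epsilon_-\le\epsilon_+$.

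I would fix coordinated data $(H_\pm,J_\pm)$ defining $\mathit{CF}^*(M,H_\pm)$, with $H_\pm=\epsilon_\pm\rho_M$ near $\partial M$ and $J_\pm$ of contact type as in \eqref{eq:j-convex}, arranged to agree on a fixed inner region and to differ only through the slope on a collar. For a monotone family of slopes $\epsilon_s\in[\epsilon_-,\epsilon_+]$ interpolating from $\epsilon_+$ at $s\to-\infty$ to $\epsilon_-$ at $s\to+\infty$, together with a matching homotopy of the remaining data, one gets a continuation chain map $k\colon \mathit{CF}^*(M,H_+)\to\mathit{CF}^*(M,H_-)$; running the slopes the other way gives $k'$ in the opposite direction. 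The crucial point is that $\epsilon_s$ never leaves $[\epsilon_-,\epsilon_+]$, so \eqref{eq:no-reeb} holds for every $s$: hence no $1$-periodic orbit escapes the interior, the same maximum principle already invoked for \eqref{eq:floer} applies to the $s$-dependent continuation equation and keeps trajectories away from $\partial M$, and the energy identity (the analogue of \eqref{eq:energy-cont}) supplies the compactness needed to define $k$ and $k'$. Comparing $k'k$ and $kk'$ with the constant homotopy of homotopies shows each is chain homotopic to the relevant identity, so $k$ is a quasi-isomorphism and induces the asserted isomorphism on $\mathit{HF}^*$.

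For compatibility with the BV operators I would feed the loop-rotation parameter $r\in S^1$ into the continuation construction, just as in the definition of \eqref{eq:bv-chain} and in Remark \ref{th:uniqueness}: counting isolated points in the parametrised moduli space of triples (homotopy parameter, $r$, continuation trajectory) produces a degree $-2$ map $\kappa$ with $\delta_- k - k\,\delta_+ = d_-\kappa - \kappa\,d_+$, in parallel with \eqref{eq:compatibility-with-delta}. Thus on cohomology $k$ intertwines the two BV operators, which is what is wanted.

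The main obstacle I anticipate is the robustness of the maximum principle for the continuation equation with $s$-dependent slope: one must choose the spatial interpolation between ``$\epsilon_s\rho_M$ near $\partial M$'' and the fixed inner profile so that $\rho_M\circ u$ has no interior maximum in the collar, and verify that the homotopy genuinely remains in the admissible class. This is the technical heart of the ``no wall'' statement; everything else is bookkeeping already present in the construction of $d$, $\delta$, and the continuation maps of Remark \ref{th:uniqueness}.
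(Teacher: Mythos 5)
Your route (bidirectional continuation maps with slopes confined to $[\epsilon_-,\epsilon_+]$) is genuinely different from the paper's. The paper never runs a continuation argument with $s$-dependent slope: it attaches a cone $\hat{M} = M \cup_{\partial M}([1,C]\times \partial M)$ and extends $H_-$ by a convex function $h_+$ whose derivative sweeps from $\epsilon_-$ to $\epsilon_+$ across the cone. Since no slope in $[\epsilon_-,\epsilon_+]$ is a Reeb period, the extension creates no new $1$-periodic orbits, an ($s$-independent) maximum principle confines trajectories to $M$, and one gets a literal isomorphism of chain complexes $\mathit{CF}^*(\hat{M},\hat{H}_+)\iso \mathit{CF}^*(M,H_-)$; a second enlargement with a linear function identifies $\mathit{HF}^*(\hat{M},\epsilon_+)$ with $\mathit{HF}^*(M,\epsilon_+)$. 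The whole point of that construction is to avoid exactly the step your proof leans on.

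That step is where your argument has a genuine gap: the ``wrong-way'' map $k'$. For the continuation equation with $H_{s,t}=\epsilon_s\rho_M$ near $\partial M$ and contact-type $J$, the differential inequality satisfied by $\rho_M\circ u$ acquires a term proportional to $\partial_s\epsilon_s$ with a definite sign; the maximum principle goes through only for the monotone direction (increasing slope towards the output), which is why continuation maps for Liouville domains are ordinarily one-directional. Any homotopy realizing $k'$ must have $\partial_s\epsilon_s>0$ somewhere, and no choice of ``spatial interpolation'' removes the bad term, so the confinement of trajectories away from $\partial M$ does not follow from ``the same maximum principle already invoked for \eqref{eq:floer}''. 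Note also that the hypothesis \eqref{eq:no-reeb} for all $\epsilon\in[\epsilon_-,\epsilon_+]$ controls the \emph{generators} (no orbits appear near $\partial M$), not the \emph{trajectories}; you are implicitly using it for the latter, which is a separate issue. You flag this as ``the technical heart'' but do not resolve it. The argument can be repaired — e.g.\ prove directly that the forward map $k$ is an isomorphism by subdividing $[\epsilon_-,\epsilon_+]$ and using that $k$ is upper-triangular for the action filtration with identity diagonal when the generators coincide, or construct the inverse via the paper's enlargement trick, or invoke an integrated maximum principle with the appropriate action estimates — but as written the existence of $k'$ is unsupported. The BV part of your argument (the parametrized homotopy $\kappa$ satisfying the analogue of \eqref{eq:compatibility-with-delta}) is fine once the continuation maps themselves are in place.
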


\begin{proof}
Even though that is not absolutely necessary, we find it convenient to introduce finite enlargements of our Liouville domain. Such an enlargement, by an amount $C>1$, is 
\begin{equation} \label{eq:attach-cone}
\hat{M} = M \cup_{\partial M} ([1,C] \times \partial M).
\end{equation}
The conical part which we have added carries the one-form $\theta_{\hat{M}} = r (\theta_M|\partial M)$ ($r$ being the coordinate in $[1,C]$). One extends $\rho_M$ to $\hat{M}$ by setting $\rho_{\hat{M}}(r,x) = r$. Choose a function
\begin{equation} \label{eq:turn}
\left\{
\begin{aligned}
& h_+: (0,1] \longrightarrow \bR, \\
& h_+(a) = \epsilon_- a \quad \text{for $a$ sufficiently small,} \\
& h_+'(a) = \epsilon_+ \quad \text{for $a$ close to $1$,} \\
& h_+''(a) \geq 0 \quad \text{everywhere}, \\
& h_+''(a) > 0 \quad \text{for all $a$ such that $\epsilon_- < h_+'(a) < \epsilon_+$.}
\end{aligned}
\right.
\end{equation} 
Given the $H_-$ used to define $\mathit{HF}^*(M,\epsilon_-)$, we extend it to a time-dependent function $\hat{H}_+$ on $\hat{M}$ by setting
\begin{equation} \label{eq:extend-hamiltonian}
\hat{H}_{+,t}(r,x) =C\, h_+(C^{-1}r). 
\end{equation}
This makes sense provided that $C$ is large (so that $h_+(a) = \epsilon_- a$ near $a = C^{-1}$). This extension does not quite belong to the class \eqref{eq:hamiltonian} for the manifold $\hat{M}$ and constant $\epsilon_+$, but that could be remedied by adding a constant, which has no effect on our construction. Hence, $\hat{H}_+$ can be used to define $\mathit{HF}^*(\hat{M},\epsilon_+)$.

Note that on $[1,C] \times \partial M$,
\begin{equation}
X_{\hat{H}_+,t} = h'_+(C^{-1}r)\, R_{\partial M}.
\end{equation}
Because of the assumption, it follows that all $1$-periodic orbits of \eqref{eq:extend-hamiltonian} are actually contained in $M$. A general almost complex structure on $M$ does not naturally extend to $\hat{M}$, so there we proceed in the other direction: we choose $\hat{J} = (\hat{J}_t)$ on $\hat{M}$ which satisfy the analogue of \eqref{eq:j-convex} in a neighbourhood of $[1,C] \times \partial M$, and then take $J = (J_t)$ to be the restriction to $M$. A maximum principle argument shows that Floer trajectories in $\hat{M}$ are in fact all contained in $M$. As a consequence, we get an isomorphism (of complexes, and hence of Floer cohomology groups)
\begin{equation}
\mathit{HF}^*(\hat{M},\epsilon_+) \iso \mathit{HF}^*(M,\epsilon_-).
\end{equation}
For similar reasons, this is compatible with the BV operators. A repetition of the same argument, with a linear function instead of $h_+$, shows that
\begin{equation}
\mathit{HF}^*(\hat{M},\epsilon_+) \iso \mathit{HF}^*(M,\epsilon_+).
\end{equation}
Combining the two isomorphisms yields the desired result.
\end{proof}

\begin{lemma} \label{th:wall}
Take $\epsilon_- < \epsilon_+$, both of which satisfy \eqref{eq:no-reeb}, and suppose that there is exactly one $\epsilon \in (\epsilon_-,\epsilon_+)$ for which \eqref{eq:no-reeb} fails. Then there is a long exact sequence
\begin{equation} \label{eq:les-f}
\cdots \rightarrow \mathit{HF}^*(M,\epsilon_-) \longrightarrow \mathit{HF}^*(M,\epsilon_+) \longrightarrow H^*(Q) \rightarrow \cdots
\end{equation}
where $H^*(Q)$ depends only on the local geometry near the $1$-periodic orbits of $\epsilon R_{\partial M}$. Moreover, $H^*(Q)$ carries an endomorphism of degree $-1$, with the same locality property, and which fits in with \eqref{eq:les-f} and the BV operators on $\mathit{HF}^*(M,\epsilon_{\pm})$.
\end{lemma}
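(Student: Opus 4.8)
The plan is to realise $Q^*$ as the mapping cone of a continuation map, which makes the exact sequence \eqref{eq:les-f} and the degree $-1$ endomorphism essentially formal, and then to establish the locality claim by a compactness argument that confines the relevant Floer cylinders to a neighbourhood of $\partial M$. First, using Lemma~\ref{th:no-wall}, I would shrink the interval so that $\epsilon_\pm$ are as close as desired to the exceptional value $\epsilon_0$, and fix Hamiltonians $H_-$ and $H_+$ of slopes $\epsilon_-$ and $\epsilon_+$ which agree outside a thin collar of $\partial M$; in that collar $H_+$ follows an interpolating profile as in \eqref{eq:turn}, whose slope rises monotonically and strictly convexly from $\epsilon_-$ to $\epsilon_+$. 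A monotone homotopy between them gives a continuation chain map $c\colon \mathit{CF}^*(M,H_-)\to\mathit{CF}^*(M,H_+)$, and I set $Q^* := \mathrm{Cone}(c)=\mathit{CF}^*(M,H_+)\oplus\mathit{CF}^{*+1}(M,H_-)$ with the usual cone differential. The mapping-cone long exact sequence, suitably rotated, is precisely \eqref{eq:les-f}, with connecting map $\mathit{HF}^*(M,\epsilon_-)\to\mathit{HF}^*(M,\epsilon_+)$ the comparison map.

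For the endomorphism, recall (as in Remark~\ref{th:uniqueness}) that $c$ commutes with the chain-level BV operators $\delta_\pm$ up to an explicit homotopy $\kappa\colon\mathit{CF}^*(M,H_-)\to\mathit{CF}^{*-2}(M,H_+)$, obtained from a parametrised moduli space carrying both a continuation parameter and a loop-rotation parameter, and satisfying an identity of the shape \eqref{eq:compatibility-with-delta}. By that identity, the block operator assembled from $\delta_+$, $\kappa$ and $\delta_-$ is a degree $-1$ chain endomorphism of $Q^*$; it is compatible with the mapping-cone short exact sequence, hence induces a map on $H^*(Q)$ that fits into \eqref{eq:les-f} and restricts to the BV operators $\Delta$ on $\mathit{HF}^*(M,\epsilon_\pm)$.

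The heart of the matter is locality. For this I would pass to an enlargement $\hat M$ as in \eqref{eq:attach-cone} on which $H_+$ extends to $\hat H_+$ via the profile \eqref{eq:turn}; the maximum principle of Lemma~\ref{th:no-wall} then identifies $\mathit{CF}^*(\hat M,\hat H_+)$ with $\mathit{HF}^*(M,\epsilon_+)$, its generators being the orbits of $H_-$ lying in $M$ together with ``new'' orbits sitting in the convex part of the collar. Each new orbit lies over a closed $R_{\partial M}$-orbit whose period is a Reeb period in $(\epsilon_-,\epsilon_+)$; since $\epsilon_0$ is the only value in that interval violating \eqref{eq:no-reeb}, every such period equals $\epsilon_0$, so the new orbits are precisely (small perturbations of) the $1$-periodic orbits of $\epsilon_0 R_{\partial M}$. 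Because $H_-$ and $H_+$ agree off the collar, the component of $c$ landing in the span of the ``old'' generators of $\mathit{CF}^*(M,H_+)$ is a quasi-isomorphism, and a cancellation (homological perturbation) argument then identifies $Q^*$ up to quasi-isomorphism with the complex freely generated by the new orbits, equipped with an induced differential and an induced degree $-1$ operator coming from $\kappa$. Finally, by a Gromov/SFT-type compactness argument in the spirit of \cite{mclean12} (compare \cite{bourgeois-oancea09}), after stretching the collar every Floer cylinder with both asymptotes among the new orbits --- and every configuration defining $\kappa$ between them --- is confined to an arbitrarily small neighbourhood of $\partial M$. Hence the differential on $Q^*$, and the induced endomorphism, depend only on the germ of the contact geometry near the $1$-periodic orbits of $\epsilon_0 R_{\partial M}$, as claimed.

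I expect the confinement step to be the main obstacle: one must prove a $C^0$ (maximum-principle type) estimate ruling out escape of these Floer cylinders into the interior of $M$, and keep enough control of the homological-perturbation corrections that the resulting complex is manifestly built from data near $\partial M$. By contrast, producing the exact sequence and the endomorphism is formal once the chain-level BV operator and its compatibility with continuation maps are granted.
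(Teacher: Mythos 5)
Your architecture (cone of a continuation map, block BV endomorphism via the homotopy $\kappa$, reduction to the orbits created in the collar) is compatible with the lemma, but it differs from the paper's route and, as written, leaves the decisive step unproved. The paper does not form a mapping cone at all: it extends $H_-$ to $\hat H_+$ on the enlargement \eqref{eq:attach-cone} using the convex profile \eqref{eq:turn}, computes that every new $1$-periodic orbit in the conical part has action \eqref{eq:h-hat-x-action}, which is negative and proportional to $C$, and then takes $C$ large so that all new orbits have action far below every orbit of $H_-$. Since the Floer differential decreases action, the old orbits automatically form a subcomplex \eqref{eq:small-h-complex}, $Q^*$ is simply the quotient, and the ``continuation map'' is literally an inclusion. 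Your assertion that the old-to-old component of $c$ is a quasi-isomorphism, and the subsequent homological-perturbation step, are both made unnecessary by (and in fact are justified by) this same action filtration; without invoking it, the quasi-isomorphism claim is unsubstantiated.

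The genuine gap is in the locality/confinement step, which you correctly flag as the main obstacle but for which you propose the wrong (and much heavier) tool. The mechanism in the paper is not SFT neck-stretching: it is that, before perturbation, \emph{all} new orbits have exactly the same action \eqref{eq:h-hat-x-action}, so after a small perturbation any Floer trajectory or BV configuration with both asymptotes among the new orbits has arbitrarily small energy. Gromov compactness (or the Monotonicity Lemma) then forces such solutions to stay in an arbitrarily small neighbourhood of the orbit set, which is precisely the statement that $H^*(Q)$ is a local Floer cohomology group in the sense of \cite{pozniak, ginzburg06b}; no $C^0$ maximum-principle estimate into the interior of $M$ is needed, and stretching the collar is not. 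The same small-energy argument, combined with choosing all Hamiltonians to be small perturbations of autonomous ones so that \eqref{eq:energy-cont} shows the chain-level BV operator almost preserves actions, is what makes $\delta$ preserve the subcomplex and makes the induced endomorphism of $Q^*$ local. If you insert the action computation \eqref{eq:h-hat-x-action} and the resulting energy bound into your argument, both your quasi-isomorphism claim and your confinement step follow, and the neck-stretching can be discarded.
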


\begin{proof}
Let's use the same extension $\hat{H}_+$ as in the proof of Lemma \ref{th:no-wall}. This time, there are additional $1$-periodic orbits in the conical part. Their actions are
\begin{equation} \label{eq:h-hat-x-action}
C (h_+(a_*)-ah'_+(a_*)), \quad \text{for the unique $a_*$ such that $h'_+(a_*) = \epsilon$.}
\end{equation}
The convexity properties of $h_+$ ensure that $h_+(a_*) - a_*h'_+(a_*) < 0$. Therefore, \eqref{eq:h-hat-x-action} goes to $-\infty$ as we make $C$ large. In particular, we can choose $C$ so that \eqref{eq:h-hat-x-action} is smaller than the actions of any $1$-periodic orbits of $H_-$. The $1$-periodic orbits lying in the conical part will be degenerate, but one can remedy that by a small time-dependent perturbation of $\hat{H}_+$ (concentrated near those orbits), and the statement about actions will continue to hold. Hence, the $1$-periodic orbits lying in $M$ form a subcomplex of $\mathit{CF}^*(\hat{M},\hat{H}_+)$. Using the integrated maximum principle (see \cite{abouzaid-seidel07}), one can show that the differential on this subcomplex agrees with the Floer differential in $M$ (alternatively, one can arrive at the same conclusion by taking $C$ very large, and using the Monotonicity Lemma; or by the standard maximum principle and a Gromov compactness argument, applied to the limit where the perturbation of $\hat{H}_+$ goes to $0$). In other words, one has an inclusion of chain complexes
\begin{equation} \label{eq:small-h-complex}
\mathit{CF}^*(M,H_-) \subset \mathit{CF}^*(\hat{M},\hat{H}_+).
\end{equation}
Defining $Q^*$ to be the quotient, one obviously gets a long exact sequence
\begin{equation} \label{eq:reeb-les}
\cdots \rightarrow \mathit{HF}^*(M,\epsilon_-) \longrightarrow \mathit{HF}^*(\hat{M},\epsilon_+) \longrightarrow H^*(Q) \rightarrow \cdots
\end{equation}
In the original situation \eqref{eq:h-hat-x-action}, all $1$-periodic orbits in the conical part had the same action. The perturbation will destroy that, but only by a small amount. Hence, $H^*(Q)$ is still a local Floer cohomology group, in the sense of \cite{pozniak, ginzburg06b}. As in Lemma \ref{th:no-wall}, one can replace $\mathit{HF}^*(\hat{M},\epsilon_+)$ by $\mathit{HF}^*(M,\epsilon_+)$ in \eqref{eq:reeb-les}, which yields \eqref{eq:les-f}.

The corresponding statements concerning BV operators are slightly more tricky. Let's choose all our Hamiltonians to be small perturbations of autonomous ones. In that case, it follows from \eqref{eq:h-hat-x-action} and \eqref{eq:energy-cont} that the chain level BV map on $\mathit{CF}^*(\hat{M},\hat{H}_+)$ will preserve the subcomplex \eqref{eq:small-h-complex}. A Gromov compactness argument (where one decreases the size of the perturbations, as briefly mentioned before) implies that the induced endomorphism of $Q^*$ is again local in nature. To show that its restriction to $\mathit{CF}^*(M,H_-)$ counts only solutions lying in $M$, one combines the same idea of Gromov compactness with the methods we've used for the differential (integrated maximum principle, or Monotonicity Lemma).
\end{proof}

\begin{lemma} \label{th:wall-2}
In the situation of Lemma \ref{th:wall}, assume additionally that the $1$-periodic orbits of $\epsilon R_{\partial M}$ form a Morse-Bott nondegenerate connected manifold $F$. Then,
\begin{equation}
H^*(Q) \iso H^{*+k}(F;\xi),
\end{equation}
for some $k$ and local $\bK$-coefficient system $\xi \rightarrow F$ (with holonomy $\pm 1$). That coefficient system is equivariant with respect to the $S^1$-action given by loop rotation on $F$. Hence, $H^*(F;\xi)$ comes with an endomorphism of degree $-1$, which fits in with the BV operators and \eqref{eq:les-f}.
\end{lemma}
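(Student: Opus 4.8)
\emph{Strategy.} The plan is to identify $Q^*$ with a Morse--Bott model complex attached to $F$, and then to trace the loop-rotation operator through that identification. Following the proof of Lemma~\ref{th:wall}, $Q^*$ is the local Floer complex generated (after a small perturbation of $\hat{H}_+$ near the conical part) by the $1$-periodic orbits of $\epsilon R_{\partial M}$, all of which, before perturbation, have the same action. By hypothesis those orbits sweep out the connected Morse--Bott nondegenerate manifold $F$, so I would take the perturbation to be $\delta f$ for $f$ a Morse function on $F$, pulled back via a tubular-neighbourhood projection, with $\delta>0$ small enough that the action estimates of Lemma~\ref{th:wall} still force the splitting $\mathit{CF}^*(M,H_-)\subset \mathit{CF}^*(\hat{M},\hat{H}_+)$. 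Standard Morse--Bott Floer theory (Po\'zniak; Bourgeois--Oancea; or the cascades formalism) then supplies a quasi-isomorphism between $Q^*$ and the Morse complex of $(F,f)$, up to a global degree shift by an integer $k$ --- computed from the Robbin--Salamon index of the Morse--Bott family with respect to the symplectic Calabi--Yau trivialization, and relevant only mod $2$ in the ungraded case --- and a twist by a rank-one local system $\xi\to F$ whose fibres are the determinant (orientation) lines of the linearized operators along $F$. Since these are real lines, the holonomy of $\xi$ lies in $\{\pm 1\}$, and we obtain $H^*(Q)\iso H^{*+k}(F;\xi)$.

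Next I would install the $S^1$-structure. Loop rotation acts on the free loop space and preserves the set of $1$-periodic orbits of the autonomous model Hamiltonian, hence restricts to a smooth $S^1$-action on $F$ (it simply reparametrizes the underlying closed Reeb orbits). All the data defining $\xi$ --- the linearized Cauchy--Riemann operators along $F$ and their determinant lines --- are natural under this reparametrization, so $\xi$ inherits a canonical $S^1$-equivariant structure. Consequently $H^*(F;\xi)$ carries a degree $-1$ operator: the one induced by the orbit map $S^1\times F\to F$, i.e.\ slant product with $[S^1]$, realized at the chain level by the usual "sweep the marked point once around the orbit" construction for an $S^1$-manifold with equivariant coefficients.

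Finally I would match this operator with the BV operator and the long exact sequence. As in Lemma~\ref{th:wall}, the chain-level BV map on $\mathit{CF}^*(\hat{M},\hat{H}_+)$ preserves the subcomplex (by the energy estimate \eqref{eq:energy-cont}), hence descends to $Q^*$; what remains is to see that the induced map agrees, under the Morse--Bott identification, with the $S^1$-operator just described. The mechanism is that, as $\delta\to 0$, a parametrized BV solution with rotation parameter $r$ which stays near $F$ breaks into a Morse flow line in $F$ together with a short Reeb-orbit reparametrization through angle $r$; counting these configurations with signs is precisely the definition of the $S^1$-operator on $C^{\mathrm{Morse}}_*(F;\xi)$. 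This is the Morse--Bott analogue of the computation of $\Delta$ on symplectic cohomology in terms of the $S^1$-action on spaces of Reeb orbits (cf.\ Bourgeois--Oancea). Compatibility with \eqref{eq:les-f} and with the BV operators on $\mathit{HF}^*(M,\epsilon_{\pm})$ is then inherited from Lemma~\ref{th:wall}, since the quasi-isomorphism $Q^*\simeq C^{\mathrm{Morse}}_*(F;\xi)[-k]$ can be arranged compatibly with the inclusion of subcomplexes and the chain-level BV maps.

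\emph{Main obstacle.} The delicate step is the last one: establishing the gluing-and-compactness statement that identifies the chain-level BV map on $Q^*$ with the chain-level $S^1$-operator on $C^{\mathrm{Morse}}_*(F;\xi)$, with the correct orientations. This needs a transversality scheme for the $r$-dependent moduli problem near a Morse--Bott manifold, together with an orientation analysis precise enough to confirm that the $\pm 1$ holonomy that appears is exactly that of $\xi$. Everything else --- the action bookkeeping, the existence and value of $k$, and the compatibility with \eqref{eq:les-f} --- is routine once this is in hand.
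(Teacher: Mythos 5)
Your proposal is correct and lands on the same core identifications as the paper: $Q^*$ becomes a shifted Morse complex of $F$ twisted by the orientation-line local system $\xi$, and the induced BV operator is the $S^1$-sweep operator for the loop-rotation action. The implementation differs in one respect worth noting. You perturb $\hat H_+$ by $\delta f$ near $F$ and propose to recover the Morse complex and the $S^1$-operator in the adiabatic limit $\delta\to 0$, so the technical burden sits in that gluing/compactness/orientation analysis (which you correctly flag). The paper instead does \emph{not} perturb the Morse--Bott Hamiltonian at all: it defines a cascade-type complex $\mathit{CF}^*(\hat M,\hat H_+,f)=\mathit{CF}^*(M,H_-)\oplus \mathit{CM}^{*+k}(f;\xi)$ with an upper-triangular differential \eqref{eq:morse-bott-d}, and builds the chain-level BV operator \eqref{eq:morse-bott-delta} so that its block on the Morse summand, $\delta_{\mathit{Morse}}$, literally counts solutions of the rotated gradient-flow equation \eqref{eq:morse-bv} for the family $(f^{(r)},g^{(r)})$ --- i.e.\ the $S^1$-operator is present by construction, with correction terms $\delta_{\mathit{left}},\delta_{\mathit{right}}$ and the auxiliary count $b$ accounting for the breaking of parametrized solutions via \eqref{eq:delta-left} and \eqref{eq:delta-right}. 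The residual unproven step in the paper is then the comparison of the Morse--Bott model with the standard perturbed definition (via a generalization of continuation maps), whereas in your version it is the adiabatic limit; these are comparable in difficulty, and the paper explicitly declines to supply full details for its version as well. One small advantage of the paper's route is that the explicit block structure \eqref{eq:morse-bott-delta} is reused later (in Propositions \ref{th:bv-vanish-2} and \ref{th:partial-splitting-2}) to prove vanishing statements for the BV operator, which your purely limiting description would make less direct.
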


The connectedness assumption is for notational convenience only (otherwise, each component of $F$ comes with its own index offset $k$, and one has to take the direct sum of their contributions).

\begin{example}
Suppose that $\epsilon R_{\partial M}$ has exactly one $1$-periodic orbit (up to loop rotation), which moreover is transversally nondegenerate. This means that $F \iso S^1$. The local coefficient system $\xi$ is trivial if the orbit is ``good'', and has holonomy $-1$ if it is ``bad'' (in the terminology from Symplectic Field Theory; for an explanation in a framework close to ours, see \cite[\S 5]{bourgeois-mohnke04} or \cite[\S 4.4]{bourgeois-oancea09c}).
\end{example}

Lemma \ref{th:wall-2} can be proved by using the Morse-Bott formalism for Floer cohomology \cite{austin-braam95, bourgeois02}. While we will not give a complete proof, it makes sense to describe that formalism in the appropriate form, since we'll return to it later on.

We start with a given $H_-$ and extend that to $\hat{H}_+$ as in \eqref{eq:extend-hamiltonian}, but do not perturb this extension further. As usual, $C$ is assumed to be large, to get the necessary action inequalities. Fix a Morse-Smale pair on $F$, consisting of a Morse function $f$ and a metric $g$. The Morse-Bott-Floer complex $\mathit{CF}^*(\hat{M},\hat{H}_+,f)$ has two kinds of generators: ones coming from the $1$-periodic orbits in $M$, and ones coming from the critical points of $f$ (the constant $k$ and local coefficient system $\xi$ appear when we take gradings and signs into account). Let's write this as
\begin{equation} \label{eq:morse-bott-c}
\mathit{CF}^*(\hat{M},\hat{H}_+,f) = \mathit{CF}^*(M,H_-) \oplus \mathit{CM}^{*+k}(f;\xi). 
\end{equation}
The differential on \eqref{eq:morse-bott-c} is of the form
\begin{equation} \label{eq:morse-bott-d}
d = \begin{pmatrix} 
d_{\mathit{Floer}} & d_{\mathit{mixed}} \\
0 & d_{\mathit{Morse}}
\end{pmatrix},
\end{equation}
where the various parts are defined as follows:
\begin{itemize} \itemsep.5em
\item[(i)] If $x_-$ and $x_+$ are $1$-periodic orbits in $M$, one counts Floer trajectories as before, and this yields $d_{\mathit{Floer}}$. 

\item[(ii)] If $x_-$ and $x_+$ are both critical points of $f$, one counts trajectories of $-\nabla_g f$ going from $x_-$ to $x_+$, which yields $d_{\mathit{Morse}}$.

\item[(iii)] Suppose that $x_-$ lies in $M$, and $x_+$ is a critical point of $f$. In that case, one considers a mixed moduli space consisting of pairs $(u,v)$, where $u$ is a Floer trajectory, and $v: [0,\infty) \rightarrow F$ is a positive half flow line of $-\nabla_g f$, with
\begin{equation} 
\left\{
\begin{aligned}
& \textstyle
\lim_{s \rightarrow -\infty} u(s,\cdot) = x_-, \\ & \textstyle
\lim_{s \rightarrow +\infty} u(s,\cdot) = v(0), \\ & \textstyle
\lim_{s \rightarrow +\infty} v(s) = x_+. 
\end{aligned}
\right.
\label{eq:matching-conditions}
\end{equation}
Counting solutions of this equation yields $d_{\mathit{mixed}}$. 
\end{itemize}

Assume that the original $H_-$ was a small perturbation of an autonomous Hamiltonian. We then choose a family of Hamiltonians $H_{-,r,s,t}$ on $M$ as in \eqref{eq:bv-data}, which are perturbations of the same autonomous Hamiltonian, and extend them to $\hat{H}_{+,r,s,t}$ on $\hat{M}$ as in \eqref{eq:extend-hamiltonian}. The main property, established by similar arguments as in the proof of Lemma \ref{th:wall}, is that any solution of \eqref{eq:bv-equation} whose left-hand limit $x_-^{(r)}$ lies in $F$ must be stationary, which means $u(s,t) = x_-^{(r)}(t) = x_+(t)$. Then, the Morse-Bott version of \eqref{eq:bv-chain} takes on a form parallel to \eqref{eq:morse-bott-d}:
\begin{equation} \label{eq:morse-bott-delta}
\delta = \begin{pmatrix}
\delta_{\mathit{Floer}} & \delta_{\mathit{left}} + \delta_{\mathit{right}} \\
0 & \delta_{\mathit{Morse}}
\end{pmatrix}.
\end{equation}
The definition is a more elaborate version of the previous one:
\begin{itemize} \itemsep.5em
\item[(i)] $\delta_{\mathit{Floer}}$ is defined by considering solutions of \eqref{eq:bv-equation} whose limits both lie in $M$.

\item[(ii)]
Loop rotation restricts to an $S^1$-action on $F$. Denote by $f^{(r)}$ and $g^{(r)}$ the images of $f$ and $g$ under that action, for time $r$. Choose generic families $f_{r,s}$ and $g_{r,s}$, depending on $(r,s) \in S^1 \times \bR$, which satisfy the analogue of \eqref{eq:bv-data}:
\begin{equation}
(f_{r,s},g_{r,s}) = \begin{cases} (f^{(r)},g^{(r)}) & s \leq -1, \\ (f,g) & s \geq 1. \end{cases}
\end{equation}
The counterpart of \eqref{eq:bv-equation}, \eqref{eq:bv-limits} is an equation for $r \in S^1$ and $v: \bR \rightarrow F$:
\begin{equation} \label{eq:morse-bv}
\left\{ 
\begin{aligned}
& \partial_s v + \nabla_{g_{r,s}} f_{r,s} = 0, \\
& \textstyle \lim_{s \rightarrow -\infty} v(s) = x_-^{(r)}, \\
& \textstyle \lim_{s \rightarrow +\infty} v(s) = x_+,
\end{aligned}
\right.
\end{equation}
where $x_\pm$ are critical points of $f$. By counting isolated solutions of this equation, one defines $\delta_{\mathit{Morse}}$.

\item[(iii)]
Suppose that $x_-$ lies in $M$, and $x_+$ is a critical point of $f$. Consider triples $(r,u,v)$, where $(r,u)$ is a solution of the same parametrized equation as in (i), and $v$ is a half flow line of $-\nabla_g f$, with asymptotic conditions
\begin{equation}
\left\{\begin{aligned}
& \textstyle 
\lim_{s \rightarrow -\infty} u(s,\cdot) = x_-^{(r)}, \\
& \textstyle 
\lim_{s \rightarrow +\infty} u(s,\cdot) = v(0), \\
& \textstyle 
\lim_{s \rightarrow +\infty} v(s) = x_+. 
\label{eq:matching-conditions-2}
\end{aligned}
\right.
\end{equation}
This defines $\delta_{\mathit{left}}$, which satisfies
\begin{equation} \label{eq:delta-left}
\delta_{\mathit{left}} d_{\mathit{Morse}} + \delta_{\mathit{Floer}} d_{\mathit{mixed}} + d_{\mathit{Floer}} \delta_{\mathit{left}} + b = 0.
\end{equation}
The two last terms in \eqref{eq:delta-left} correspond to limits where a Floer trajectory ``breaks off'' from a sequence of solutions $u$ on the left hand side $s \ll 0$. If the limit of the $u$ themselves is non-stationary, such broken solutions are accounted for by $d_{\mathit{Floer}} \delta_{\mathit{left}}$ in the standard way. The other term $b$ counts the remaining limiting configurations. It is defined using a parametrized moduli space of triples $(r,u,v)$, where: $u$ is a solution of Floer's equation; $v$ is a half flow line; and
\begin{equation} \label{eq:matching-conditions-3}
\left\{\begin{aligned}
& \textstyle
\lim_{s \rightarrow -\infty} u(s,\cdot) = x_-, \\
& \textstyle
\lim_{s \rightarrow +\infty} u(s,\cdot) = v(0)^{(-r)}, \\
& \textstyle
\lim_{s \rightarrow +\infty} v(s) = x_+.
\end{aligned}
\right.
\end{equation}

\item[(iv)]
For $x_{\pm}$ as in (iii), we consider an equation for pairs $(u,v)$ with additional parameters $(q,r) \in [0,\infty) \times S^1$. Asymptotic conditions are as in \eqref{eq:matching-conditions-3}, and $u$ is always a solution of Floer's equation. The $q$-dependence lies entirely in the equation satisfied by $v$. For $q \gg 0$, that equation should be that which defines $\delta_{\mathit{Morse}}$, but applied to the positive half-line only, and with the $s$-parameter shifted by $q$:
\begin{equation} \label{eq:morse-bv-shifted}
\partial_s v + \nabla_{g_{r,s-q}} f_{r,s-q} = 0.
\end{equation}
In contrast, for $q = 0$ the equation should be the gradient flow equation for $-\nabla_g f$ (and one connects those two behaviours by choosing some intermediate data). The outcome is that
\begin{equation} \label{eq:delta-right}
d_{\mathit{Floer}} \delta_{\mathit{right}} + \delta_{\mathit{right}} d_{\mathit{Morse}} + d_{\mathit{mixed}} \delta_{\mathit{Morse}} - b = 0,
\end{equation}
where the last two terms correspond to $q \rightarrow \infty$ and $q = 0$, respectively.
\end{itemize}
%

Lemma \ref{th:wall-2} follows directly once one establishes that the Morse-Bott approach yields the same Floer cohomology (and BV operator) as the original definition. There are several ways of doing that, the most natural one being a suitable generalization of continuation maps; but we will not explain the details.

\subsection{Autonomous Hamiltonians}
The relation between Floer cohomology and ordinary cohomology is established by the following classical result:

\begin{proposition} \label{th:bv-vanish}
For sufficiently small $\epsilon > 0$, we have $\mathit{HF}^*(M,\epsilon) \iso H^*(M)$. Moreover, the BV operator $\Delta$ vanishes.
\end{proposition}

The most natural proof is a version of \cite{piunikhin-salamon-schwarz94}, but here we choose instead to follow \cite{floer-hofer-salamon94, hofer-salamon95}, which is more elementary. Take a time-independent $H$ and $J$, such that $H$ is Morse and the metric associated to $J$ is Morse-Smale. Then, after possibly multiplying $H$ with a small positive constant, we have:
\begin{equation} \label{eq:time-independent}
\left\{\!\!\!\!\!\! \parbox{35em}{
\begin{itemize} \itemsep0.5em
\item[(i)] 
all one-periodic orbits of $X_H$ are constant (at the critical points of $H$);
\item[(ii)]
any solution of Floer's equation is constant in $t$, hence a negative gradient flow line for $H$ \cite[Lemma 7.1]{hofer-salamon95};
\item[(iii)]
for the linearization of Floer's equation (at a gradient flow line), all solutions are also constant in $t$ \cite[Proposition 4.2]{salamon-zehnder92}; together with an easy index computation, this shows that the moduli spaces are all regular.
\end{itemize}
}
\right.
\end{equation}
Hence, $\mathit{CF}^*(M,H)$ is isomorphic to the Morse complex of $H$ (this continues to hold when signs are taken into account). Moreover, when defining the BV operator, one may choose $(H_{r,s,t},J_{r,s,t}) = (H,J)$, in which case there are no isolated solutions of \eqref{eq:bv-equation}, since $r$ can be changed freely. Hence, $\delta$ itself vanishes.

We have proved that the BV operator is zero on the chain level, for a specific kind of Hamiltonian and almost complex structure. One can be slightly more precise about the meaning of this vanishing result; for that, it is convenient to introduce some simple algebraic language.

\begin{setup} \label{th:null}
(i) Suppose that we have a chain complex $(C^*,d)$ together with a chain endomorphism $\delta$ of degree $-1$. Recall that a nullhomotopy for $\delta$ is a map $\chi: C^* \rightarrow C^{*-2}$ satisfying $d\chi - \chi d = \delta$. Two nullhomotopies are called equivalent if the difference between them is a nullhomotopic chain map of degree $-2$. Equivalence classes are called {\em homotopy trivializations} of $\delta$.

(ii) Take two chain complexes $(C,d)$ and $(\tilde{C},\tilde{d})$, with endomorphisms $\delta$ and $\tilde\delta$. Suppose that we have maps
\begin{equation}
\begin{aligned}
& k: C^* \longrightarrow \tilde{C}^*, \\
& \kappa: C^* \longrightarrow \tilde{C}^{*-2}, 
\end{aligned}
\end{equation}
of which the first one is a chain map, and the second satisfies \eqref{eq:compatibility-with-delta}. Given homotopy trivializations $[\chi]$ and $[\tilde{\chi}]$ on our complexes, we say that they are {\em compatible with $(k,\kappa)$} if there is a map
\begin{equation} \label{eq:ggg}
g: C^* \longrightarrow \tilde{C}^{*-3}
\end{equation}
such that
\begin{equation}
\tilde{\chi} k - k \chi - \kappa = \tilde{d} g + g d.
\end{equation}
It is easy to see that compatibility depends only on the equivalence classes of $\chi$ and $\tilde{\chi}$. Moreover, if $k$ is a quasi-isomorphism, every homotopy trivialization on one complex determines a unique compatible homotopy trivialization on the other one.
\end{setup}

In this terminology, the desired statement is that the chain complex underlying $\mathit{HF}^*(M,\epsilon)$ (still in the situation of Proposition \ref{th:bv-vanish}, but now with $(H,J)$ arbitrary) comes with a canonical homotopy trivialization of the BV operator, which is compatible with \eqref{eq:cont-maps}. To prove that, one takes two choices $(H,J)$ and $(\tilde{H},\tilde{J})$ to which the proof of Proposition \ref{th:bv-vanish} applies. The associated BV operators vanish on the chain level, hence have trivial nullhomotopies. From \eqref{eq:compatibility-with-delta}, one gets a degree $-2$ chain map between these Floer complexes (itself canonical up to chain homotopy). One can show that, within a slightly more precisely defined class of Hamiltonians and almost complex structures, those chain maps are always nullhomotopic. Nullhomotopies for them provide the desired maps \eqref{eq:ggg}, which show compatibility of the homotopy trivializations. After that, one uses continuation maps to extend to arbitrary $(H,J)$. We omit the details.

\begin{proposition} \label{th:bv-vanish-2}
Suppose that $\partial M$ is a contact circle bundle. Take $\epsilon \in (1,2)$. Then there is a long exact sequence (of $\bZ/2$-graded spaces)
\begin{equation} \label{eq:morse-bott-sequence}
\cdots \rightarrow H^*(M) \longrightarrow \mathit{HF}^*(M,\epsilon) \longrightarrow H^*(\partial M) \rightarrow \cdots
\end{equation}
The connecting map in \eqref{eq:morse-bott-sequence}, $\mathit{H}^*(\partial M) \rightarrow H^{*+1}(M)$, vanishes on $H^0(\partial M)$. Moreover, the BV operator vanishes on the preimage of $H^0(\partial M)$ inside $\mathit{HF}^*(M,\epsilon)$. Finally, suppose that $M$ carries a symplectic Calabi-Yau structure constructed as in Example \ref{th:contact-circle-bundle}(ii). Then, \eqref{eq:morse-bott-sequence} becomes $\bZ$-graded if we take the rightmost term to be $H^{*+2m-2}(\partial M)$.
\end{proposition}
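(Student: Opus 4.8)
\emph{Assembling the sequence.} Since the Reeb flow on $\partial M$ has period $1$, condition \eqref{eq:no-reeb} fails inside $(0,2)$ only at $\epsilon = 1$. Hence Lemmas \ref{th:no-wall} and \ref{th:wall}, applied with $\epsilon_-\in(0,1)$ and $\epsilon_+=\epsilon$, produce a long exact sequence
\[ \cdots \to \mathit{HF}^*(M,\epsilon_-) \to \mathit{HF}^*(M,\epsilon) \to H^*(Q) \to \cdots, \]
compatible with BV operators, where $Q^*$ is the local Floer complex of the wall at $\epsilon = 1$; and by Lemma \ref{th:bv-vanish} together with Lemma \ref{th:no-wall}, $\mathit{HF}^*(M,\epsilon_-)\iso H^*(M)$ with trivial BV operator. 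To identify $Q^*$ I would run the proof of Lemma \ref{th:wall} with the convex function $h_+$, so that the $1$-periodic orbits at the wall all lie on one level set of $\rho_{\hat M}$, where they form a manifold canonically diffeomorphic to $\partial M$ --- the simple fibre Reeb orbits, parametrised by their starting point. This family is Morse--Bott nondegenerate: along the level set the linearisation of the time-$1$ flow is the identity, because the Reeb flow is a \emph{free} $S^1$-action of period $1$; transversally, $h_+''\neq 0$ forces a nonzero shear; so the fixed subspace of the linearised return map is exactly the tangent space to that copy of $\partial M$. (If $\partial M$ is disconnected one argues componentwise, as after Lemma \ref{th:wall-2}.) Lemma \ref{th:wall-2} then gives $H^*(Q)\iso H^{*+k}(\partial M;\xi)$, with the degree-$(-1)$ endomorphism on the right induced by the free Reeb $S^1$-action on $\partial M$.

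\emph{The local system, the shift, and the BV operator on $Q$.} The orbits of the family are simple, hence ``good'', so $\xi$ is trivial; and $k$ is even, so $H^*(Q)\iso H^*(\partial M)$ as $\bZ/2$-graded groups, which is the first assertion. Given a Calabi--Yau structure as in Example \ref{th:contact-circle-bundle}(ii), $k$ is the (appropriately normalised) Conley--Zehnder index of the simple fibre Reeb orbit relative to the chosen trivialisation of $K_M$: capping the orbit with a fibre disc $D_b$ of the normal disc bundle of $B:=\partial M/S^1$ in $M^{\mathit{cl}}$, and writing the chosen canonical section near $B$ in a normal coordinate $z$ as a unit times $z^{-m}\,dz\wedge(\text{a form on }B)$, one finds that it winds $-m$ times relative to a trivialisation extending over $D_b$; with the Morse--Bott correction this yields $k=2m-2$, as in the computation of \cite{mclean12}. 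Thus $H^*(Q)=H^{*+2m-2}(\partial M)$, which is the $\bZ$-graded refinement. The induced endomorphism on $H^*(Q)$, coming from a \emph{free} circle action, factors through fibre integration $H^*(\partial M)\to H^{*-1}(B)$, and therefore annihilates $H^0(\partial M)$; this is the endomorphism that fits in with the BV operators of the sequence.

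\emph{The connecting map on $H^0(\partial M)$.} Pass to the Morse--Bott model \eqref{eq:morse-bott-c}--\eqref{eq:morse-bott-d}, in which $\mathit{CF}^*(M,H_-)$ is the subcomplex (for action reasons) and $Q^*=\mathit{CM}^{*+k}(f;\xi)$ the quotient; choosing $f$ with a single minimum $e_F$, the class $1\in H^0(\partial M)$ is represented by $e_F$, and since $e_F$ has Morse index $0$ one has $d_{\mathit{Morse}}(e_F)=0=\delta_{\mathit{Morse}}(e_F)$, while the connecting homomorphism sends $[e_F]$ to $[d_{\mathit{mixed}}(e_F)]\in H^*(M)$. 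The task is to show this class is zero. Now $d_{\mathit{mixed}}(e_F)$ counts rigid Floer half-trajectories in $\hat M$ from $1$-periodic orbits of $H_-$ to the family of simple fibre Reeb orbits, which the maximum principle confines to $M\subset M^{\mathit{cl}}$; capping the Reeb-orbit end with the fibre disc identifies these with a count of holomorphic curves in $M^{\mathit{cl}}$ meeting $B$, whose net contribution to $H^*(M)$ is pulled back --- via fibre integration over the Reeb circle --- from $H^{*-1}(B)$, and hence vanishes on the fundamental class of $\partial M$. (In the $\bZ$-graded case this is automatic once $m\geq 1$, the target $H^{3-2m}(M)$ being $0$.) I expect this identification --- of the connecting map, on classes pulled back from $B$, with a fibre-integration map --- to be the main obstacle, since it requires the compactness and gluing analysis of \cite{mclean12}.

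\emph{Vanishing of the BV operator.} Let $\alpha\in\mathit{HF}^*(M,\epsilon)$ map into $H^0(\partial M)\subset H^*(Q)$. If its image is $0$ then $\alpha$ comes from $H^*(M)$ and $\Delta\alpha=0$ by BV-compatibility of the sequence and Lemma \ref{th:bv-vanish}. In general, BV-compatibility and the preceding paragraph give $\pi(\Delta\alpha)=0$, so $\Delta\alpha$ lies in the image of $H^*(M)$; to conclude $\Delta\alpha=0$ I would work on chains. Using $[d_{\mathit{mixed}}(e_F)]=0$, represent $\alpha$ by a cocycle $(\tilde\alpha_M,e_F)$ with $d_{\mathit{Floer}}\tilde\alpha_M=-d_{\mathit{mixed}}(e_F)$; apply the Morse--Bott BV operator \eqref{eq:morse-bott-delta}; and use $\delta_{\mathit{Morse}}(e_F)=0$, the relations \eqref{eq:delta-left}--\eqref{eq:delta-right} evaluated at $e_F$, and the fact that the chain-level BV operator on $\mathit{CF}^*(M,H_-)$ is null-homotopic (Remark \ref{th:homotopically-unique}), to exhibit a representative of $\Delta\alpha$ as a $d_{\mathit{Floer}}$-coboundary. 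In the $\bZ$-graded case with $m\geq 1$ this, too, is automatic for degree reasons, since $\mathit{CM}^{<0}(M)=0$.
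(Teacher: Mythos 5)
Your assembly of the long exact sequence (Lemmas \ref{th:no-wall}, \ref{th:wall}, \ref{th:wall-2}, \ref{th:bv-vanish}, the Morse--Bott model, and the index computation $k=2m-2$) matches the paper. But the two substantive claims --- vanishing of the connecting map on $H^0(\partial M)$ and of the BV operator on its preimage --- are not actually established, and you miss the one idea the paper's proof turns on. That idea is to keep the Hamiltonian \emph{and} the almost complex structure genuinely time-independent even for the mixed moduli spaces, invoking \cite[Theorem 7.4]{floer-hofer-salamon94} to achieve regularity (this is where one uses that the orbits in $F$ are \emph{simple}, so mixed trajectories are somewhere injective); then any isolated mixed trajectory asymptotic to the open set of points flowing to the minimum $e_F$ could be rotated in $t$, so there are none, and one gets the chain-level vanishings $d_{\mathit{mixed}}(e_F)=0$, $\delta_{\mathit{Floer}}=0$, $\delta_{\mathit{left}}=0$, $\delta_{\mathit{right}}(e_F)=0$. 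Both assertions of the proposition then fall out immediately. By contrast, you defer the connecting-map vanishing to a compactification/gluing argument in $M^{\mathit{cl}}$ identifying $d_{\mathit{mixed}}(e_F)$ with a count of curves meeting $B$ --- a genuinely different and much heavier route, which you yourself flag as unproven (and your degree remark is off by one: for $m=1$ the target is $H^{1}(M)$, which need not vanish; the vanishing is automatic only for $m\geq 2$).

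The BV argument has a concrete gap even granting $[d_{\mathit{mixed}}(e_F)]=0$. Writing $\alpha=[(\tilde\alpha_M,e_F)]$ and using $\delta_{\mathit{Floer}}=d_{\mathit{Floer}}\chi-\chi d_{\mathit{Floer}}$ together with \eqref{eq:delta-left}--\eqref{eq:delta-right} evaluated at $e_F$, your computation shows that the first component of $\delta(\tilde\alpha_M,e_F)$ equals $d_{\mathit{Floer}}(\chi\tilde\alpha_M)$ plus the cocycle $(\delta_{\mathit{left}}+\delta_{\mathit{right}})(e_F)+\chi\, d_{\mathit{mixed}}(e_F)$. That element is closed (its $d_{\mathit{Floer}}$-image cancels), but nothing in your argument makes it exact --- so you have only reproved that $\Delta\alpha$ lies in the image of $H^*(M)$, which is where you started. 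The paper avoids this circularity precisely by killing $\delta_{\mathit{left}}$, $\delta_{\mathit{right}}(e_F)$ and $d_{\mathit{mixed}}(e_F)$ on the nose, so that $\delta(\tilde\alpha_M,e_F)=0$ as a chain.
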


The existence of such an exact sequence is a special case of Lemma \ref{th:wall-2}, combined with Proposition \ref{th:bv-vanish}; what's new are the additional properties (of the connecting map and BV operator).

\begin{proof}
Choose $\epsilon_- > 0$ small, and define $\mathit{HF}^*(M,\epsilon_-)$ using time-independent $H_-$ (and the same for the almost complex structure), as in Proposition \ref{th:bv-vanish}. For $\epsilon_+ = \epsilon \in (1,2)$, extend $H_-$ to $\hat{H}_+$ as in \eqref{eq:extend-hamiltonian}. This enlargement produces a Morse-Bott-nondegenerate manifold of $1$-periodic orbits $F$, which is a copy of $\partial M$ (and one can check that it carries a trivial local system $\xi$). Floer trajectories whose limits $x_{\pm}$ lie in $M$ remain entirely within that subset, and have the same description as in Proposition \ref{th:bv-vanish}. The remaining point is to ensure that Floer trajectories with mixed limits ($x_-$ lies in $M$, and $x_+$ in $F$) can be made regular. Because $x_+$ is a simple periodic orbit, such trajectories are themselves simple, in the sense of \cite[p.~279]{floer-hofer-salamon94}. Then, \cite[Theorem 7.4]{floer-hofer-salamon94} ensures that one can choose $J$ (by varying it near $x_-$) so that regularity holds generically. Three technical remarks are appropriate. First, transversality theory needs to be carried out in an analytic formalism suitable for the Morse-Bott case. Secondly, while the results in \cite{floer-hofer-salamon94} are stated for trajectories with both limits being constant orbits, only one such limit is actually necessary for the argument to go through (and, since the constant orbits are still nondegenerate in our situation, their treatment does not need to modified). Finally, we need a technical condition on the Hessian of $H_-$ at its critical points \cite[Definition 7.1]{floer-hofer-salamon94}, but it is unproblematic to arrange that it holds.

Given that, we can use this same almost complex structure everywhere in the definition of \eqref{eq:morse-bott-d} and \eqref{eq:morse-bott-delta}. Then, as in Proposition \ref{th:bv-vanish} (which means using the fact that $r$ is a free parameter), we get
\begin{align} \label{eq:1-vanish}
& \delta_{\mathit{Floer}} = 0, \\
& \delta_{\mathit{left}} = 0.
\end{align}

Now let $x_+$ be a local minimum (Morse index $0$ critical point) of $f$, so that for degree reasons,
\begin{equation}
\delta_{\mathit{Morse}}(x_+) = 0.
\end{equation}
There is an open subset of $F$ consisting of points $v(0)$ which flow to $x_+$ under $-\nabla_g f$. The Floer trajectories $u$ such that $\lim_{s \rightarrow +\infty} u(s,\cdot)$ lies in that open subset form a space of dimension $\geq 1$ (since one can rotate them a little in the $t$-variable). The same applies if one replaces the gradient flow equation by \eqref{eq:morse-bv-shifted}. As a consequence,
\begin{align}
& d_{\mathit{mixed}}(x_+) = 0, \\
& \delta_{\mathit{right}}(x_+) = 0. \label{eq:5-vanish}
\end{align}
The desired properties of \eqref{eq:morse-bott-sequence} and $\Delta$ follow directly from this. The observation about $\bZ$-gradings is a standard Conley-Zehnder index computation.
\end{proof}

As before, it can be useful to encode this observation in a more abstract framework, which also allows one to formulate a uniqueness property.

\begin{setup} \label{th:truncation}
(i) In the situation of Setup \ref{th:null}(i), fix some integer $j$. Let $C^{\leq j} \subset C^*$ be the subcomplex consisting of all cocycles of degree $j$, together with all cochains of degree $<j$. Note that this is automatically preserved by $\delta$. A {\em homotopy trivialization of $\delta$ in degrees $\leq j$} is a homotopy trivialization of $\delta^{\leq j} = \delta|C^{\leq j}$, in the previously defined sense. The existence of such a trivialization implies that $[\delta]: H^*(C) \rightarrow H^{*-1}(C)$ vanishes in degrees $\leq j$, since the map $H^*(C^{\leq j}) \rightarrow H^*(C)$ is an isomorphism in those degrees.

(ii) In the situation of Setup \ref{th:null}(ii), suppose that we have homotopy trivializations of $C$ and $\tilde{C}$ in degrees $\leq j$. We say that these are compatible if there exists a map \eqref{eq:ggg} defined on $C^{\leq j}$, with the same properties as before.
\end{setup}

Suppose that we are in the situation of Proposition \ref{th:bv-vanish-2}, with a symplectic Calabi-Yau structure constructed as in Example \ref{th:contact-circle-bundle}(ii). Then, the proof of that Proposition yields (for a very special choice of Morse-Bott complex underlying Floer cohomology) a homotopy trivialization of the BV operator in degrees $\leq 2m-2$. The relevant uniqueness result would say that this is compatible with continuation maps. Proving this requires an analogue of Proposition \ref{th:bv-vanish-2} for continuation maps, which we will not explain here.

\begin{remark}
The statement of Proposition \ref{th:bv-vanish-2} is actually not optimal. One can show that the connecting map vanishes on $\mathit{im}(H^*(\partial M/S^1) \rightarrow H^*(\partial M))$, and that the BV operator is zero on the preimage of that subspace inside $\mathit{HF}^*(M,\epsilon)$. To do that, one has to use a chain level model for $H^*(\partial M)$ which is more closely adapted to the circle action than what we've done (one possibility is to perturb the Reeb flow on $\partial M$ so that the one-periodic orbits become transversally nondegenerate).

Alternatively, one can use the $S^1$-equivariant version of Hamiltonian Floer cohomology (see e.g.\ \cite{seidel07}), which is a $\bK[[u]]$-module $\mathit{HF}^*_{S^1}(M,\epsilon)$. In the situation of Proposition \ref{th:bv-vanish-2}, it fits into a long exact sequence of such modules,
\begin{equation} \label{eq:s1-equi-les}
\cdots \rightarrow H^*(M)[[u]] \longrightarrow \mathit{HF}^*_{S^1}(M,\epsilon) \longrightarrow H^*(\partial M/S^1) \rightarrow \cdots
\end{equation}
where $u$ acts on $H^*(\partial M/S^1)$ by cup product with the first Chern class of the circle bundle $\partial M \rightarrow \partial M/S^1$. Since that action is nilpotent, the boundary operator of \eqref{eq:s1-equi-les} is necessarily zero. Now, \eqref{eq:s1-equi-les} and \eqref{eq:morse-bott-sequence} fit into a commutative diagram whose rows and columns are long exact sequences:
\begin{equation}
\xymatrix{
& 
\vdots
\ar[d]^-{\text{zero}}
& 
\vdots
\ar[d]
&
\vdots
\ar[d]
&
\\
\cdots
\ar[r]^-{\text{zero}}
& 
H^*(M)[[u]] 
\ar[r] \ar[d]^-{u}
& 
\mathit{HF}^*_{S^1}(M,\epsilon)
\ar[r] \ar[d]^-{u}
&
H^*(\partial M/S^1)
\ar[r]^-{\text{zero}} \ar[d]^-{u} 
& 
\cdots
\\
\cdots
\ar[r]^-{\text{zero}}
& 
H^*(M)[[u]] 
\ar[r] \ar[d]
& 
\mathit{HF}^*_{S^1}(M,\epsilon)
\ar[r] \ar[d]
&
H^*(\partial M/S^1)
\ar[r]^-{\text{zero}} \ar[d]
& 
\cdots
\\
\cdots
\ar[r]
&
H^*(M)
\ar[r]
\ar[d]^-{\text{zero}}
&
\mathit{HF}^*(M,\epsilon)
\ar[r]
\ar[d]
&
H^*(\partial M) 
\ar[r]
\ar[d] 
&
\cdots
\\
& 
\vdots
& 
\vdots
& 
\vdots
&
}
\end{equation}
The right hand column is the standard Gysin sequence; and the BV operator is the composition of two vertical arrows (going from $\mathit{HF}^*(M,\epsilon)$ to $\mathit{HF}^*_{S^1}(M,\epsilon)$ and then back). Diagram-chasing yields the desired result: if a class in $\mathit{HF}^*(M,\epsilon)$ has the property that its image in $H^*(\partial M)$ is the pullback of a class in $H^*(\partial M/S^1)$, then it necessarily comes from a class in $\mathit{HF}^*_{S^1}(M,\epsilon)$, hence is killed by the BV operator.
\end{remark}

\section{Fixed point Floer cohomology}

The generalisation of Floer cohomology from Hamiltonian to general symplectic automorphisms was introduced in \cite{dostoglou-salamon93, dostoglou-salamon94}. Here, we use a version for Liouville domains, as in \cite[Section 4]{seidel00b}, \cite{mclean12}, or \cite{uljarevic14}. 

\subsection{Definition}
For $M$ as before, we will consider the following situation.

\begin{setup} \label{th:auto-setup}
(i) Let $\phi$ be a symplectic automorphism of $M$ which is equal to the identity near $\partial M$ and exact. The latter condition means that there is a function $G_\phi$ (necessarily locally constant near $\partial M$) such that $\phi^*\theta_M - \theta_M = dG_\phi$. 

(ii) If $M$ carries a symplectic Calabi-Yau structure, we will assume that $\phi$ is compatible with it, and in fact comes with a choice of grading (making it a graded symplectic automorphism \cite{seidel99}).
\end{setup}

\begin{example} \label{th:boundary-twist}
Suppose that $\partial M$ is a contact circle bundle. Fix a function $F$ on $M$ which agrees with $\rho_M$ near the boundary, and let $(\phi^t_F)$ be its flow. By construction, $\phi_F^{-1}$ is the identity near the boundary. We call it the {\em boundary twist} of $M$ (see \cite[Section 4]{seidel99} for a general discussion of such symplectic automorphisms), and denote it by $\tau_{\partial M}$.

Let's now assume that $M$ carries a symplectic Calabi-Yau structure as in Example \ref{th:contact-circle-bundle}(ii). Then, $\tau_{\partial M}$ can be equipped with the structure of a graded symplectic automorphism. In fact, there are two reasonable choices (which coincide for $m = 1$):

(i) One can take the trivial grading of the identity, and extend it continuously over the isotopy $(\phi_F^t)$ to get a grading of $\tau_{\partial M}$. Near the boundary, that grading is a shift $[2-2m]$.

(ii) Alternatively, by changing the previous grading by a constant $2-2m$, one can get the unique grading of $\tau_{\partial M}$ which is trivial near the boundary.
\end{example}

Take families of functions $H = (H_t)$ and $J = (J_t)$, parametrized by $t \in \bR$. Each of those should be as in \eqref{eq:hamiltonian} and \eqref{eq:j-convex}, but now with $\phi$-twisted periodicity properties:
\begin{equation} \label{eq:phi-periodicity}
\left\{
\begin{aligned}
& H_{t+1}(x) = H_t(\phi(x)), \\
& J_{t+1} = \phi^*J_t.
\end{aligned}
\right.
\end{equation}
Take the space $\scrL_\phi = \{x: \bR \rightarrow M \,:\, x(t) = \phi(x(t+1))\}$, with action functional
\begin{equation} \label{eq:phi-action}
A_{\phi,H}(x) = \Big( \int_0^1 -x^*\theta_M + H_t(x(t)) \, dt \Big) - G_\phi(x(1)).
\end{equation}
Its critical points are solutions $x \in \scrL_\phi$ of \eqref{eq:periodic-y}, and correspond bijectively to fixed points $x(1)$ of $\phi_H^1 \circ \phi$. For a generic choice of $H$, these will be nondegenerate, and we use them as generators of a $\bZ/2$-graded $\bK$-vector space $\mathit{CF}^*(\phi,H)$. To define the differential, one considers solutions $u: \bR^2 \rightarrow M$ of \eqref{eq:floer}, but now satisfying
\begin{equation} \label{eq:floer-2}
u(s,t) = \phi(u(s,t+1)). \\
\end{equation}
The resulting Floer cohomology is denoted by $\mathit{HF}^*(\phi,\epsilon)$, with the previous $\mathit{HF}^*(M,\epsilon)$ being the special case $\phi = \mathit{id}_M$. Remark \ref{th:signs-and-grading} carries over to this more general situation; except that to make $\mathit{HF}^*(\phi,\epsilon)$ $\bZ$-graded, one needs to impose conditions on $\phi$ as well as on $M$, as in Setup \ref{th:auto-setup}(ii).

\begin{remark} \label{th:circle-and-discrete-actions}
In general, $\mathit{HF}^*(\phi,\epsilon)$ does not carry a BV operator. The exception is when $\phi$ can be written as the time-one map of a Hamiltonian flow $(\phi^t)$. Here, the general $\phi^t$ do not have to be equal to the identity near $\partial M$, but they have to preserve $\theta_M$ near $\partial M$ (one example of this would be the flow that leads to $\phi = \tau_{\partial M}$). One then defines a circle action on $\scrL_\phi$ by mapping a twisted loop $x$ to
\begin{equation}
x^{(r)}(t) = \phi^t(x(t-r)).
\end{equation}

There is a related situation where fixed point Floer cohomology admits a discrete symmetry, which was extensively studied in \cite{seidel14c, polterovich-shelukhin15}. Namely, suppose that $\phi$ admits a $k$-th root $\phi^{1/k}$ (such that $(\phi^{1/k})^*\theta_M = \theta_M$ near $\partial M$). This gives rise to an action of $\bZ/k$ on $\scrL_\phi$, whose generator maps $x$ to
\begin{equation}
x^{(1/k)}(t) = \phi^{1/k}(x(t-1/k)).
\end{equation}
One gets an induced action of $\bZ/k$ on $\mathit{HF}^*(\phi)$ (in the previously considered case of Hamiltonian flows, $\phi$ has $k$-th roots for any $k$; but the induced $\bZ/k$-actions on Floer cohomology are trivial, which is intuitively clear since they embed into a continuous symmetry).
\end{remark}

Floer cohomology is invariant under isotopies of $\phi$ (within the class of symplectic automorphisms that are allowed). One has a straightforward counterpart of \eqref{eq:id-poincare-duality}:
\begin{equation}\label{eq:phi-poincare-duality}
\mathit{HF}^*(\phi^{-1},-\epsilon) \iso \mathit{HF}^{2n-*}(\phi,\epsilon)^\vee.
\end{equation}
As for the dependence on $\epsilon$, Lemmas \ref{th:no-wall}--\ref{th:wall-2}, with the parts about BV operators omitted, carry over with essentially the same proofs.

\begin{lemma} \label{th:tau-shift}
Suppose that $\partial M$ is a contact circle bundle, and let $\tau_{\partial M}$ be the boundary twist. For any $\phi$ and $\epsilon$, 
\begin{equation} \label{eq:shift-isomorphism}
\mathit{HF}^*(\tau_{\partial M}^{-1} \circ \phi, \epsilon - 1) \iso \mathit{HF}^*(\phi,\epsilon). 
\end{equation}
This isomorphism is compatible with $\bZ$-gradings if we use option (i) from Example \ref{th:boundary-twist} as a grading for $\tau_{\partial M}$; if instead we use option (ii), the right hand side of \eqref{eq:shift-isomorphism} should be replaced with $\mathit{HF}^{*+2-2m}(\phi,\epsilon)$.
\end{lemma}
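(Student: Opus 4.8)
The plan is to reduce \eqref{eq:shift-isomorphism} to a statement about reparametrizing twisted loops, exploiting the fact that $\tau_{\partial M}$ is supported near the boundary and is (up to sign) the time-one map of a flow that shifts along the Reeb direction. Concretely, recall that $\tau_{\partial M} = \phi_F^{-1}$ where $F = \rho_M$ near $\partial M$, so that $\phi_F^t$ acts near the boundary by the (reverse) Reeb flow of period $t$. I would first observe that a twisted loop $x \in \scrL_{\tau_{\partial M}^{-1}\circ\phi}$, i.e.\ $x(t) = (\tau_{\partial M}^{-1}\circ\phi)(x(t+1)) = (\phi_F^{1}\circ\phi)(x(t+1))$, can be converted into a $\phi$-twisted loop by the substitution $y(t) = \phi_F^{-t}(x(t))$, using that $(\phi_F^t)$ is a genuine one-parameter group (so $\phi_F^{-(t+1)} = \phi_F^{-t}\circ\phi_F^{-1}$). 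Since $\phi_F^t$ equals the identity away from the collar, this is the identity on the bulk of $M$ and only rotates in the conical coordinate near the boundary.

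Next I would track what this substitution does to the Floer data. If $(H_t, J_t)$ are the twisted Floer data for $\tau_{\partial M}^{-1}\circ\phi$ at slope $\epsilon-1$, then setting $H'_t = H_t\circ\phi_F^{t} - F + (\text{const})$ — more precisely, transporting $H_t$ through $\phi_F^t$ and subtracting off the generating Hamiltonian $F$ of the reparametrization — produces data $(H'_t, J'_t)$ that are $\phi$-twisted and whose slope near $\partial M$ has been increased by $1$, since $F = \rho_M$ there contributes $+\rho_M$ to the Hamiltonian, i.e.\ $\epsilon$ instead of $\epsilon-1$. One must check that $(H'_t, J'_t)$ still lie in the admissible classes \eqref{eq:hamiltonian}, \eqref{eq:j-convex} (this is automatic because $\phi_F^t$ preserves $\rho_M$ and intertwines $Z_M$ with itself near the boundary, hence preserves the convexity condition $JZ_M = R_M$), and that the correspondence $x \mapsto y$ matches up critical points, Floer trajectories, and orientations. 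The action functionals \eqref{eq:phi-action} differ only by the term coming from $G_\phi$ versus $G_{\tau_{\partial M}^{-1}\circ\phi}$ together with the primitive picked up by the reparametrization; these combine to a global constant, so the two complexes are literally isomorphic, not merely quasi-isomorphic. Passing to cohomology gives \eqref{eq:shift-isomorphism}.

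For the grading statement, I would note that the substitution $x\mapsto y = \phi_F^{-t}(x)$ twists the symplectic trivialization along each loop by the linearized flow $D\phi_F^{-t}$, which near the boundary is the linearized Reeb flow; comparing Conley–Zehnder indices of $x$ and $y$ introduces a shift equal to (twice) the Maslov-type index of the path $t\mapsto D\phi_F^{-t}$ relative to the chosen Calabi-Yau trivialization. By the description in Example \ref{th:boundary-twist}, grading option (i) is exactly the grading for which this shift is absorbed, so the isomorphism is degree-preserving; option (ii) differs from (i) by the constant $2-2m$ near the boundary, which reintroduces that shift, whence the target must be replaced by $\mathit{HF}^{*+2-2m}(\phi,\epsilon-1)$. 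This is essentially the same index bookkeeping already invoked in Proposition \ref{th:bv-vanish-2} and in Example \ref{th:boundary-twist}, so I would cite \cite{seidel99, salamon-zehnder92} rather than redo it.

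The main obstacle is the verification that the reparametrization is genuinely an isomorphism of chain complexes respecting all the structure, in particular that no $1$-periodic orbits or Floer trajectories are created or destroyed at the boundary: one has to be sure that the maximum-principle confinement (which kept solutions for slope $\epsilon-1$ in the interior) is matched on the other side by the confinement for slope $\epsilon$, and that the admissibility constraint \eqref{eq:no-reeb} at slope $\epsilon-1$ translates precisely to \eqref{eq:no-reeb} at slope $\epsilon$ — which it does, since shifting the monodromy by $\tau_{\partial M}$ shifts the relevant return-time condition on Reeb orbits by exactly $1$. Apart from that, the argument is a direct change of variables, and I expect the write-up to be short.
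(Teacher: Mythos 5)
This is exactly the paper's argument: conjugate the twisted loop space by the flow $(\phi^t_F)$ (chosen to be supported near $\partial M$ so that it commutes with $\phi$), pull back the action functional, observe that the Hamiltonian's slope near $\partial M$ shifts by $1$, and conclude that the two Floer complexes are literally isomorphic; the grading bookkeeping is likewise the same. One sign needs fixing: with $\tau_{\partial M}=\phi_F^{-1}$ and the periodicity $x(t)=(\phi_F^{1}\circ\phi)(x(t+1))$, the correct substitution is $y(t)=\phi^{+t}_F(x(t))$ — your $y(t)=\phi_F^{-t}(x(t))$ lands in $\scrL_{\tau_{\partial M}^{-2}\circ\phi}$ rather than $\scrL_{\phi}$, since $\phi_F^{-2}\neq\mathit{id}$ away from the collar — and correspondingly the transported Hamiltonian is $H_t\circ\phi_F^{-t}+F$, i.e.\ one \emph{adds} $F$ (as your verbal description of the slope going from $\epsilon-1$ to $\epsilon$ requires), not subtracts it.
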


\begin{proof}
When defining $\tau_{\partial M}$, one can choose the flow $(\phi^t_F)$ to be supported arbitrarily close to $\partial M$, so that it commutes with $\phi$. Consider the diffeomorphism
\begin{equation} \label{eq:change-loop}
\begin{aligned}
& \scrF: \scrL_{\tau_{\partial M}^{-1} \circ \phi} \longrightarrow \scrL_{\phi}, \\
& (\scrF x)(t) = \phi^t_F(x(t)),
\end{aligned}
\end{equation}
which satisfies
\begin{equation}
\begin{aligned}
& \scrF^* A_{\phi,H} = A_{\tau_{\partial M}^{-1} \circ \phi, \tilde{H}}, \\
& \tilde{H}_t(x) = H_t(\phi^t_F(x)) - F(x).
\end{aligned}
\end{equation}
Note that close to $\partial M$, $\tilde{H}_t = \epsilon\rho_M - \rho_M$. Given suitable choices of almost complex structures, \eqref{eq:change-loop} gives rise to an isomorphism of Floer cochain complexes, which induces \eqref{eq:shift-isomorphism}.
\end{proof}

\begin{example} \label{th:grading-of-boundary-twist}
Assuming choice (ii) for the grading, 
$\mathit{HF}^{*+2-2m}(\tau_{\partial M}, \epsilon) \iso \mathit{HF}^*(M,\epsilon-1)$. In particular, using Proposition \ref{th:bv-vanish} and \eqref{eq:id-poincare-duality}, one obtains \eqref{eq:trivial-monodromy} (for $m = 0$) and its generalization \eqref{eq:trivial-monodromy-2} (for arbitrary $m$).
\end{example}

\subsection{Symplectic mapping tori\label{subsec:mapping-tori}}
As usual, the mapping torus construction relates discrete dynamics (symplectic automorphisms) and its continuous counterpart (Hamiltonian flows).

\begin{setup} \label{th:mapping-torus-setup}
(i) Let $M$ be a Liouville domain, and $\mu$ an exact symplectic automorphism (as in Setup \ref{th:auto-setup}, with associated function $G_\mu$). Its symplectic mapping torus is the fibration
\begin{equation} \label{eq:mapping-cone-projection}
E = \frac{\bR^2 \times M}{(p,q,x) \sim (p,q-1,\mu(x))}
\xrightarrow{\pi(p,q,x) = (p,q)}
\bR \times S^1.
\end{equation}
The symplectic form is $\omega_E = dp \wedge dq + \omega_M$. To obtain a primitive, one chooses a function $G = G(q,x)$ such that $G_\mu(x) = G(q,x) - G(q-1,\mu(x))$, and sets $\theta_E = p\,\mathit{dq} + \theta_M + dG$.

(ii) If $M$ has a symplectic Calabi-Yau structure, and $\mu$ is a graded symplectic automorphism, the mapping torus inherits a symplectic Calabi-Yau structure.

(iii) We use the class of functions $H$ on $E$ of the following form. Fix constants $\epsilon$, as in \eqref{eq:no-reeb}, and $\gamma_{\pm} \in \bR \setminus \bZ$. Let $H_M$ be a function on the fibre, which equals $\epsilon \rho_M$ near the boundary, and which is invariant under $\mu$ (such functions can easily be constructed as cutoffs of $\epsilon \rho_M$). Let $H_{\bR \times S^1}$ be a function on the base, such that $H_{\bR \times S^1}(p,q) = \gamma_{\pm} p$ if $\pm p \gg 0$. Then, there should be an open subset $U \subset E$, which contains $\partial E$ and has compact complement, such that
\begin{equation} \label{eq:mapping-torus-h}
H(p,q,x) = H_{\bR \times S^1}(p,q) + H_M(x) \quad \text{for $(p,q,x) \in U$.}
\end{equation}
We stress that if we have several different $H$, the associated $H_{\bR \times S^1}$ and $H_M$ can also be different (they do not have to be fixed once and for all).

(iv) We will use almost complex structures $J$ on $E$ such that: outside a compact subset, $\pi$ is $J$-holomorphic (with respect to the standard complex structure $i$ on the base); and near the boundary, $J = J_{\bR \times S^1} \times J_{M,p,q}$, where $J_{\bR \times S^1}$ is an almost complex structure on the base which is standard outside a compact subset, and the $J_{M,p,q}$ are as in Setup \ref{th:hamiltonian-setup}.
\end{setup}

Given time-dependent $H = (H_t)$ and $J = (J_t)$, one can build a Floer complex $\mathit{CF}^*(E,H)$ as before. Of course, one has to check that solutions of Floer's equation $u: \bR \times S^1 \rightarrow E$ remain inside a compact subset of $E \setminus \partial E$. To see that this is the case, note that on the subset where the $|p|$-coordinate of $u$ is large, $v = \pi(u)$ will itself be a solution of 
\begin{equation}
\partial_s v + i(\partial_t v - \gamma_{\pm} \partial_q) = 0,
\end{equation}
($\partial_q$ stands for the unit vector field in $q$-direction), hence its $p$-component is harmonic. On the other hand, where $u$ is close to $\partial E$, one can consider its fibre component alone, and then apply the maximum principle in the same way as when constructing Floer cohomology inside $M$. The resulting groups $\mathit{HF}^*(E,\gamma_-,\gamma_+,\epsilon)$ depend only on the constants involved (and we can change those, without affecting Floer cohomology, as long as no ``forbidden values'' are crossed, in parallel with Lemma \ref{th:no-wall}). They also carry a BV operator.

\begin{lemma} \label{th:mclean-example}
Suppose that $\gamma_- \in (0,1)$ and $\gamma_+ \in (1,2)$. Then
\begin{equation} \label{eq:s1-floer}
\mathit{HF}^*(E,\gamma_-,\gamma_+,\epsilon) \iso H^*(S^1) \otimes \mathit{HF}^*(\mu,\epsilon).
\end{equation}
The BV operator is given by rotation on $S^1$, tensored with the identity map on $\mathit{HF}^*(\mu,\epsilon)$.
\end{lemma}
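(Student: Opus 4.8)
The plan is to compute $\mathit{HF}^*(E,\gamma_-,\gamma_+,\epsilon)$ by exploiting the product structure of the mapping torus away from a compact set: $E$ fibers over $\bR\times S^1$ with fibre $M$, and we will choose Floer data that are products of base data and fibre data so that the Floer equation decouples. The base factor $\bR\times S^1$ (with its conical end structure at $p\to\pm\infty$ determined by the slopes $\gamma_\pm$) carries its own Hamiltonian Floer cohomology, and the key input is that when $\gamma_-\in(0,1)$ and $\gamma_+\in(1,2)$ the base contributes $H^*(S^1)$. Concretely I would first treat the base: for a function $H_{\bR\times S^1}$ linear of slope $\gamma_\pm$ at the two ends, the $1$-periodic orbits are either the constant orbits over the critical set of a compactly-supported perturbation, or the circle $\{p=p_*\}$ where the slope crosses $1$ (there is exactly one such crossing since $\gamma_-<1<\gamma_+$); a Morse--Bott computation (this is the one-dimensional-base analogue of Lemma \ref{th:wall-2}, with $F\cong S^1$) shows the base Floer cohomology is $H^*(S^1)$, concentrated in the appropriate degrees, with BV operator equal to rotation of $S^1$. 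On the fibre, by construction $H_M=\epsilon\rho_M$ near $\partial M$ and is $\mu$-invariant, so the fibrewise Floer theory computes exactly $\mathit{HF}^*(\mu,\epsilon)$ in the sense of Setup \ref{th:auto-setup}.

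Next I would assemble the two. Choose $H=H_{\bR\times S^1}+H_M$ and $J=J_{\bR\times S^1}\times J_{M}$ of split form on the region $U$ (which is all of $E$ outside a compact set), and verify — using the maximum principle in the fibre direction near $\partial E$, and harmonicity of the $p$-component of $v=\pi(u)$ at large $|p|$, exactly as in the paragraph preceding the lemma — that all $1$-periodic orbits and all Floer trajectories stay in a compact subset of $E\setminus\partial E$. On that compact region one can, by a further generic perturbation supported there, arrange regularity. Because the equation splits, the $1$-periodic orbits are pairs (base orbit, fibre orbit) and, by a standard product/Künneth argument for split Floer data, the Floer complex is the tensor product $\mathit{CF}^*_{\mathrm{base}}\otimes \mathit{CF}^*(\mu,H_M)$ with the tensor-product differential; passing to cohomology gives \eqref{eq:s1-floer}. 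The grading bookkeeping (index of the base orbits, Conley--Zehnder indices in the fibre) matches $H^*(S^1)\otimes\mathit{HF}^*(\mu,\epsilon)$; this uses the Calabi--Yau structures of Setup \ref{th:mapping-torus-setup}(ii).

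For the BV operator: loop rotation on $\scrL(E)$ acts diagonally through its action on base loops and on fibre twisted loops. With the chosen split data and the parametrized (BV) equation also taken in split form, the chain-level BV operator respects the tensor decomposition, so it is $\delta_{\mathrm{base}}\otimes\mathit{id}+\mathit{id}\otimes\delta_{\mathrm{fibre}}$. On the fibre factor, $\delta_{\mathrm{fibre}}$ vanishes after one passes to cohomology — more precisely, since $\mathit{HF}^*(\mu,\epsilon)$ as such carries no BV operator (Remark \ref{th:circle-and-discrete-actions}), in the product setup the relevant rotation acts only on the base loop coordinate, so the surviving operator is $\Delta_{\mathrm{base}}\otimes\mathit{id}$; and $\Delta_{\mathrm{base}}$ on $H^*(S^1)$ is the usual rotation operator, identified via the Morse--Bott-circle computation above. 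This gives the second assertion of the lemma.

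The main obstacle I anticipate is not the algebraic Künneth step but the geometric confinement and transversality for the split data: one must be careful that the genericity needed for regularity can be achieved by perturbations supported in the compact region where $H$ is genuinely not of product form, without destroying either the split structure near infinity (needed for the $C^0$-bounds) or the identification of the pieces. Relatedly, pinning down the index offset of the base circle of orbits — so that the $H^*(S^1)$ factor sits in the right degrees and the BV operator has the right sign — requires a Conley--Zehnder computation on the base analogous to the one in Lemma \ref{th:wall-2}; this is routine but is the place where the $\bZ$-graded refinement could go wrong if done carelessly.
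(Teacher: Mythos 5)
Your plan is correct and coincides with the paper's approach: the paper simply cites \cite[Theorem 1.3]{mclean12} for \eqref{eq:s1-floer} and notes that the Morse--Bott computation (the single Morse--Bott circle of orbits where the base slope crosses $1$, times the fixed points of $\phi_{H_M}^1\circ\mu$ in the fibre) also determines the BV operator, which is exactly what you spell out. The only small adjustment is that since $\gamma_->0$ one should take the base Hamiltonian with everywhere-positive slope, so there are no constant orbits at all and the entire complex comes from that one Morse--Bott family.
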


The Floer cohomology computation \eqref{eq:s1-floer} is \cite[Theorem 1.3]{mclean12}, which can be proved relatively straightforwardly by a Morse-Bott approach. A similar argument determines the BV operator.

\section{Rotations at infinity\label{subsec:rotate}}
The next step is to apply Hamiltonian Floer cohomology to total spaces of Lefschetz fibrations. Besides making the necessary adjustments to the construction of Floer cohomology, we will consider some specific computations. Those follow \cite{mclean12} fairly closely, but with the BV operator as an added ingredient.

\subsection{Target spaces}
The Lefschetz (complex nondegeneracy) condition is not important yet, and we will allow considerably more freedom for the local geometry. On the other hand, we impose quite strict conditions on the behaviour near infinity.

\begin{setup} \label{th:setup-e}
(i) An {\em exact symplectic fibration with singularities} is a $2n$-dimensional manifold with boundary $E$, together with an exact symplectic form $\omega_E = d\theta_E$, and a proper map
\begin{equation} \label{eq:symplectic-fibration}
\pi: E \longrightarrow \bC,
\end{equation}
subject to the following conditions.

At any $x \in E$, define the horizontal subspace $TE^h_x \subset TE$ as the $\omega_E$-orthogonal complement of $TE^v_x = \mathit{ker}(D\pi_x)$. We ask that there should be an open subset $U \subset E$ which contains $\partial E$ and has compact complement, such that at each $x \in U$, one has: $TE_x^v$ is a codimension $2$ symplectic subspace of $TE$ (hence, $x$ is a regular point of $\pi$, and $D\pi_x|TE^h_x$ is an isomorphism); and
\begin{equation} \label{eq:flat}
(\omega_E-\pi^*\omega_{\bC})\,|\,TE^h_x = 0,
\end{equation}
where $\omega_{\bC} = d\mathrm{re}(y) \wedge d\mathrm{im}(y)$ is the standard symplectic form on the base. 

Next, at each point $x \in \partial E$, $TE_x^h$ should lie inside $T_x(\partial E)$.

Finally, there should be a $y_* > 0$ such that: all fibres $E_y = \pi^{-1}(y)$ with $|y| \geq y_*$ lie inside the previously introduced subset $U$; and $M = E_{y_*}$, with its induced exact symplectic structure, is a Liouville domain (in all subsequent developments, we will assume that such a $y_*$ has been fixed).

(ii) For part of our considerations, we will assume that $E$ comes with a symplectic Calabi-Yau structure. This induces the same kind of structure on $M$.
\end{setup}

Let's consider the implications of these conditions. We have symplectic parallel transport maps defined in a neighbourhood of $\partial E$. More precisely, let $W \subset M$ be a small open neighbourhood of $\partial M$. Then, parallel transport yields a canonical embedding, whose image is an open neighbourhood of $\partial E \subset E$:
\begin{equation} \label{eq:first-trivialization}
\left\{
\begin{aligned}
& \Theta: \bC \times W \longrightarrow E, \\
& \Theta(\bC \times \partial M) = \partial E, \\
& \pi(\Theta(y,x)) = y, \\
& \Theta(y_*,x) = x, \\
& \Theta^*\omega_E = \omega_\bC + \omega_M.
\end{aligned}
\right.
\end{equation}
To see why that is the case, note that \eqref{eq:flat} implies that the symplectic connection is flat near $\partial E$. Hence, the relevant part of parallel transport is independent of the choice of path, and $\Theta^*\omega_E - \omega_M$ must be locally the pullback of some two-form on $\bC$; to determine that two-form, one again appeals to \eqref{eq:flat}.

We also have the same flatness property outside a compact subset, with the following consequence. Let $\mu$ be the monodromy around the circle of radius $y_*$, which is an exact symplectic automorphism of $M$ (and restricts to the identity on $W$). 
Then, for $p_* = \log(y_*)$, there is a unique covering map
\begin{equation} \label{eq:second-trivialization}
\left\{
\begin{aligned}
& \Psi: [p_*,\infty) \times \bR \times M \longrightarrow \{|\pi(x)| \geq y_*\} \subset E, \\
& \pi(\Psi(p,q,x)) = e^{p+iq}, \\
& \Psi(p_*,0,x) = x, \\
& \Psi(p,q+2\pi,x) = \Psi(p,q,\mu(x)), \\
& \Psi^*\,\omega_E = e^{2p} dp \wedge dq + \omega_M.
\end{aligned}
\right.
\end{equation}
The partial trivializations \eqref{eq:first-trivialization} and \eqref{eq:second-trivialization} are compatible, in the sense that
\begin{equation}
\Theta(e^{p+iq},x) = \Psi(p,q,x) \quad \text{for $p \geq p_*$ and $x \in W$.}
\end{equation}
In words, what we have observed is that \eqref{eq:symplectic-fibration} is symplectically trivial near $\partial E$, whereas outside the preimage of a disc, it is like a symplectic mapping torus for $\mu$ (but with a different symplectic structure on the base than before). Finally, in the situation of Setup \ref{th:setup-e}(ii), $\mu$ is a graded symplectic automorphism (in a preferred way, such that the grading is trivial near $\partial M$).

\begin{example} \label{th:anticanonical-lefschetz-pencil}
The most important examples for us are those obtained from anticanonical Lefschetz pencils, or more generally, from Lefschetz pencils satisfying the condition from Remark \ref{th:fractional-cy-1}. In that case, the boundary of the fibre is a contact circle bundle; $E$ carries a symplectic Calabi-Yau structure, such that the induced structure on the fibre is as in Example \ref{th:contact-circle-bundle}(ii); and as already stated in \eqref{eq:mu-tau}, the monodromy is the associated boundary twist.
\end{example}

\begin{setup} \label{th:setup-rotation}
(i) We consider functions $H \in \smooth(E,\bR)$ such that
\begin{equation}
\left\{
\begin{aligned}
& H(\Theta(y,x)) = H_{\bC}(y) + \epsilon \rho_M(x) \quad \text{for $x$ near $\partial M$}, \\
& H(\Psi(p,q,x)) = \textstyle \gamma e^{2p}/2 + H_M(x) \quad \text{for $p \gg 0$.}
\end{aligned}
\right.
\end{equation}
Here, the constant $\epsilon$ is as in \eqref{eq:no-reeb}, and $\gamma \in \bR \setminus 2\pi\bZ$. $H_{\bC}$ is a function on the base such that $H_{\bC}(y) = \gamma |y|^2/2$ for $|y| \gg 0$, and $H_M$ is as in Setup \ref{th:mapping-torus-setup}.

(ii) We use compatible almost complex structures $J$ on $E$ satisfying the following conditions. Outside a compact subset, $\pi$ is $J$-holomorphic, with respect to the standard complex structure $i$ on the base; and at points $(y,x) \in \bC \times M$ with $x$ sufficiently close to $\partial M$,
\begin{equation} \label{eq:theta-star-j}
\Theta^*J = J_{\bC} \times J_{M,y}.
\end{equation}
Here, $J_{\bC}$ is an almost complex structure on $\bC$ which is standard outside a compact subset, and each $J_{M,y}$ is an almost complex structure on $M$ as in Setup \ref{th:hamiltonian-setup}. 
\end{setup}

\subsection{Floer cohomology and its properties}
The construction of Hamiltonian Floer cohomology in this context proceeds pretty much as in Section \ref{subsec:mapping-tori}. One gets chain complexes $\mathit{CF}^*(E,H)$ and cohomology groups $\mathit{HF}^*(E,\gamma,\epsilon)$, together with a BV operator. A suitable analogue of Lemma \ref{th:no-wall} holds. There are also counterparts of Lemma \ref{th:wall} for changing either $\gamma$ or $\epsilon$. We will not consider them in full generality, but one important special case is this:

\begin{lemma} \label{th:gamma-wall}
For $\gamma_- \in (0,2\pi)$, $\gamma_+ \in (2\pi,4\pi)$, and any $\epsilon$, we have a long exact sequence
\begin{equation} \label{eq:2pi-sequence}
\cdots \rightarrow \mathit{HF}^*(E,\gamma_-,\epsilon) \longrightarrow
\mathit{HF}^*(E,\gamma_+,\epsilon) \longrightarrow H^*(S^1) \otimes \mathit{HF}^{*+2}(\mu,\epsilon) \rightarrow \cdots
\end{equation}
where $\mu$ is the monodromy. This is compatible with BV operators, where the operator on the rightmost group is as in Lemma \ref{th:mclean-example}. Moreover, if $E$ has a symplectic Calabi-Yau structure, \eqref{eq:2pi-sequence} is compatible with $\bZ$-gradings.
\end{lemma}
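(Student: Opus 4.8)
The plan is to run the wall-crossing argument of Lemma~\ref{th:wall} once more, using McLean's computation (Lemma~\ref{th:mclean-example}, i.e.\ \cite[Theorem~1.3]{mclean12}) to identify the third term of the sequence. Since the region $\{|\pi| \geq y_*\} \subset E$ is already noncompact in the radial direction, no enlargement of $E$ is needed. I would introduce near infinity the coordinate $P = \tfrac{1}{2}e^{2p}$, in terms of which \eqref{eq:second-trivialization} reads $\Psi^*\omega_E = dP \wedge dq + \omega_M$ and an admissible Hamiltonian of Setup~\ref{th:setup-rotation}(i) takes the form $\gamma P + H_M$ there. Starting from the data defining $\mathit{HF}^*(E,\gamma_-,\epsilon)$, I modify the Hamiltonian only in the region near infinity: replace $\gamma_- P + H_M$ by $h(P) + H_M$, where $h$ is a convex function with $h'(P) = \gamma_-$ for $P$ below some $P_1 > \tfrac12 y_*^2$, $h'(P) = \gamma_+$ for $P$ large, and $h'' > 0$ exactly where $\gamma_- < h' < \gamma_+$ --- the direct analogue of \eqref{eq:turn}. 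Call the result $\hat H$. Since $\gamma_+ \notin 2\pi\bZ$, $\hat H$ is admissible with parameter $\gamma_+$ (up to an irrelevant additive constant), so by the invariance of $\mathit{HF}^*(E,\gamma_+,\epsilon)$ under admissible choices, the Floer cohomology of $\hat H$ is $\mathit{HF}^*(E,\gamma_+,\epsilon)$.

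The $1$-periodic orbits of $\hat H$ split into those contained in $\{P \leq P_1\}$ --- where $\hat H$ still has slope $\gamma_-$, so (recalling that a slope outside $2\pi\bZ$ produces no orbits out near infinity) these are precisely the generators of $\mathit{CF}^*(E,\gamma_-,\epsilon)$ --- and a Morse-Bott manifold $F$ of extra orbits at the radius $P_\#$ with $h'(P_\#) = 2\pi$. Using the product description \eqref{eq:second-trivialization}, an orbit of $F$ is a loop $t \mapsto \Psi(p_\#, q_0 + 2\pi t, \phi^t_{H_M}(y_0))$ with $q_0$ ranging over a circle and $y_0$ a fixed point of $\mu \circ \phi^1_{H_M}$; since $H_M$ is $\mu$-invariant and equals $\epsilon\rho_M$ near $\partial M$, this fibre problem is exactly the one defining $\mathit{HF}^*(\mu,\epsilon)$, so (after a small perturbation making $F$ Morse-Bott nondegenerate) $F$ is a mapping-torus-type $S^1$-family over the generators of $\mathit{CF}^*(\mu,\epsilon)$. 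As in \eqref{eq:h-hat-x-action}, an orbit of $F$ has action $\sim h(P_\#) - P_\# h'(P_\#)$, which by convexity tends to $-\infty$ when the turn is placed at large $P_\#$; hence, in the cohomological convention where the differential raises action, $\mathit{CF}^*(E,\gamma_-,\epsilon)$ is a subcomplex of $\mathit{CF}^*(\hat H)$ and the $F$-orbits span the quotient $Q^*$. That the differential on the subcomplex counts only trajectories staying in $\{P\leq P_1\}$, hence agrees with that of $\mathit{CF}^*(E,\gamma_-,\epsilon)$, is checked by the integrated maximum principle of \cite{abouzaid-seidel07} (or the Monotonicity Lemma), exactly as in the proof of Lemma~\ref{th:wall}. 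The long exact sequence of $0 \to \mathit{CF}^*(E,\gamma_-,\epsilon) \to \mathit{CF}^*(\hat H) \to Q^* \to 0$ is then \eqref{eq:2pi-sequence}, once $H^*(Q)$ is identified.

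The identification $H^*(Q) \cong H^*(S^1) \otimes \mathit{HF}^{*+2}(\mu,\epsilon)$, together with the description of the induced BV operator as rotation on the $S^1$-factor tensored with the identity, is the core of the proof; it follows McLean's argument for Lemma~\ref{th:mclean-example}, run through the Morse-Bott-Floer formalism on $F$: the $q_0$-circle contributes the $H^*(S^1)$ factor and the loop-rotation action, and the fibre contributes $\mathit{HF}^*(\mu,\epsilon)$. The grading shift by $2$ relative to the linear mapping-torus model of Setup~\ref{th:mapping-torus-setup} is a Conley-Zehnder (Robbin-Salamon) index computation in the base direction --- an orbit winding once around a large circle in $\bC$ under the rotation at the wall value $\gamma = 2\pi$ contributes an index offset of $2$ --- and it is this computation that also yields the $\bZ$-graded statement once $E$ (hence $M$ and $\mu$) carries its symplectic Calabi-Yau structure. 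For BV compatibility, I would choose all Hamiltonians to be small perturbations of autonomous ones, so that the energy estimate \eqref{eq:energy-cont} together with the action gap forces the chain-level BV map on $\mathit{CF}^*(\hat H)$ to preserve the subcomplex; it then descends to $Q^*$, the short exact sequence of complexes is BV-compatible up to homotopy, and the induced operator on $H^*(Q)$ is pinned down by the same Morse-Bott computation. The main obstacle is the combination of transversality and locality in the computation of $H^*(Q)$ and its BV operator: one must show, via Gromov compactness and the integrated maximum principle as for the differential, that the relevant mixed moduli spaces are regular and that their counts depend only on the germ of the geometry near $F$, so that the problem genuinely reduces to the already-known mapping-torus computation; the index bookkeeping for the degree shift is then routine.
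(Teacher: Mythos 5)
Your proposal is correct and follows essentially the same route as the paper: bend the slope of the radial Hamiltonian at infinity from $\gamma_-$ to $\gamma_+$, so that the new generators form a circle-fibered Morse--Bott family corresponding to fixed points of $\phi^1_{H_M}\circ\mu$, realize $\mathit{CF}^*(E,\gamma_-,\epsilon)$ as a subcomplex, identify the quotient with McLean's mapping-torus computation (Lemma \ref{th:mclean-example}), and treat the BV operator and the degree shift (Conley--Zehnder index of the base circle) in parallel. The only divergence is the confinement step: where you combine an action gap (placing the turn at large radius) with the integrated maximum principle, the paper instead runs a ``barrier argument'' with a degenerate action functional supported near the circle $\{|y|=2\}$, which simultaneously yields the subcomplex property and forces the quotient trajectories to stay in the outer region --- the latter is really a minimum-principle statement, as in \cite[Lemma 6.1]{mclean12}, rather than something the integrated maximum principle gives directly, but both routes are workable.
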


\begin{proof}
To simplify the notation, we assume that one can take $y_* = 1$. Fix a function $H_M$ which is supported in a small neighbourhood of $\partial M$ (hence is invariant under $\mu$), and which equals $\epsilon\rho_M$ close to $\partial M$. Via \eqref{eq:first-trivialization}, there is an obvious extension of this function to all of $E$, which we again denote by $H_M$. Choose a function
\begin{equation} \label{eq:rotation-function}
\left\{
\begin{aligned}
& h_+: [0,\infty) \longrightarrow \bR, \\
& h_+'(a) = 0 \quad \text{if and only if $a \leq 1/2$,} \\
& h_+'(a) = \gamma_- \quad \text{for $a$ close to $2$,} \\
& h_+(a) = \gamma_+ a \quad \text{for $a \gg 0$}, \\
& h_+''(a) \geq 0 \quad \text{everywhere}, \\
& h_+''(a) > 0 \quad \text{whenever $h'_+(a) \in (\gamma_-,\gamma_+)$.}
\end{aligned}
\right.
\end{equation}
Define $H_+: E \rightarrow \bR$ by
\begin{equation}
H_+(x) = h_+(|\pi(x)|^2/2) + H_M(x).
\end{equation}
This has two kinds of $1$-periodic orbits:
\begin{itemize} \itemsep.5em
\item[(i)] The first kind are contained in a fibre $\pi^{-1}(y)$, $|y| \leq 1$. They are either constant, or else nontrivial $1$-periodic orbits of $H_M$ (lying close to $\partial M$).
\item[(ii)] The second kind of $1$-periodic orbit is fibered over the circle $|y|^2/2 = a_*$, where $a_* > 2$ is the unique value such that $h'_+(a_*) = 2\pi$. In each fibre, they correspond to fixed points of $\phi_{H_M}^1 \circ \mu$.
\end{itemize}
Let's perturb $H_+$ slightly (in a time-dependent way, and without changing the notation), so as to make the $1$-periodic points nondegenerate; we will assume that the perturbation is trivial near $\partial E$, as well as outside a compact subset. The outcome lies in the class of Hamiltonians which can be used to construct $\mathit{HF}^*(E,\gamma_+,\epsilon)$. As long as the perturbation is small, one can still maintain the distinction between type (i) and (ii) orbits, even though they no longer have exactly the same properties as before. 

We now consider Floer trajectories. When perturbing $H_+$, we can assume that the perturbation is trivial near the preimage of the circle $Z = \{|y| = 2\}$. We will also use almost complex structures $J = (J_t)$ such that $\pi$ becomes pseudo-holomorphic near that same preimage. As a consequence, if $u$ is a Floer trajectory, then $v = \pi(u)$ satisfies
\begin{equation} \label{eq:v-projection}
\partial_s v + i (\partial_t v - \gamma_- iv) = 0 \quad \text{whenever $v$ is close to $Z$.}
\end{equation}
We exploit that by introducing a degenerate version of the action functional (this is essentially what \cite{seidel12b} called a ``barrier argument''). On the base, choose a two-form $\bar{\omega}_{\bC}$ which is: rotationally invariant; everywhere nonnegative; supported in a small neighbourhood of $Z$; and positive at every point of $Z$. Then,
\begin{equation} \label{eq:barrier}
0 \leq \int_{\bR \times S^1} \bar\omega_{\bC}(\partial_s v, \partial_tv - \gamma_- iv) =
\bar{A}(y_-) - \bar{A}(y_+),
\end{equation}
where $\bar{A}(y_{\pm})$ depends only on the limits $y_{\pm}$ of $v$. To compute it explicitly, fix a primitive $\bar{\omega}_{\bC} = d\bar{\theta}_{\bC}$. Also, consider the function $\bar{H}_{\bC}$ such that $d\bar{H}_{\bC} = \bar{\omega}_{\bC}(\cdot, \gamma_- i y)$, normalized by asking that it should vanish near the origin. Then,
\begin{equation}
\bar{A}(y) = \int_{S^1} -y^*\bar{\theta}_{\bC} + \bar{H}_{\bC}(y(t))\, \mathit{dt}.
\end{equation}
Concretely,
\begin{equation}
\bar{A}(y) = \begin{cases} 0 & \text{if $y = \pi(x)$, with $x$ of type (i) as listed above,} \\
\textstyle (\frac{\gamma_-}{2\pi} - 1) \textstyle \int_{\bC} \bar{\omega}_{\bC} < 0 & \text{if $x$ is of type (ii).}
\end{cases}
\end{equation}
Finally, note that equality in \eqref{eq:barrier} holds only if $v^{-1}(Z) = \emptyset$. It follows that there are only three kinds of Floer trajectories:
\begin{itemize} \itemsep.5em
\item[(i, i)]
If both limits $x_{\pm}$ are of type (i), the Floer trajectory remains entirely on the inside of $\pi^{-1}(Z)$;
\item[(ii, ii)]
If both limits $x_{\pm}$ are of type (ii), the Floer trajectoy remains entirely on the outside of $\pi^{-1}(Z)$, since $\bar{A}(y_-) = \bar{A}(y_+)$ (the same purpose was achieved by the ``minimum principle argument'' in \cite[Lemma 6.1]{mclean12});
\item[(i, ii)]
Finally, $x_-$ could be of type (i), and $x_+$ of type (ii).
\end{itemize}

Take a function $h_-: [0,\infty) \rightarrow \bR$ which agrees with $h$ on $[0,2]$, and satisfies $h'_-(a) = \gamma_-$ for $a \geq 2$. Let $H_-$ be a (time-dependent) function which agrees with $H_+$ over the preimage of $\{|y| \leq 2\}$, and satisfies $H_-(x) = h_-(|\pi(x)|^2/2) + H_M(x)$ elsewhere. This has only the type (i) periodic orbits, and the associated chain complex $\mathit{CF}^*(E,H_-)$ has cohomology $\mathit{HF}^*(E, \gamma_-,\epsilon)$. From the argument above, we get (assuming suitable choices of almost complex structures) a short exact sequence of chain complexes
\begin{equation} \label{eq:gamma-gamma}
0 \rightarrow \mathit{CF}^*(E,H_-) \longrightarrow \mathit{CF}^*(E,H_+) \longrightarrow Q^* \rightarrow 0.
\end{equation}
The differential on the quotient $Q^*$ counts type (ii, ii) trajectories only, hence defines a (slightly modified) version of Floer cohomology inside the mapping torus of $\mu$, which is as computed in Lemma \ref{th:mclean-example}. This shows that \eqref{eq:gamma-gamma} induces the desired long exact sequence \eqref{eq:2pi-sequence}. The argument about BV operators is parallel, since one can arrange that the relevant equation \eqref{eq:bv-equation} has the same property \eqref{eq:v-projection}. The statement about gradings is straightforward (the shift by $2$ reflects the Conley-Zehnder index of the circle $|y|^2/2 = a_*$, as a $1$-periodic orbit of the Hamiltonian $h_+(|y|^2/2)$ on $\bC$).
\end{proof}

\begin{proposition} \label{th:bv-vanish-3}
For $\gamma \in (0,2\pi)$ and sufficiently small $\epsilon>0$, 
\begin{equation} \label{eq:e-pss}
\mathit{HF}^*(E,\gamma,\epsilon) \iso H^*(E). 
\end{equation}
Moreover, the BV operator vanishes.
\end{proposition}


This is an analogue of Proposition \ref{th:bv-vanish}, and has the same proof. We will now dig a little deeper into the consequences. In the situation of Lemma \ref{th:gamma-wall}, choose a class $z \in \mathit{HF}^{*+2}(\mu,\epsilon)$. Using notation as in \eqref{eq:gamma-gamma}, take $[\mathit{point}] \otimes z \in H^1(S^1) \otimes \mathit{HF}^{*+2}(\mu,\epsilon) \subset H^{*+1}(Q)$, represent it by a cocycle in $Q^*$, and lift that cocycle to a cochain
\begin{equation} \label{eq:x-cochain}
x \in \mathit{CF}^{*+1}(E,H_+),  \quad dx \in \mathit{CF}^{*+2}(E,H_-).
\end{equation}
Assume that $\gamma_-$ and $\epsilon$ are small. Using the proof of Proposition \ref{th:bv-vanish} (as applied to our situation in Lemma \ref{th:bv-vanish-3}), one can arrange that the chain level BV operator vanishes on $\mathit{CF}^*(E,H_-) \subset \mathit{CF}^*(E,H_+)$. As a result, we get a cocycle
\begin{equation} \label{eq:bar-x}
\bar{x} = \delta x \in \mathit{CF}^{*}(E,H_+), 
\end{equation}
since $d \bar{x} = -\delta (dx) = 0$. Clearly, $\bar{x}$ is independent of the choice of lift $x$. Moreover, if the original cohomology class was trivial, we would have $x - dy \in \mathit{CF}^{*+1}(E,H_-)$ for some $y$, and hence
\begin{equation}
\bar{x} = \delta(dy) = -d(\delta y).
\end{equation}
This shows that $[\bar{x}] \in \mathit{HF}^*(E,\gamma,\epsilon_+)$ depends only on $z$. Finally, if we project $\bar{x}$ to $Q^*$, it represents $1 \otimes z \in H^0(S^1) \otimes \mathit{HF}^{*+2}(\mu,\epsilon) \subset H^*(Q)$ (by the description of the BV operator in Lemmas \ref{th:mclean-example} and \ref{th:gamma-wall}). The outcome of this argument can be summarized as follows:

\begin{proposition} \label{th:partial-splitting}
Take the situation from Lemma \ref{th:gamma-wall}, with $\epsilon>0$ small. Then the map 
\begin{equation} \label{eq:proj-s1}
\mathit{HF}^*(E,\gamma_+,\epsilon) \longrightarrow H^*(S^1) \otimes \mathit{HF}^{*+2}(\mu,\epsilon)
\end{equation}
from \eqref{eq:2pi-sequence} has a partial splitting, defined on $H^0(S^1) \otimes \mathit{HF}^{*+2}(\mu,\epsilon)$. \qed
\end{proposition}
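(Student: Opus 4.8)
The plan is to turn the construction sketched in the paragraphs immediately preceding the statement into a clean argument, organized around the chain-level short exact sequence \eqref{eq:gamma-gamma} and the chain-level BV operator. First I would fix all auxiliary data exactly as in the proof of Lemma \ref{th:gamma-wall}, so that \eqref{eq:gamma-gamma} holds with $\mathit{CF}^*(E,H_-)$ computing $\mathit{HF}^*(E,\gamma_-,\epsilon)$ and the quotient $Q^*$ computing the modified mapping-torus complex, which by Lemma \ref{th:mclean-example} has cohomology $H^*(S^1)\otimes\mathit{HF}^{*+2}(\mu,\epsilon)$ with chain-level BV operator induced by loop rotation on the $S^1$-factor. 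Crucially, since $\gamma_-$ and $\epsilon$ are small, I would invoke the proof of Lemma \ref{th:bv-vanish} in the form used for Lemma \ref{th:bv-vanish-3} to arrange, simultaneously, that the chain-level BV operator $\delta$ of $\mathit{CF}^*(E,H_+)$ preserves the subcomplex $\mathit{CF}^*(E,H_-)$ and vanishes identically on it.

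Given $z\in\mathit{HF}^{*+2}(\mu,\epsilon)$, I would lift a cocycle representing $[\mathit{point}]\otimes z\in H^{*+1}(Q)$ to a cochain $x\in\mathit{CF}^{*+1}(E,H_+)$ with $dx\in\mathit{CF}^{*+2}(E,H_-)$, and set $\bar x=\delta x\in\mathit{CF}^*(E,H_+)$. Then $d\bar x=-\delta(dx)=0$ because $\delta$ kills $\mathit{CF}^*(E,H_-)$, so $\bar x$ is a cocycle. For well-definedness I would check: replacing $x$ by $x+y$ with $y\in\mathit{CF}^{*+1}(E,H_-)$ leaves $\delta x$ unchanged; changing the representing cocycle in $Q^*$ by a coboundary $d\eta$ changes $\bar x$ by $\delta(d\tilde\eta)=-d(\delta\tilde\eta)$, an exact term; and if $z=0$, then $x-dy\in\mathit{CF}^{*+1}(E,H_-)$ for some $y$, whence $\bar x=-d(\delta y)$ is exact. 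Thus $z\mapsto[\bar x]$ is a well-defined $\bK$-linear map $\mathit{HF}^{*+2}(\mu,\epsilon)\to\mathit{HF}^*(E,\gamma_+,\epsilon)$. Finally, projecting $\bar x$ to $Q^*$ yields the chain-level BV operator of $Q^*$ applied to a cocycle representing $[\mathit{point}]\otimes z$; since that operator is loop rotation $\otimes\,\mathit{id}$ and rotation sends the point class of $H^*(S^1)$ to the unit, the projection represents $1\otimes z$. Composing $z\mapsto[\bar x]$ with the map \eqref{eq:proj-s1} is therefore $z\mapsto 1\otimes z$, i.e. $[\bar x]$ furnishes the desired partial splitting over $H^0(S^1)\otimes\mathit{HF}^{*+2}(\mu,\epsilon)$.

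The main obstacle is the compatibility of choices in the first step: one needs a single family of Hamiltonians and almost complex structures for which (a) the barrier argument of Lemma \ref{th:gamma-wall} cleanly splits off the subcomplex $\mathit{CF}^*(E,H_-)$, (b) the same holds for the parametrized $r$-family defining $\delta$, so that $\delta$ respects \eqref{eq:gamma-gamma}, and (c) the autonomous structure on the inner region forces $\delta|_{\mathit{CF}^*(E,H_-)}=0$ as in Lemma \ref{th:bv-vanish}. Combining (a)--(c) requires the perturbation of $H_+$ to be trivial near $\partial E$ and near $\pi^{-1}(Z)$, time-independent up to a small perturbation on the inner region, and small enough that the estimate \eqref{eq:energy-cont} keeps $\delta$ filtered for the barrier action $\bar A$; the negativity $\bar A(y)<0$ at the type-(ii) orbits is exactly what makes the subcomplex $\delta$-invariant. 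Once these choices are in place, the remainder is formal homological algebra.
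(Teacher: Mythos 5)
Your proposal is correct and follows essentially the same route as the paper: the statement is proved there by exactly the construction you describe, namely lifting a representative of $[\mathit{point}]\otimes z$ to $x\in\mathit{CF}^{*+1}(E,H_+)$ with $dx\in\mathit{CF}^{*+2}(E,H_-)$, using the vanishing of the chain-level BV operator on $\mathit{CF}^*(E,H_-)$ (via Lemmas \ref{th:bv-vanish} and \ref{th:bv-vanish-3}) to conclude that $\bar x=\delta x$ is a cocycle depending only on $z$, and observing that its image in $Q^*$ represents $1\otimes z$. Your closing remarks on coordinating the perturbation data so that $\delta$ respects the filtration from the barrier argument make explicit what the paper leaves implicit, but do not change the argument.
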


For the benefit of readers concerned by the apparent ad hoc nature of the previous argument, we can explain how it fits into an appropriate formal framework. 

\begin{setup} \label{th:split-setup}
(i) Take a chain complex $C^*_+$ together with an endomorphism $\delta$ of degree $-1$. Let $C^*_-$ be a subcomplex such that $\delta(C^*_-) \subset C^*-$. Then, a homotopy trivialization $[\chi]$ of $\delta|C^*_-$ induces a lift
\begin{equation} \label{eq:lift-bv}
\xymatrix{
&& H^{*-1}(C_+) \ar[d] \\
H^*(Q) \ar@{-->}[urr] \ar[rr] && H^{*-1}(Q)
}
\end{equation}
where the horizontal arrow is the operation induced by $\delta$ on the quotient complex $Q^* = C^*_+/C^*_-$, and the vertical arrow is the projection map. The lift in \eqref{eq:lift-bv} is defined as follows: given a cocycle in $Q^*$, lift it to a cochain $x$ in $C^*_+$, and then map that to the cocycle 
\begin{equation} \label{eq:corrected-bar}
\bar{x} = \delta x + \chi(dx).
\end{equation}
One sees immediately that this induces a map on cohomology, which depends only on the equivalence class $[\chi]$.

(ii) Take two such complexes $C^*_+$ and $\tilde{C}^*_+$, each with the same structure as in (i) (including subcomplexes $C^*_-$ and $\tilde{C}^*_-$). Suppose that we have maps between them as in Setup \ref{th:null}(ii), which preserve the subcomplexes. If we are given mutually compatible homotopy trivializations $[\chi]$ and $[\tilde{\chi}]$ on those subcomplexes, which are compatible in the sense of Setup \ref{th:null}(ii), then (by a straightforward computation) the lifts from \eqref{eq:lift-bv} fit into a commutative diagram
\begin{equation} \label{eq:lift-lift}
\xymatrix{
H^*(C_+) \ar[rr] && H^*(\tilde{C}_+)
\\
\ar@{-->}[u] H^*(Q) \ar[rr] && H^*(\tilde{Q}). \ar@{-->}[u]
}
\end{equation}
\end{setup}

Mapping this abstract framework onto our specific situation is fairly straightforward. We had $H^*(Q) \iso H^*(S^1) \otimes \mathit{HF}^{*+2}(\mu,\epsilon)$, with the BV operator on the quotient as described in Lemma \ref{th:mclean-example}. We arranged that $\delta|C^*_-$ vanished, and made the trivial choice $\chi = 0$. The partial splitting of \eqref{eq:proj-s1} is the map which fits into the commutative diagram
\begin{equation}
\xymatrix{
\mathit{HF}^*(E,\gamma_+,\epsilon) \\
\ar[u]
H^1(S^1) \otimes \mathit{HF}^{*}(\mu,\epsilon) \ar[rr]^-{\iso} &&
H^0(S^1) \otimes \mathit{HF}^{*}(\mu,\epsilon) \ar@{-->}[ull]
}
\end{equation}
where the vertical arrow is the restriction of the lift \eqref{eq:lift-bv}, and the horizontal arrow is the BV operator. Inspection shows that this indeed recovers the formula \eqref{eq:bar-x}. As in the discussion following Setup \ref{th:null}, one can use the fact that the homotopy trivialization is canonical to show that the partial splitting is independent of all auxiliary choices.

We will also need a modified version of Proposition \ref{th:partial-splitting}, whose proof combines much of the technical material about Floer cohomology that has been mentioned so far:

\begin{proposition} \label{th:partial-splitting-2}
Take the situation from Lemma \ref{th:gamma-wall}. Assume that $\partial M$ is a contact circle bundle, and that $\epsilon \in (1,2)$. Additionally, assume that $E$ has a symplectic Calabi-Yau structure, such that the induced symplectic Calabi-Yau structure on $M$ is as in Example \ref{th:contact-circle-bundle}(ii). Then \eqref{eq:proj-s1} has a partial splitting, defined on the subspace of $H^0(S^1) \otimes \mathit{HF}^{*+2}(\mu,\epsilon)$ where $\ast \leq -2m$.
\end{proposition}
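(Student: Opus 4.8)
The plan is to follow the proof of Proposition~\ref{th:partial-splitting} essentially verbatim, using the short exact sequence \eqref{eq:gamma-gamma}
\[
0 \rightarrow \mathit{CF}^*(E,H_-) \longrightarrow \mathit{CF}^*(E,H_+) \longrightarrow Q^* \rightarrow 0, \qquad H^*(Q) \iso H^*(S^1) \otimes \mathit{HF}^{*+2}(\mu,\epsilon),
\]
compatibly with BV operators (Lemmas~\ref{th:mclean-example}, \ref{th:gamma-wall}). The only place in that earlier argument where smallness of $\epsilon$ intervened was the appeal to Lemma~\ref{th:bv-vanish-3}, used to arrange that the chain-level BV operator $\delta$ vanishes on the subcomplex $\mathit{CF}^*(E,H_-)$. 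For $\epsilon \in (1,2)$ this fails, and it will be replaced by the weaker assertion that $\delta$ vanishes on a subcomplex $\mathcal{S} \subset \mathit{CF}^*(E,H_-)$ which is still large enough to carry the relevant cocycles.

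To produce $\mathcal{S}$ I would first establish the total-space analogue of Proposition~\ref{th:bv-vanish-2}. Starting from $\mathit{HF}^*(E,\gamma_-,\epsilon_0) \iso H^*(E)$ with vanishing BV operator for $\epsilon_0>0$ small (Lemma~\ref{th:bv-vanish-3}) and crossing the Reeb wall at $\epsilon = 1$ by a convex modification of the Hamiltonian in the fibrewise radial coordinate $\rho_M$ near $\partial E$ --- exactly as in Lemmas~\ref{th:wall}, \ref{th:wall-2} and Proposition~\ref{th:bv-vanish-2} --- one obtains a long exact sequence, compatible with BV operators and (given the Calabi--Yau hypothesis) with $\bZ$-gradings,
\[
\cdots \rightarrow H^*(E) \longrightarrow \mathit{HF}^*(E,\gamma_-,\epsilon) \xrightarrow{\ p_\epsilon\ } H^{*+2m-2}(\partial M) \rightarrow \cdots.
\]
The new Morse--Bott manifold of $1$-periodic orbits is a copy of $\partial M$: these orbits are fibred over the unique critical point $y = 0$ of the base Hamiltonian $H_\bC$ (there are no other base-periodic orbits since $\gamma_- \in (0,2\pi)$) and over a fibrewise Reeb level $\{\rho_M = c_*\}$; the local system on it is trivial, the loop-rotation $S^1$-action is the Reeb action, and the shift $2m-2$ is a Conley--Zehnder computation using that the symplectic Calabi--Yau structure on $E$ restricts to the one on $M$ of Example~\ref{th:contact-circle-bundle}(ii), with $\mu = \tau_{\partial M}$ trivially graded near $\partial M$. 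In the Morse--Bott model \eqref{eq:morse-bott-c}--\eqref{eq:morse-bott-delta} one has $\mathit{CF}^*(E,H_-) = \mathit{CM}^*(E) \oplus \mathit{CM}^{*+2m-2}(f)$ for a Morse function $f$ on $\partial M$, and the vanishing bookkeeping of Proposition~\ref{th:bv-vanish-2} --- $\delta_{\mathit{Floer}} = 0$ and $\delta_{\mathit{left}} = 0$ by freeness of the rotation parameter $r$, while $d_{\mathit{mixed}}$, $\delta_{\mathit{right}}$ and $\delta_{\mathit{Morse}}$ vanish on the minima of $f$ --- shows that $\delta$ vanishes on $\mathcal{S} := \mathit{CM}^*(E) \oplus \langle \text{minima of } f\rangle$. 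One also checks, in degrees $\leq 2-2m$, that $H^*(\mathcal{S}) \rightarrow \mathit{HF}^*(E,\gamma_-,\epsilon)$ is injective: there the image of $p_\epsilon$ is concentrated in non-positive degree, hence lies in $H^0(\partial M)$, which is spanned by the classes of the minima of $f$, and the connecting map $H^{*+2m-2}(\partial M) \to H^{*+1}(E)$ kills $H^0(\partial M)$.

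Now fix $z \in \mathit{HF}^{*+2}(\mu,\epsilon)$ with $* \leq -2m$. By \eqref{eq:trivial-monodromy-2} and Example~\ref{th:grading-of-boundary-twist}, $z$ corresponds to a class of non-positive cohomological degree in $H^*(M)$; hence the image $[dx] \in \mathit{HF}^{*+2}(E,\gamma_-,\epsilon)$ of $[\mathit{point}] \otimes z$ under the connecting map of \eqref{eq:gamma-gamma} satisfies $p_\epsilon([dx]) \in H^{*+2m}(\partial M) = H^0(\partial M)$, so by the injectivity above $[dx]$ lies in the image of $H^*(\mathcal{S})$. Therefore one can choose the lift $x \in \mathit{CF}^{*+1}(E,H_+)$ of a cocycle representing $[\mathit{point}] \otimes z$ so that the cocycle $dx$ lies in $\mathcal{S}$. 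Then $\delta(dx) = 0$, so $\bar{x} := \delta x$ is a cocycle in $\mathit{CF}^*(E,H_+)$, whose image in $Q^*$ represents $1 \otimes z \in H^0(S^1) \otimes \mathit{HF}^{*+2}(\mu,\epsilon) \subset H^*(Q)$ by the description of the BV operator on $Q$. As in Proposition~\ref{th:partial-splitting}, $[\bar{x}] \in \mathit{HF}^*(E,\gamma_+,\epsilon)$ is independent of all choices --- now using that $\delta$ vanishes on $\mathcal{S}$, that $\delta$ preserves $\mathit{CF}^*(E,H_-)$ (where it induces the BV operator of $\mathit{HF}^*(E,\gamma_-,\epsilon)$), and the injectivity statement --- and $z \mapsto [\bar{x}]$ is the desired partial splitting of \eqref{eq:proj-s1}.

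I expect the main obstacle to be the total-space $\epsilon$-wall analysis of the second paragraph: identifying the Morse--Bott manifold $\partial M$ and its data inside $E$, obtaining transversality for the mixed Morse--Bott moduli spaces (the Floer--Hofer--Salamon simple-orbit argument, now also in the base direction), and rerunning the Proposition~\ref{th:bv-vanish-2} vanishing bookkeeping in this setting. Pinning down the shift $2m-2$ --- equivalently, why the degree bound is exactly $* \leq -2m$ --- via Conley--Zehnder indices, and making the well-definedness of $[\bar{x}]$ airtight (since, unlike in Proposition~\ref{th:partial-splitting}, $\delta$ is nonzero on $\mathit{CF}^*(E,H_-)$ as a whole), are the remaining delicate points.
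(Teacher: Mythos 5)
Your key computational input is the right one, and it coincides with the paper's: the total-space analogue of Proposition \ref{th:bv-vanish-2}, namely a Morse--Bott model for $\mathit{HF}^*(E,\gamma_-,\epsilon)$ whose non-constant generators form a copy of $\partial M$ contributing only in degrees $\geq 2-2m$, with the (Morse--Bott) BV operator vanishing on all cochains of degree $\leq 2-2m$; your degree bookkeeping ($dx$ of degree $*+2\leq 2-2m$) is also correct. The gap is in how this is wired into the argument of Proposition \ref{th:partial-splitting}. Your subcomplex $\mathcal{S}$ lives in the Morse--Bott model, whereas the short exact sequence \eqref{eq:gamma-gamma} is built from the perturbed Hamiltonian $H_+$ of Lemma \ref{th:gamma-wall}, whose inner ($|y|\leq 1$) orbits have been made nondegenerate by a time-dependent perturbation; moreover the base profile $h_+(|y|^2/2)$ there is constant on the whole disc $\{|y|\leq 1\}$, so there is no ``unique critical point $y=0$'' and the inner part of $\mathit{CF}^*(E,H_+)$ is not the complex $\mathit{CM}^*(E)\oplus \mathit{CM}^{*+2m-2}(f)$ you write down. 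Consequently the crucial assertion $\delta(dx)=0$ --- for the chain-level BV operator of the complex actually appearing in \eqref{eq:gamma-gamma} --- is not established. Making it literal would require rebuilding Lemma \ref{th:gamma-wall} in a hybrid setting (Morse--Bott at the $\epsilon$-wall near $\partial E$, together with the $\gamma$-wall orbits), including the barrier argument, transversality for the mixed Morse--Bott trajectories, and the BV compatibility; you correctly flag this as ``the main obstacle'', but it is precisely the missing step, and it is a substantial piece of new analysis rather than a routine check.

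The paper resolves this differently and more cheaply: the Morse--Bott computation is kept as a statement about a \emph{separate} chain model for $\mathit{HF}^*(E,\gamma_-,\epsilon)$, and is transferred to the perturbed complex $\mathit{CF}^*(E,H_-)$ through the BV-compatible continuation equivalence of Remark \ref{th:uniqueness}. What survives the transfer is not vanishing of $\delta$ but a partial nullhomotopy \eqref{eq:chi-kill}, $d\chi-\chi d=\delta$ in degrees $\leq 2-2m$; one then replaces $\bar{x}=\delta x$ by $\bar{x}=\delta x+\chi(dx)$ as in \eqref{eq:bar-x-2}, after which the cocycle and well-definedness checks go through essentially verbatim. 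Note that a cohomology-level vanishing of $\Delta$ in low degrees would not suffice (a primitive $w$ of $\delta(dx)$ is ambiguous up to a cocycle of $\mathit{CF}^{*+1}(E,H_-)$, which changes $[\bar{x}]$), so some chain-level datum such as $\chi$ is genuinely needed; your injectivity detour and the choice of lift with $dx\in\mathcal{S}$ do not substitute for it, and would in any case be automatic for degree reasons once the hybrid model existed.
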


\begin{proof}
One can set up a Morse-Bott chain complex which computes $\mathit{HF}^*(E,\gamma_-,\epsilon)$ exactly as in Proposition \ref{th:bv-vanish-2}. Namely, generators consist of constant orbits, together with a Morse-Bott nondegenerate submanifold $F$ which is a copy of $\partial M$ (located in some fibre of $\pi$). Moreover, the Hamiltonian and almost complex structure can be taken to be time-independent. Then, the argument used there shows that the Morse-Bott analogue of the BV operator vanishes on all cochains of degree $\leq 2-2m$.

What does this mean for our original complex $\mathit{CF}^*(E,H_-)$? It is related to the Morse-Bott version by a chain homotopy equivalence, which is a version of a continuation map, hence comes with a secondary version as in \eqref{eq:cont-maps}, expressing its compatibility with BV operators. Using Setup \ref{th:truncation}(ii), we obtain a nullhomotopy for the BV operator on the correspondingly truncated subcomplex:
\begin{equation} \label{eq:chi-kill}
\chi: \mathit{CF}^{*}(E,H_-)^{\leq 2-2m} \longrightarrow \mathit{CF}^{*-2}(E,H_-)^{\leq 2-2m}.
\end{equation}
Now take \eqref{eq:x-cochain}, where $\ast \leq -2m$, and replace \eqref{eq:bar-x} with \eqref{eq:corrected-bar}, which provides the desired splitting; this makes sense because $dx$ is a cocycle in $\mathit{CF}^*(E,H_-)$ of degree $\ast+2 \leq 2-2m$, hence lies in the domain of definition of \eqref{eq:chi-kill}. 
\end{proof}

As before, one can interpret this as an instance of a more general setup:

\begin{setup} \label{th:final-setup}
(i) Consider a variation on the situation of Setup \ref{th:split-setup}(i), where the homotopy trivialization exists only on $(C^*_-)^{\leq j}$, for some $j$. Then, \eqref{eq:corrected-bar} defines a partial lift, as in \eqref{eq:lift-bv} but defined only on $H^*(Q)$ for $\ast \leq j-1$.

(ii) There is a corresponding variation of Setup \ref{th:split-setup}(ii), with two homotopy trivializations defined on $(C^*_-)^{\leq j}$ respectively $(\tilde{C}^*_-)^{\leq j}$. Then, one gets a diagram \eqref{eq:lift-lift} in degrees $\ast \leq j-1$.
\end{setup}

To apply this to our case, we first used a Morse-Bott formalism to put ourselves in a situation where part of the BV operator can be explicitly computed (allowing an obvious choice of partial homotopy trivialization), and then used continuation maps to carry over that trivialization to the general case. In principle, part (ii) of Setup \ref{th:final-setup} can be used to show that the partial splitting from Proposition \ref{th:partial-splitting} is canonical, but that would require additional arguments not carried out here.

\begin{example} \label{th:lift-1}
In the situation of Example \ref{th:anticanonical-lefschetz-pencil}, we can use \eqref{eq:trivial-monodromy-2} for $\epsilon \in (1,2)$. The splitting from Proposition \ref{th:partial-splitting-2} then takes $1 \in H^0(M) \iso \mathit{HF}^{2-2m}(\mu,\epsilon)$ to an element of $\mathit{HF}^{-2m}(E,\gamma_+,\epsilon)$.
\end{example}

\section{Translations at infinity\label{subsec:translations}}
We remain in the same class of manifolds (Setup \ref{th:setup-e}), but now set up Floer cohomology differently. First of all, we follow the version for symplectic automorphisms, rather than the Hamiltonian one. More importantly, the perturbations involved have different behaviour at infinity, following \cite{seidel12b}.

\subsection{Geometric data}
As usual, we begin by assembling the basic geometric ingredients, this time starting with the class of symplectic automorphisms that are allowed.

\begin{setup} \label{th:setup-t}
(i) We consider exact symplectic automorphisms $\phi: E \rightarrow E$ with the following properties. There is an open subset $U \subset E$ of the same kind as in Setup \ref{th:setup-e}, such that
\begin{equation} \label{eq:fibrewise-phi}
\pi(\phi(x)) = \phi_{\bC}(\pi(x)) \quad \text{for $x \in U$.}
\end{equation}
Here, $\phi_{\bC}$ is a symplectic automorphism of the base, which is the identity outside a compact subset. Moreover, if we restrict $\phi$ to a fibre $E_y$, $|y| \gg 0$, then it should be the identity near $\partial E_y$.

(ii) As usual, if $E$ has a symplectic Calabi-Yau structure, we assume that $\phi$ is a graded symplectic automorphism (then, the same holds for its restriction to any fibre $E_y$, $|y| \gg 0$).

(iii) We use functions $H \in \smooth(E,\bR)$ such that
\begin{equation} \label{eq:h-translation}
\left\{
\begin{aligned}
& H(\Theta(y,x)) = H_{\bC}(y) + \epsilon\rho_M(x) \quad \text{for $x$ near $\partial M$,} \\
& H(\Psi(p,q,x)) = \delta \,\mathrm{re}(e^{p+iq}) + H_M(x) \quad \text{for $p \gg 0$.}
\end{aligned}
\right.
\end{equation}
Here, $\epsilon$ is as in \eqref{eq:no-reeb}, and $\delta \in \bR \setminus \{0\}$. $H_{\bC}$ is a function on the base such that $H_{\bC}(y) = \delta\, \mathrm{re}(y)$ outside a compact subset, and $H_M$ is as in Setup \ref{th:mapping-torus-setup}.

(iv) We use compatible almost complex structures $J$ on $E$ of the following kind. Near the boundary, we impose the same condition \eqref{eq:theta-star-j} as before. At points $(p,q,x)$, where $p \gg 0$ and $-\pi \leq q \leq \pi$,
\begin{equation} \label{eq:two-families}
\Psi^*J = \begin{cases} 
i \times J_{M,\mathrm{re}(e^{p+iq})}^+ & \text{for $0 \leq q \leq \pi$}, \\
i \times J_{M,\mathrm{re}(e^{p+iq})}^- & \text{for $-\pi \leq q \leq 0$.}
\end{cases}
\end{equation}
Here, $J_{M,r}^\pm$ are two auxiliary families of compatible almost complex structures on the fibre, both parametrized by $r \in \bR$ and belonging to the class \eqref{eq:j-convex}, and additionally satisfying
\begin{equation}
\begin{cases}
J_{M,r}^- = J_{M,r}^+ & \text{if $r \gg 0$,} \\
J_{M,r}^- = \mu_*J_{M,r}^+ & \text{if $r \ll 0$.}
\end{cases}
\end{equation}
\end{setup}

The assumption \eqref{eq:fibrewise-phi} is quite restrictive, since it implies that $\phi$ must commute with symplectic parallel transport between the fibres. In combination with the other condition, this implies that
\begin{equation} \label{eq:phi-first-trivialization}
\phi(\Theta(y,x)) = \Theta(\phi_{\bC}(y),x) \quad \text{for $x$ close to $\partial M$.}
\end{equation}
Similarly, we have
\begin{equation} \label{eq:conjugate-mu}
\phi(\Psi(p,q,x)) = \Psi(p,q,\phi_M(x)) \quad \text{if $p \gg 0$},
\end{equation}
where $\phi_M$ is an exact symplectic automorphism of $M$, which is the identity near $\partial M$, and which must commute with $\mu$.

\begin{example} \label{th:global-monodromy}
Fix a function on $\bC$, which equals $\pi |y|^2$ outside a compact subset, and pull it back to $E$. Then, its flow for time $1$ gives a symplectic automorphism within the class defined above. We denote it by $\nu$, and call it the {\em global monodromy}. The associated automorphism of the fibre is the monodromy $\mu$.
\end{example}

The class of almost complex structures introduced above has the property that $\pi$ is $J$-holomorphic outside a compact subset. Moreover, because of \eqref{eq:two-families}, the induced almost complex structures on the fibres $E_y$, $|y| \gg 0$, are locally constant under parallel transport in imaginary direction (this is sketched in Figure \ref{fig:j}).
\begin{figure}
\begin{centering}
\begin{picture}(0,0)%
\includegraphics{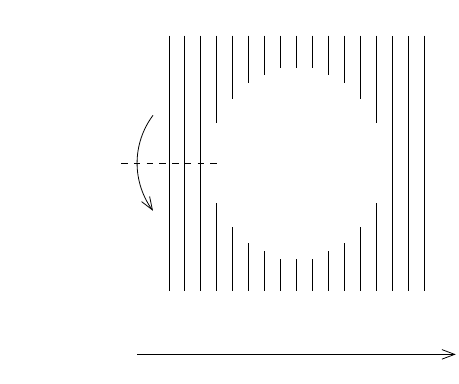}%
\end{picture}%
\setlength{\unitlength}{3355sp}%
\begingroup\makeatletter\ifx\SetFigFont\undefined%
\gdef\SetFigFont#1#2#3#4#5{%
  \reset@font\fontsize{#1}{#2pt}%
  \fontfamily{#3}\fontseries{#4}\fontshape{#5}%
  \selectfont}%
\fi\endgroup%
\begin{picture}(4302,3594)(-389,-3634)
\put(3001,-3561){\makebox(0,0)[lb]{\smash{{\SetFigFont{10}{12.0}{\rmdefault}{\mddefault}{\updefault}{\color[rgb]{0,0,0}$r = \mathrm{re}(y)$}%
}}}}
\put(2251,-211){\makebox(0,0)[lb]{\smash{{\SetFigFont{10}{12.0}{\rmdefault}{\mddefault}{\updefault}{\color[rgb]{0,0,0}$J_{M,r}^+$}%
}}}}
\put(2251,-3061){\makebox(0,0)[lb]{\smash{{\SetFigFont{10}{12.0}{\rmdefault}{\mddefault}{\updefault}{\color[rgb]{0,0,0}$J_{M,r}^-$}%
}}}}
\put(-374,-1411){\makebox(0,0)[lb]{\smash{{\SetFigFont{10}{12.0}{\rmdefault}{\mddefault}{\updefault}{\color[rgb]{0,0,0}monodromy $\mu$}%
}}}}
\end{picture}%
\caption{\label{fig:j}}
\end{centering}
\end{figure}%

\subsection{The compactness argument}
Given $\phi$, choose $H = (H_t)$ and $J = (J_t)$, with the same periodicity condition as in \eqref{eq:phi-periodicity} (this makes sense because the relevant classes of Hamiltonians and almost complex structures are invariant under $\phi$). The solutions of \eqref{eq:periodic-y} in $\scrL_\phi$, or equivalently the fixed points of $\phi_H^1 \circ \phi$, will always be contained in a compact subset of $E \setminus \partial E$. For generic choice of $(H_t)$, these fixed points will also be nondegenerate; assume from now on that this is the case. Consider solutions $u: \bR^2 \rightarrow E$ of Floer's equation \eqref{eq:floer} with periodicity conditions \eqref{eq:floer-2}, and with limits $x_{\pm}$ as usual.

\begin{lemma} \label{th:b-energy}
There is a compact subset of $E$, such that for any solution $u$ and any $s$ such that $u(s,1)$ lies outside that subset,
\begin{equation}
\int_{[0,1]} |\partial_s u(s,t)|^2 \, \mathit{dt} \geq \delta^2.
\end{equation}
\end{lemma}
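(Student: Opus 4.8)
## Proof Plan

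The plan is to exploit the fact that, near infinity, the Floer equation for $\phi$ projects (via $\pi$) to an equation on the base $\bC$, and that the Hamiltonian term there is a translation $\delta\,\mathrm{re}(y)$, so the projected map $v = \pi(u)$ satisfies $\partial_s v + i(\partial_t v - \delta\,\partial_{\mathrm{re}(y)}) = 0$ in the region where $\pi$ is holomorphic and $H = H_{\bC} + H_M$. The key geometric point is that this projected equation has no nonconstant finite-energy solutions with both ends asymptotic to the same value (the ``forbidden'' behaviour), and more quantitatively, if $v$ genuinely leaves the compact region where the perturbation lives, it must pick up a definite amount of energy per unit $s$-length. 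First I would fix the compact subset: take it large enough that (a) all $1$-periodic orbits $x_{\pm}$ of $\phi_H^1\circ\phi$ lie inside it, (b) outside it $\pi$ is $J$-holomorphic and $H$ has the form prescribed in \eqref{eq:h-translation}, and (c) the complement is contained in the region parametrized by $\Psi$ and $\Theta$.

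Next I would split into the two cases governed by \eqref{eq:h-translation}. If $u(s,1)$ lies near $\partial E$ (the $\Theta$-region), then the fibre component of $u$ is governed by a Floer equation inside $M$ with Hamiltonian $\epsilon\rho_M$, and the standard maximum principle forces $u$ to stay away from $\partial E$ entirely; so for the compact set chosen large enough, this case does not occur — more precisely, once $u(s,1)$ is in the $\Theta$-collar it cannot be outside the chosen compact set, because the maximum principle confines it. The substantive case is when $u(s,1)$ is in the $\Psi$-region with $p \gg 0$. There, writing $y = e^{p+iq}$, the translation Hamiltonian $\delta\,\mathrm{re}(y)$ has Hamiltonian vector field a constant (imaginary-direction) translation $\delta\,i\,\partial_{\mathrm{im}(y)}$ on $\bC$ (up to the conventions in \eqref{eq:h-translation}), and $v=\pi(u)$ solves $\partial_s v + i\,\partial_t v = \delta\, i\cdot(\text{unit translation})$. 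Then I would compute $\int_{[0,1]}|\partial_s u(s,t)|^2\,dt \geq \int_{[0,1]}|\partial_s v(s,t)|^2\,dt$, using that $\pi$ is $(J,i)$-holomorphic there so the projection does not increase the norm of $\partial_s$. For the projected equation, $\partial_s v = -i\partial_t v + \delta\,(\text{translation vector})$, and since $v(s,\cdot)$ is a loop (after accounting for the $\phi_{\bC}$-twisting, which is trivial outside a compact subset of $\bC$, hence trivial for $|y|\gg 0$), integrating the $\partial_t$ term over the circle kills it; the translation piece contributes a term of size $\delta$ per unit $t$, giving $\int_{[0,1]}|\partial_s v|^2\,dt \geq \delta^2$ by Cauchy--Schwarz applied to $\int_0^1 \langle \partial_s v, \text{translation vector}\rangle\,dt = \delta$ (the cross term with $-i\partial_t v$ again vanishing on the loop).

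I expect the main obstacle to be the bookkeeping at the interface: the almost complex structure near infinity in \eqref{eq:two-families} is genuinely $q$-dependent (it switches between $J^+_{M,r}$ and $J^-_{M,r}$ across $q = 0, \pi$, and is only $\mu$-twisted, not constant, in the $q$-direction), so I have to be careful that the fibrewise inequality $|\partial_s u|\geq|\partial_s v|$ still holds — it does, because $\pi$ is $(J,i)$-holomorphic there regardless of the fibrewise choice, so $D\pi$ is complex-linear and norm-nonincreasing on the horizontal part while $\partial_s u$ has a nonnegative vertical contribution. A secondary subtlety is that $v$ need not literally be a closed loop in $t$ but only $\phi_{\bC}$-twisted; since $\phi_{\bC}$ is compactly supported on $\bC$, for the $|y|\gg 0$ region relevant here it is the identity, so $v(s,\cdot)$ is honestly $1$-periodic and the loop-integration arguments apply. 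Finally, one must check that the constant $\delta^2$ (rather than $\delta^2$ times a geometric factor) comes out correctly with the conventions in \eqref{eq:h-translation} and \eqref{eq:second-trivialization}; this is a normalization computation and I would defer the precise constant-chasing, noting only that the Hamiltonian term $\delta\,\mathrm{re}(e^{p+iq})$ is designed so that its vector field has norm matching $\delta$ after the $\Psi$-pullback of $\omega_E$, which is exactly what makes the clean bound $\delta^2$ appear.
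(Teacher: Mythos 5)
Your core computation is sound and genuinely different in flavour from the paper's. The paper's proof is a displacement argument carried out in $E$ itself: outside a compact set, $\phi_H^1\circ\phi$ moves every point by at least $\delta$ (it acts as the translation $y\mapsto y+i\delta$ over the base at infinity), and since Floer's equation identifies $|\partial_s u|$ with $|\partial_t u - X_H|$, the integral $\int_0^1|\partial_s u|\,dt$ dominates the distance between $u(s,1)$ and its image under $(\phi_H^1\circ\phi)^{\pm1}$; Cauchy--Schwarz then gives the $L^2$ bound. You instead project by $\pi$, use $(J,i)$-holomorphicity to get $|\partial_s u|\geq|\partial_s v|$, integrate the projected equation over the loop so that the $\partial_t v$ term drops out and only the translation term of size $\delta$ survives, and apply Cauchy--Schwarz. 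Where it applies, your computation is correct (including the normalization: $X_{\delta\,\mathrm{re}(y)}$ has norm $|\delta|$ for the standard metric, which is the metric induced on horizontal subspaces by $\Psi^*\omega_E=e^{2p}dp\wedge dq+\omega_M$).

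The genuine gap is that your argument needs the \emph{entire circle} $t\mapsto u(s,t)$, $t\in[0,1]$, to lie in the region at infinity where $\pi$ is $J$-holomorphic, $H=H_{\bC}\circ\pi+H_M$, and $\phi_{\bC}=\mathrm{id}$ (so that $v(s,\cdot)$ is an honest loop and $\int_0^1\partial_t v\,dt=0$). The hypothesis only places the single point $u(s,1)$ outside a compact set; nothing prevents the loop from re-entering the compact region at intermediate $t$, and then your identity $\int_0^1\langle\partial_s v,e\rangle\,dt=\delta$ fails because the boundary terms do not cancel and the equation itself changes. The paper's version is immune to this because it only uses the endpoints of the path and its total length. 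Your argument can be patched by a dichotomy: choose nested compact sets $K_0\subset K$ with $\mathrm{dist}(E\setminus K,K_0)$ large compared to $|\delta|+\sup|X_H|$; if $u(s,1)\notin K$, then either $u(s,\cdot)$ stays in $E\setminus K_0$ (your computation applies), or the loop travels distance at least $\mathrm{dist}(E\setminus K,K_0)$ in $t$, whence $\int_0^1|\partial_s u|\,dt=\int_0^1|\partial_t u-X_H|\,dt\geq\int_0^1|\partial_t u|\,dt-\sup|X_H|\geq|\delta|$ and Cauchy--Schwarz finishes. As written, though, this case is missing. (Your dismissal of the near-$\partial E$ case via the fibrewise maximum principle is acceptable, since that confinement is established independently of this lemma.)
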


\begin{proof}
For $x$ outside a compact subset, 
\begin{equation} \label{eq:no-asymptotic-fixed-points}
\mathrm{dist}(x, (\phi_H^1 \circ \phi)(x)) \geq \delta, 
\end{equation}
where the distance is with respect to the metric associated to any almost complex structure in our class (recall that the fibres of $\pi$ are compact, so ``outside a compact subset'' means going to infinity in base direction). For essentially the same reason, $\int_{[0,1]} |\partial_s u(s,t)| \mathit{dt} \geq \delta$.
\end{proof}

\begin{lemma} \label{th:base-convexity}
There is a constant $B>0$, such that $|\mathrm{re}(\pi(u))| \leq B$ for all solutions $u$.
\end{lemma}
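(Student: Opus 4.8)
The approach is a maximum principle for the function $a:=\mathrm{re}(\pi\circ u)$. The point is that far out over the base the data are normalized: $\pi$ is $(J,i)$-holomorphic, and the base-directed part of $H$ is $\delta\,\mathrm{re}(y)$, whose Hamiltonian vector field on $\bC$ is the constant vector field. Where these normalizations hold, $a$ turns out to be harmonic, and an elementary argument then bounds it. So first I would fix a constant $B>0$ large enough that, setting $K_0:=\pi^{-1}(\{|y|\le B\})$ (compact, since $\pi$ is proper), all of the following hold: on $E\setminus K_0$ the projection $\pi$ is $(J,i)$-holomorphic and $H$ has the split form $H=\delta\,\mathrm{re}(\pi(\cdot))+(\text{a function pulled back along }\pi)$, as provided near infinity by Setup~\ref{th:setup-t}(iii) via the trivializations \eqref{eq:first-trivialization}, \eqref{eq:second-trivialization}; $\phi_{\bC}$ is the identity on $\pi(E\setminus K_0)$; the compact sets $E\setminus U$ and $\phi(E\setminus U)$ project into $\{|y|<B\}$; and all solutions of \eqref{eq:periodic-y} in $\scrL_\phi$ project into $\{|y|<B\}$ (they lie in a fixed compact subset of $E\setminus\partial E$).

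Next I would show that on $\Omega:=\{(s,t)\in\bR^2 : a(s,t)>B\}$ the function $a$ is harmonic and $\pi\circ u$ is genuinely $1$-periodic in $t$. If $a(s,t)>B$ then $u(s,t)\notin K_0$, so $v:=\pi\circ u$ solves, near $(s,t)$, the equation $\partial_s v + i(\partial_t v - X)=0$ with $X$ the constant Hamiltonian vector field of $\delta\,\mathrm{re}(y)$; writing $v=a+ib$ and differentiating the real and imaginary components of this identity yields $\partial_s^2 a+\partial_t^2 a=0$. Furthermore $a(s,t)>B$ forces $u(s,t+1)\in U$ (else $u(s,t)=\phi(u(s,t+1))\in\phi(E\setminus U)$, whose $\pi$-image has $|y|<B$), hence $\pi(u(s,t))=\phi_{\bC}(\pi(u(s,t+1)))$ by \eqref{eq:fibrewise-phi}; and $a(s,t)>B$ forces $\pi(u(s,t+1))$ to lie outside $\mathrm{supp}(\phi_{\bC})$ (since $\phi_{\bC}$ preserves the ball containing its support), so in fact $\pi(u(s,t))=\pi(u(s,t+1))$. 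Thus $\Omega$ is invariant under $t\mapsto t\pm1$, and $v|_\Omega$ descends to the cylinder.

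Finally I would apply the maximum principle. Since $u(s,\cdot)$ converges as $s\to\pm\infty$ to the limit orbits, whose $\pi$-images have real part of modulus $<B$, the set $\Omega$ is contained in $[-S,S]\times\bR$ for some $S$ and descends to a relatively compact subset of the cylinder $[-S,S]\times(\bR/\bZ)$. On the closure, $a$ is continuous, harmonic in the interior, and equal to $B$ along the topological boundary of $\Omega$; the weak maximum principle forces $a\le B$ there, contradicting $a>B$ on $\Omega$ unless $\Omega=\emptyset$. Hence $a\le B$ everywhere, and the symmetric argument with $\{a<-B\}$ and the minimum principle gives $a\ge -B$, so $|\mathrm{re}(\pi(u))|\le B$.

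There is no deep content here: the substance is purely in the bookkeeping. The part I expect to be most delicate is verifying that a single threshold $B$ really does simultaneously dominate all the compact ``cores'' entering the computation (where $\pi$ fails to be holomorphic, where $H$ is not of split form, where $\phi_{\bC}\ne\mathrm{id}$, the complement of $U$ together with its image under $\phi$, and the location of the periodic orbits), and in checking that the $\phi$-twisted periodicity of $u$ genuinely reduces to honest $1$-periodicity of $\pi\circ u$ on $\{\mathrm{re}(\pi\circ u)>B\}$, which is what lets one run the maximum principle on a compact domain rather than an infinite strip. Both become routine once the normalizations in Setup~\ref{th:setup-t} are unwound.
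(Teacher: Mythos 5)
Your argument is correct and is the same as the paper's: project the Floer trajectory to the base, observe that where $\mathrm{re}(\pi(u))$ is large the projection satisfies an inhomogeneous Cauchy--Riemann equation with constant inhomogeneity (so its real part is harmonic) and is genuinely $1$-periodic in $t$ since $\phi_{\bC}$ is compactly supported, and apply the maximum principle on the resulting compact region of the cylinder. You spell out the bookkeeping (choice of a single threshold $B$, the reduction of the twisted periodicity to honest periodicity) that the paper leaves implicit; the only slip is describing $H_M$ as ``pulled back along $\pi$'' --- it is a function of the fibre variable in the trivialization $\Psi$ --- but what you actually use, namely that its Hamiltonian vector field is vertical so that $D\pi(X_H)$ is the constant vector field $i\delta$, is correct.
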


\begin{proof}
Write $v = \pi(u)$. On the subset where $\mathrm{re}(v)$ is large, it satisfies the equation
\begin{equation}
\left\{
\begin{aligned}
& \partial_s v + i(\partial_t v - i \delta) = 0, \\
& v(s,t+1) = v(s,t) + i\delta.
\end{aligned}
\right.
\end{equation}
Hence, $\mathrm{re}(v)$ is harmonic, so that we can apply the maximum principle.
\end{proof}

\begin{lemma} \label{th:fibre-convexity}
There is a neighbourhood $V \subset M$ of $\partial M$, such that the image of any solution $u$ is disjoint from $\Theta(\bC \times V)$.
\end{lemma}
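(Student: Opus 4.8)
The plan is to treat this as the fibrewise counterpart of Lemma \ref{th:base-convexity}: I would use the collar trivialization \eqref{eq:first-trivialization} to reduce the statement to the standard maximum principle that confines Floer trajectories to the interior of a Liouville domain.

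The first step is to observe that Floer's equation \eqref{eq:floer}, \eqref{eq:floer-2} decouples on the region where $u$ takes values in $\Theta(\bC\times W)$. By \eqref{eq:first-trivialization} the symplectic form there is the product $\omega_\bC + \omega_M$; by \eqref{eq:theta-star-j} the almost complex structure is $J_\bC\times J_{M,y}$; by \eqref{eq:h-translation} the Hamiltonian is $H_\bC(y)+\epsilon\rho_M(x)$ near $\partial M$; and by \eqref{eq:phi-first-trivialization} the automorphism $\phi$ acts only on the base factor. Consequently, writing $u=\Theta(v,w)$ with $v=\pi(u)$, the fibre component $w$ solves a domain-dependent Floer equation in $M$, namely $\partial_s w + J_{M,v(s,t)}(\partial_t w-\epsilon R_M)=0$, with honest periodicity $w(s,t)=w(s,t+1)$, wherever $u\in\Theta(\bC\times W)$. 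Since each $J_{M,y}$ belongs to the class \eqref{eq:j-convex}, this is exactly the type of equation for which the maximum principle is available.

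The second step applies that maximum principle to $\rho := \rho_M\circ w$. As in the construction of $\mathit{HF}^*(M,\epsilon)$ and $\mathit{HF}^*(\phi,\epsilon)$, the linearity of the fibre Hamiltonian together with $\theta_M\circ J_{M,y}=d\rho_M$ makes $\rho$ subharmonic wherever it is close to $1$; and by \eqref{eq:no-reeb} the $1$-periodic orbits of $\epsilon R_M$, hence the fibre parts of the limits $x_\pm$, stay in a region $\{\rho_M\le 1-\eta\}$ for some $\eta>0$. I would then choose $\eta$ small enough that, in addition, $\{\rho_M\ge 1-\eta\}\subset W$ and the subharmonicity holds throughout $\{\rho\ge 1-\eta\}$. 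The set $\Omega := \{(s,t):\rho_M(w(s,t))\ge 1-\eta\}$ is compact, because $u(s,\cdot)\to x_\pm$ uniformly and the $x_\pm$ avoid $\{\rho_M\ge 1-\eta\}$; on $\Omega$ the function $\rho$ is subharmonic, and $\rho\equiv 1-\eta$ on $\partial\Omega$. The maximum principle then forces $\rho\le 1-\eta$ on $\Omega$, so $u$ never meets $\Theta(\bC\times\{\rho_M>1-\eta\})$, and setting $V=\{\rho_M>1-\eta\}\subset M$ finishes the argument.

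The only delicate point is the decoupling in the first step: one has to check carefully that the $\phi$-twisted periodicity \eqref{eq:floer-2}, the behaviour of $\phi$ near $\partial E$ (via \eqref{eq:phi-first-trivialization}), and the product structure of $\omega_E$, $J$, and $H$ in the trivialization \eqref{eq:first-trivialization} are genuinely compatible, so that $w$ solves an ordinary (untwisted-in-$t$) Floer equation in $M$ to which the standard arguments apply verbatim. Once that is in place, everything else is a routine repetition of the maximum principle already invoked several times above to keep Floer trajectories away from $\partial M$.
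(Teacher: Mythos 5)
Your proposal is correct and is essentially the paper's argument: the paper's entire proof of this lemma is the remark that it is ``again a maximum principle argument, but now used in fibre direction,'' and your write-up supplies exactly the intended details (decoupling of the Floer equation in the product trivialization $\Theta$, honest $t$-periodicity of the fibre component since $\phi$ acts only on the base there, and the standard subharmonicity of $\rho_M\circ w$ for contact-type $J_{M,y}$ and a Hamiltonian linear in $\rho_M$). The one point worth noting is that the domain-dependence of $J_{M,v(s,t)}$ is harmless because the subharmonicity computation uses the identity $\theta_M\circ J_{M,y}=d\rho_M$ only pointwise and never differentiates $J$; your argument is otherwise complete.
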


This is again a maximum principle argument, but now used in fibre direction.

\begin{lemma} \label{th:gromov}
There is a constant $C$ such that $\|du\|_{\infty} \leq C$ for any Floer trajectory.
\end{lemma}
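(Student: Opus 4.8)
The plan is to confine every Floer trajectory to a fixed compact subset $K \subset E \setminus \partial E$ and then rule out concentration of the derivative by a standard rescaling argument, exploiting exactness of $E$ to exclude bubbling. For the confinement, first note that since $\omega_E = d\theta_E$ is exact, a Floer trajectory with fixed asymptotics $x_\pm$ has bounded energy $E(u) = \int_{\bR\times S^1} |\partial_s u|^2 = A_{\phi,H}(x_-) - A_{\phi,H}(x_+) =: E_0$, and $E_0$ ranges over a finite set, so it may be taken uniform. Lemma \ref{th:fibre-convexity} keeps the image of $u$ away from a neighbourhood of $\partial E$ in the fibre direction, and Lemma \ref{th:base-convexity} bounds $\mathrm{re}(\pi(u))$; what remains is a bound on $\mathrm{im}(\pi(u))$. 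For this I would combine Lemma \ref{th:b-energy} with the energy bound: the set of $s$ for which $u(s,1)$ leaves a given large compact subset $K_0$ is a union of intervals of total length at most $E_0/\delta^2$, and over each such excursion the displacement of $u$ in $E$ is controlled, via Cauchy--Schwarz and Lemma \ref{th:b-energy}, in terms of $E_0$ and $\delta$. Since $\mathrm{im}(\pi(u))$ is bounded at the endpoints of each excursion, $\pi(u)$ stays in a fixed compact subset of $\bC$; together with the two previous bounds and properness of $\pi$, the image of $u$ lies in a fixed compact $K \subset E \setminus \partial E$, giving a uniform $C^0$ bound.

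With the image confined to $K$, suppose for contradiction that $R_k := \|du_k\|_\infty \to \infty$ along a sequence of Floer trajectories, with the maximum attained at $z_k$. Rescaling, $\tilde u_k(z) = u_k(z_k + z/R_k)$ satisfies $\|d\tilde u_k\|_\infty \leq 1$ with equality at $0$, has image in $K$, energy at most $E_0$, and solves a Floer equation in which the Hamiltonian term carries a factor $R_k^{-1} \to 0$; the $\phi$-twisted periodicity of \eqref{eq:floer-2}, being at scale $1$ in $t$, disappears in the limit. By elliptic bootstrapping on $K$, a subsequence converges in $C^\infty_{\mathit{loc}}$ to a non-constant finite-energy $J$-holomorphic plane $v \colon \bC \to E$ (after passing to a further subsequence so that $t_k$ converges); when $z_k$ escapes into the region where $\pi$ is holomorphic and the fibrewise almost complex structures are $\mathrm{im}(y)$-independent, $v$ may instead be valued in a single fibre, with no change to the argument. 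By removal of singularities $v$ extends to $\bar v \colon S^2 \to E$, and $\int_{S^2} \bar v^*\omega_E = 0$ by exactness, so $\bar v$ is constant, contradicting $\|d\tilde u_k(0)\| = 1$.

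The hard part is the confinement step, and within it specifically the control of $\mathrm{im}(\pi(u))$: unlike $\mathrm{re}(\pi(u))$ it is not governed by a maximum principle, and one has to feed in the quantitative lower bound on the energy density from Lemma \ref{th:b-energy} together with the uniform energy bound. Once the trajectories are known to live in a fixed compact subset of $E \setminus \partial E$, the derivative bound is entirely routine.
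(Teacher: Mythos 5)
Your proposal reverses the logical order of the paper's argument, and the reversal introduces a genuine gap. In the paper, Lemma \ref{th:gromov} is proved \emph{before} any confinement of the image: the gradient bound is established first, and only then is it fed into Proposition \ref{th:floer-bound} (together with Lemma \ref{th:b-energy}) to bound $\mathrm{im}(\pi(u))$. Your proposed confinement step does not go through as written. From the energy bound and Lemma \ref{th:b-energy} you correctly get that the excursion set $\{s : u(s,1) \notin K_0\}$ has measure at most $E_0/\delta^2$. But the displacement of $u(s,1)$ over an excursion interval $I$ is $\int_I |\partial_s u(s,1)|\, ds$, an integral along the single line $t=1$, whereas the energy only controls $\int_I \int_{[0,1]} |\partial_s u|^2\, dt\, ds$. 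Without a pointwise bound on $|\partial_s u|$ — which is exactly the conclusion of the lemma — there is no way to pass from the two-dimensional energy to the one-dimensional displacement; the map could travel arbitrarily far in the imaginary direction over a set of $s$ of tiny measure. (Replacing $u(s,1)$ by the $t$-average only shifts the problem to controlling the $t$-oscillation at fixed $s$, which again requires gradient control.) So your confinement argument is circular, and indeed the paper's Proposition \ref{th:floer-bound} explicitly cites Lemma \ref{th:gromov} at precisely this point.

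The way the paper avoids this is to run the rescaling argument with no a priori compactness of the image. One takes points $z_k$ where $|\partial_s u_k|$ is maximal and splits into two cases. If $u_k(z_k)$ stays in a compact set, one rescales and excludes the resulting non-constant finite-energy plane by exactness, as in your second paragraph. If $u_k(z_k)$ escapes, Lemma \ref{th:base-convexity} forces $\mathrm{im}(\pi(u_k(z_k))) \to \pm\infty$; one then translates back in the imaginary direction of the base. This is where the specific form \eqref{eq:two-families} of the almost complex structures matters: at infinity they are invariant under imaginary translation and split as $i \times J_{M,r}^{\pm}$, so the translated maps converge to a pseudo-holomorphic plane in $\bC \times M$ for a product structure, which is again excluded by exactness. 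Your proposal gestures at this translation invariance only as an optional aside, but it is in fact the essential mechanism that makes the lemma provable without first knowing a $C^0$ bound. If you restructure your argument to treat the escaping case this way, and then derive the confinement afterwards (as in Proposition \ref{th:floer-bound}), the proof is correct.
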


\begin{proof}
This is a simple Gromov compactness argument, following \cite[Proposition 5.1]{seidel12b}, and we will only give limited details. Suppose that the result is false. After passing to a subsequence, $\|\partial_s u_k\|$ goes to infinity. Take points $z_k = (s_k,t_k)$ where $|\partial_s u_k|$ reaches its maximum. 

{\em If the $u_k(z_k)$ remain inside a compact subset of $E$}, one can rescale locally near $z_k$, and get a non-constant pseudo-holomorphic plane as a limit. Since we have a priori bounds on the energy, this plane has finite energy, hence extends to a pseudo-holomorphic sphere, in contradiction to exactness. {\em Suppose on the other hand that the $u_k(z_k)$ do not remain inside a compact subset of $E$.} In view of Lemma \ref{th:base-convexity}, one can pass to a subsequence and then assume that $\mathrm{im}(\pi(u_k(z_k)))$ converges to either $+\infty$ or $-\infty$. Then, after a translation in imaginary direction over the base, the same argument as before applies, except that the limit lies in $\bC \times M$, and is pseudo-holomorphic for an almost complex structure $i \times J_{M,\mathrm{re}(y)}^{\pm}$, where $y$ is the coordinate on $\bC$ (it is here that we use the specific property of our almost complex structures indicated in Figure \ref{fig:j}).
\end{proof}

\begin{proposition} \label{th:floer-bound}
There is a compact subset of $E \setminus \partial E$ which contains all Floer trajectories.
\end{proposition}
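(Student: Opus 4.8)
The plan is to combine the a priori bounds of Lemmas~\ref{th:b-energy}--\ref{th:gromov} with the properness of $\pi$; the one new ingredient is a bootstrapping step upgrading the ``energy concentration'' estimate of Lemma~\ref{th:b-energy} to an $L^\infty$-bound on $\pi\circ u$. First I would record a uniform energy bound: since the Floer data $(H_t,J_t)$ is $s$-independent, any trajectory $u$ with limits $x_\pm$ satisfies $E(u)=A_{\phi,H}(x_-)-A_{\phi,H}(x_+)$, and as the generators are finite in number (being nondegenerate and contained in a compact subset of $E\setminus\partial E$) there is a constant $\mathcal E$, independent of $u$, with $E(u)\le\mathcal E$. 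Let $K_0\subset E$ be the compact set from Lemma~\ref{th:b-energy}. Integrating the inequality $\int_0^1|\partial_s u(s,t)|^2\,dt\ge\delta^2$, which holds whenever $u(s,1)\notin K_0$, over $s$ and using the energy bound shows that $S_u=\{s\in\bR: u(s,1)\notin K_0\}$ has Lebesgue measure $|S_u|\le\mathcal E/\delta^2=:L$, uniformly in $u$.

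Next I would bring in the geometry of the base. By Lemma~\ref{th:gromov}, $\|du\|_\infty\le C$; since $\pi$ is proper and, near infinity, $J$-holomorphic with horizontal metric pulled back from $\bC$, the differential of $\pi$ is uniformly bounded, so $v:=\pi\circ u$ satisfies $\|dv\|_\infty\le C'$ with $C'$ independent of $u$. By Lemma~\ref{th:base-convexity}, $|\mathrm{re}(v)|\le B$ uniformly, so it remains to bound $|\mathrm{im}(v)|$. Fix $R_0$ larger than $\sup_{K_0}|\mathrm{im}\circ\pi|$ and than the values of $|\mathrm{im}\,\pi|$ on the (finitely many) generators. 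If $|\mathrm{im}(v(s_1,1))|>R_0$ for some $s_1$, let $I\ni s_1$ be the maximal interval on which $|\mathrm{im}(v(\cdot,1))|>R_0$; then $u(s,1)\notin K_0$ for $s\in I$, so $I\subseteq S_u$, $|I|\le L$, and $I$ is bounded because $\mathrm{im}(v(s,1))\to\mathrm{im}(\pi(x_\pm(1)))$ as $s\to\pm\infty$. At the endpoints of $I$ one has $|\mathrm{im}(v)|=R_0$, so $|\mathrm{im}(v(s_1,1))|\le R_0+C'|I|\le R_0+C'L$. Hence $|\mathrm{im}(v(s,1))|\le R_0+C'L$ for all $s$; and since for fixed $s$ the path $t\mapsto u(s,t)$ has length at most $C$, applying the Lipschitz bound for $v$ once more gives $|\mathrm{im}(\pi\circ u)|\le R_0+C'L+C'C$ everywhere.

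Combining this with the bound on $\mathrm{re}$, we get $|\pi\circ u|\le R_1$ for a uniform $R_1$, so every Floer trajectory takes values in $\pi^{-1}(\overline D_{R_1})$, a compact subset of $E$ by properness of $\pi$. By Lemma~\ref{th:fibre-convexity}, the image of $u$ also avoids the neighbourhood $\Theta(\bC\times V)$ of $\partial E$; shrinking $V$ to a $V'$ with $\partial M\subset V'\subset\!\subset V$, every Floer trajectory lies in $\pi^{-1}(\overline D_{R_1})\setminus\Theta(\bC\times V')$, which is compact and disjoint from $\partial E$. This proves the proposition.

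The analytically substantial steps are exactly Lemmas~\ref{th:b-energy}--\ref{th:gromov}, so the work that remains is the bookkeeping in the middle paragraph: converting the measure estimate $|S_u|\le L$ into a genuine sup-norm bound on $\mathrm{im}(\pi\circ u)$ with constants uniform over all $u$. The points that need care are that Lemma~\ref{th:b-energy} controls only $u(s,1)$, so the gradient bound must be invoked a second time to propagate the estimate to all $t$, and that $K_0$ and $R_0$ must be chosen compatibly with Lemma~\ref{th:fibre-convexity}, so that ``$u(s,1)\notin K_0$'' genuinely forces $|\mathrm{im}\,\pi|$ to be large there.
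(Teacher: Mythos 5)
Your argument is correct and is essentially the paper's own proof: the paper combines Lemmas \ref{th:b-energy}, \ref{th:base-convexity}, \ref{th:fibre-convexity} and \ref{th:gromov} in exactly the same way, arguing by contradiction that an escaping sequence would have to spend an increasingly long $s$-interval in the region where Lemma \ref{th:b-energy} applies, violating the a priori energy bound. You have merely made that argument direct and quantitative (the measure bound $|S_u|\le \mathcal{E}/\delta^2$ plus the Lipschitz bound on $\pi\circ u$), which is fine.
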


\begin{proof}
Assume that this is not true. By Lemmas \ref{th:base-convexity} and \ref{th:fibre-convexity}, there must be a sequence $(u_k)$ of solutions, such that the maximal value of $|\mathrm{im}(\pi(u_k))|$ goes to infinity. Because we have an absolute bound on $|\partial_su_k|$ from Lemma \ref{th:gromov}, $u_k(s,1)$ has to spend an increasingly large interval (in $s$) inside the region where Lemma \ref{th:b-energy} applies. But that contradicts the a priori bound on the energy.
\end{proof}

\subsection{Floer cohomology and its properties}
Having obtained Proposition \ref{th:floer-bound}, it is now a familiar process to set up Floer complexes $\mathit{CF}^*(\phi,H)$, whose cohomology we denote by $\mathit{HF}^*(\phi,\delta,\epsilon)$. The appropriate version of \eqref{eq:phi-poincare-duality} is 
\begin{equation} \label{eq:translation-duality}
\mathit{HF}^*(\phi^{-1},-\delta,-\epsilon) \iso \mathit{HF}^{2n-*}(\phi,\delta,\epsilon)^\vee.
\end{equation}

\begin{lemma} \label{th:vanishing-homology}
For small $\delta > 0$ and $\epsilon > 0$, $\mathit{HF}^*(\mathit{id},\delta,\epsilon) \iso H^*(E,\{\mathrm{re}(\pi) \ll 0\})$.
\end{lemma}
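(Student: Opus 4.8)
The plan is to compute $\mathit{HF}^*(\mathit{id},\delta,\epsilon)$ for $\delta>0$ by a direct geometric argument, exhibiting a Hamiltonian whose Floer complex is manifestly a Morse complex of $E$ relative to the region $\{\mathrm{re}(\pi)\ll 0\}$.

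\textbf{Step 1: Choice of Hamiltonian.} Since $\phi = \mathit{id}$, we are in ordinary Hamiltonian Floer theory on $E$, but with the translation-type behaviour at infinity dictated by Setup \ref{th:setup-t}(iii): $H(\Psi(p,q,x)) = \delta\,\mathrm{re}(e^{p+iq}) + H_M(x)$ for $p\gg 0$, so that near the $\mathrm{re}(\pi)\to+\infty$ end the Hamiltonian vector field is (up to the fibre term) a large constant translation in the $-\mathrm{im}(y)$ direction, with no $1$-periodic orbits there. I would take $H$ time-independent, of the form $H = H_{\bC}\circ\pi + H_M$ near infinity, where $H_{\bC}$ is a Morse function on $\bC$ equal to $\delta\,\mathrm{re}(y)$ outside a compact set (so its only critical points, if any, can be pushed off to the $\mathrm{re}(y)\ll 0$ side) and $H_M$ is supported near $\partial M$ as usual; in the compact part I take $H$ to be a small Morse function (times a small constant). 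By Proposition \ref{th:floer-bound}, all Floer trajectories stay in a fixed compact subset of $E\setminus\partial E$, so the maximum principle in base and fibre directions (Lemmas \ref{th:base-convexity}--\ref{th:fibre-convexity}) confines everything; this reduces us to a problem on a compact piece.

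\textbf{Step 2: Identify generators and trajectories with a Morse picture.} Following the scheme of Lemma \ref{th:bv-vanish} (i.e.\ \cite{floer-hofer-salamon94, hofer-salamon95}): after multiplying $H$ by a small positive constant, all $1$-periodic orbits of $X_H$ are constant, at critical points of $H$, all of which lie in the compact region where $H$ is a small Morse function (plus possibly critical points of $H_{\bC}$, which we arrange to be none, or to lie far on the $\mathrm{re}(y)\ll 0$ side and thus be excluded by a sublevel-set argument); every Floer trajectory is $t$-independent, hence a negative gradient flow line of $H$; and the relevant moduli spaces are regular by \cite[Prop.~4.2]{salamon-zehnder92} plus an index count. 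Thus $\mathit{CF}^*(\mathit{id},H)$ is the Morse complex of $H$ on $E$. Because $H$ behaves like $\delta\,\mathrm{re}(\pi)$ at infinity, its negative gradient flow pushes the end $\{\mathrm{re}(\pi)\gg 0\}$ inward and the end $\{\mathrm{re}(\pi)\ll 0\}$ outward; the Morse homology of such a function computes $H^*(E,\{\mathrm{re}(\pi)\ll 0\})$ (equivalently $H^*$ of $E$ deformation-retracted onto a compact piece, rel the negative end). I would make this precise by choosing $H$ so that $\{\mathrm{re}(\pi)\le -c\}$ is a union of downward flow lines (a "fundamental domain at $-\infty$"), so that the Morse complex computes the relative cohomology by the standard excision/retraction argument.

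\textbf{Step 3: Independence of choices.} Finally, invariance of $\mathit{HF}^*(\mathit{id},\delta,\epsilon)$ under the auxiliary data (continuation maps, as in Remark \ref{th:uniqueness}, which work here because Proposition \ref{th:floer-bound} also bounds continuation trajectories) shows the answer does not depend on the particular $H$, giving the stated isomorphism.

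\textbf{Main obstacle.} The crux is Step 2's identification of the Morse homology of a Hamiltonian linear-at-infinity-in-$\mathrm{re}(\pi)$ with the \emph{relative} cohomology $H^*(E,\{\mathrm{re}(\pi)\ll 0\})$: one must control what happens at the noncompact $\mathrm{re}(\pi)\to\pm\infty$ ends simultaneously with the fibre-boundary behaviour, and check that no spurious generators or trajectories escape. The a priori compactness (Proposition \ref{th:floer-bound}, resting on Lemmas \ref{th:b-energy}--\ref{th:gromov}) is exactly what makes this tractable, but care is needed to arrange $H$ so that the sublevel set $\{H\ll 0\}$ deformation retracts onto $\{\mathrm{re}(\pi)\ll 0\}$ and carries the excluded critical points; I expect this to require a slightly delicate choice of $H_{\bC}$ and a cofinality argument rather than any deep new input.
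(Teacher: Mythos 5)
Your proposal is exactly the route the paper indicates (the paper omits the proof, saying only that it ``can be done by reduction to Morse theory as in Lemma \ref{th:bv-vanish}''), and the main line of argument -- confine everything by Proposition \ref{th:floer-bound}, reduce to the time-independent picture of \cite{floer-hofer-salamon94, hofer-salamon95}, and identify the resulting Morse complex with a relative cohomology determined by where the negative gradient flow exits -- is correct. Since $H \approx \delta\,\mathrm{re}(\pi)$ at infinity, the exit set is $\{H \ll 0\} = \{\mathrm{re}(\pi) \ll 0\}$ (the ends $|\mathrm{im}(\pi)| \to \infty$ are swept into that region by the flow), which gives the stated pair; your ``main obstacle'' paragraph correctly locates where the care is needed.

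One step as written would go wrong: the claim that ``any $\epsilon$'' follows from Step 3's generic independence of auxiliary choices. In this framework $\epsilon$ is a genuine parameter of the Floer group, not auxiliary data (changing it is a non-compactly-supported modification of $H$ near $\partial E$; for the fibrewise theory of Section 3, crossing a Reeb period changes the answer, cf.\ Lemma \ref{th:wall}). Moreover, for $\epsilon < 0$ your Morse picture itself gives a different answer: the fibre term $\epsilon\rho_M$ then decreases towards $\partial M$, the negative gradient flow also exits through $\partial E$, and the Morse complex computes cohomology relative to $\{\mathrm{re}(\pi)\ll 0\}\cup \partial E$. The correct fix is to run your argument only for small $\epsilon>0$ and then invoke Lemma \ref{th:independence-of-epsilon}, whose proof (the base component of $\phi_H^1$ is a translation at infinity, so the would-be extra generators near $\partial E$ never appear) is precisely what makes the answer $\epsilon$-independent here. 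A second, smaller point to be explicit about: the hypotheses of \cite[Lemma 7.1]{hofer-salamon95} require $C^2$-smallness, which the summand $\delta\,\mathrm{re}(\pi)$ does not satisfy in the naive sense; you should either first shrink $\delta$ (legitimate, since only its sign matters) and check smallness in the metric of \eqref{eq:second-trivialization}, or use the energy/confinement estimates to localize the $t$-independence argument to the region where $H$ is small.
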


We will not explain the proof of this, which can be done by reduction to Morse theory as in Proposition \ref{th:bv-vanish} (if $\pi$ is a Lefschetz fibration, $H^*(E,\{\mathrm{re}(\pi) \ll 0\})$ is concentrated in degree $n$, and has one generator for each critical point; one can in fact arrange that the underlying chain complex has the same property, and thereby give an elementary proof of Lemma \ref{th:vanishing-homology} for that special case).

The dependence of Floer cohomology on the parameters $(\delta,\epsilon)$ is a more interesting issue than before. One can show (for instance, using parametrized moduli spaces) that only the sign of $\delta$ matters. In fact, one also has isomorphisms
\begin{equation} \label{eq:reverse-delta}
\mathit{HF}^*(\phi,-\delta,\epsilon) \iso \mathit{HF}^*(\phi,\delta,\epsilon),
\end{equation}
but not canonical ones. To see that, note that the sign of $\delta$ depends on our identification of the base with the standard complex plane $\bC$. Reversing that identification ($y \mapsto -y$) takes $\delta$ to $-\delta$. To construct \eqref{eq:reverse-delta}, one has to rotate the plane by some amount in $\pi + 2\pi\bZ$, and different choices yield different maps \eqref{eq:reverse-delta}. 

\begin{remark} \label{th:canonical-automorphism}
One way to think of the ambiguity in \eqref{eq:reverse-delta} is as follows. $\mathit{HF}^*(\phi,\delta,\epsilon)$ carries a canonical automorphism, induced by a full rotation of the plane (whose angle is thought of as an additional parameter), or equivalently by conjugation with the global monodromy. Then, \eqref{eq:reverse-delta} is unique up to composition with powers of that automorphism.

Another version of the same explanation goes as follows: on can allow translations over the base in any direction, corresponding to a parameter $\delta \in \bC^*$ (for compatibility with our previous notation, the translation would have to be by $i\delta$). For fixed $\epsilon$, these more general Floer cohomology groups $\mathit{HF}^*(\phi,\delta,\epsilon)$ would be canonical locally trivial in $\delta$, which means that they would form a local system over $\bC^*$. The previously mentioned automorphism is just the holonomy of the local system, and \eqref{eq:reverse-delta} would be a parallel transport map between two different fibres.
\end{remark}

\begin{lemma} \label{th:independence-of-epsilon}
For any $\epsilon_- < \epsilon_+$ which satisfy \eqref{eq:no-reeb}, one has $\mathit{HF}^*(\phi,\delta,\epsilon_-) \iso \mathit{HF}^*(\phi,\delta,\epsilon_+)$.
\end{lemma}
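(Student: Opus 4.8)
\emph{Proof sketch.}
The plan is to arrange the auxiliary data so that the two Floer complexes computing $\mathit{HF}^*(\phi,\delta,\epsilon_-)$ and $\mathit{HF}^*(\phi,\delta,\epsilon_+)$ coincide on the nose, so that no continuation argument is needed. The underlying reason is that, because of the translation term at infinity, the constant $\epsilon$ enters the construction of $\mathit{HF}^*(\phi,\delta,\epsilon)$ only through the shape of the Hamiltonian on a fibrewise collar of $\partial E$; and by Lemma \ref{th:fibre-convexity} neither the $1$-periodic orbits (regarded as constant solutions of \eqref{eq:floer}, \eqref{eq:floer-2}) nor any Floer trajectory ever meets that collar.

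Fix $\phi$, $\delta$, and a small $\eta>0$; set $W_0=\{\rho_M>1-\eta\}\subset M$, a collar of $\partial M$, and let $N\subset E$ be the union of $\Theta(\bC\times W_0)$ and $\Psi([p_*,\infty)\times\bR\times W_0)$. The class of admissible almost complex structures in Setup \ref{th:setup-t}(iv) does not involve $\epsilon$, so fix one generic $J$, of contact type in the fibre direction over $N$, to be used for both slopes. For $i\in\{-,+\}$ choose $H^i$ as in Setup \ref{th:setup-t}(iii) with parameter $\epsilon_i$, subject to the extra requirement that $H^-$ and $H^+$ agree \emph{outside} $N$; this is possible, since there the two functions are unconstrained beyond lying in the admissible class, whereas over $N$ they differ only in the coefficient $\epsilon_i$ of $\rho_M$. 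If $\eta$ is small enough that no closed Reeb orbit on $\partial M$ has period in $[\,|\epsilon_i|(1-\eta)/k,\,|\epsilon_i|/k\,]$ for any $k\geq 1$ and $i\in\{-,+\}$ — possible by \eqref{eq:no-reeb} together with a generic choice of the set-up — then there are no $1$-periodic orbits in $N$ for either slope, and the fibrewise maximum principle underlying Lemma \ref{th:fibre-convexity} applies to $(\phi,H^i,J)$: since $\rho_M\circ u$ has no interior maximum above $1-\eta$ along a solution and its limits do not lie in $N$, the image of any solution avoids $N$.

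Consequently the $1$-periodic orbits of $H^-$ and of $H^+$ all lie in $E\setminus N$, where $H^-=H^+$, hence form the same set; they are nondegenerate once $H^-$ is chosen generically there; and the trajectories contributing to the two differentials — together with the operators linearising \eqref{eq:floer} along them, which are supported near the trajectories and so see only the common part of the data — coincide and carry the same signs. Therefore $\mathit{CF}^*(\phi,H^-)=\mathit{CF}^*(\phi,H^+)$ as chain complexes, compatibly with $\bZ$-gradings when $E$ carries a symplectic Calabi-Yau structure; passing to cohomology gives the asserted isomorphism.

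The only delicate point is the uniform applicability of the fibrewise maximum principle, i.e.\ the choice of $\eta$ above making Lemma \ref{th:fibre-convexity} push \emph{all} solutions strictly outside $N$ for \emph{both} slopes simultaneously. Once that is granted, the remaining a priori confinement of solutions to a fixed compact subset of $E\setminus\partial E$ is supplied by Proposition \ref{th:floer-bound} (through Lemmas \ref{th:b-energy}--\ref{th:gromov}), applied separately to $\epsilon_-$ and $\epsilon_+$, so no new compactness analysis is required. One could instead construct continuation maps in both directions along a homotopy of the constant $\epsilon$, but that route would require re-deriving the compactness statements of this section for the resulting $s$-dependent equations, which is why the direct identification above is preferable.
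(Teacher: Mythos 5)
Your argument has a genuine gap, and it is located exactly where the content of the lemma lies. First, the construction you propose is not actually available: Setup \ref{th:setup-t}(iii) forces $H^{\pm}(\Theta(y,x)) = H_{\bC}(y) + \epsilon_{\pm}\rho_M(x)$ on a fibrewise collar of $\partial E$, and two smooth functions with these different normal forms on $N$ cannot also agree on $E\setminus N$ (their values and normal derivatives already disagree along the inner boundary of $N$). So an interpolation region is unavoidable, somewhere between the collar (slope $\epsilon_+$, say) and the locus where the two Hamiltonians coincide (slope $\epsilon_-$). In that region the fibrewise slope sweeps through every value in $[\epsilon_-,\epsilon_+]$; whenever this interval contains a ``forbidden'' value $\epsilon$ for which $\epsilon R_{\partial M}$ has $1$-periodic orbits, the fibre component of the Hamiltonian acquires genuine $1$-periodic orbits at the corresponding level of $\rho_M$, and these produce new generators of $\mathit{CF}^*(\phi,H^+)$ unless something in the base direction kills them. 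Your period condition involving $\eta$ only excludes orbits where the slope is exactly $\epsilon_i$, not in the interpolation zone. Note that the lemma is asserted for \emph{arbitrary} $\epsilon_-<\epsilon_+$ satisfying \eqref{eq:no-reeb}, with forbidden values allowed in between; a proof that never confronts those values would apply verbatim to $\mathit{HF}^*(M,\epsilon)$ in the fibre, contradicting Lemma \ref{th:wall}, where crossing a forbidden value genuinely changes the cohomology.

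The missing idea is that the translation at infinity must be used to kill the would-be new orbits, not merely for compactness. The paper's proof attaches a cone $[1,C]\times\partial M$ to each fibre (so the original data on $E$ is untouched and the interpolation happens entirely on the added cone), and then chooses $H_{\bC}$ so that $\phi^1_{H_{\bC}}\circ\phi_{\bC}$ is an honest translation $y\mapsto y+i\delta$ of the base, with no fixed points anywhere (this uses the connectedness of the group of compactly supported symplectomorphisms of $\bC$). Since the Hamiltonian is of product form over the cone, any twisted $1$-periodic orbit there would project to a fixed point of $\phi^1_{H_{\bC}}\circ\phi_{\bC}$, of which there are none; hence the enlarged complex has the same generators as $\mathit{CF}^*(\phi,H_-)$, and a fibrewise maximum principle identifies the differentials. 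Your proposal uses $\delta\neq 0$ only through Lemma \ref{th:b-energy} for compactness, which is not enough. If you want to avoid specific choices of $H_{\bC}$, the alternative (sketched in the remark following the proof in the paper) is to accept the new generators, organize them into a quotient complex of the form $Q^*_{\bC}\otimes Q^*_M$, and show that the base factor $Q^*_{\bC}$ is acyclic --- again by deforming $\phi^1_{H_{\bC}}\circ\phi_{\bC}$ to a fixed-point-free translation.
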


\begin{proof}
%
Our strategy follows that of Lemma \ref{th:no-wall}. We enlarge the given $E$ by attaching a conical piece to the boundary of each fibre, as in \eqref{eq:attach-cone}:
\begin{equation} \label{eq:attach-cone-2}
\hat{E} = E \cup_{\partial E} (\bC \times [1,C] \times \partial M),
\end{equation}
where $\partial E$ is identified with $\bC \times \partial M$ using parallel transport, which means \eqref{eq:first-trivialization}. Extend the given $\phi$ to an automorphism $\hat\phi$ of $\hat{E}$ by setting it equal to $\phi_{\bC} \times \mathit{id}_{[1,C] \times \partial M}$ on the conical part, where $\phi_{\bC}$ is as in \eqref{eq:phi-first-trivialization}.

Suppose that $H_-$ is the function used to define the chain complex $\mathit{CF}^*(\phi,H_-)$ underlying $\mathit{HF}^*(\phi,\delta,\epsilon_-)$. We extend it to a function $\hat{H}_+$ on $\hat{E}$ by a fibrewise version of \eqref{eq:extend-hamiltonian}:
\begin{equation}
\hat{H}_{t,+}(r,y,x) = H_{\bC,t}(y) + C h(C^{-1}r)
\end{equation}
where $h$ is as in \eqref{eq:turn}, and the $H_{\bC,t}$ are functions as in \eqref{eq:h-translation}. It is unproblematic to show that $\mathit{CF}^*(\hat\phi,\hat{H}_+)$ computes $\mathit{HF}^*(\phi,\delta,\epsilon_+)$. At this point, we want to be more specific about the choice of function for the original Floer cohomology group. Using the fact that the group of compactly supported symplectic automorphisms of $\bC$ is connected, one can find a time-dependent $H_{\bC}$ such that 
\begin{equation}
(\phi_{H_{\bC}}^1 \circ \phi_{\bC})(y) = y + i\delta
\end{equation}
is simply a translation. In that case, $\mathit{CF}^*(\hat\phi,\hat{H}_+)$ has the same generators as $\mathit{CF}^*(\phi,H_-)$, and an easy maximum principle argument (in fibre direction) shows that the differentials also coincide.
\end{proof} 

\begin{remark}
Alternatively, one can avoid the use of specific choices of $H_{\bC}$, and argue as in Lemma \ref{th:wall}. Namely, suppose that there is only one $\epsilon \in (\epsilon_-,\epsilon_+)$ such that $\epsilon R_{\partial M}$ has $1$-periodic orbits. Then, there is a long exact sequence
\begin{equation} \label{eq:les-sequence-2}
\cdots \rightarrow \mathit{HF}^*(\phi,\delta,\epsilon_-) \longrightarrow \mathit{HF}^*(\phi,\delta,\epsilon_+) \longrightarrow H(Q^*) \rightarrow \cdots
\end{equation}
One can arrange that $Q^* \iso Q^*_{\bC} \otimes Q^*_{M}$, where the first factor is a version of the Floer chain complex for $\phi^1_{H_{\bC}} \circ \phi_{\bC}$, which is then shown to be acyclic (this last step would again use the connectedness of the group of compactly supported symplectic automorphisms of $\bC$).
\end{remark}

Our final topic is the relation between the two versions of Floer cohomology on $E$. Because of the different classes of perturbations used, this is not quite straightforward. We only need a partial result:

\begin{lemma} \label{th:rotation-translation}
Take $\gamma_- \in (2\pi(k-1),2\pi k)$, and arbitrary $\delta,\epsilon$. Let $\nu$ be the global monodromy. Then there is a canonical map
\begin{equation} \label{eq:rotation-translation}
\mathit{HF}^*(E,\gamma_-,\epsilon) \longrightarrow \mathit{HF}^*(\nu^k,\delta,\epsilon).
\end{equation}
\end{lemma}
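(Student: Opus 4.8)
The plan is to construct \eqref{eq:rotation-translation} as a continuation-type map that interpolates between the two kinds of perturbation data used to define the left-hand and right-hand sides. The key observation is that, by Lemma \ref{th:gamma-wall} and its iterations, raising $\gamma$ past each multiple $2\pi j$ ($j = 1,\dots,k$) glues in one copy of the mapping-torus Floer complex for $\mu$; geometrically this reflects that a Hamiltonian rotating the base by total angle $\gamma_- \in (2\pi(k-1),2\pi k)$ ``sees'' the $k$-th iterate of the monodromy. The target $\mathit{HF}^*(\nu^k,\delta,\epsilon)$ is, by \eqref{eq:conjugate-mu}, exactly the fixed point Floer cohomology of the global automorphism whose fibre part is $\mu^k$; so the two sides should be compared via a homotopy of the asymptotic behaviour at infinity, from ``rotation by $\gamma_-$'' to ``translation by $\delta$ in the $\nu^k$-twisted loop space.''

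Concretely, first I would choose a one-parameter family of Hamiltonian/almost-complex data $(H_{\lambda,s,t}, J_{\lambda,s,t})$ on $E$, $\lambda \in [0,1]$, such that at $\lambda = 0$ the data are those defining $\mathit{CF}^*(E,H_{\gamma_-})$ (rotation at infinity, genuine $S^1$-periodicity in $t$), and at $\lambda = 1$ the data are those defining $\mathit{CF}^*(\nu^k,H_\delta)$ (translation at infinity, $\nu^k$-twisted periodicity). To make the endpoints compatible one passes to the $k$-fold cover in the $q$-direction over the base: a loop for the $\gamma_-$-rotation, when lifted, becomes a $\nu^k$-twisted path, and the rotation-by-$2\pi k$ part of the isotopy is absorbed into the twisting by $\nu^k$ exactly as in the proof of Lemma \ref{th:tau-shift} (where rescaling the loop by a flow converted a boundary twist into a shift of $\epsilon$). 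Having set up this family, I would count isolated solutions of the $s$-dependent Floer equation with data $(H_{\lambda(s),s,t}, J_{\lambda(s),s,t})$, with $\lambda(s) = 0$ for $s \ll 0$ and $\lambda(s) = 1$ for $s \gg 0$, limits $x_-$ a generator of $\mathit{CF}^*(E,H_{\gamma_-})$ and $x_+$ a generator of $\mathit{CF}^*(\nu^k,H_\delta)$; as usual this defines a chain map, hence \eqref{eq:rotation-translation} on cohomology, and its independence of choices (up to chain homotopy) follows by the standard two-parameter argument, giving canonicity.

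The main obstacle — and the only nontrivial point — is \emph{compactness of the interpolating moduli space}: one must show that these mixed solutions $u$ stay in a compact subset of $E \setminus \partial E$ uniformly in $\lambda$. Near $\partial E$ the argument is the fibrewise maximum principle of Lemma \ref{th:fibre-convexity}, which is insensitive to the base behaviour. Controlling the base direction is the delicate part: for $s \ll 0$ the composite $v = \pi(u)$ is governed by a rotation Hamiltonian (so $\mathrm{re}(v)$, $\mathrm{im}(v)$ are controlled by a winding/energy estimate as in Lemma \ref{th:gamma-wall} and \cite[Lemma 6.1]{mclean12}), while for $s \gg 0$ it is governed by a translation (so $\mathrm{re}(v)$ is harmonic and $\mathrm{im}(v)$ grows linearly, as in Lemmas \ref{th:base-convexity}–\ref{th:b-energy}); in the transition region $s \in [-R,R]$ one combines a Gromov-compactness rescaling argument (following \cite[Proposition 5.1]{seidel12b}, exactly as in Lemma \ref{th:gromov}) with the uniform energy bound coming from the action functionals at the two ends. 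The point to verify carefully is that crossing the walls $\gamma = 2\pi j$ during the homotopy does not create new periodic-orbit families that could let energy escape to infinity — but this is controlled by the same ``barrier'' two-form $\bar\omega_{\bC}$ used in \eqref{eq:barrier}, chosen to be supported near a large circle in the base and positive there, which forces any solution crossing it to pay a definite amount of energy, contradicting the a priori bound once the circle is taken large enough. With compactness in hand, the rest is the routine machinery of continuation maps.
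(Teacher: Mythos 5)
Your proposal takes a genuinely different route from the paper, and as written it has a gap at its central step. The paper does not construct \eqref{eq:rotation-translation} as a continuation map at all. Instead, it realizes $\nu^k$ as the time-one map of $F(x) = h_+(|\pi(x)|^2/2)$ with $h_+$ as in \eqref{eq:rotation-function} for $\gamma_+ = 2\pi k$, and then conjugates by the flow $(\phi_F^t)$ exactly as in Lemma \ref{th:tau-shift}: this identifies the generators of $\mathit{CF}^*(\nu^k,H_+)$ lying inside the circle $Z = \{|\pi(x)|=2\}$ with the generators of a Hamiltonian Floer complex $\mathit{CF}^*(E,H_-)$ whose slope at infinity is $\gamma_-$. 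The barrier argument of Lemma \ref{th:gamma-wall} (the two-form $\bar\omega_{\bC}$ supported near $Z$) then shows that these generators span a subcomplex, and the canonical map is literally the inclusion of that subcomplex, inducing \eqref{eq:rotation-translation} on cohomology. No interpolation of perturbation data, and no new compactness analysis beyond what Sections 4 and 5 already provide, is needed.

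The gap in your version is the claim that one can choose a family $(H_{\lambda,s,t},J_{\lambda,s,t})$ interpolating between data with genuine $S^1$-periodicity in $t$ and data with $\nu^k$-twisted periodicity: the periodicity condition is imposed on the domain of the map $u$, not on the inhomogeneous term, so it cannot vary with $s$. The fix is precisely the untwisting above, which must be performed once and for all before any comparison is attempted; your description of it as ``passing to the $k$-fold cover in the $q$-direction'' is not correct --- there is no covering involved, only conjugation by the Hamiltonian isotopy $(\phi_F^t)$ whose time-one map is $\nu^k$. Once both sides are rewritten as Hamiltonian Floer complexes on $E$, one could in principle run your continuation argument from slope $\gamma_-$ to ``slope $2\pi k$ plus translation by $\delta$''; but then the compactness for the mixed rotational/translational asymptotics, which you defer to a combination of Lemmas \ref{th:base-convexity}--\ref{th:gromov} and \eqref{eq:barrier}, is the entire content and is not covered by any single argument in the paper. (Also, your worry about ``new periodic-orbit families created by crossing walls during the homotopy'' concerns invariance of $\mathit{HF}$, not the well-definedness of one continuation map; the barrier's actual role in the paper is to separate the inside-$Z$ and outside-$Z$ generators, not to prevent energy escape during a homotopy.) The subcomplex-inclusion route avoids all of this and is what you should use.
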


To clarify the grading conventions: the source in \eqref{eq:rotation-translation} carries its natural $\bZ$-grading (as a form of Hamiltonian Floer cohomology, on a manifold with a symplectic Calabi-Yau structure). For the target, we use the structure of the global monodromy $\nu$ as a graded symplectic automorphism which comes from its original construction by a flow (see Example \ref{th:global-monodromy}; this is parallel to (i) in Example \ref{th:boundary-twist}).

\begin{proof} 
The argument follows Lemma \ref{th:gamma-wall} closely, and we will only give a few details. Set $\gamma_+ = 2\pi k$, and choose a function $h_+$ as in \eqref{eq:rotation-function}. We can assume that $\nu^k$ is defined as the time-one map of the function $F(x) = h_+(|\pi(x)|^2/2)$. When choosing a perturbation $H_+$ to be used to define $\mathit{CF}^*(\nu^k,H_+)$, we can assume that $H_{t,+}(x) = H_M(x)$ close to $Z = \{|\pi(x)| = 2\}$.

A suitable ``barrier'' argument shows that (for small $|\delta|$) the generators corresponding to fixed points lying on the inside of $Z$ form a subcomplex of $\mathit{CF}^*(\nu^k,H_+)$. Moreover, that subcomplex can be identified with $\mathit{CF}^*(E,H_-)$, where
\begin{equation} \label{eq:minusham}
H_{t,-}(x) = 
\begin{cases} 
H_{t,+}(\phi_F^{-t}(x)) + F(x) & |\pi(x)| \leq 2, \\
H_M(x) + h_-(|\pi(x)|^2/2) & |\pi(x)| \geq 2. 
\end{cases}
\end{equation}
Here, $h_-$ is the function that agrees with $h_+$ on $[0,2]$, and satisfies $h_-'(a) = \gamma_-$ for all $a \geq 2$. But \eqref{eq:minusham} defines $\mathit{HF}^*(E,\gamma_-,\epsilon)$.
\end{proof}

\subsection{Additional remarks}
One can show that the map \eqref{eq:rotation-translation} fits into a long exact sequence
\begin{equation} \label{eq:r-t-1}
\cdots \rightarrow \mathit{HF}^*(E,\gamma_-,\epsilon) \longrightarrow \mathit{HF}^*(\nu^k,\delta,\epsilon) \longrightarrow \mathit{HF}^{*+2k}(\mu^{k+1},\epsilon) \rightarrow \cdots
\end{equation}
It seems likely (but we have not checked the details) that there is a similar long exact sequence
\begin{equation} \label{eq:r-t-2}
\cdots \rightarrow \mathit{HF}^*(\nu^k,\delta,\epsilon) \longrightarrow \mathit{HF}^*(E,\gamma_+,\epsilon) \longrightarrow \mathit{HF}^{*+2k+1}(\mu^{k+1},\epsilon) \rightarrow \cdots
\end{equation}
where $\gamma_+ \in (2\pi k, 2\pi (k+1))$. One piece of supporting evidence is that the combination of the two sequences above (in the appropriate order) is compatible with \eqref{eq:2pi-sequence}. Repeated use of those two sequences gives a step-by-step ``decomposition'' of all the Floer cohomology groups $\mathit{HF}^*(E,\gamma,\epsilon)$ and $\mathit{HF}^*(\nu^k,\delta,\epsilon)$. Alternatively, one can approach the same idea through spectral sequences:

\begin{lemma} \label{th:rotate-ss}
For any $\epsilon$ and any $\gamma \in (2\pi k, 2\pi(k+1))$, $k \geq 0$, there is a spectral sequence converging to $\mathit{HF}^*(E,\gamma,\epsilon)$, whose starting page is
\begin{equation} \label{eq:rotate-ss}
E_1^{pq} = \begin{cases} 
H^{q+1}(E,\{\mathrm{re}(\pi) \ll 0\}) & p = 1, \\
\mathit{HF}^q(\mu^{-\lfloor p/2 \rfloor},\epsilon) & -2k \leq p \leq 0, \\
0 & \text{otherwise.}
\end{cases}
\end{equation}
\end{lemma}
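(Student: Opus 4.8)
The plan is to extract the spectral sequence from an exhausting filtration of the Floer complex computing $\mathit{HF}^*(E,\gamma,\epsilon)$, obtained by iterating the barrier/wall-crossing mechanism of Lemma \ref{th:gamma-wall}. Concretely, fix $\gamma \in (2\pi k, 2\pi(k+1))$ and pick a profile function $h$ on $[0,\infty)$ of the type \eqref{eq:rotation-function} whose derivative sweeps monotonically from $0$ up to $\gamma$. Choosing circles $Z_0 = \{|\pi| = 2\} \supset$ (reading outward) and radii $r_1 < r_2 < \cdots < r_k$ at which $h'$ crosses the successive forbidden values $2\pi, 4\pi, \dots, 2\pi k$, I would define $H$ by $H(x) = h(|\pi(x)|^2/2) + H_M(x)$ and arrange the almost complex structure so that $\pi$ is holomorphic near each critical circle $\pi^{-1}(|y| = r_j)$. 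The $1$-periodic orbits of $H$ then fall into $k+1$ groups: an innermost group consisting of constant orbits (fibred over the disc $\{\mathrm{re}(\pi) \ll 0\}$ region in the Lefschetz picture) contributing $H^*(E,\{\mathrm{re}(\pi)\ll 0\})$ by the reasoning of Lemma \ref{th:vanishing-homology}/\ref{th:bv-vanish-3}, and for each $j = 1,\dots,k$ a group fibred over the circle $|y|^2/2 = a_j$ where $h'(a_j) = 2\pi j$, which in each fibre corresponds to fixed points of $\phi_{H_M}^1 \circ \mu^j$, hence contributes (an $S^1$-family of copies of) $\mathit{HF}^*(\mu^j,\epsilon)$.

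The key step is to show that the action values of these groups are separated and monotone, so that the associated-graded pieces of the action filtration are exactly these Floer cohomologies with appropriate degree shifts. I would run the same degenerate (barrier) action estimate as in \eqref{eq:barrier}: for each $j$, choose a rotationally invariant nonnegative two-form $\bar\omega_{\bC}^{(j)}$ supported near the $j$-th critical circle, giving $\bar A^{(j)}(y_-) \geq \bar A^{(j)}(y_+)$ along any Floer trajectory, with $\bar A^{(j)}$ strictly decreasing across the $j$-th circle and constant on either side. Combining these barriers for all $j$ shows that a Floer trajectory can only move from an outer group to an inner (or equal) group; equivalently, the subcomplexes spanned by orbits lying inside $\pi^{-1}(|y| \le r_j)$ form an increasing filtration $0 = F_{-1} \subset F_0 \subset F_1 \subset \cdots \subset F_k = \mathit{CF}^*(E,H)$. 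Taking the cohomology spectral sequence of this filtration and reading off the associated graded (as in Lemma \ref{th:gamma-wall}, the outermost quotient is a mapping-torus complex for $\mu^j$, contributing $H^*(S^1) \otimes \mathit{HF}^{*+2j}(\mu^j,\epsilon)$, i.e.\ two copies of $\mathit{HF}^{*+2j}(\mu^j,\epsilon)$ in cohomological-$S^1$ degrees $0$ and $1$) gives the stated $E_1$ page after reindexing: the two $S^1$-degrees of the $j$-th group supply the columns $p = -2j$ and $p = -2j+1$, carrying $\mathit{HF}^{q+2j}(\mu^j,\epsilon)$ with $j = \lfloor -p/2 \rfloor$ (up to the degree-$2$ Conley--Zehnder shift already accounted for in Lemma \ref{th:gamma-wall}), and $p = 1$ carries the constant-orbit contribution $H^{q+1}(E,\{\mathrm{re}(\pi)\ll 0\})$. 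Regularity of all the relevant moduli spaces follows from the genericity discussion already used for Lemma \ref{th:gamma-wall} and Proposition \ref{th:bv-vanish-2}, using that the outer orbits are simple.

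The main obstacle I expect is bookkeeping the index shifts and the precise column assignment: matching the $\bZ/2$- (resp.\ $\bZ$-) grading on each associated-graded piece $\mathit{HF}^*(\mu^j,\epsilon)$ with the cohomological degree $q$ and the filtration degree $p$ in \eqref{eq:rotate-ss} requires carefully tracking the Conley--Zehnder index of the $j$-th critical circle as a periodic orbit of $h(|y|^2/2)$ on $\bC$ (which grows linearly in $j$), together with the $H^*(S^1)$ factor from the Morse--Bott family and the sign/grading conventions of Remark \ref{th:signs-and-grading}; the geometric input (barrier estimates, maximum principles, Gromov compactness) is a direct iteration of arguments already in place in Lemmas \ref{th:gamma-wall}, \ref{th:mclean-example} and \ref{th:bv-vanish-3}. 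One should also check that no nontrivial crossing of the $\epsilon$-wall occurs, which is automatic since $\epsilon$ is held fixed and the conical-fibre maximum principle of Lemma \ref{th:fibre-convexity} confines all trajectories.
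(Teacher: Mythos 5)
Your overall strategy---filter the Floer complex for a profile function $h$ whose slope sweeps through $2\pi,4\pi,\dots,2\pi k$, using iterated barrier/action estimates as in Lemma \ref{th:gamma-wall} to show that the orbit groups over the successive critical circles are ordered by the filtration---is the intended one: the paper gives no proof of this lemma and simply points to \cite{mclean12}, whose spectral sequence is built exactly this way. So the geometric core of your plan (barriers, maximum principles, the identification of the $j$-th circle's contribution with $H^*(S^1)\otimes\mathit{HF}^{*+2j}(\mu^j,\epsilon)$) is sound.

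There is, however, a genuine gap in your accounting of the $E_1$ page. Your filtration has $k+1$ steps, but \eqref{eq:rotate-ss} has $2k+2$ columns: besides $p=1$ and the pairs $p=-2j,-2j+1$ ($j=1,\dots,k$), there is the column $p=0$, which carries $\mathit{HF}^q(\mu^0,\epsilon)=\mathit{HF}^q(M,\epsilon)$, the Hamiltonian Floer cohomology of the fibre. Your proposal omits it, because you identify the innermost group of orbits with ``constant orbits contributing $H^*(E,\{\mathrm{re}(\pi)\ll 0\})$''. That identification fails on two counts. First, the lemma is stated for arbitrary $\epsilon$, so the innermost region (where $h'=0$) contains, in addition to the interior constants, non-constant orbits in the fibre direction near $\partial E$ coming from Reeb orbits of $\partial M$ of period $<|\epsilon|$ (the Hamiltonian is forced to equal $\epsilon\rho_M$ there by Setup \ref{th:setup-rotation}); the innermost associated-graded piece therefore computes $\mathit{HF}^*(E,\gamma_0,\epsilon)$ for small $\gamma_0>0$, not a classical cohomology group, and Lemma \ref{th:bv-vanish-3} cannot be invoked since it requires $\epsilon$ small. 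Second, even for small $\epsilon$ the constants give $H^*(E)$, not $H^*(E,\{\mathrm{re}(\pi)\ll 0\})$; these differ by $H^*(M)$ via the long exact sequence of the pair (note $\{\mathrm{re}(\pi)\ll 0\}\simeq M$). What is actually needed is a further splitting of the innermost column into the two columns $p=1$ and $p=0$, i.e.\ into $H^*(E,\{\mathrm{re}(\pi)\ll 0\})$ and $\mathit{HF}^*(M,\epsilon)$; this requires an additional mechanism (a second, fibre-direction barrier separating the collar $\Theta(\bC\times W)$ from the interior, together with one fibre's worth of constants, or equivalently the comparison with the translation-type groups $\mathit{HF}^*(\mathit{id},\delta,\epsilon)\iso H^*(E,\{\mathrm{re}(\pi)\ll 0\})$ of Lemma \ref{th:vanishing-homology}), and is not supplied by the radial filtration alone. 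A smaller point: your reindexing $j=\lfloor -p/2\rfloor$ is off for odd $p$ (for $p=-2j+1$ it gives $j-1$); the correct formula is $j=-\lfloor p/2\rfloor$ as in the statement.
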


\begin{lemma} \label{th:nu-ss}
For any $k \geq 0$, $\delta>0$, and $\epsilon$, there is a spectral sequence converging to $\mathit{HF}^*(\nu^k,\delta,\epsilon)$, whose starting page is
\begin{equation} \label{eq:nu-ss}
E_1^{pq} = \begin{cases}
H^{q+1}(E,\{\mathrm{re}(\pi) \ll 0\}) & p = 1, \\
\mathit{HF}^q(\mu^{-\lfloor p/2 \rfloor},\epsilon) & -2k+1 \leq p \leq 0, \\
0 & \text{otherwise.}
\end{cases}
\end{equation}
\end{lemma}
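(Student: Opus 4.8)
The plan is to obtain the spectral sequence as that of a single \emph{filtered} Floer complex computing $\mathit{HF}^*(\nu^k,\delta,\epsilon)$, rather than by iterating long exact sequences (which would force us to use the still-unverified sequence \eqref{eq:r-t-2}). This also makes the relation to Lemma \ref{th:rotate-ss} transparent: both spectral sequences arise from essentially the same filtration, the one for $\mathit{HF}^*(E,\gamma,\epsilon)$ with $\gamma\in(2\pi k,2\pi(k+1))$ simply carrying one additional top piece.

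First I would fix, as in the proof of Lemma \ref{th:rotation-translation}, a Hamiltonian $H_+$ representing $\mathit{HF}^*(\nu^k,\delta,\epsilon)$ whose ``rotation part'' $h_+(|\pi|^2/2)$ has monotonically increasing slope crossing each of the values $2\pi,4\pi,\dots,2\pi k$ before the translation behaviour of Setup \ref{th:setup-t}(iii) takes over far out (so that $\nu^k$ is the time-one map of that rotation part), and then perturb it slightly. After the perturbation the $1$-periodic orbits fall, by radius, into finitely many groups: an innermost ``translation'' piece over $\{\mathrm{re}(\pi)\ll 0\}$ whose Floer complex is, by the fibrewise maximum principle used in Lemma \ref{th:rotation-translation}, that of $\mathit{CF}^*(\mathrm{id},\delta,\epsilon)$; a central fibre piece; and, for each $j$ with $h_+'=2\pi j$ attained, a Morse--Bott circle of ``ring orbits'' fibred over $\{|\pi|^2/2=a_j\}$ which fibrewise are the fixed points of $\phi^1_{H_M}\circ\mu^j$.

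Next I would carry out the ``barrier'' argument of Lemmas \ref{th:gamma-wall} and \ref{th:rotation-translation} simultaneously at circles $Z_1\subset Z_2\subset\cdots$ separating consecutive groups: since $\pi$ is holomorphic and the perturbation is trivial near each $Z_i$, the degenerate action functional $\bar A$ of \eqref{eq:barrier} is monotone along Floer trajectories, so a trajectory starting inside $Z_i$ can never leave it. Hence the spans of the orbits inside $Z_1\subset Z_2\subset\cdots$ form a finite increasing filtration of $\mathit{CF}^*(\nu^k,H_+)$ by subcomplexes, whose spectral sequence converges (with no convergence subtlety, the filtration being finite) to $\mathit{HF}^*(\nu^k,\delta,\epsilon)$. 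It then remains to identify $E_1$ with the cohomology of the successive quotients: the bottom quotient is $\mathit{CF}^*(\mathrm{id},\delta,\epsilon)$, with cohomology $H^*(E,\{\mathrm{re}(\pi)\ll 0\})$ by Lemma \ref{th:vanishing-homology} — this is the $p=1$ column, the shift $q\mapsto q+1$ being a bookkeeping convention for the filtration degree; each remaining quotient is, up to the barrier modification, a Floer complex in the symplectic mapping torus of $\mu$ of the kind computed in Lemma \ref{th:mclean-example}, hence contributes $\mathit{HF}^*(\mu^j,\epsilon)$ (tensored with $H^*(S^1)$ for the ring pieces, which is what produces two columns per ring), and after the Morse--Bott and Conley--Zehnder index bookkeeping — identical to the grading statements in Lemmas \ref{th:gamma-wall} and \ref{th:rotation-translation} — these appear exactly as the columns $\mathit{HF}^q(\mu^{-\lfloor p/2\rfloor},\epsilon)$ in the stated range $-2k+1\le p\le 0$. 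Pushing the slope just past $2\pi k$ adds one more ring, hence one more column $p=-2k$, which is precisely the difference between \eqref{eq:nu-ss} and \eqref{eq:rotate-ss}.

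The main obstacle I expect is the \emph{simultaneous} barrier-and-compactness analysis: one must choose the barrier circles and the auxiliary data so that monotonicity of $\bar A$ holds at every $Z_i$ at once and is compatible with the translation behaviour at infinity underlying Proposition \ref{th:floer-bound}, and one must line up the geometric filtration level with the homological index $p$ so that the various degree shifts come out exactly as written. Neither ingredient is conceptually new — each is a controlled iteration of the compactness and barrier arguments already used for Lemmas \ref{th:mclean-example}, \ref{th:gamma-wall} and \ref{th:rotation-translation} — but keeping the bookkeeping consistent across all $k$ barriers, and in particular getting the offsets in \eqref{eq:nu-ss} right, is where the real work lies.
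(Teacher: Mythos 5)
Your overall strategy --- a radius/action filtration of a single Floer complex, with barrier circles separating the groups of $1$-periodic orbits --- is exactly the one the paper has in mind (it declines to write out a proof, but points to the filtration argument of \cite{mclean12}, of which Lemma \ref{th:rotate-ss} is a weaker version). The inner part of your accounting is also essentially right: the innermost orbits give the $p=1$ and $p=0$ columns, and each slope value $2\pi j$ that is crossed \emph{transversally} produces a Morse--Bott ring contributing $H^*(S^1)\otimes \mathit{HF}^*(\mu^j,\epsilon)$, i.e.\ two columns.

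The gap is in the outermost region, which is precisely where Lemma \ref{th:nu-ss} differs from Lemma \ref{th:rotate-ss}. For the time-one map of the rotational part to be $\nu^k$, its slope must equal $2\pi k$ exactly outside a compact set; so the slope crosses $2\pi,\dots,2\pi(k-1)$ transversally but only reaches $2\pi k$ asymptotically, and there is no Morse--Bott ring at slope $2\pi k$. The outermost generators instead come from the interaction of the asymptotic $2\pi k$-rotation (which is trivial on the base) with the translation term $\delta\,\mathrm{re}(\pi)$ from Setup \ref{th:setup-t}(iii), and this family contributes only a single copy of a fixed-point Floer complex of a power of $\mu$ --- one column, not two. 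Your count therefore comes out to $2k+2$ columns, which is the $E_1$-page \eqref{eq:rotate-ss} of $\mathit{HF}^*(E,\gamma,\epsilon)$ for $\gamma\in(2\pi k,2\pi(k+1))$, rather than the $2k+1$ columns of \eqref{eq:nu-ss}. The same slip surfaces in your closing remark: passing from \eqref{eq:nu-ss} to \eqref{eq:rotate-ss} cannot be ``adding one more ring, hence one more column'', since by your own (correct) bookkeeping a ring adds two columns; what actually happens is that the single outermost column of \eqref{eq:nu-ss} gets replaced by a full ring when the slope is pushed past $2\pi k$. So the missing ingredient is the analysis of the $1$-periodic orbits in the transition region between the rotational and the translational behaviour of the Hamiltonian --- the same analysis that produces the third term of \eqref{eq:r-t-1} --- and without it the filtration does not yield the stated $E_1$-page.
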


In fact, these spectral sequences are more straightforward to prove than \eqref{eq:r-t-1} or \eqref{eq:r-t-2}. The first one is a weaker version of that in \cite{mclean12}, and the second one is not substantially different. We will not explain them further, but we do want to show one application.

\begin{example} \label{th:negative-degrees}
Take the situation arising from an anticanonical Lefschetz pencil (see Examples \ref{th:contact-circle-bundle}(ii) and \ref{th:anticanonical-lefschetz-pencil}, but where $m = 0$). By repeatedly applying Lemma \ref{th:tau-shift}, we get
\begin{equation} \label{eq:r-shift}
\mathit{HF}^*(\mu^j, \epsilon) \iso \mathit{HF}^{*-2j}(M,\epsilon-j).
\end{equation}
Moreover, for any $\epsilon>0$, there is a Morse-Bott spectral sequence converging to $\mathit{HF}^*(M,\epsilon)$ (generalizing Proposition \ref{th:bv-vanish-2}), with
\begin{equation} \label{eq:bottish}
E_1^{pq} = \begin{cases} H^{p+q}(M) & p = 0, \\
H^{q-p}(\partial M) & -\lfloor \epsilon \rfloor \leq p < 0, \\
0 & \text{otherwise.}
\end{cases}
\end{equation}
In particular, $\mathit{HF}^*(M,\epsilon)$ is always concentrated in degrees $\ast \geq 0$. 

Choose some $k>0$, and take $\epsilon > k$. From \eqref{eq:r-shift} and \eqref{eq:bottish}, it follows that
\begin{equation}
\left\{
\begin{aligned}
& \text{$\mathit{HF}^*(M, \epsilon)$ is concentrated in degrees $\ast \geq 0$,} \\
& \text{$\mathit{HF}^*(\mu,\epsilon) \iso \mathit{HF}^{*-2}(M, \epsilon-1)$ is concentrated
in degrees $\ast \geq 2$,} \\
& \dots \\
& \text{$\mathit{HF}^*(\mu^k,\epsilon) \iso \mathit{HF}^{*-2k}(M,\epsilon-k)$ is concentrated in degrees $\ast \geq 2k$.}
\end{aligned}
\right.
\end{equation}
By feeding that into \eqref{eq:rotate-ss} and \eqref{eq:nu-ss}, it follows that $\mathit{HF}^*(E,\gamma,\epsilon)$ and $\mathit{HF}^*(\nu^k,\delta,\epsilon)$ are all concentrated in nonnegative degrees 
(moreover, all the contributions coming from closed Reeb orbits on $\partial M$ land in degrees $\geq 2$).
\end{example}

To summarize, we have two infinite sequences of ``closed string'' Floer cohomology groups associated to any Lefschetz fibration. The first of these sequences, $\mathit{HF}^*(E,\gamma,\epsilon)$ for $\gamma \in (2\pi k, 2\pi(k+1))$, comes with an additional dependence on $\epsilon$ (one can remove that dependence by passing to the direct limit $\epsilon \rightarrow \infty$, as in the definition of symplectic cohomology; the price to pay is that the resulting groups will typically be infinite-dimensional). Moreover, these groups carry BV operators. The second sequence is $\mathit{HF}^*(\nu^k,\delta,\epsilon)$. These groups are independent of $\epsilon$, do not have BV operators, but come with canonical automorphisms (mentioned in Remark \ref{th:canonical-automorphism}).

\begin{remark}
It is possible to interpret this situation in terms of mirror symmetry. Consider a smooth projective variety $A$, together with a section $r$ of its anticanonical bundle, which gives rise to a smooth divisor $B = r^{-1}(0)$. We'll discuss the analogues of the two Floer cohomology groups mentioned above, but in reverse order.

Take the sheaves $\Omega^i_A(kB)$ of algebraic $i$-forms with poles of order at most $k$ along $B$, and form
\begin{equation} \label{eq:hh-b}
\bigoplus_i H^{*+i}(A,\Omega^i_A(kB))
\end{equation}
For $k = 0$, this is the Hochschild homology of $A$. It admits a BV type operator, induced by the de Rham differential, but that is known to vanish. Derived autoequivalences act on Hochschild homology, and in particular, one gets a distinguished automorphism from the action of the Serre functor. More concretely, this automorphism is given by multiplying with the exponential of the class of the canonical bundle in $H^1(A,\Omega^1_A)$. For $k>0$, \eqref{eq:hh-b} does not carry a natural BV type operator, since the de Rham differential increases pole order (but one can still define a canonical automorphism, as before).

Instead, consider the subsheaves
\begin{equation} \label{eq:log}
\Omega^i_A((k-1)B +\log B) \subset \Omega^i_A(kB)
\end{equation}
of those differential forms $\alpha$ such that both $\alpha$ and $d\alpha$ have poles of order $\leq k$ along $B$. This gives rise to another sequence of graded groups,
\begin{equation} \label{eq:coherent-or-not}
\bigoplus_i H^{*+i}(A,\Omega^i_A((k-1)B + \log B)),
\end{equation}
which do carry BV operators. To be more precise, the situation we have just considered (with a smooth $B$) is mirror to working with a Lefschetz fibration which has closed fibres; hence, there is no parameter corresponding to our $\epsilon$ here.
\end{remark}

We also would like to consider a variant of Lemma \ref{th:nu-ss}, where some of the columns in the $E_1$ page have already been combined, as in \eqref{eq:r-t-1}. Again, the proof is omitted.

\begin{lemma} \label{th:3e}
There is a spectral sequence converging to $\mathit{HF}^*(\nu^2,\delta,\epsilon)$, with
\begin{equation} \label{eq:3cols}
E_1^{pq} = \begin{cases} \mathit{HF}^q(E,\gamma,\epsilon) & p = 0, \text{ where $\gamma>0$ is small,} \\
\mathit{HF}^{q+1}(\mu,\epsilon) \oplus \mathit{HF}^{q+2}(\mu,\epsilon) & p = -1, \\
\mathit{HF}^{q+1}(\mu^2,\epsilon) & p = -2, \\
0 & \text{otherwise.}
\end{cases}
\end{equation}
This spectral sequence is compatible with the $\bZ/2$-action on $\mathit{HF}^*(\nu^2,\delta,\epsilon)$ from Remark \ref{th:circle-and-discrete-actions}: the induced $\bZ/2$-action on \eqref{eq:3cols} is trivial except in the $p = -2$ column, where it is the corresponding action on $\mathit{HF}^*(\mu^2,\epsilon)$.
\end{lemma}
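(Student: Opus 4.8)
The plan is to run the same filtration-of-the-Floer-complex argument that underlies Lemmas \ref{th:gamma-wall}, \ref{th:rotation-translation} and \ref{th:nu-ss}, but separating off only two of the ``walls'' and grouping the first two mapping-torus contributions into a single column. First I would realise $\nu^2$ as the time-one map of $F = h_+(|\pi(\cdot)|^2/2)$ for a function $h_+$ of the shape in \eqref{eq:rotation-function} whose derivative rises from $0$, plateaus at some small $\gamma>0$, then increases through $2\pi$, and finally stabilises at $4\pi$ near infinity; and then pick a Hamiltonian $H$ in the class of Setup \ref{th:setup-t}(iii) that equals $H_M$ near the circle $\{h_+'=2\pi\}$, equals the $\delta$-translation form at infinity, and is a small nondegenerate perturbation in between. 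The $1$-periodic orbits of $H$ (that is, the fixed points of $\phi_H^1\circ\nu^2$) then fall into three groups according to the radius $|\pi(x)|$: those lying where $h_+'\in(0,2\pi)$; those fibred over the circle $\{h_+'=2\pi\}$, which wind once around the base and so correspond to fixed points of a fibre map conjugate to $\mu$; and those in the outermost region $\{h_+'=4\pi\}$, which wind twice and correspond to fixed points of a fibre map conjugate to $\mu^2$.

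Next I would carry out the ``barrier'' argument of Lemma \ref{th:gamma-wall} — nonnegative rotationally invariant two-forms supported near the circle $\{h_+'=2\pi\}$ and near a circle in the region $\{h_+'=4\pi\}$, almost complex structures making $\pi$ holomorphic there, and the minimum principle of \cite[Lemma 6.1]{mclean12} for the outermost region — to show that Floer trajectories can only preserve or decrease the filtration level. This yields a three-step filtration of $\mathit{CF}^*(\nu^2,H)$ and hence the spectral sequence. Its associated graded I would identify exactly as in the earlier proofs: the innermost piece with a Floer complex computing $\mathit{HF}^*(E,\gamma,\epsilon)$ for small $\gamma>0$ (truncating $h_+$ to a function with derivative $\gamma$ at infinity, as in Lemma \ref{th:rotation-translation}), giving column $p=0$; the middle piece with the modified mapping-torus Floer complex of $\mu$, whose cohomology is $H^*(S^1)\otimes\mathit{HF}^*(\mu,\epsilon)$ by Lemma \ref{th:mclean-example}, contributing the two summands $\mathit{HF}^{q+1}(\mu,\epsilon)\oplus\mathit{HF}^{q+2}(\mu,\epsilon)$ placed in the single column $p=-1$; and the outermost piece with the mapping-torus complex of $\mu^2$, where the $\delta$-translation at infinity resolves the circle direction into a Morse picture so that only $\mathit{HF}^{q+1}(\mu^2,\epsilon)$ survives, in column $p=-2$. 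The hard part will be matching the two Conley--Zehnder index shifts — one at each barrier crossing — with the precise gradings recorded in \eqref{eq:3cols}, and keeping the barrier energy estimates under control in the outermost region, where the $\delta$-translation has broken the rotational symmetry that makes such estimates automatic elsewhere.

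Finally, for the $\bZ/2$-equivariance I would exploit the geometric description from Remark \ref{th:circle-and-discrete-actions}: the action is induced on $\scrL_{\nu^2}$ by $x\mapsto\nu(x(\cdot-\tfrac{1}{2}))$, where $\nu$ is the global monodromy of Example \ref{th:global-monodromy}. Since $\nu$ preserves $|\pi|$ (it is a fibrewise-twisted rotation of the base outside a compact set, and may be chosen Hamiltonian and rotationally symmetric on the rest), this involution can be realised on the chain level compatibly with the radial filtration, and hence descends to every page of the spectral sequence. On the $p=0$ column it lies in the image of a connected group — the Hamiltonian isotopy defining $\nu$, composed with loop rotation — so it is the identity; on the $p=-1$ column it reduces to a half-period rotation of $S^1$ (homotopically trivial, hence trivial on $H^*(S^1)$) together with an inner-type automorphism of the mapping torus of $\mu$ acting trivially on $\mathit{HF}^*(\mu,\epsilon)$; and on the $p=-2$ column, where the generators wind twice around the base and $\nu$ implements exactly ``apply $\mu$ on the fibre, shift by half a period'', it is by construction the $\bZ/2$-action on $\mathit{HF}^*(\mu^2,\epsilon)$ coming from the square root $\mu$. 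Assembling these gives the claimed compatibility; the remaining check is that the geometric representatives of the perturbation data and of the involution can all be arranged to respect the filtration simultaneously, which I expect to be routine but tedious.
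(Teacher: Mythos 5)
Your proposal is essentially the argument the paper has in mind: it explicitly omits the proof but describes Lemma \ref{th:3e} as the variant of Lemma \ref{th:nu-ss} in which columns are combined as in \eqref{eq:r-t-1}, and your radial filtration of $\mathit{CF}^*(\nu^2,H)$ via barriers, with graded pieces identified through Lemmas \ref{th:mclean-example}, \ref{th:gamma-wall} and \ref{th:rotation-translation} and the involution analysed column by column, is exactly that strategy. The two points you flag as remaining work (the Conley--Zehnder index bookkeeping and arranging the perturbation data so that the involution respects the filtration on the chain level) are indeed the only substantive gaps, and they are of the routine kind the paper itself leaves unaddressed.
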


\begin{example} \label{th:trivial-involution}
Take the situation from Example \ref{th:negative-degrees} with $\epsilon>2$. Then the $p = -2$ column contributes only in positive degrees. Hence, 
the $\bZ/2$-action on $\mathit{HF}^0(\nu^2,\delta,\epsilon)$ is trivial, at least if we assume that $\mathrm{char}(\bK) \neq 2$.
\end{example}

\section{Open-closed string maps}

We now combine fixed point Floer cohomology, in the version considered in Section \ref{subsec:translations}, with its counterpart for Lagrangian submanifolds. The relation between the two theories, together with the preceding Floer cohomology computations, leads directly to our main results (Theorems \ref{th:main} and \ref{th:fano}).

\subsection{Lagrangian Floer cohomology}
We will continue to work in the situation of Setups \ref{th:setup-e} and \ref{th:setup-t}, with the following additional geometric ingredient.

\begin{setup} \label{th:setup-l}
(i) We consider oriented exact Lagrangian submanifolds $L \subset E \setminus \partial E$ such that $\pi|L$ is proper, and
\begin{equation} \label{eq:lambda}
\pi(L) = \{\text{compact subset}\} \cup \{\mathrm{re}(y) \gg 0, \;\; \mathrm{im}(y) = o\} \subset \bC
\quad\text{for some $o \in \bR$.}
\end{equation}

(ii) If $E$ comes with a Calabi-Yau structure, we will assume that $L$ is a graded Lagrangian submanifold. 

(iii) Independently, one may want to assume that $L$ comes with a {\em Spin} structure.
\end{setup}

At any point $x \in L$ outside a compact subset, we know that $TE_x^v$ is a symplectic subspace, and hence that 
\begin{equation} \label{eq:n-1}
\mathrm{rank}(D\pi_x|TL_x) = n - \mathrm{dim}(TL_x \cap \mathit{TE}_x^v) \geq 1.
\end{equation}
By combining this with \eqref{eq:lambda}, one sees that equality holds in \eqref{eq:n-1}, hence that $D\pi_x|TL_x: TL_x \rightarrow \bR$ is onto. This implies that there is a unique closed Lagrangian submanifold $L_M \subset M \setminus \partial M$ such that, if $y = e^{p+iq}$ with $q \in (-\pi/2,\pi/2)$, $\mathrm{re}(y) \gg 0$ and $\mathrm{im}(y) = o$, then
\begin{equation} \label{eq:l-m}
L \cap \pi^{-1}(y) = \Psi((p,q) \times L_M).
\end{equation}
In other words, at infinity $L$ is fibered over a horizontal half-infinite path, with each fibre being equal to $L_M$. Note that $L_M$ is exact, and inherits an orientation (as well as the other structure mentioned in (ii) and (iii) above, whenever that exists on $L$).

Given two such submanifolds $L_0,L_1$, such that the corresponding numbers \eqref{eq:lambda} satisfy $o_1 - o_0 \neq 0$, there is a well-defined Floer cohomology $\mathit{HF}^*(L_0,L_1)$. To make the setup formally parallel to fixed point Floer cohomology, we will also introduce a perturbed version $\mathit{HF}^*(L_0,L_1,\delta,\epsilon)$, which reduces to the previous one for $\delta = \epsilon = 0$, and is defined under the assumption that
\begin{equation} \label{eq:delta-inequality}
o_1-o_0 - \delta \neq 0.
\end{equation}
Floer cohomology is invariant under automorphisms of $E$,
\begin{equation}
\mathit{HF}^*(\phi(L_0),\phi(L_1),\delta,\epsilon) \iso \mathit{HF}^*(L_0,L_1,\delta,\epsilon).
\end{equation}
The analogue of \eqref{eq:phi-poincare-duality} says that
\begin{equation} \label{eq:lagrangian-floer-duality}
\mathit{HF}^*(L_1,L_0,-\delta,-\epsilon) \iso \mathit{HF}^{n-*}(L_0,L_1,\delta,\epsilon)^\vee.
\end{equation}

\begin{lemma} \label{th:lag-1}
Suppose that $\delta_{\pm}$ are such that $o_1 - o_0 - \delta_{\pm}$ have the same sign. Then $\mathit{HF}^*(L_0,L_1,\delta_-,\epsilon) \iso \mathit{HF}^*(L_0,L_1,\delta_+,\epsilon)$.
\end{lemma}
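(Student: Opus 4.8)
\emph{Plan.} This is a continuation-map argument, parallel to Lemma~\ref{th:independence-of-epsilon} and to the discussion of $\delta$-independence following \eqref{eq:reverse-delta}; equivalently, one can run it with parametrized moduli spaces. Fix $L_0,L_1$ and $\epsilon$, and choose Hamiltonians $H_{\pm}$ (with slope $\delta_{\pm}\,\mathrm{re}(y)$ at infinity over the base, and the fixed slope $\epsilon$ at the fibre boundary) together with almost complex structures $J_{\pm}$ as in Setup~\ref{th:setup-t}, used to define $\mathit{CF}^*(L_0,L_1,\delta_{\pm},\epsilon)$. Pick an $s$-dependent homotopy $(H_s,J_s)$, $s\in\bR$, interpolating from $(H_-,J_-)$ at $s=-\infty$ to $(H_+,J_+)$ at $s=+\infty$, and such that the slope $\delta_s$ at infinity varies monotonically inside $[\delta_-,\delta_+]$; in particular $\delta_s$ never reaches the value $o_1-o_0$, by the constant-sign hypothesis. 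Counting isolated solutions $u:\bR\times[0,1]\to E$ of the $s$-dependent Floer equation with boundary on $L_0,L_1$ and the usual asymptotics yields a chain map $\mathit{CF}^*(L_0,L_1,\delta_-,\epsilon)\to\mathit{CF}^*(L_0,L_1,\delta_+,\epsilon)$; interchanging $\pm$ and running the usual ``homotopy of homotopies'' then shows the two continuation maps are mutually inverse quasi-isomorphisms (compatibly with $\bZ$-gradings when a Calabi--Yau structure is present, since continuation maps have degree $0$).

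\emph{Compactness.} The only nontrivial point is the a priori confinement of continuation trajectories to a fixed compact subset of $E\setminus\partial E$, uniformly in $s$; given this, transversality and the remaining bookkeeping are routine adaptations of Section~\ref{subsec:translations}. This is assembled exactly as there: (a) a maximum principle in the fibre direction near $\partial M$ keeps $u$ away from $\Theta(\bC\times V)$, as in Lemmas~\ref{th:fibre-convexity} and~\ref{th:independence-of-epsilon}; (b) exactness, together with the product form \eqref{eq:two-families} of $J$ at infinity, rules out bubbling and yields a derivative bound $\|du\|_\infty\le C$ as in Lemma~\ref{th:gromov}; and (c) the projection $v=\pi\circ u$ stays in a compact region of $\bC$. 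For (c), on the part where $\pi\circ u$ is large in the $\mathrm{re}$-direction, $v$ is a strip whose two boundary components lie on the horizontal half-lines $\{\mathrm{im}(y)=o_0\}$ and $\{\mathrm{im}(y)=o_1\}$ and which satisfies the perturbed Cauchy--Riemann equation attached to $H_{\bC,s}=\delta_s\,\mathrm{re}(y)$; such a strip running out to $\mathrm{re}(y)=+\infty$ would enclose symplectic area at least $\inf_s|o_1-o_0-\delta_s|$ times its extent in the $\mathrm{re}$-direction, contradicting the a priori energy bound. This is the Lagrangian counterpart of Lemmas~\ref{th:b-energy}, \ref{th:base-convexity} and~\ref{th:floer-bound}.

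\emph{Main obstacle.} The crux is step (c), and this is exactly where the hypothesis is used: the area lower bound must be uniform along the homotopy, i.e. one needs $\inf_{s\in\bR}|o_1-o_0-\delta_s|>0$, which is possible precisely because $o_1-o_0-\delta_-$ and $o_1-o_0-\delta_+$ have the same sign, so the monotone path $\delta_s$ can be kept on one side of the wall $\delta=o_1-o_0$. One must also check that the correction term coming from the $s$-dependence of $H_s$ in the energy identity (cf. \eqref{eq:energy-cont}) is dominated by this area bound; this is arranged by taking $\|\partial_s H_s\|$ small, the $s$-dependence of $H_s$ away from a compact set being only through the slope $\delta_s$. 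Everything else is a mechanical transcription of the estimates already carried out for fixed $\delta$ in Section~\ref{subsec:translations}.
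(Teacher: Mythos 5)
Your argument is essentially correct, but it takes the continuation-map route, whereas the paper's (omitted) proof is meant to be read off from Lemma \ref{th:no-wall}: there one does not vary the data in the domain parameter $s$ at all, but instead uses a single, $s$-independent Hamiltonian whose slope over the base interpolates between $\delta_-$ and $\delta_+$ as a function of $\mathrm{re}(y)$ (the analogue of the convex function $h_+$ in \eqref{eq:turn}). Since the interpolating slope never equals $o_1-o_0$, no new intersection points of $\phi_H^1(L_0)$ with $L_1$ are created in the transition region, and a confinement argument identifies the resulting complex with both $\mathit{CF}^*(L_0,L_1,\delta_\mp,\epsilon)$ on the nose. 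The advantage of that route is precisely that the troublesome term $\partial_s H_s$ in \eqref{eq:energy-cont} never appears. Your route is also standard and buys the usual functoriality of continuation maps; your identification of where the same-sign hypothesis enters (keeping the path $\delta_s$ on one side of the wall $\delta = o_1-o_0$, so that the asymptotic separation of the boundary conditions stays bounded below) is exactly right, and your three-step compactness scheme matches Lemmas \ref{th:b-energy}--\ref{th:floer-bound} and Remark \ref{th:referee}.

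The one step that does not work as written is the claim that the correction term in the energy identity is handled ``by taking $\|\partial_s H_s\|$ small.'' Since $H_s$ is linear at infinity over the base, $\partial_s H_s = \delta_s'\cdot\mathrm{re}(\pi(\cdot))$ plus compactly supported terms, so $\|\partial_s H_s\|_\infty = \infty$ whenever $\delta_s' \neq 0$; stretching the homotopy in $s$ does not help, because $\int \delta_s'\,ds = \delta_+-\delta_-$ is fixed. The correct repair within your framework is a self-consistent estimate: for a given solution set $R = \sup \mathrm{re}(\pi(u))$; the correction term is then at most $|\delta_+-\delta_-|\cdot R$ up to constants, while your area argument gives a lower bound of order $\inf_s|o_1-o_0-\delta_s|\cdot R$ for the energy of a solution reaching that far. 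These are compatible only for bounded $R$ provided $|\delta_+-\delta_-| < \inf_s|o_1-o_0-\delta_s|$, so one must subdivide $[\delta_-,\delta_+]$ into sufficiently small steps (each staying on the same side of the wall, which the hypothesis permits) and compose the resulting continuation isomorphisms. With that modification, and the same estimates applied to the reverse maps and the homotopies of homotopies, your proof goes through.
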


\begin{lemma} \label{th:lag-2}
For any $\epsilon_{\pm}$, $\mathit{HF}^*(L_0,L_1,\delta,\epsilon_-) \iso \mathit{HF}^*(L_0,L_1,\delta,\epsilon_+)$.
\end{lemma}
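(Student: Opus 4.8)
The plan is to adapt, essentially verbatim, the enlargement argument used to prove Lemma~\ref{th:independence-of-epsilon}. First I would attach a conical collar to the boundary of each fibre, forming
\[
\hat{E} = E \cup_{\partial E}\bigl(\bC \times [1,C] \times \partial M\bigr),
\]
with $\partial E$ identified with $\bC \times \partial M$ via the parallel transport trivialization \eqref{eq:first-trivialization}, and $C>1$ a large constant to be fixed later. The given Lagrangians $L_0,L_1$ lie in $E\setminus\partial E$, and by \eqref{eq:l-m} their ends are fibered over horizontal half-lines with fibre the \emph{closed} Lagrangian $L_M\subset M\setminus\partial M$; in particular $L_0\cup L_1$ stays at a uniform distance from the collar region, and nothing about the Lagrangians needs to be modified inside $\hat{E}$. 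Starting from the Hamiltonian $H_-$ defining $\mathit{CF}^*(L_0,L_1,\delta,\epsilon_-)$, I would extend it to $\hat{E}$ by a fibrewise turning function $h$ (as in \eqref{eq:turn}--\eqref{eq:extend-hamiltonian}) whose slope runs from $\epsilon_-$ near the old boundary to $\epsilon_+$ near the new one, keeping the base part $H_{\bC}$ unchanged. For $C$ large this produces an admissible Hamiltonian on $\hat{E}$ that computes $\mathit{HF}^*(L_0,L_1,\delta,\epsilon_+)$.

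Two things must then be checked. First, that no new generators appear in the collar: as in the proof of Lemma~\ref{th:independence-of-epsilon}, one chooses $H_{\bC}$ so that $\phi^1_{H_{\bC}}$ acts near infinity in the base as the vertical translation by $\pm\delta$ dictated by $H_{\bC}(y)=\delta\,\mathrm{re}(y)$; since $o_1-o_0\mp\delta\neq 0$ by \eqref{eq:delta-inequality}, the perturbed $\phi^1_{H_-}(L_0)$ and $L_1$ are disjoint over the whole region at infinity, hence in particular over $\bC\times[1,C]\times\partial M$, so the intersection points are exactly those of $\mathit{CF}^*(L_0,L_1,\delta,\epsilon_-)$. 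Second, that holomorphic strips stay inside $E$: near $\partial E$ and on the conical part the symplectic geometry splits as base $\times$ fibre, the almost complex structure is convex in the fibre ($JZ_M=R_M$, so $\rho_{\hat{M}}\circ u$ is subharmonic), and the Lagrangian boundary conditions avoid the collar; hence $\rho_{\hat{M}}\circ u$ can have neither an interior nor a boundary maximum there, and a standard maximum principle argument in fibre direction forces every Floer strip to be contained in $E$. Combined with the first point, this yields an isomorphism of chain complexes $\mathit{CF}^*\bigl(\hat{E};L_0,L_1;\hat{H}_+\bigr)\iso\mathit{CF}^*\bigl(E;L_0,L_1;H_-\bigr)$.

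Running the same argument with a \emph{linear} fibrewise function of slope $\epsilon_+$ in place of the turning function identifies $\mathit{HF}^*(\hat{E};L_0,L_1;\epsilon_+)$ with $\mathit{HF}^*(L_0,L_1,\delta,\epsilon_+)$, and combining the two isomorphisms gives the lemma. The remaining ingredients — the bound on $\mathrm{re}(\pi(u))$ from the base maximum principle (Lemma~\ref{th:base-convexity}), the Gromov-compactness bound on $\|du\|_\infty$ (Lemma~\ref{th:gromov}), and the fibrewise confinement near $\partial E$ (Lemma~\ref{th:fibre-convexity}) — use only the behaviour at the ends of the base and exactness, neither of which is affected by the fibrewise enlargement, so they transfer to $\hat{E}$ without change. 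An equivalent, perturbation-theoretic route (parallel to the remark following Lemma~\ref{th:independence-of-epsilon}) would be to build a continuation map directly from an $s$-dependent family of Hamiltonians interpolating the boundary slope from $\epsilon_-$ to $\epsilon_+$, with $H_{\bC,s}$ translating the base throughout; the same maximum principle confines the continuation strips, and the usual homotopy-of-homotopies argument shows the map is a quasi-isomorphism.

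I expect the only genuine subtlety to be the fibrewise maximum principle after the generic time-dependent perturbation needed for transversality of $\hat{H}_+$: one must arrange that this perturbation is supported away from a fixed neighbourhood of $\bC\times\partial M$ (exactly as in Lemmas~\ref{th:independence-of-epsilon} and \ref{th:fibre-convexity}), so that both the subharmonicity of $\rho_{\hat{M}}\circ u$ and the uniform separation of $L_0\cup L_1$ from the collar survive; granting that, the argument is routine. A minor additional point, handled as for the earlier lemmas via uniqueness of continuation maps, is that the resulting isomorphism is canonical.
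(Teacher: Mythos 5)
Your proposal is correct and follows essentially the argument the paper intends: Lemmas \ref{th:lag-1} and \ref{th:lag-2} are stated as analogues of Lemma \ref{th:no-wall} with the proof omitted, and you reproduce the fibrewise enlargement argument, correctly identifying the reason there are no ``forbidden values'' of $\epsilon$ here — namely that $L_0 \cup L_1$ stay at a uniform distance from $\partial E$, so no new generators can appear in the attached cone and the fibrewise maximum principle confines all strips to $E$. (One small imprecision: your disjointness argument via the base translation addresses confinement at base-infinity, which is unchanged from the $\epsilon_-$ complex; the absence of generators in the collar $\bC\times[1,C]\times\partial M$ follows already from the fact that the Lagrangians do not meet it.)
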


The two Lemmas above are analogues of Lemma \ref{th:no-wall}. Note that this time, there are no ``forbidden values'' of $\epsilon$. The passage through the unique ``forbidden value'' of $\delta$ is described by the following result, whose proof we omit:

\begin{lemma} \label{th:lag-3}
Take $\delta_- < o_1-o_0 < \delta_+$. Then there is a long exact sequence
\begin{equation}
\cdots \rightarrow \mathit{HF}^*(L_0,L_1,\delta_-,\epsilon) \longrightarrow
\mathit{HF}^*(L_0,L_1,\delta_+,\epsilon) \longrightarrow \mathit{HF}^*(L_{0,M},L_{1,M}) \rightarrow \cdots
\end{equation}
where the third group is the Floer cohomology of the associated closed Lagrangian submanifolds \eqref{eq:l-m} in the fibre $M$.
\end{lemma}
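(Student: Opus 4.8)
The plan is to run a neck-stretching / barrier argument in the base $\bC$, entirely parallel to the proof of Lemma \ref{th:gamma-wall} but now in the Lagrangian setting, so that the short exact sequence of chain complexes producing the stated long exact sequence comes from splitting Floer generators according to which side of a circle (or, more naturally here, a vertical line) in $\bC$ they lie on. First I would set things up concretely: choose a Hamiltonian perturbation whose base component $H_{\bC}$ generates, for time one, a translation $y \mapsto y + i\delta_+$ at infinity, and arrange (using Lemmas \ref{th:lag-1} and \ref{th:lag-2}) that $\delta = \delta_+$, $\delta = \delta_-$, and $\epsilon$ are taken in convenient small ranges with $\delta_- < o_1 - o_0 < \delta_+$ and no forbidden $\epsilon$ crossed. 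The chords of $H$ from $L_0$ to $L_1$ then split into two types: (i) those contained over a compact region of $\bC$, which are the generators of $\mathit{CF}^*(L_0,L_1,\delta_-,\epsilon)$; and (ii) new chords created near infinity, fibered over a point of $\bC$ lying on the ``far'' side, which because $L$ is fibered over a horizontal ray with fibre $L_{i,M}$ correspond bijectively to chords of the fibre data from $L_{0,M}$ to $L_{1,M}$.

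The key step is the barrier estimate. As in \eqref{eq:v-projection}--\eqref{eq:barrier}, I would pick a two-form $\bar\omega_{\bC}$ on the base that is supported near a vertical strip separating the compact region from the new chords, translation-invariant in the imaginary direction there, nonnegative, and positive along that strip; pairing it against the base-projection $v = \pi(u)$ of a Floer strip (which satisfies $\partial_s v + i(\partial_t v - i\delta) = 0$ in that region) gives $0 \leq \bar A(y_-) - \bar A(y_+)$, where $\bar A$ depends only on the limits. A computation of $\bar A$ on the two types of chords (it is $0$ on type (i) and strictly negative on type (ii), because a type (ii) chord's base point has been ``pushed across'' the barrier, exactly as in the computation after \eqref{eq:barrier}) shows that there are no Floer strips from a type (i) chord to a type (ii) chord, and that strips between type (ii) chords never re-enter the compact region. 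Hence the type (i) generators form a subcomplex, which one identifies with $\mathit{CF}^*(L_0,L_1,\delta_-,\epsilon)$ by a maximum-principle argument (in both base and fibre directions, as in Lemmas \ref{th:base-convexity} and \ref{th:fibre-convexity}); the quotient complex counts only type (ii, ii) strips, which after restricting attention to a single fibre and applying the fibered structure \eqref{eq:l-m} of $L$ is precisely the Floer complex $\mathit{CF}^*(L_{0,M},L_{1,M})$ computing the Floer cohomology of the closed Lagrangians in $M$. The resulting short exact sequence $0 \to \mathit{CF}^*(L_0,L_1,\delta_-,\epsilon) \to \mathit{CF}^*(L_0,L_1,\delta_+,\epsilon) \to \mathit{CF}^*(L_{0,M},L_{1,M}) \to 0$ gives the long exact sequence on cohomology.

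I expect the main obstacle to be the same as in Lemma \ref{th:gamma-wall}: verifying that the differential on the quotient complex is genuinely the (unperturbed, or lightly perturbed) fibre Floer differential, i.e.\ that type (ii, ii) strips stay in a compact part of $E$ and are honestly fibered. This requires the barrier inequality to be strict enough to rule out any excursion toward the compact base region, together with a compactness argument in the fibre direction to control escape toward $\partial M$ — the Lagrangian analogue of the combined Lemmas \ref{th:b-energy}--\ref{th:floer-bound}, here made easier because the base-projected equation forces $\mathrm{re}(\pi(u))$ to be harmonic on the relevant region so the maximum principle applies directly. A secondary technical point is the bijection between type (ii) chords and fibre chords, which uses that $L$ is fibered over a \emph{horizontal} ray while the perturbation translates in the imaginary direction, so that the chord count localizes to a single fibre; this is routine given \eqref{eq:lambda} and \eqref{eq:l-m}. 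Gradings and signs (when $E$ is Calabi-Yau and $L_i$ are graded and \emph{Spin}) are bookkeeping, with the degree shift determined by a Maslov/Conley--Zehnder index computation for the line $\mathrm{im}(y) = o_1 - o_0$, exactly analogous to the shift by $2$ in \eqref{eq:2pi-sequence}.
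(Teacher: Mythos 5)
The paper does not actually prove Lemma \ref{th:lag-3} --- it is introduced as a ``result, whose proof we omit'' --- so there is no official argument to compare against. Your proposal follows exactly the template the paper does spell out for the analogous wall-crossings, Lemmas \ref{th:wall} and \ref{th:gamma-wall}: split the generators according to their position relative to a barrier in the base, use the degenerate action functional \eqref{eq:barrier} to show the ``old'' generators form a subcomplex isomorphic to $\mathit{CF}^*(L_0,L_1,\delta_-,\epsilon)$, and identify the quotient with a Floer complex living at infinity. This is surely the intended route, and your sketch is essentially correct. One point should be made precise: the new generators are not created ``near infinity'' by a pure translation $y \mapsto y + i\delta_+$, since the translated ray $\mathrm{im}(y)=o_0+\delta_+$ and the ray $\mathrm{im}(y)=o_1$ are parallel and disjoint whenever $\delta_+\neq o_1-o_0$. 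The correct choice, mirroring \eqref{eq:turn} and \eqref{eq:rotation-function}, is to take $H_{\bC}(y)=h(\mathrm{re}(y))$ with $h$ convex, $h'=\delta_-$ for small $\mathrm{re}(y)$ and $h'=\delta_+$ for large $\mathrm{re}(y)$; then $\pi(\phi_H^1(L_0))$ is the graph $\mathrm{im}(y)=o_0+h'(\mathrm{re}(y))$, which crosses $\mathrm{im}(y)=o_1$ exactly once and transversally, and over that single crossing point the new chords are precisely the generators of $\mathit{CF}^*(L_{0,M},L_{1,M})$. You do say the new chords sit over a single point on the far side of the barrier, so your picture is right, but the creation mechanism should be stated this way rather than attributed to the translation at infinity. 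Finally, be careful with the direction of the excluded mixed trajectories: with the convention of \eqref{eq:floer-limits}, the inequality $\bar{A}(y_-)\geq\bar{A}(y_+)$ together with $\bar{A}=0$ on old generators and $\bar{A}<0$ on new ones forbids trajectories with $x_-$ new and $x_+$ old, while the reverse kind is allowed; it is this exclusion that makes the old generators a subcomplex and the new ones a quotient, exactly as in the list (i,i), (ii,ii), (i,ii) in the proof of Lemma \ref{th:gamma-wall}.
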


Finally, we have an analogue of Proposition \ref{th:bv-vanish}, also given here without proof:

\begin{lemma} \label{th:albers}
For $\delta>0$, $\mathit{HF}^*(L,L,\delta,\epsilon) \iso H^*(L)$.
\end{lemma}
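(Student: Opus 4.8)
The plan is to reduce $\mathit{CF}^*(L,L,\delta,\epsilon)$ to a Morse complex of $L$, mimicking the proof of Lemma \ref{th:bv-vanish} (the Hamiltonian analogue), with free loops replaced by paths having boundary on $L$ and $1$-periodic orbits replaced by Hamiltonian chords. First I would fix the perturbation. Choose a Weinstein neighbourhood $T^*L \hookrightarrow E \setminus \partial E$ and a Morse function $f$ on $L$; pull $f$ back to $T^*L$, cut off, and extend to a function $H \in \smooth(E,\bR)$ which has the prescribed behaviour from Setup \ref{th:setup-t}(iii) at infinity and near $\partial E$, and which on the compact region where the Floer theory is supported is a small autonomous perturbation. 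Take a $t$-independent almost complex structure $J$ in the class of Setup \ref{th:setup-t}(iv) — such $J$ exist, since the conditions \eqref{eq:theta-star-j} and \eqref{eq:two-families} are themselves $t$-independent — whose restriction to $L$ in the Weinstein chart induces a metric $g$ for which $(f,g)$ is Morse--Smale. Because $\delta>0$ and $o_0=o_1$, condition \eqref{eq:delta-inequality} holds, so the relevant copies of $L$ are disjoint at infinity; hence, by the confinement package already used to set up $\mathit{CF}^*(L_0,L_1,\delta,\epsilon)$ (the Lagrangian analogues of Lemmas \ref{th:base-convexity}--\ref{th:gromov} and Proposition \ref{th:floer-bound}), all intersection points $L \cap \phi_H^1(L)$ and all Floer strips between them lie in a compact subset $K \subset E\setminus\partial E$ depending only on $\delta$, $\epsilon$ and the ambient geometry. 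One arranges that $K$ lies in the region where $H$ equals a small constant times $f$ in the Weinstein chart.

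Next I would run the Floer--Hofer--Salamon / Hofer--Salamon argument inside $K$, exactly as in Lemma \ref{th:bv-vanish}: after rescaling $f$ by a small positive constant, (i) all time-$1$ Hamiltonian chords of $H$ from $L$ to $L$ are constant, at the critical points of $f$ (this is the transversality of the graph of $df$ to the zero section); (ii) every solution of Floer's equation in $K$ is independent of $t$, hence a negative gradient flow line of $f$ on $L$, by the Lagrangian counterpart of the maximum-principle argument for $\partial_t u$ (cf.\ \cite[Lemma 7.1]{hofer-salamon95}); (iii) at such a flow line the linearised operator has only $t$-independent solutions (cf.\ \cite[Proposition 4.2]{salamon-zehnder92}), so together with an index count all moduli spaces are regular and agree with the Morse trajectory spaces of $(f,g)$. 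Therefore $\mathit{CF}^*(L,L,\delta,\epsilon)$ is the Morse cochain complex $\mathit{CM}^*(f)$ over $\bK$ (with signs determined by the \emph{Spin} structure on $L$ from Setup \ref{th:setup-l}(iii)), and passing to cohomology gives $\mathit{HF}^*(L,L,\delta,\epsilon) \iso H^*(L)$. The grading matches because, using the chosen grading of $L$ from Setup \ref{th:setup-l}(ii), the index of a constant chord at a critical point equals its Morse index.

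The main obstacle is not the interior argument, which is classical, but ensuring that the non-compact ends contribute nothing: no chords and no strips escape to infinity, and the interpolation region between the small autonomous perturbation near $L\cap K$ and the prescribed behaviour $\delta\,\mathrm{re}(y)+\epsilon\rho_M$ (resp.\ $\epsilon\rho_M$ near $\partial M$) is never entered. Both follow from the confinement results quoted above — disjointness of $L$ from $\phi_H^1(L)$ at infinity forces all intersection points (in particular the constant strips) into $K$, and the base- and fibre-direction maximum principles force all strips into $K$ — together with the freedom to shrink the support of the $f$-perturbation so that $K$ sits inside the region where $H$ is a small multiple of $f$. Once that confinement is invoked, the remaining work is routine.
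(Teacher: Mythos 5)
The paper states this lemma explicitly without proof, so there is no argument of the author's to compare against; your strategy (confine everything to a compact region, then run the Floer--Hofer--Salamon reduction to Morse theory as in Lemma \ref{th:bv-vanish}) is surely the intended one. But there is a genuine gap: you never use the hypothesis $\delta>0$ except through $\delta\neq 0$, i.e.\ through \eqref{eq:delta-inequality}, and the conclusion is sensitive to the sign. Since $\pi|L$ is proper and $\pi(L)$ contains a half-infinite ray, $L$ is non-compact, and by \eqref{eq:lagrangian-floer-duality} the lemma for $\delta>0$ forces $\mathit{HF}^*(L,L,-\delta,-\epsilon)\iso H^{n-*}(L)^\vee\iso H^*_c(L)$, which is a different group from $H^*(L)$ (for a Lefschetz thimble, concentrated in degree $n$ rather than degree $0$; the paper uses exactly this asymmetry in the example following \eqref{eq:dual-oc0}). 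Both signs of $\delta$ make $L$ and $\phi_H^1(L)$ disjoint at infinity and give the same interior Morse-theoretic picture, yet they yield different answers, so an argument built only from those two ingredients cannot be complete.

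The missing content sits in the transition region between the compact part, where you set $H$ equal to a small multiple of a Morse function $f$, and the end, where $H(\Psi(p,q,x))=\delta\,\mathrm{re}(e^{p+iq})+H_M(x)$. First, your assertion that all chords lie in a compact set $K$ on which $H=cf$ is circular: the chords depend on $H$ in the interpolation region, and excluding chords there amounts to choosing $H$ so that $H|L$ has no critical points there --- which is a condition you must impose, not a consequence of the confinement lemmas. Second, even granting that, what you get is the Morse complex of a function on the compact manifold-with-boundary $L\cap K$, and whether that computes $H^*(L\cap K)\iso H^*(L)$ or $H^*(L\cap K,\partial(L\cap K))\iso H^*_c(L)$ depends on the direction of $\nabla(H|L)$ along $\partial(L\cap K)$. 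The correct fix is to arrange that $H|L$ is a single exhausting Morse function on all of $L$, increasing along the end --- possible precisely because $\delta\,\mathrm{re}(\pi)|L\to+\infty$ when $\delta>0$ --- and to identify the Floer complex with the Morse complex of that exhausting function, which does compute $H^*(L)$. That is where the sign of $\delta$ enters, and it needs to be made explicit; also note that an arbitrary Morse function on the non-compact manifold $L$, cut off as you describe, carries no reason to compute $H^*(L)$ in the first place.
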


\begin{remark} \label{th:spin}
In general, $\mathit{HF}^*(L_0,L_1,\delta,\epsilon)$ is a $\bZ/2$-graded space over a coefficient field $\bK$ of characteristic $2$. In the situation from Setup \ref{th:setup-l}(ii), one can obtain $\bZ$-graded Floer cohomology groups; and in that of Setup \ref{th:setup-l}(iii), an arbitrary coefficient field $\bK$ can be allowed (see \cite[Chapter 8]{fooo} or \cite[Section 12]{seidel04}).
\end{remark}

To define the chain complex $\mathit{CF}^*(L_0,L_1,H)$ underlying $\mathit{HF}^*(L_0,L_1,\delta,\epsilon)$, one chooses functions $H = (H_t)$ and almost complex structures $J = (J_t)$ as in Setup \ref{th:setup-t}, but this time parametrized by $t \in [0,1]$. On the path space 
\begin{equation} \label{eq:path-space}
\scrL_{L_0,L_1} = \{x: [0,1] \rightarrow E \,:\, x(0) \in L_0, \; x(1) \in L_1\},
\end{equation}
one has a counterpart of \eqref{eq:phi-action}:
\begin{equation} \label{eq:lagrangian-action}
A_{L_0,L_1,H}(x) = \Big( \int_0^1 -x^*\theta_E + H_t(x(t)) \, dt \Big) + G_{L_1}(x(1)) - G_{L_0}(x(0)),
\end{equation}
where the $G_{L_k}$ are functions such that $dG_{L_k} = \theta_E|L_k$. The critical points are solutions $x$ of \eqref{eq:periodic-y} in \eqref{eq:path-space}, hence correspond to points of $\phi_H^1(L_0) \cap L_1$. The condition \eqref{eq:delta-inequality} implies that all such $x$ are contained in a compact subset of $E$. Moreover, for generic choice of $H$, the $x$ will be nondegenerate. Assuming this to be the case, one considers solutions $u: \bR \times [0,1] \rightarrow E$ of \eqref{eq:floer}, with boundary conditions
\begin{equation} \label{eq:floer-boundary}
u(s,0) \in L_0, \;\; u(s,1) \in L_1.
\end{equation}

There is an analogue of Proposition \ref{th:floer-bound} in this context, with similar strategy of proof. The rest of the construction of $\mathit{HF}^*(L_0,L_1,\delta,\epsilon)$ follows the classical theory for closed Lagrangian submanifolds \cite{floer88c}.

\begin{remark} \label{th:referee}
It may make sense to mention the one point where the situation here differs from that in Proposition \ref{th:floer-bound} (by being easier, in fact). There is no analogue of \eqref{eq:no-asymptotic-fixed-points} in the present context, but that is in fact unnecessary: because of \eqref{eq:floer-boundary}, a bound on $\|du_\infty\|$ (obtained as in Lemma \ref{th:gromov}) implies that the image of $u$ is contained in a region $\{\mathrm{im}(\pi) \leq D\}$. (The reader may also want to consult the proof of \cite[Proposition 5.1]{seidel12b}, even though that result itself does not apply here, because of the different choice of inhomogeneous terms.)
\end{remark}

\subsection{The formalism\label{subsec:formalism}}
We will now introduce the additional structures on Floer cohomology which underlie our main argument (for the moment, only the formal aspects will be considered; discussion of their actual construction will take place later on, in Section \ref{subsec:cr}).

Given Lagrangian submanifolds $(L_0,L_1,L_2)$ and constants $(\delta_0,\delta_1)$, $(\epsilon_0,\epsilon_1)$ such that all Floer cohomology groups involved are well-defined, the triangle product (due to Donaldson) is a map
\begin{equation} \label{eq:triangle-product}
\mathit{HF}^*(L_1,L_2,\delta_1,\epsilon_1) \otimes \mathit{HF}^*(L_0,L_1,\delta_0,\epsilon_0) \longrightarrow \mathit{HF}^*(L_0,L_2,\delta_0+\delta_1,\epsilon_0+\epsilon_1).
\end{equation}
This satisfies an appropriate associativity condition. As an application, fix Lagrangian submanifolds $L_1,\dots,L_m$ with pairwise different constants $o_1, \dots, o_m \in \bR$. To these, one can associate an algebra $A$ over \eqref{eq:semisimple}, namely
\begin{equation} \label{eq:directed-1}
A = R \oplus \bigoplus_{i<j} \mathit{HF}^*(L_i,L_j),
\end{equation}
or equivalently
\begin{equation} \label{eq:directed-2}
e_j A e_i = \begin{cases} \mathit{HF}^*(L_i,L_j) & i<j, \\ \bK & i=j, \\ 0 & i>j.
\end{cases}
\end{equation}
Recall that here, the Floer cohomology groups under discussion are $\mathit{HF}^*(L_i,L_j,\delta,\epsilon)$ with $\delta = \epsilon = 0$ (in view of Lemmas \ref{th:lag-1} and \ref{th:lag-2}, one could equivalently use any $\delta$ which lies on the same side of $o_j-o_i$ as the origin, and any $\epsilon$, but that it not helpful for thinking about the products). The nontrivial part of the algebra structure of $A$ consists of the products $\mathit{HF}^*(L_j,L_k) \otimes \mathit{HF}^*(L_i,L_j) \rightarrow \mathit{HF}^*(L_i,L_k)$ for $i<j<k$, which are special cases of \eqref{eq:triangle-product}. Next, take an automorphism $\phi$ as in Setup \ref{th:setup-t}, and constants $\delta$, $\epsilon$ satisfying
\begin{equation} \label{eq:lambda-difference}
\delta \neq o_i - o_j \quad \text{for all $i,j \in \{1,\dots,m\}$  (including $i = j$, which means $\delta \neq 0$).}
\end{equation}
Generalizing \cite[Section 6.3]{seidel12b}, one associates to this a bimodule $P_{\phi,\delta,\epsilon}$ over $A$, namely:
\begin{equation}
P_{\phi,\delta,\epsilon} = \bigoplus_{i,j} \mathit{HF}^*(\phi(L_i),L_j,\delta,\epsilon).
\end{equation}
These structures have cochain level refinements: an $A_\infty$-algebra $\scrA$ (following Fukaya), and $A_\infty$-bimodules $\scrP_{\phi,\delta,\epsilon}$ over $\scrA$. There are corresponding refinements of the properties of Lagrangian Floer cohomology mentioned above. Namely, there is a quasi-isomorphism (in fact, if the choices are suitably coordinated, an isomorphism)
\begin{equation} \label{eq:dual-bimodule}
\scrP_{\phi,\delta,\epsilon} \htp \scrP_{\phi^{-1},-\delta,-\epsilon}^\vee[-n].
\end{equation}

\begin{lemma}
The quasi-isomorphism type of $\scrP_{\phi,\delta,\epsilon}$ is independent of $\epsilon$. It is also locally constant in $\delta$, within the allowed range \eqref{eq:lambda-difference}.
\end{lemma}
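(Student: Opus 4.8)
The strategy is to upgrade the chain-level counterparts of Lemmas \ref{th:lag-2} and \ref{th:lag-3} (for the $\epsilon$-dependence) and of Lemma \ref{th:lag-1} (for the $\delta$-dependence) so that they take place simultaneously for all pairs $L_i, L_j$ and, crucially, are compatible with the bimodule structure maps $\mu_{\scrP}^{s;1;r}$. First I would treat the dependence on $\epsilon$. Fix $\epsilon_- < \epsilon_+$ (no forbidden values occur, by Lemma \ref{th:lag-2}), and enlarge $E$ fibrewise as in \eqref{eq:attach-cone-2}, extending $\phi$ to $\hat\phi$ and each auxiliary datum as in the proof of Lemma \ref{th:independence-of-epsilon}. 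The same fibrewise maximum-principle argument shows that the pseudoholomorphic polygons with boundary on the $L_i$ (and one ``input'' side shifted by $\phi$) that define the $A_\infty$-operations $\mu_{\scrA}^d$ and the bimodule operations $\mu_{\scrP_{\phi,\delta,\epsilon}}^{s;1;r}$ all remain in the original $E$; hence the entire package $(\scrA,\scrP_{\phi,\delta,\epsilon})$ computed with $\hat H_+$ agrees, as an $A_\infty$-algebra together with an $A_\infty$-bimodule, with the one computed with $H_-$. (If one prefers to avoid the enlargement, the alternative is a continuation-map argument: the continuation maps for varying $\epsilon$ assemble into an $A_\infty$-bimodule quasi-isomorphism, by the usual parametrized-moduli-space bookkeeping, once one checks the relevant compactness, which is again a fibrewise maximum principle.)

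For the dependence on $\delta$ within a chamber of \eqref{eq:lambda-difference}, the point is that no generator is created or destroyed: as $\delta$ varies continuously while $\delta \neq o_i - o_j$ for all $i,j$, the generating intersection points of $\phi(L_i) \cap L_j$ (suitably perturbed) vary continuously and stay in a compact subset of $E \setminus \partial E$, by the counterpart of Proposition \ref{th:floer-bound}. Continuation maps associated to a path of such $\delta$'s then give an $A_\infty$-bimodule quasi-isomorphism $\scrP_{\phi,\delta_-,\epsilon} \htp \scrP_{\phi,\delta_+,\epsilon}$; compatibility with $\scrA$ (which does not see $\delta$) is automatic because the continuation data can be chosen to agree with the fixed data on the $L_i$-only pieces. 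I would phrase this as: the family of $A_\infty$-bimodules over the parameter interval is ``locally trivial'', which follows from the standard fact that an $A_\infty$-bimodule structure varying smoothly with a parameter, with no change in the underlying graded vector space, is quasi-isomorphic to a constant one.

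The main obstacle is purely analytic bookkeeping rather than anything conceptual: one must verify that all the compactness statements behind Proposition \ref{th:floer-bound} — the base maximum principle (Lemma \ref{th:base-convexity}), the fibrewise maximum principle (Lemma \ref{th:fibre-convexity}), the Gromov bound (Lemma \ref{th:gromov}), and the energy-versus-displacement estimate (Lemma \ref{th:b-energy}) — persist uniformly for the polygon moduli spaces (not just strips) and for the enlarged target $\hat E$ and the continuation data, so that no polygon escapes to infinity or hits the boundary during the homotopies. Each of these is a routine adaptation of an argument already in the excerpt, but they must be checked for the $d$-input operations and for the $1$-parameter families, and the signs/orientations and ($\bZ$- or $\bK$-)gradings must be tracked so that the resulting maps are genuine $A_\infty$-bimodule morphisms. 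Granting these adaptations, the two quasi-isomorphisms above combine to give the stated independence of $\epsilon$ and local constancy in $\delta$. \qed
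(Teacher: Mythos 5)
The paper itself omits the proof of this lemma, saying only that it is a refinement of the invariance properties in Lemmas \ref{th:lag-1} and \ref{th:lag-2}; your plan is exactly the refinement being gestured at (fibrewise enlargement plus maximum principle for the $\epsilon$-dependence, as in Lemma \ref{th:independence-of-epsilon}, and continuation-type bimodule homomorphisms for $\delta$ within a chamber of \eqref{eq:lambda-difference}), and you correctly identify that the real work is re-running the compactness estimates of Lemmas \ref{th:b-energy}--\ref{th:gromov} and Proposition \ref{th:floer-bound} for polygon moduli spaces and for the interpolating data. Two small points: your citation of Lemma \ref{th:lag-3} in connection with the $\epsilon$-dependence is misplaced (that lemma concerns crossing the forbidden value of $\delta$, which is irrelevant here since only local constancy in $\delta$ is claimed); and the assertion that ``no generator is created or destroyed'' as $\delta$ varies within a chamber is not literally true --- intersection points of $\phi_H^1(L_i)$ and $L_j$ can appear or cancel in pairs --- but this does not matter for your argument, since the continuation maps you invoke do not require the generating set to be constant, only that generators and trajectories stay in a fixed compact subset of $E \setminus \partial E$ throughout the homotopy.
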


This is a refinement of the previously mentioned invariance properties of Lagrangian Floer cohomology (Lemmas \ref{th:lag-1}, \ref{th:lag-2}). We omit the proof.

%
\begin{example}
In our eventual application, $(L_1,\dots,L_m)$ will be a basis of Lefschetz thimbles. In that case, if one takes $\delta \gg 0$, then $\scrP_{\mathit{id},\delta,\epsilon}$ is quasi-isomorphic to the diagonal bimodule $\scrA$ \cite[Corollary 6.1]{seidel12b}; and if one takes $\delta \ll 0$, $\scrP_{\mathit{id},\delta,\epsilon}$ is quasi-isomorphic to the shifted dual diagonal bimodule $\scrA^\vee[-n]$ \cite[Corollary 6.2]{seidel12b}.
\end{example}

For $\phi$ as before, take $(\delta,\epsilon)$ so that $\mathit{HF}^*(\phi,\delta,\epsilon)$ is well-defined. Given any $L$, one has a canonical map relating the two kinds of Floer cohomology groups:
\begin{equation} \label{eq:oc0}
\mathit{HF}^*(\phi(L_i),L_i,\delta,\epsilon) \longrightarrow \mathit{HF}^{*+n}(\phi,\delta,\epsilon).
\end{equation}
After applying \eqref{eq:translation-duality}, \eqref{eq:lagrangian-floer-duality} and changing notation from $(\phi,\delta,\epsilon)$ to $(\phi^{-1},-\delta,\epsilon)$, the map dual to \eqref{eq:oc0} can be written as
\begin{equation} \label{eq:dual-oc0}
\mathit{HF}^*(\phi,\delta,\epsilon) \longrightarrow \mathit{HF}^*(\phi(L_i),L_i,\delta,\epsilon).
\end{equation}

\begin{example}
Let's specialize to $\phi = \mathit{id}$ and $\delta >0$. Applying Lemmas \ref{th:albers} and \ref{th:vanishing-homology}, one sees that \eqref{eq:oc0} reduces to a map
\begin{equation} \label{eq:i-1}
H^*(L_i) \longrightarrow H^{*+n}(E,\{\mathrm{re}(\pi) \ll 0\}),
\end{equation}
Let's instead take $\delta < 0$. Then, because of \eqref{eq:lagrangian-floer-duality} and \eqref{eq:translation-duality}, the map \eqref{eq:oc0} looks like this:
\begin{equation} \label{eq:i-2}
H_*(L_i) \longrightarrow H_*(E,\{\mathrm{re}(\pi) \ll 0\}),
\end{equation}
and its dual \eqref{eq:dual-oc0} is correspondingly
\begin{equation} \label{eq:i-3}
H^*(E,\{\mathrm{re}(\pi) \ll 0\}) \longrightarrow H^*(L_i).
\end{equation}
Unsurprisingly, \eqref{eq:i-1} and \eqref{eq:i-3} can be identified with the ordinary pushforward and restriction maps in cohomology; in particular, \eqref{eq:i-3} factors through $H^*(E)$.
\end{example}

As before, \eqref{eq:oc0} is part of a more complicated structure, the {\em open-closed string map}
\begin{equation} \label{eq:open-closed-string-map}
\mathit{HH}_*(\scrA,\scrP_{\phi,\delta,\epsilon}) \longrightarrow \mathit{HF}^{*+n}(\phi,\delta,\epsilon),
\end{equation}
which is defined for all $(\delta,\epsilon)$ such that both sides make sense. Let's use the same Floer-theoretic duality as before, as well as \eqref{eq:second-hochschild} and \eqref{eq:dual-bimodule}. After applying that, and suitably adjusting notation, one can write the dual of \eqref{eq:open-closed-string-map} as
\begin{equation} \label{eq:dual-open-closed-string-map}
\mathit{HF}^*(\phi,\delta,\epsilon) \longrightarrow
\mathit{HH}^*(\scrA,\scrP_{\phi,\delta,\epsilon}) = H^*(\hom_{[\scrA,\scrA]}(\scrA, \scrP_{\phi,\delta,\epsilon})).
\end{equation}
%

\begin{remark}
The composition of \eqref{eq:open-closed-string-map} and \eqref{eq:dual-open-closed-string-map} yields a map from Hochschild homology to Hochschild cohomology (of degree $n$), for any bimodule $\scrP_{\phi,\delta,\epsilon}$. Taking into account \eqref{eq:serre-hochschild} (since $\scrA$ is directed, it is homologically smooth), one can write that as
\begin{equation} \label{eq:our-map}
H^*(\hom_{[\scrA,\scrA]}((\scrA^\vee)^{-1}, \scrP_{\phi,\delta,\epsilon}))
\longrightarrow
H^{*+n}(\hom_{[\scrA,\scrA]}(\scrA,\scrP_{\phi,\delta,\epsilon})).
\end{equation}
Generally speaking, one natural source of such homomorphisms are bimodule maps $\scrA \rightarrow (\scrA^\vee)^{-1}$ of degree $n$, or equivalently bimodule maps $\scrA^\vee[-n] \rightarrow \scrA$. At least in the case of Lefschetz fibrations, to be discussed later (and assuming $\delta \gg 0$), it seems likely that \eqref{eq:our-map} is induced by the bimodule map we have called $\rho$ (Lemma \ref{th:old}).
\end{remark}

We will also need a variation of \eqref{eq:open-closed-string-map} which involves two automorphisms $\phi_k$ ($k = 0,1$), with their associated bimodules $\scrP_{\phi_k,\delta_k,\epsilon_k}$. For simplicity, we assume that $\epsilon_0/\delta_0 = \epsilon_1/\delta_1$. Set
\begin{equation} \label{eq:product-phi}
\left\{
\begin{aligned}
& \phi = \phi_1\,\phi_0, \\
& \delta = \delta_0 + \delta_1, \\
& \epsilon = \epsilon_0 + \epsilon_1.
\end{aligned}
\right.
\end{equation}
We suppose that $\mathit{HF}^*(\phi,\delta,\epsilon)$ is defined. Then, the new version of the open-closed string map has the form
\begin{equation} \label{eq:double-open-closed}
\mathit{HH}_*(\scrA, \scrP_{\phi_1,\delta_1,\epsilon_1} \otimes_{\scrA} \scrP_{\phi_0,\delta_0,\epsilon_0}) \longrightarrow \mathit{HF}^{*+n}(\phi,\delta,\epsilon).
\end{equation}
This is the same kind of construction as the ``two-pointed open-closed string maps'' of \cite[Section 5.6]{ganatra13}. In parallel with \eqref{eq:dual-open-closed-string-map}, but this time using \eqref{eq:general-hochschild}, one can write the dual of \eqref{eq:double-open-closed} as
\begin{equation} \label{eq:dual-double}
\mathit{HF}^*(\phi,\delta,\epsilon) \longrightarrow H^*\big(\mathit{hom}_{[\scrA,\scrA]}(\scrP_{\phi_0^{-1},-\delta_0,-\epsilon_0}, \scrP_{\phi_1,\delta_1,\epsilon_1})\big).
\end{equation}

\begin{remark} \label{th:consider-double}
One can think of \eqref{eq:double-open-closed} as follows. Write $\scrP_k = \scrP_{\phi_k,\delta_k,\epsilon_k}$ and $\scrP = \scrP_{\phi,\delta,\epsilon}$, assuming that the latter is defined. The triangle products
\begin{equation}
\begin{aligned} &
\mathit{HF}^*(\phi_1(L_j),L_k,\delta_1,\epsilon_1) \otimes \mathit{HF}^*(\phi_0(L_i),L_j,\delta_0,\epsilon_0) \\
& \quad \iso \mathit{HF}^*(\phi_1(L_j),L_k,\delta_1,\epsilon_1) \otimes
\mathit{HF}^*(\phi(L_i),\phi_1(L_j),\delta_0,\epsilon_0)
\longrightarrow \mathit{HF}^*(\phi(L_i),L_k,\delta,\epsilon)
\end{aligned}
\end{equation}
can be lifted to an $A_\infty$-bimodule homomorphism
\begin{equation} \label{eq:bimodule-tensor}
\scrP_1 \otimes_{\scrA} \scrP_0 \longrightarrow \scrP.
\end{equation}
One can reduce \eqref{eq:double-open-closed} to \eqref{eq:open-closed-string-map} for $\phi$, by writing it as the composition
\begin{equation}
\mathit{HH}_*(\scrA, \scrP_1 \otimes_{\scrA} \scrP_0) \longrightarrow 
\mathit{HH}_*(\scrA,\scrP) \longrightarrow \mathit{HF}^{*+n}(\phi,\delta,\epsilon),
\end{equation}
where the first map is induced by \eqref{eq:bimodule-tensor} (\cite{ganatra13} uses the same idea to relate ``two-pointed open-closed string maps'' to ordinary ``one-pointed'' ones). However, we will prefer a direct definition of \eqref{eq:double-open-closed}, since that is a little simpler, and we have no use for the maps \eqref{eq:bimodule-tensor} by themselves.
\end{remark}
 
\begin{remark} \label{th:swap-factors}
Let's slightly change notation, and assume that $(\phi,\delta,\epsilon)$ is given, with the property that $\scrP_{\phi,\delta,\epsilon}$ and $\mathit{HF}^*(\phi^2,2\delta,2\epsilon)$ are well-defined. Then, \eqref{eq:double-open-closed} specializes to yield a map
\begin{equation} \label{eq:square-map}
\mathit{HH}_*(\scrA,\scrP_{\phi,\delta,\epsilon}^{\otimes_{\scrA} 2}) \longrightarrow \mathit{HF}^{*+n}(\phi^2,2\delta,2\epsilon).
\end{equation}
The left hand side carries a natural $\bZ/2$-action, as discussed in Section \ref{subsec:hochschild-homology}; so does the right hand side, because it is the Floer cohomology of a square (Remark \ref{th:circle-and-discrete-actions}); and \eqref{eq:square-map} is compatible with those actions (essentially, because Figure \ref{fig:disc2} is rotationally symmetric).
\end{remark}

\subsection{Cauchy-Riemann equations\label{subsec:cr}}
The previously mentioned algebraic structures belong to a kind of TCFT (Topological Conformal Field Theory), which means a general framework of algebraic operations parametrized by Riemann surfaces. We will now outline some of the ingredients of this construction. Compared to similar ideas in the literature, the main difference is that we allow nontrivial monodromy around interior punctures on the Riemann surfaces. In other respects, the exposition here is actually somewhat restrictive: for instance, unlike \cite{abouzaid-seidel07}, we only allow closed one-forms on our Riemann surfaces.

\begin{setup} \label{th:setup-s}
(i) Fix constants $(\epsilon,\delta)$. A {\em worldsheet} is a connected non-compact Riemann surface $S$, possibly with boundary, with the following properties and additional data.

We assume that there is a compactification $\bar{S} = S \sqcup \Sigma$, obtained by adding a finite set of points. We divide the points of this finite set $\Sigma$ into closed string (interior) and open string (boundary) ones, and (independently, and arbitrarily) into inputs and outputs, denoting the respective subsets by $\Sigma^{\mathit{cl}/\mathit{op},\mathit{in}/\mathit{out}}$. These should come with local holomorphic coordinates (tubular and strip-like ends), of the form
\begin{equation} \label{eq:ends}
\begin{cases}
\epsilon_\zeta: (-\infty,0] \times S^1 \longrightarrow S, & \zeta \in \Sigma^{\mathit{cl},\mathit{out}}, \\
\epsilon_\zeta: [0,\infty) \times S^1 \longrightarrow S, & \zeta \in \Sigma^{\mathit{cl},\mathit{in}}, \\
\epsilon_\zeta: (-\infty,0] \times [0,1] \longrightarrow S, & \zeta \in \Sigma^{\mathit{op},\mathit{out}}, \\
\epsilon_\zeta: [0,\infty) \times [0,1] \longrightarrow S, & 
\zeta \in \Sigma^{\mathit{op},\mathit{in}}.
\end{cases}
\end{equation}
Let $\tilde{S} \rightarrow S$ be the universal covering. We fix lifts of \eqref{eq:ends}, of the form
\begin{equation}
\begin{cases}
\tilde\epsilon_\zeta: (-\infty,0] \times \bR \longrightarrow \tilde{S}, & \zeta \in \Sigma^{\mathit{cl},\mathit{out}}, \\
\tilde\epsilon_\zeta: [0,\infty) \times \bR \longrightarrow \tilde{S}, & \zeta \in \Sigma^{\mathit{cl},\mathit{in}}, \\
\tilde\epsilon_\zeta: (-\infty,0] \times [0,1] \longrightarrow \tilde{S}, & \zeta \in \Sigma^{\mathit{op},\mathit{out}}, \\
\tilde\epsilon_\zeta: [0,\infty) \times [0,1] \longrightarrow \tilde{S}, & 
\zeta \in \Sigma^{\mathit{op},\mathit{in}}.
\end{cases}
\end{equation}

Write $\Gamma \iso \pi_1(S)$ for the covering group of $\tilde{S} \rightarrow S$. We want to have a homomorphism 
\begin{equation} \label{eq:monodromy}
\Phi: \Gamma \longrightarrow \mathit{Symp}(E),
\end{equation}
which takes values in the subgroup of those symplectic automorphisms of $E$ described in Setup \ref{th:setup-t}. In particular, given $\zeta \in \Sigma^{\mathit{cl}}$, passing from $\tilde\epsilon_\zeta(s,t+1)$ to $\tilde\epsilon_\zeta(s,t)$ amounts to acting by an element of $\Gamma$, which determines an automorphism $\phi_\zeta$ by \eqref{eq:monodromy}.

To each point $\tilde{z} \in \partial \tilde{S}$ we want to associate a Lagrangian submanifold $L_{\tilde{z}}$ as in Setup \ref{th:setup-l}, in a way which is locally constant in $z$, and compatible with \eqref{eq:monodromy}:
\begin{equation} \label{eq:equivariant-lagrangian}
L_{\gamma(\tilde{z})} = \Phi(\gamma)(L_{\tilde{z}}).
\end{equation}
In particular, given $\zeta \in \Sigma^{\mathit{op}}$, we have distinguished Lagrangian submanifolds $(L_{\zeta,0}, L_{\zeta,1})$ which are associated to the points $(\tilde{\epsilon}_{\zeta}(s,0), \tilde\epsilon_\zeta(s,1))$. By definition, these come with real numbers \eqref{eq:lambda}, which we denote by $(o_{\zeta,0},o_{\zeta,1})$.

Finally, we want $S$ to carry a closed one-form $\beta_S$ with $\beta_S|\partial S = 0$, and which over each end \eqref{eq:ends} satisfies 
\begin{equation}
\epsilon_\zeta^* \beta_S = \beta_\zeta \mathit{dt},
\end{equation}
where the $\beta_\zeta$ are constants. Set $\delta_\zeta = \beta_\zeta \delta$, $\epsilon_\zeta = \beta_\zeta \epsilon$. We require that for $\zeta \in \Sigma^{\mathit{cl}}$, $\epsilon_\zeta$ should satisfy \eqref{eq:no-reeb}, and $\delta_\zeta \neq 0$; while for $\zeta \in \Sigma^{\mathit{op}}$, the pair $(L_{\zeta,0},L_{\zeta,1})$ and the number $\delta_\zeta$ must satisfy \eqref{eq:delta-inequality}.

(ii) Given $S$, we consider families $J_S = (J_{S,\tilde{z}})$ of almost complex structures in the class from Setup \ref{th:setup-t}, para\-me\-trized by $\tilde{z} \in \tilde{S}$. These should be equivariant with respect to \eqref{eq:monodromy}, and over the strip-like ends they should be invariant under translation in the first variable. Concretely, this means that there are families $J_\zeta = (J_{\zeta,t})$ such that
\begin{equation} \label{eq:j-zeta}
J_{S,\tilde\epsilon_\zeta(s,t)} = J_{\zeta,t};
\end{equation}
and if $\zeta \in \Sigma^{\mathit{cl}}$, the associated family $J_\zeta$ satisfies a periodicity condition as in \eqref{eq:phi-periodicity} with respect to $\phi_\zeta$.

(iii) We will equip $S$ with an {\em inhomogeneous term} $K_S$, which is a one-form on $\tilde{S}$ with values in the space of functions on $E$. More precisely, for any tangent vector $\xi$, $K_S(\xi) \in \smooth(E,\bR)$ belongs to the class from Setup \ref{th:setup-t} for the constants $(\beta_S(\xi) \delta, \beta_S(\xi)\epsilon )$. This family must be equivariant with respect to \eqref{eq:monodromy}, and if $\xi$ is tangent to $\partial S$, then $K_S(\xi)|L_{\tilde{z}} = 0$. Finally, over each end, $\epsilon_\zeta^*K_S$ is invariant under translation in the first variable, and satisfies $(\epsilon_\zeta^* K_S)(\partial_s) = 0$. In analogy with \eqref{eq:j-zeta}, one can therefore write
\begin{equation} \label{eq:k-zeta}
\tilde\epsilon_\zeta^* K_S = H_{\zeta,t} \, \mathit{dt};
\end{equation}
and if $\zeta \in \Sigma^{\mathit{cl}}$, $H_\zeta$ again satisfies \eqref{eq:phi-periodicity}.
\end{setup}

We usually refer to a pair $(J_S,K_S)$ as a {\em perturbation datum}, since it specifies a particular pertubed Cauchy-Riemann equation on $S$. Concretely, this is an equation for maps
\begin{equation} \label{eq:tilde-u}
\left\{\begin{aligned}
& \tilde{u}: \tilde{S} \longrightarrow E, \\
& \tilde{u}(\gamma(\tilde{z})) = \Phi(\gamma)(\tilde{u}(\tilde{z})), \\
& \tilde{u}(\tilde{z}) \in L_{\tilde{z}} \quad \text{for $\tilde{z} \in \partial \tilde{S}$},
\end{aligned}
\right.
\end{equation}
namely:
\begin{equation} \label{eq:generalized-floer}
(d\tilde{u} - Y_{K_S})^{0,1} = 0.
\end{equation}
Here, $Y_{K_S}$ is the one-form on $\tilde{S}$ with values in $\smooth(TE)$ associated to $K_S$ (by passing from functions to Hamiltonian vector fields). We use the complex structure on $S$, and the almost complex structures $J_{S,\tilde{z}}$, to form the $(0,1)$-part of the linear map $d\tilde{u} - Y_{K_S}: TS_{\tilde{z}} \longrightarrow TE_{\tilde{u}(\tilde{z})}$. One checks readily that \eqref{eq:generalized-floer} is invariant under the twisted $\Gamma$-periodicity condition from \eqref{eq:tilde-u}. Over the ends, which means for $u_\zeta(s,t) = \tilde{u}(\tilde{\epsilon}_\zeta(s,t))$, \eqref{eq:generalized-floer} reduces to equations for suitable Floer trajectories:
\begin{equation} \label{eq:end-floer}
\left\{
\begin{aligned}
& u_\zeta(s,k) \in L_{\zeta,k} \quad \text{for $k = 0,1$, if $\zeta \in \Sigma^{\mathit{op}}$}, \\
& u_\zeta(s,t) = \phi_\zeta(u_\zeta(s,t+1)) \quad \text{if $\zeta \in \Sigma^{\mathit{cl}}$}, \\
& \partial_s u_\zeta + J_{\zeta,t}(\partial_t u_\zeta - X_{H_\zeta,t}) = 0.
\end{aligned}
\right.
\end{equation}
It therefore makes sense to impose convergence conditions
\begin{equation} \label{eq:generalized-limit}
\textstyle \lim_{s \rightarrow \pm\infty} u_\zeta(s,\cdot) = x_\zeta.
\end{equation}
The necessary compactness argument for the moduli space of solutions of \eqref{eq:generalized-floer} combines: an overall energy bound; a maximum principle argument in horizontal direction over the base (as in Lemma \ref{th:base-convexity}); and the following idea. Let's fix a metric on $S$ which is standard over the ends. Then there is a constant $c$ such that each point of $S$ either (i) lies within distance $c$ of $\partial S$, or else (ii) lies on a tubular end (the image of $\epsilon_\zeta$ for some $\zeta \in \Sigma^{\mathit{cl}}$). For preimages $\tilde{z}$ of points as in (i), one can bound $|\pi(\tilde{u}(\tilde{z}))|$ as in Lemma \ref{th:gromov}; whereas in case (ii), one uses an argument as in Lemma \ref{th:b-energy}. Taken together, these ingredients yield an analogue of Proposition \ref{th:floer-bound}.  Transversality is straightforward, given the freedom to choose $J_S$ and $K_S$.

Counting solutions of \eqref{eq:generalized-floer}, \eqref{eq:generalized-limit} for a single Riemann surface $S$ yields a map between Floer cochain spaces, which induces a cohomology level map of degree $n(-\chi(\bar{S}) + |\Sigma^{\mathit{out,op}}| + 2|\Sigma^{\mathit{out,cl}}|)$:
\begin{equation} \label{eq:tqft}
\xymatrix{
\displaystyle
\bigotimes_{\zeta \in \Sigma^{\mathit{in},\mathit{op}}} \mathit{HF}^*(L_{\zeta,0},L_{\zeta,1},\delta_\zeta,\epsilon_\zeta) \otimes \bigotimes_{\zeta \in \Sigma^{\mathit{in},\mathit{cl}}} \mathit{HF}^*(\phi_\zeta,\delta_\zeta,\epsilon_\zeta) 
\ar[d] \\ 
\displaystyle
\bigotimes_{\zeta \in \Sigma^{\mathit{out},\mathit{op}}} \mathit{HF}^*(L_{\zeta,0},L_{\zeta,1},\delta_\zeta,\epsilon_\zeta) \otimes
\bigotimes_{\zeta \in \Sigma^{\mathit{out},\mathit{cl}}}
\mathit{HF}^*(\phi_\zeta,\delta_\zeta,\epsilon_\zeta).
}
\end{equation}

\begin{remark} 
A suitable analogue of Remarks \ref{th:signs-and-grading} and \ref{th:spin} applies. In general, \eqref{eq:tqft} will be a $\bZ/2$-graded map, and defined over a coefficient field with $\mathrm{char}(\bK) = 2$. However, if one assumes that: $E$ carries a symplectic Calabi-Yau structure; that \eqref{eq:monodromy} comes with a lift to the graded symplectic automorphism group; and that all boundary conditions are graded Lagrangian submanifolds; then \eqref{eq:tqft} will have the specified degree with respect to the $\bZ$-gradings of the Floer cohomology groups involved. Moreover, if the Lagrangian submanifolds are {\em Spin}, arbitrary $\bK$ are allowed.
\end{remark}

So far, what we have explained is a version of the TQFT formalism for Floer theory. For the extension to a TCFT, which is relevant for us, one has to include families of Riemann surfaces. Strictly speaking, a TCFT framework would have to allow a class of such families which is large enough to form a chain level model for the homology of the relevant compactified moduli spaces of Riemann surfaces, and to allow appropriate composition (operad) structures. For our purpose, only certain specific families are needed; there, the underlying analysis remains the same as for the TQFT case (except that the gluing theory has to be carried out in a parametrized sense, which is something that can be regarded as well-understood).

\subsection{Moduli spaces}
As before, we start with $(L_1,\dots,L_m)$ having pairwise different constants $(o_1,\dots,o_m)$. For $1 \leq i < j \leq m$, fix the additional data (functions $H_{ij}$, and almost complex structures $J_{ij}$) needed to define the Floer cochain complexes $\mathit{CF}^*(L_i,L_j,H_{ij})$. To obtain an $A_\infty$-algebra structure, consider worldsheets $S$ which are discs with $d+1 \geq 3$ boundary punctures. We equip the components of $\partial S$ with boundary conditions $L_{i_0},\dots,L_{i_d}$, for $i_0 < \cdots < i_d$ (in positive order around $\partial \bar{S}$). The one-forms $\beta_S$ are taken to be zero throughout. The perturbation data $(J_S,K_S)$ need to be chosen so that their restriction to the strip-like ends, as in \eqref{eq:j-zeta} and \eqref{eq:k-zeta}, reduces to the choices previously made to define Floer theory. Finally, the choices need to depend smoothly on the moduli of $S$, and there are conditions about their behaviour as this surface degenerates. We will not give any details, referring instead to \cite[Section 9]{seidel04}. The outcome are maps
\begin{equation}
\mathit{CF}^*(L_{i_{d-1}},L_{i_d},H_{i_{d-1}i_d}) \otimes \cdots
\otimes \mathit{CF}^*(L_{i_0},L_{i_1},H_{i_0i_1}) \longrightarrow
\mathit{CF}^{*+2-d}(L_{i_0},L_{i_d},H_{i_0i_d}),
\end{equation}
which satisfy the $A_\infty$-associativity equations. To define the cochain level structure underlying \eqref{eq:directed-1}, one extends these operations in the unique way to a strictly unital $A_\infty$-structure on 
\begin{equation}
\scrA = R \oplus \bigoplus_{i<j} \mathit{CF}^*(L_i,L_j,H_{ij}).
\end{equation}

Fix an automorphism $\phi$. To define $\scrP = \scrP_{\phi,\delta,\epsilon}$, one chooses Hamiltonians $H_{\phi,ij}$ and almost complex structures $J_{\phi,ij}$ for all $(i,j)$, and then sets
\begin{equation} \label{eq:p-space}
\scrP = \bigoplus_{ij} \mathit{CF}^*(\phi(L_i),L_j,H_{\phi,ij}),
\end{equation}
with $\mu^{0;1;0}_{\scrP}$ the direct sum of Floer differentials. Fundamentally, the moduli spaces of Riemann surfaces which define the higher operations $\mu^{s;1;r}_{\scrP}$ are the same (Stasheff polyhedra) as for $\mu^{r+1+s}_{\scrA}$. However, the other data they carry are different, and we find it convenient to think of the Riemann surfaces themselves in a slightly different way, namely to write them as
\begin{equation} \label{eq:s-strip}
S = T \setminus \{\zeta_1,\dots,\zeta_r,\zeta_1',\dots,\zeta_s'\},
\end{equation}
where: $T = \bR \times [0,1]$; the $\zeta_i \in \bR \times \{0\} \subset \partial T$ are increasing; and the $\zeta_i' \in \bR \times \{1\} \subset \partial T$ are decreasing. The one-forms $\beta_S$ should be equal to $\mathit{dt}$ on the region where $|s| \gg 0$, and should vanish sufficiently close to the $\zeta_i$ and $\zeta_i'$. The boundary conditions are 
\begin{equation} \label{eq:boundary-0}
(\phi(L_{i_0}),\dots,\phi(L_{i_r})) \quad \text{along $\bR \times \{0\}$}
\end{equation}
 (as $s$ increases, and where $i_0 < \cdots < i_r$), respectively 
\begin{equation} \label{eq:boundary-1}
(L_{i_0'},\dots,L_{i_s'}) \quad \text{along $\bR \times \{1\}$} 
\end{equation}
(as $s$ decreases, and where again $i_0' < \cdots < i_s'$). The inhomogeneous terms and almost complex structures are determined by those chosen for \eqref{eq:p-space} over the ends where $|s| \gg 0$, and by the choices made for $\scrA$ on the remaining ends. As we vary over all possible $S$, the outcome are operations
\begin{equation}
\begin{CD}
\mathit{CF}^*(L_{i_{s-1}'},L_{i_s'},H_{i_{s-1}'i_s'}) \otimes \cdots \otimes \mathit{CF}^*(L_{i_0'},L_{i_1'},H_{i_0'i_1'}) \\ 
\otimes \, \mathit{CF}^*(\phi(L_{i_r}),L_{i_0'},H_{\phi,i_r i_0'})\, \otimes \\ 
\mathit{CF}^*(L_{i_{r-1}},L_{i_r},H_{i_{r-1}i_r}) \otimes \cdots \otimes
\mathit{CF}^*(L_{i_0},L_{i_1},H_{i_0i_1}) 
\\
@VVV
\\
\mathit{CF}^{*+1-r-s}(\phi(L_{i_0}),L_{i_s'},H_{\phi,i_0 i_s'}),
\end{CD}
\end{equation}
which (extended over the units in $\scrA$ in the obvious way) constitute $\mu_{\scrP}^{s;1;r}$. It may be convenient to think of \eqref{eq:s-strip} as glued together from two half-strips $S_{\pm} = \{(s,t) \in S \;:\; \pm t \leq \half\}$, each of which carries boundary conditions taken from the $L_k$, but where the gluing identifies the two target spaces $E$ using $\phi$. This amounts to rewriting the relevant equation \eqref{eq:generalized-floer} as two parts $u_\pm: S_\pm \rightarrow E$, joined by a ``seam'' $u_+(s,\half) = \phi(u_-(s,\half))$. Mathematically, this does not change anything, but it can be useful as an aid to the intuition, since it makes the connection with quilted Floer cohomology \cite{wehrheim-woodward10} (Figure \ref{fig:half-strips}).
\begin{figure}
\begin{centering}
\begin{picture}(0,0)%
\includegraphics{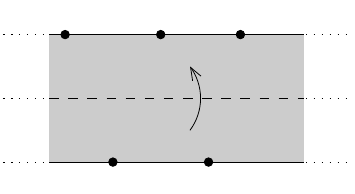}%
\end{picture}%
\setlength{\unitlength}{3355sp}%
\begingroup\makeatletter\ifx\SetFigFont\undefined%
\gdef\SetFigFont#1#2#3#4#5{%
  \reset@font\fontsize{#1}{#2pt}%
  \fontfamily{#3}\fontseries{#4}\fontshape{#5}%
  \selectfont}%
\fi\endgroup%
\begin{picture}(3324,1819)(3739,-2459)
\put(5701,-1411){\makebox(0,0)[lb]{\smash{{\SetFigFont{10}{12.0}{\rmdefault}{\mddefault}{\updefault}{\color[rgb]{0,0,0}$\phi$}%
}}}}
\put(4201,-2386){\makebox(0,0)[lb]{\smash{{\SetFigFont{10}{12.0}{\rmdefault}{\mddefault}{\updefault}{\color[rgb]{0,0,0}$L_{i_0}$}%
}}}}
\put(6151,-811){\makebox(0,0)[lb]{\smash{{\SetFigFont{10}{12.0}{\rmdefault}{\mddefault}{\updefault}{\color[rgb]{0,0,0}$L_{i_0'}$}%
}}}}
\put(5101,-2386){\makebox(0,0)[lb]{\smash{{\SetFigFont{10}{12.0}{\rmdefault}{\mddefault}{\updefault}{\color[rgb]{0,0,0}$L_{i_1}$}%
}}}}
\put(6001,-2386){\makebox(0,0)[lb]{\smash{{\SetFigFont{10}{12.0}{\rmdefault}{\mddefault}{\updefault}{\color[rgb]{0,0,0}$L_{i_2}$}%
}}}}
\put(3901,-811){\makebox(0,0)[lb]{\smash{{\SetFigFont{10}{12.0}{\rmdefault}{\mddefault}{\updefault}{\color[rgb]{0,0,0}$L_{i_3'}$}%
}}}}
\put(4576,-811){\makebox(0,0)[lb]{\smash{{\SetFigFont{10}{12.0}{\rmdefault}{\mddefault}{\updefault}{\color[rgb]{0,0,0}$L_{i_2'}$}%
}}}}
\put(5476,-811){\makebox(0,0)[lb]{\smash{{\SetFigFont{10}{12.0}{\rmdefault}{\mddefault}{\updefault}{\color[rgb]{0,0,0}$L_{i_1'}$}%
}}}}
\end{picture}%
\caption{\label{fig:half-strips}}
\end{centering}
\end{figure}%

Next, consider surfaces of the form 
\begin{equation} \label{eq:h-punctured}
S = H \setminus \{\zeta_*,\zeta_1,\dots,\zeta_d\}
\end{equation}
where $H \subset \bC$ is the closed upper half-plane, from which we remove a fixed interior point $\zeta_*$ (say $\zeta_* = i$) as well as boundary points $\zeta_1 < \cdots < \zeta_d$. We consider the $\zeta_k$ as inputs, and $\zeta_*$ as an output. The one-form $\beta_S$ should vanish near the $\zeta_k$, and its integral along a small (counterclockwise) loop around $\zeta_*$ should be equal to $1$ (in other words, $\beta_{\zeta_k} = 0$ and $\beta_{\zeta_*} = 1$). The representation \eqref{eq:monodromy} maps that same loop to $\phi$ (Figure \ref{fig:disc1} shows two equivalent pictures of this; in the right-hand one, we have drawn $H$ itself as a disc with one boundary point removed). Over $\partial H \subset H \setminus \{\zeta_*\}$, we can choose a section of the universal cover. Along that section we place the boundary conditions $L_{i_0},\dots,L_{i_d}$ with $i_0 < \cdots < i_d$, and then extend that to all of $\partial \tilde{S}$ in the unique way which satisfies \eqref{eq:equivariant-lagrangian}. The outcome of this construction are maps
\begin{equation} \label{eq:hochschild-maps}
\begin{CD}
\mathit{CF}^*(L_{i_{d-1}},L_{i_d},H_{i_{d-1}i_d}) \otimes \cdots \otimes \mathit{CF}^*(L_{i_0},L_{i_1},H_{i_0i_1}) \otimes \mathit{CF}^*(\phi(L_{i_d}),L_{i_0},H_{\phi,i_di_0}) \\
@VVV \\
\mathit{CF}^{*+n-d}(\phi,H_{\phi}),
\end{CD}
\end{equation}
where $H_\phi$ is a new Hamiltonian, chosen (together with a corresponding $J_\phi$) to form $\mathit{HF}^*(\phi,\delta,\epsilon)$. The \eqref{eq:hochschild-maps} are the components of a chain map, which induces \eqref{eq:open-closed-string-map}.
\begin{figure}
\begin{centering}
\begin{picture}(0,0)%
\includegraphics{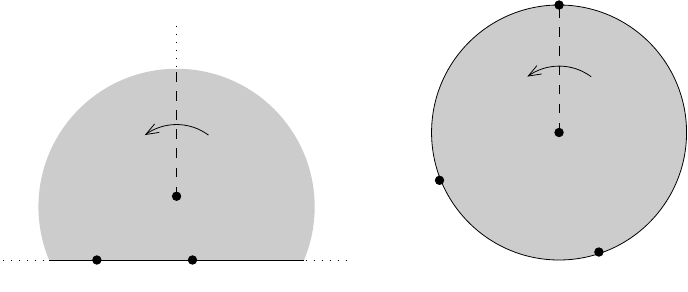}%
\end{picture}%
\setlength{\unitlength}{3355sp}%
\begingroup\makeatletter\ifx\SetFigFont\undefined%
\gdef\SetFigFont#1#2#3#4#5{%
  \reset@font\fontsize{#1}{#2pt}%
  \fontfamily{#3}\fontseries{#4}\fontshape{#5}%
  \selectfont}%
\fi\endgroup%
\begin{picture}(6470,2745)(-3311,-3059)
\put(901,-2611){\makebox(0,0)[lb]{\smash{{\SetFigFont{10}{12.0}{\rmdefault}{\mddefault}{\updefault}{\color[rgb]{0,0,0}$L_{i_1}$}%
}}}}
\put(676,-661){\makebox(0,0)[lb]{\smash{{\SetFigFont{10}{12.0}{\rmdefault}{\mddefault}{\updefault}{\color[rgb]{0,0,0}$L_{i_0}$}%
}}}}
\put(3001,-2386){\makebox(0,0)[lb]{\smash{{\SetFigFont{10}{12.0}{\rmdefault}{\mddefault}{\updefault}{\color[rgb]{0,0,0}$L_{i_2}$}%
}}}}
\put(2176,-1261){\makebox(0,0)[lb]{\smash{{\SetFigFont{10}{12.0}{\rmdefault}{\mddefault}{\updefault}{\color[rgb]{0,0,0}$\phi$}%
}}}}
\put(-2924,-2986){\makebox(0,0)[lb]{\smash{{\SetFigFont{10}{12.0}{\rmdefault}{\mddefault}{\updefault}{\color[rgb]{0,0,0}$L_{i_0}$}%
}}}}
\put(-2249,-2986){\makebox(0,0)[lb]{\smash{{\SetFigFont{10}{12.0}{\rmdefault}{\mddefault}{\updefault}{\color[rgb]{0,0,0}$L_{i_1}$}%
}}}}
\put(-1274,-2986){\makebox(0,0)[lb]{\smash{{\SetFigFont{10}{12.0}{\rmdefault}{\mddefault}{\updefault}{\color[rgb]{0,0,0}$L_{i_2}$}%
}}}}
\put(-1574,-1411){\makebox(0,0)[lb]{\smash{{\SetFigFont{10}{12.0}{\rmdefault}{\mddefault}{\updefault}{\color[rgb]{0,0,0}$\phi$}%
}}}}
\end{picture}%
\caption{\label{fig:disc1}}
\end{centering}
\end{figure}

The construction of \eqref{eq:double-open-closed} is a mixture of the previous two. One considers surfaces
\begin{equation} \label{eq:s-strip-2}
S = T \setminus \{\zeta_*,\zeta_1,\dots,\zeta_r,\zeta_1',\dots,\zeta_s'\},
\end{equation}
where $\zeta_*$ is a fixed interior point of $T$, say $\zeta_* = (0,1/2)$, and the other points are as in \eqref{eq:s-strip}. Note however that this time, both ends $s \rightarrow \pm \infty$ will be considered as inputs. The map \eqref{eq:monodromy} has monodromy $\phi = \phi_1\phi_0$ around $\zeta_*$. This structure, as well as the choice of boundary conditions, is indicated in Figure \ref{fig:disc2}. One chooses one-forms $\beta_S$ such that 
\begin{equation}
\beta_S = \begin{cases} -(\epsilon_0/\epsilon)\, \mathit{dt} & s \ll 0, \\
(\epsilon_1/\epsilon)\, \mathit{dt} & s \gg 0,
\end{cases}
\end{equation}
and which vanish near the $\zeta_k, \zeta_k'$. As a consequence, the integral of $\beta_S$ along a loop around $\zeta_*$ is necessarily $\epsilon_0/\epsilon+\epsilon_1/\epsilon = 1$. The outcome are maps
\begin{equation} \label{eq:double-open-closed-components}
\begin{CD}
\mathit{CF}^*(L_{i_{s-1}'},L_{i_s'},H_{i_{s-1}'i_s'}) \otimes \cdots \otimes \mathit{CF}^*(L_{i_0'},L_{i_1'},H_{i_0'i_1'}) \\ 
\otimes \, \mathit{CF}^*(\phi_1(L_{i_r}),L_{i_0'},H_{\phi_0,i_r i_0'})\, \otimes \\ 
\mathit{CF}^*(L_{i_{r-1}},L_{i_r},H_{i_{r-1}i_r}) \otimes \cdots \otimes
\mathit{CF}^*(L_{i_0},L_{i_1},H_{i_0i_1}) \\
\; \otimes \, \mathit{CF}^*(\phi_0(L_{i_s'}),L_{i_0},H_{\phi_1,i_{s'} i_0})
\\
@VVV
\\
\mathit{CF}^{*+n-r-s}(\phi,H_{\phi}).
\end{CD}
\end{equation}
These are the components of a chain map, which induces \eqref{eq:double-open-closed}.
\begin{figure}
\begin{centering}
\begin{picture}(0,0)%
\includegraphics{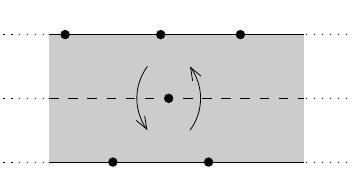}%
\end{picture}%
\setlength{\unitlength}{3355sp}%
\begingroup\makeatletter\ifx\SetFigFont\undefined%
\gdef\SetFigFont#1#2#3#4#5{%
  \reset@font\fontsize{#1}{#2pt}%
  \fontfamily{#3}\fontseries{#4}\fontshape{#5}%
  \selectfont}%
\fi\endgroup%
\begin{picture}(3324,1819)(3739,-2459)
\put(4201,-2386){\makebox(0,0)[lb]{\smash{{\SetFigFont{10}{12.0}{\rmdefault}{\mddefault}{\updefault}{\color[rgb]{0,0,0}$L_{i_0}$}%
}}}}
\put(6151,-811){\makebox(0,0)[lb]{\smash{{\SetFigFont{10}{12.0}{\rmdefault}{\mddefault}{\updefault}{\color[rgb]{0,0,0}$L_{i_0'}$}%
}}}}
\put(5101,-2386){\makebox(0,0)[lb]{\smash{{\SetFigFont{10}{12.0}{\rmdefault}{\mddefault}{\updefault}{\color[rgb]{0,0,0}$L_{i_1}$}%
}}}}
\put(6001,-2386){\makebox(0,0)[lb]{\smash{{\SetFigFont{10}{12.0}{\rmdefault}{\mddefault}{\updefault}{\color[rgb]{0,0,0}$L_{i_2}$}%
}}}}
\put(3901,-811){\makebox(0,0)[lb]{\smash{{\SetFigFont{10}{12.0}{\rmdefault}{\mddefault}{\updefault}{\color[rgb]{0,0,0}$L_{i_3'}$}%
}}}}
\put(4576,-811){\makebox(0,0)[lb]{\smash{{\SetFigFont{10}{12.0}{\rmdefault}{\mddefault}{\updefault}{\color[rgb]{0,0,0}$L_{i_2'}$}%
}}}}
\put(5476,-811){\makebox(0,0)[lb]{\smash{{\SetFigFont{10}{12.0}{\rmdefault}{\mddefault}{\updefault}{\color[rgb]{0,0,0}$L_{i_1'}$}%
}}}}
\put(5626,-1336){\makebox(0,0)[lb]{\smash{{\SetFigFont{10}{12.0}{\rmdefault}{\mddefault}{\updefault}{\color[rgb]{0,0,0}$\phi_1$}%
}}}}
\put(4651,-1861){\makebox(0,0)[lb]{\smash{{\SetFigFont{10}{12.0}{\rmdefault}{\mddefault}{\updefault}{\color[rgb]{0,0,0}$\phi_0$}%
}}}}
\end{picture}%
\caption{\label{fig:disc2}}
\end{centering}
\end{figure}

\subsection{The Lefschetz condition}
What's been missing so far is a natural source of Lagrangian submanifolds, which would make the construction of $\scrA$ meaningful. We will provide that now, in the form of Lefschetz thimbles.

\begin{setup} \label{th:setup-lefschetz}
(i) Take an exact symplectic fibration with singularities. We call it an {\em exact symplectic Lefschetz fibration} if it satisfies the following additional conditions. First of all, $TE^v_x = \mathit{ker}(D\pi_x) \subset TE_x$ is a symplectic subspace at every regular point $x$. Secondly, near each critical point, there is an (integrable and $\omega_E$-compatible) complex structure $I_E$, such that $\pi$ is $I_E$-holomorphic, and the (complex) Hessian at the critical point is nondegenerate. Together with the previous conditions, this implies that there are only finitely many critical points. For practical bookkeeping purposes, we also impose the additional condition that there should be at most one critical point in each fibre.

(ii) Suppose that we have an exact symplectic Lefschetz fibration. A {\em basis of vanishing paths} (see Figure \ref{fig:basis}) is a collection of properly embedded half-infinite paths $l_1,\dots,l_m \subset \bC$,  whose endpoints are precisely the critical values of $\pi$, and with the following properties. There is a half-plane $\{\mathrm{re}(y) \leq C\}$ which contains all critical values of $\pi$, and such that the parts of the $l_k$ lying in that half-plane are pairwise disjoint. Outside that half-plane, we have
\begin{equation}
l_k \cap \{\mathrm{re}(y) \geq C\} = \{\mathrm{im}(y) = \iota_k(\mathrm{re}(y)) \}.
\end{equation}
Here, the functions $\iota_1,\dots,\iota_m: [C,\infty] \rightarrow \bR$ are constant near infinity, let's say $\iota_k(r) = o_k$ for $r \gg 0$, and
\begin{equation}
\left\{
\begin{aligned}
& \iota_1(C) < \cdots < \iota_m(C), \\
& o_1 > \cdots > o_m.
\end{aligned}
\right.
\end{equation}
The corresponding {\em basis of Lefschetz thimbles} consists of the unique Lagrangian submanifolds $L_1,\dots,L_m \subset E$ such that $\pi(L_k) = l_k$. We choose orientations arbitrarily (since the Lefschetz thimbles are diffeomorphic to $\bR^n$, they carry unique {\em Spin} structures; and if $E$ has a symplectic Calabi-Yau structure, they can be equipped with gradings).
\end{setup}
\begin{figure} 
\begin{centering}
\begin{picture}(0,0)%
\includegraphics{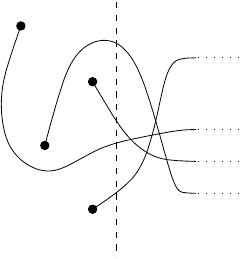}%
\end{picture}%
\setlength{\unitlength}{3355sp}%
\begingroup\makeatletter\ifx\SetFigFont\undefined%
\gdef\SetFigFont#1#2#3#4#5{%
  \reset@font\fontsize{#1}{#2pt}%
  \fontfamily{#3}\fontseries{#4}\fontshape{#5}%
  \selectfont}%
\fi\endgroup%
\begin{picture}(2311,2424)(1380,-2998)
\put(3676,-2461){\makebox(0,0)[lb]{\smash{{\SetFigFont{10}{12.0}{\rmdefault}{\mddefault}{\updefault}{\color[rgb]{0,0,0}$l_4$}%
}}}}
\put(3676,-1186){\makebox(0,0)[lb]{\smash{{\SetFigFont{10}{12.0}{\rmdefault}{\mddefault}{\updefault}{\color[rgb]{0,0,0}$l_1$}%
}}}}
\put(3676,-1861){\makebox(0,0)[lb]{\smash{{\SetFigFont{10}{12.0}{\rmdefault}{\mddefault}{\updefault}{\color[rgb]{0,0,0}$l_2$}%
}}}}
\put(3676,-2161){\makebox(0,0)[lb]{\smash{{\SetFigFont{10}{12.0}{\rmdefault}{\mddefault}{\updefault}{\color[rgb]{0,0,0}$l_3$}%
}}}}
\end{picture}%
\caption{\label{fig:basis}}
\end{centering}
\end{figure}

\begin{conjecture} \label{th:decomposition-of-diagonal}
Let's use a basis of vanishing cycles to define the $A_\infty$-algebra $\scrA$ and bimodules $\scrP_{\phi,\delta,\epsilon}$. Then, for any automorphism $\phi$ and $\delta \gg 0$, the open-closed string map \eqref{eq:open-closed-string-map} is an isomorphism.
\end{conjecture}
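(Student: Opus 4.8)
Since \eqref{eq:open-closed-string-map} sits inside the TCFT formalism of Section~\ref{subsec:formalism}, the natural route, modelled on \cite{abouzaid-ganatra14} and its predecessors for closed Lagrangian submanifolds, is to pair the open-closed map with the dual closed-open map and thereby reduce the statement to a split-generation property of the Lefschetz thimbles. Concretely, one first records the closed-open map $\mathit{CO}\colon \mathit{HF}^*(\phi,\delta,\epsilon) \to H^*(\hom_{[\scrA,\scrA]}(\scrA,\scrP_{\phi,\delta,\epsilon}))$, which is \eqref{eq:dual-open-closed-string-map}, and observes that both $\mathit{OC}$ and $\mathit{CO}$ are maps of modules over the unital ring $\mathit{HF}^*(E,\gamma,\epsilon)$ (which by Lemma~\ref{th:bv-vanish-3} is $H^*(E)$ for $\gamma$ small), the module structures coming from the triangle product \eqref{eq:triangle-product} and its closed-string analogue. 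The algebraic input one then needs is a Cardy-type relation: a one-parameter family of annuli with one interior puncture (carrying monodromy $\phi$) and several boundary punctures degenerates in two ways, showing that the composite $\mathit{CO}\circ\mathit{OC}$, interpreted through Floer-theoretic Poincar\'e duality \eqref{eq:translation-duality}--\eqref{eq:lagrangian-floer-duality} and the Serre identity \eqref{eq:serre-hochschild}, is (up to the action of a unit) the canonical map \eqref{eq:our-map}. Combined with the fact that the open-closed map for the diagonal bimodule detects the identity endomorphism of $\scrA$, the general machinery then gives: if the thimbles split-generate, $\mathit{OC}$ is an isomorphism for every $\scrP_{\phi,\delta,\epsilon}$.

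The geometric heart is therefore the split-generation statement for $\delta\gg 0$. Here one follows the strategy of Seidel's book: a large positive $\delta$ pushes all intersection points and all Floer trajectories towards $\mathrm{re}(\pi)\to+\infty$, and a Lagrangian submanifold as in Setup~\ref{th:setup-l} can be isotoped across the critical values of $\pi$ one at a time, exhibiting it as an iterated mapping cone built from Lefschetz thimbles. The point which must be pushed further than in the directed-category setting is that one needs this at the level of the ambient Floer theory that feeds \eqref{eq:open-closed-string-map}, including the interior puncture with monodromy $\phi$. I would organize this as an induction on the ``winding number at infinity'': the long exact sequences of Lemma~\ref{th:rotation-translation} (together with Lemmas~\ref{th:rotate-ss}, \ref{th:nu-ss}, and the Lagrangian analogue Lemma~\ref{th:lag-3}) express $\mathit{HF}^*(\nu^k,\delta,\epsilon)$, and the corresponding bimodules $\scrP_{\nu^k,\delta,\epsilon}$, as iterated extensions whose graded pieces are built from $\mathit{HF}^*(E,\gamma,\epsilon)\iso H^*(E)$, the fibrewise groups $\mathit{HF}^*(\mu^j,\epsilon)$, and (on the algebra side) from $\scrA$ and shifted duals of it. Since $\mathit{OC}$ is compatible with these sequences, the five lemma reduces the claim to its validity on each graded piece, where it is either the already-understood case of the total space with small rotation at infinity (Lemmas~\ref{th:vanishing-homology}, \ref{th:albers}) or the open-closed map for the closed Fukaya category of the fibre $M$ equipped with its vanishing cycles, for which one invokes \cite{abouzaid-ganatra14}. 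A general $\phi$ in the class of Setup~\ref{th:setup-t} is reduced to this fibered picture by using that $\phi$ commutes with symplectic parallel transport near infinity, so that near $\infty$ it is the product of $\phi_{\bC}$ with the fibre automorphism $\phi_M$, which in turn can be isotoped into the relevant ``monodromy power'' situation.

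Once split-generation is in hand, the conclusion is formal: $\mathit{OC}$ and $\mathit{CO}$ become one-sided inverses up to the identity on the respective units, so $\mathit{OC}$ is split-injective with split-surjective dual; since $\scrA$ is proper both $\mathit{HH}_*(\scrA,\scrP_{\phi,\delta,\epsilon})$ and $\mathit{HF}^{*+n}(\phi,\delta,\epsilon)$ are finite-dimensional, and a rank count upgrades this to an isomorphism. The hard part will be the geometric split-generation argument in the presence of the interior puncture carrying monodromy $\phi$ and of the noncompact ``translation-at-infinity'' perturbations: one has to control the compactifications of the relevant parametrized moduli spaces uniformly in families, combining the horizontal maximum principle of Lemma~\ref{th:base-convexity}, the fibrewise maximum principle of Lemma~\ref{th:fibre-convexity}, and the vertical rescaling of Lemma~\ref{th:gromov}, and then verify that the handle-decomposition producing thimbles from an arbitrary Lagrangian descends to the $A_\infty$-bimodules $\scrP_{\phi,\delta,\epsilon}$ rather than merely to their fibre restrictions.
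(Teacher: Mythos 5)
The statement you are addressing is explicitly a \emph{conjecture} in the paper (Conjecture \ref{th:decomposition-of-diagonal}); the author offers no proof, saying only that ``it is plausible that it could be approached by the methods from \cite{abouzaid-ganatra14}, but we will not discuss that possibility further here.'' Your proposal follows exactly that suggested route (Cardy relation plus split-generation, as in the Abouzaid--Ganatra framework), so in spirit you are aligned with the paper's intent. But what you have written is a strategy outline, not a proof, and several of its load-bearing steps are themselves open.

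Concretely: (1) The Cardy-type relation and the generation criterion you invoke are taken from \cite{abouzaid-ganatra14}, which is listed in the bibliography as ``in preparation''; neither the annulus degeneration argument nor the identification of $\mathit{CO}\circ\mathit{OC}$ with \eqref{eq:our-map} has been established in this setting (noncompact total space, translation-type perturbations at infinity, interior puncture carrying a nontrivial monodromy $\phi$) --- indeed the paper only says ``it seems likely'' that \eqref{eq:our-map} is induced by $\rho$. (2) Your reduction of a general $\phi$ to the ``monodromy power'' situation does not work as stated: by \eqref{eq:conjugate-mu} the fibre automorphism $\phi_M$ is only required to \emph{commute} with $\mu$, not to be isotopic to a power of it, so the induction on winding number at infinity via Lemmas \ref{th:rotate-ss}--\ref{th:3e} does not exhaust the class of automorphisms in Setup \ref{th:setup-t}. (3) The five-lemma step presupposes compatibility of $\mathit{OC}$ with the exact sequences \eqref{eq:r-t-1}--\eqref{eq:r-t-2}, which the paper itself has not verified (``we have not checked the details''), and presupposes the fibrewise case of the conjecture for $\scrF(M)$ with its vanishing cycles, which is again an appeal to the unavailable reference. (4) The split-generation you need is not the familiar generation of the directed Fukaya category by thimbles, but a statement about the diagonal in a category of $\phi$-twisted bimodules with the $\delta\gg 0$ perturbation scheme; you correctly flag this as ``the hard part,'' but flagging it does not discharge it. So the proposal is a reasonable research plan consistent with the paper's own hint, not a proof; the statement remains open in the paper, and your write-up does not close it.
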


This is a kind of ``decomposition of the diagonal'' statement, which is of general interest since it would give a way of computing fixed point Floer cohomology in terms of open string (Fukaya category) data. It is plausible that it could be approached by the methods from \cite{abouzaid-ganatra14}, but we will not discuss that possibility further here.

\begin{lemma} \label{th:li-floer}
Let $\nu$ be the global monodromy. For any $\delta < 0$,
$\mathit{HF}^*(\nu(L_i),L_i,\delta,\epsilon)$ is one-dimensional and concentrated in degree $0$. Moreover, for any $i<j$ and any $\delta < o_j - o_i < 0$, the triangle product
\begin{equation} \label{eq:isomorphic-triangle-product}
\mathit{HF}^*(L_i,L_j) \otimes \mathit{HF}^*(\nu(L_i),L_i,\delta,\epsilon) \longrightarrow \mathit{HF}^*(\nu(L_i),L_j,\delta,\epsilon)
\end{equation}
is an isomorphism.
\end{lemma}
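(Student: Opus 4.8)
\emph{Proof plan.} Write $V_i=L_{i,M}$ for the vanishing cycle of $L_i$, and normalize $\nu$ so that it is generated by a rotationally invariant Hamiltonian on the base which vanishes near all critical values and equals $\pi|y|^2$ near infinity. Then $\nu$ fixes the critical values; $\pi(\nu(L_i))$ is obtained from $l_i$ by inserting one full turn around the cluster of critical values; and $\nu(L_i)$ is again a Lagrangian of the kind in Setup~\ref{th:setup-l}, namely the Lefschetz thimble over that wound path, going off to infinity at the \emph{same} height $o_i$ as $L_i$ (because $\nu$ is the identity on the base near infinity), and with asymptotic fibre \eqref{eq:l-m} the image of $V_i$ under the total monodromy $\mu$. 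Two reductions are immediate: by Lemma~\ref{th:lag-2} none of the groups depend on $\epsilon$, and by Lemma~\ref{th:lag-1} none depends on the precise value of $\delta$ within the relevant half-line ($\delta<0$ for the pair $(\nu(L_i),L_i)$, since $o(\nu(L_i))-o(L_i)=0$; and $\delta<o_j-o_i$ for the pair $(\nu(L_i),L_j)$). So I am free to take $\delta$ as negative as convenient and $\epsilon>0$ small.

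For the first assertion I would argue by localization. With $\delta\ll 0$ the inhomogeneous term at infinity pushes the perturbed copy of $\nu(L_i)$ far up in the base, so that — combining a maximum principle in base direction (as in Lemmas~\ref{th:base-convexity}, \ref{th:fibre-convexity}) with a ``barrier'' argument of the type used in Lemma~\ref{th:gamma-wall} and in \cite[\S6]{seidel12b} — every Floer trajectory contributing to $\mathit{CF}^*(\nu(L_i),L_i,\delta,\epsilon)$ stays in a fixed compact region over which $\pi$ is a genuine Lefschetz fibration over a disc, and in fact within a neighbourhood of $L_i$ together with its critical fibre. There the computation becomes the standard model of one nondegenerate critical point: the only generator is the intersection of the (perturbed) wound thimble with $L_i$ lying over the critical value of $L_i$, and the differential vanishes for index reasons, so the group is one-dimensional. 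Pinning the generator to degree $0$ is where the winding, hence $\nu$, really enters: a Conley--Zehnder/Maslov computation shows that one full turn of the model monodromy applied to the thimble contributes exactly the shift that places this generator in degree $0$ (for $n=1$ this is the shift $[1]$ coming from a half-turn of the $z$-plane; a consistency check is provided by \eqref{eq:lagrangian-floer-duality} together with Lemma~\ref{th:albers} applied at the positive constant $-\delta$). In the $\bZ/2$-graded case only the local count is needed, and it gives $\bK$ in a single parity.

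For the triangle product I would use the first part: $\mathit{HF}^*(\nu(L_i),L_i,\delta,\epsilon)$ is spanned by a distinguished class $e$ which, under the local identification above, is the ``unit-like'' element supported at the critical point of $L_i$. Thus \eqref{eq:isomorphic-triangle-product} is the map $p\mapsto p\cdot e$, and a degeneration (neck-stretching) argument on the moduli of holomorphic triangles with boundary on $(\nu(L_i),L_i,L_j)$ with one input $e$ shows — precisely because $e$ is concentrated near $c_i$, where $\nu(L_i)$ agrees to first order with $L_i$ — that this product is chain homotopic to the natural comparison map $\mathit{HF}^*(L_i,L_j)\to\mathit{HF}^*(\nu(L_i),L_j,\delta,\epsilon)$. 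The latter is an isomorphism by the same barrier/maximum-principle analysis applied now to the pair $(\nu(L_i),L_j)$ with $\delta<o_j-o_i$: the winding loop of $\nu(L_i)$ (which lives at large, mostly negative, real part) and the upward push coming from $\delta\ll 0$ are both confined away from $L_j$, so the intersection points and contributing strips for $(\nu(L_i),L_j,\delta,\epsilon)$ are literally those for $(L_i,L_j)$, and the comparison map is the identification of these two chain complexes.

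The main obstacle is the confinement step in both parts: choosing Hamiltonians, almost complex structures, and barrier two-forms so that (i) trajectories for $(\nu(L_i),L_i,\delta,\epsilon)$ are trapped near the critical fibre of $L_i$, and (ii) trajectories and triangles for $(\nu(L_i),L_j,\delta,\epsilon)$ never reach the winding loop. Once that is set up, the rest is routine: the local model computation, the Conley--Zehnder index of the model monodromy, and the collapse of a triangle with an ``identity'' input to a strip. The entire argument runs in parallel with the thimble computations of \cite{seidel12b} and the barrier arguments of \cite{mclean12}.
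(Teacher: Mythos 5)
Your overall route is the paper's: reduce the first statement to a single transverse intersection point lying at the critical point of $\pi$ on $L_i$, check that its Maslov index is $0$, and then reduce the triangle product to a local count near that point. However, there is a genuine gap in how you get to that situation. Your confinement/barrier arguments (maximum principle, barrier two-forms) control where Floer \emph{trajectories} can go; they say nothing about the \emph{generators} of the chain complexes, which are intersection points determined by the geometry of $\nu(L_i)$ alone. Since $\pi(\nu(L_i))$ winds once around the whole cluster of critical values, it necessarily acquires extra crossings with $l_i$ and with $l_j$ beyond the ones you list: in particular, the winding arc encircles $c_j\in l_j$, so it cannot be ``confined away from $L_j$'' as you claim, and over each extra base crossing one gets a whole fibrewise intersection of (transported) vanishing cycles. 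So as stated, ``the only generator is the intersection over the critical value'' and ``the intersection points and contributing strips for $(\nu(L_i),L_j,\delta,\epsilon)$ are literally those for $(L_i,L_j)$'' are both unjustified, and the second is false for the unmodified $\nu(L_i)$.

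What the paper does instead is apply a compactly supported Hamiltonian isotopy to $\nu(L_i)$ (Figure \ref{fig:triangle}), using invariance of Floer cohomology, to put its projection into a normal form where it meets $l_i$ only at the critical value (a single point of $E$, since the thimble collapses there) and meets $l_j$ in exactly one arc, cutting out a small triangle in the base; a further isotopy (Figure \ref{fig:triangle2}) shrinks that triangle to a point so the count of holomorphic triangles becomes local, as in \cite[Figure 6]{maydanskiy-seidel09}. If you replace your confinement step by this isotopy-to-normal-form step (or supply an argument that the extra generators cancel), the rest of your plan — the local model, the Maslov index computation, and the collapse of the triangle with a unit-like input to a comparison strip — goes through and matches the paper's sketch.
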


\begin{proof}[Sketch of proof]
Let $H$ be the Hamiltonian used to define $\mathit{HF}^*(\nu(L_i),L_i,\delta,\epsilon)$. The generators of the underlying chain complex correspond to points of $(\phi_H^1 \circ \nu)(L_i) \cap L_i$. After a compactly supported isotopy (see Figure \ref{fig:triangle}), these two Lagrangian submanifolds will intersect in a single point, which is a critical point of $\pi$; and one easily computes that its Maslov index is $0$. For a suitable choice of auxiliary data, the product \eqref{eq:isomorphic-triangle-product} is obtained by counting holomorphic triangles in $E$ which project to the triangle in the base shaded in Figure \ref{fig:triangle}. One can in principle determine those directly, but it is easier to apply a further isotopy as indicated in Figure \ref{fig:triangle2}, after which the triangle in the base can be shrunk to a point, making the computation straightforward (the same trick is used in \cite[Figure 6]{maydanskiy-seidel09}).
\end{proof}
\begin{figure} 
\begin{centering}
\begin{picture}(0,0)%
\includegraphics{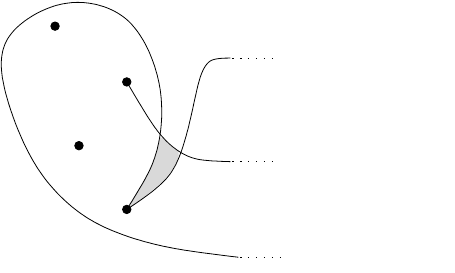}%
\end{picture}%
\setlength{\unitlength}{3355sp}%
\begingroup\makeatletter\ifx\SetFigFont\undefined%
\gdef\SetFigFont#1#2#3#4#5{%
  \reset@font\fontsize{#1}{#2pt}%
  \fontfamily{#3}\fontseries{#4}\fontshape{#5}%
  \selectfont}%
\fi\endgroup%
\begin{picture}(4261,2559)(1058,-3134)
\put(3751,-3061){\makebox(0,0)[lb]{\smash{{\SetFigFont{10}{12.0}{\rmdefault}{\mddefault}{\updefault}{\color[rgb]{0,0,0}$\nu(L_i)$}%
}}}}
\put(3226,-2311){\makebox(0,0)[lb]{\smash{{\SetFigFont{10}{12.0}{\rmdefault}{\mddefault}{\updefault}{\color[rgb]{0,0,0}$L_j$}%
}}}}
\put(3226,-961){\makebox(0,0)[lb]{\smash{{\SetFigFont{10}{12.0}{\rmdefault}{\mddefault}{\updefault}{\color[rgb]{0,0,0}$L_i$}%
}}}}
\put(3751,-2836){\makebox(0,0)[lb]{\smash{{\SetFigFont{10}{12.0}{\rmdefault}{\mddefault}{\updefault}{\color[rgb]{0,0,0}perturbed version of}%
}}}}
\end{picture}%
\caption{\label{fig:triangle}}
\end{centering}
\end{figure}
\begin{figure}
\begin{centering}
\begin{picture}(0,0)%
\includegraphics{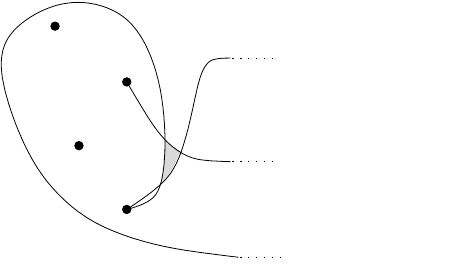}%
\end{picture}%
\setlength{\unitlength}{3355sp}%
\begingroup\makeatletter\ifx\SetFigFont\undefined%
\gdef\SetFigFont#1#2#3#4#5{%
  \reset@font\fontsize{#1}{#2pt}%
  \fontfamily{#3}\fontseries{#4}\fontshape{#5}%
  \selectfont}%
\fi\endgroup%
\begin{picture}(4261,2559)(1058,-3134)
\put(3226,-2311){\makebox(0,0)[lb]{\smash{{\SetFigFont{10}{12.0}{\rmdefault}{\mddefault}{\updefault}{\color[rgb]{0,0,0}$L_j$}%
}}}}
\put(3226,-961){\makebox(0,0)[lb]{\smash{{\SetFigFont{10}{12.0}{\rmdefault}{\mddefault}{\updefault}{\color[rgb]{0,0,0}$L_i$}%
}}}}
\put(3751,-2836){\makebox(0,0)[lb]{\smash{{\SetFigFont{10}{12.0}{\rmdefault}{\mddefault}{\updefault}{\color[rgb]{0,0,0}perturbed version of}%
}}}}
\put(3751,-3061){\makebox(0,0)[lb]{\smash{{\SetFigFont{10}{12.0}{\rmdefault}{\mddefault}{\updefault}{\color[rgb]{0,0,0}$\nu(L_i)$}%
}}}}
\end{picture}%
\caption{\label{fig:triangle2}}
\end{centering}
\end{figure}


Take $\gamma \in (0,2\pi)$, $\delta > 0$, and a small $\epsilon>0$. Consider the map
\begin{equation} \label{eq:combined-oc}
H^*(E) \iso \mathit{HF}^*(E,\gamma,\epsilon) \longrightarrow \mathit{HF}^*(\nu,\delta,\epsilon) \longrightarrow \mathit{HF}^*(\nu(L_i),L_i,\delta,\epsilon).
\end{equation}
Here, the first isomorphism is Lemma \ref{th:bv-vanish-3}; the map after that comes from Lemma \ref{th:rotation-translation}; and the final one is \eqref{eq:dual-oc0}. Let $u_i$ be the image of $1 \in H^0(E)$ under \eqref{eq:combined-oc}.

\begin{lemma} \label{th:last-lemma}
$u_i$ is nontrivial.
\end{lemma}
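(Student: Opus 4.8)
The plan is to prove the equivalent statement that the composite \eqref{eq:combined-oc} is nonzero in degree $0$. Since $E$ is connected, $\mathit{HF}^0(E,\gamma,\epsilon) \cong H^0(E)$ is one-dimensional, spanned by $1$, so the composite is nonzero in degree $0$ precisely when $u_i \neq 0$, and also precisely when its transpose is nonzero. I would therefore dualize all three maps in \eqref{eq:combined-oc}. Using the Poincar\'e-type dualities --- the analogue of \eqref{eq:id-poincare-duality} for the rotation-at-infinity version of Floer cohomology on $E$, the duality \eqref{eq:translation-duality}, and \eqref{eq:lagrangian-floer-duality} --- together with the independence of $\mathit{HF}^*(\phi,\delta,\epsilon)$ and $\mathit{HF}^*(L_0,L_1,\delta,\epsilon)$ on $\epsilon$ (Lemmas \ref{th:independence-of-epsilon} and \ref{th:lag-2}), the transpose of \eqref{eq:combined-oc} becomes a composite of the form
\[
\mathit{HF}^*(\nu^{-1}(L_i),L_i,-\delta,\epsilon) \longrightarrow \mathit{HF}^{*+n}(\nu^{-1},-\delta,\epsilon) \longrightarrow \mathit{HF}^{*+n}(E,\gamma',\epsilon),
\]
where the first map is the open--closed string map \eqref{eq:oc0} for the triple $(\nu^{-1},-\delta,\epsilon)$, the second is the transpose of the rotation-to-translation map of Lemma \ref{th:rotation-translation}, and $\gamma'<0$ is a (small, negative) rotation parameter. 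By the negative-rotation analogue of Lemma \ref{th:bv-vanish-3} together with Lemma \ref{th:vanishing-homology}, the target is a cohomology group of $E$ relative to a fibre over a region at infinity; for a Lefschetz fibration it is concentrated in degree $n$, with a basis indexed by the critical points. It thus suffices to show that the displayed composite is nonzero.

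For that, I would compute the groups. In the chamber $-\delta<0$, a computation symmetric to the one in the proof of Lemma \ref{th:li-floer} (reflecting Figures \ref{fig:triangle} and \ref{fig:triangle2} so that $\nu^{-1}$ replaces $\nu$) shows that $\mathit{HF}^*(\nu^{-1}(L_i),L_i,-\delta,\epsilon)$ is one-dimensional, concentrated in degree $0$, with its generator supported at the single intersection point of $\nu^{-1}(L_i)$ and $L_i$ after a compactly supported isotopy --- namely the $i$-th critical point of $\pi$, which has Maslov index $0$. The open--closed map \eqref{eq:oc0} carries this generator to a class in $\mathit{HF}^n(\nu^{-1},-\delta,\epsilon)$; composing with the transpose rotation map, I claim it is sent to the Lefschetz thimble class $[L_i]$ in the degree-$n$ group above. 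To see this, one proceeds exactly as in the identification of \eqref{eq:i-1} and \eqref{eq:i-3} with the ordinary pushforward and restriction maps (using that the rotation-to-translation map ``unwinds'' the monodromy, so that over the constant automorphism it reduces to the tautological map $H^*(E)\to H^*(E,\{\mathrm{re}(\pi)\gg 0\})$); the thimble classes form a basis, hence $[L_i]\neq 0$ and the composite is nonzero, which gives the lemma.

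I expect the main obstacle to be the bookkeeping in the first step: one must check that the open--closed string maps and the rotation-to-translation maps of Section \ref{subsec:translations} are genuinely compatible with all three Poincar\'e-type dualities --- including the grading shifts, the swap $(\phi,\delta,\epsilon)\leftrightarrow(\phi^{-1},-\delta,-\epsilon)$, and the passage between the ``$\epsilon>0$ small'' chambers used by Lemmas \ref{th:bv-vanish-3} and \ref{th:vanishing-homology} --- so that the transpose of \eqref{eq:combined-oc} really has the shape claimed. This is routine but lengthy. A secondary point needing care is the computation of $\mathit{HF}^*(\nu^{-1}(L_i),L_i,-\delta,\epsilon)$ for the inverse global monodromy: one should confirm that the argument of Lemma \ref{th:li-floer} is symmetric under reversing $\nu$. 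Alternatively, one can avoid $\nu^{-1}$ altogether by crossing the wall $\delta=0$ directly, using the analogue of Lemma \ref{th:lag-3}; this trades the obstacle for the need to control the fibre group $\mathit{HF}^*(\mu(V_i),V_i)$ of the monodromy-image of the $i$-th vanishing cycle and to track $u_i$ across the wall.
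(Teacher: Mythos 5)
Your opening reduction is fine: since $E$ is connected, $u_i \neq 0$ is indeed equivalent to the nontriviality of \eqref{eq:combined-oc} in degree $0$, or of its transpose. The argument goes wrong at the identification of the target of the dualized composite. The dual of the first group in \eqref{eq:combined-oc} is $\mathit{HF}^*(E,\gamma,\epsilon)^\vee \iso H^*(E)^\vee$, i.e.\ (after the Poincar\'e-type regrading) ordinary homology $H_{2n-*}(E)$. The group that is concentrated in degree $n$ with one generator per critical point is a different invariant: the translation-type group $\mathit{HF}^*(\mathit{id},\delta,\epsilon) \iso H^*(E,\{\mathrm{re}(\pi)\ll 0\})$ of Lemma \ref{th:vanishing-homology}. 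That group does not appear in \eqref{eq:combined-oc}, whose first arrow uses the rotation-type group of Lemma \ref{th:bv-vanish-3}; you have conflated the two closed-string invariants of $E$. Consequently ``the image is the thimble class $[L_i]$, and the thimble classes form a basis'' is not the statement you need. The one-dimensional piece of $H^*(E)^\vee$ that the transpose must hit is $H^0(E)^\vee \iso H_0(E)$, and the required statement is that the point class of $L_i$ pushes forward nontrivially to $H_0(E)$ --- which is precisely the dual of ``the restriction $H^0(E)\to H^0(L_i)$ is nonzero.'' (Your degree bookkeeping is also off by the shifts in \eqref{eq:translation-duality} and \eqref{eq:lagrangian-floer-duality}: after dualizing, the source should sit in degree $n$ and the target in degree $2n$, consistent with the fact that the dual of the degree-$0$ generator of Lemma \ref{th:li-floer} lives in degree $n$, not $0$.)

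This points to the second, more structural gap: even with the target corrected, your key step --- identifying the dualized composite with a classical pushforward --- is justified only by analogy with \eqref{eq:i-1} and \eqref{eq:i-3}, which concern $\phi = \mathit{id}$. Here the automorphism is $\nu^{\pm 1}$, and before any topological identification is possible one must relate the rotation-to-translation map of Lemma \ref{th:rotation-translation} to an open-string counterpart. That compatibility is exactly the content of the paper's proof: one introduces a rotationally perturbed Lagrangian group $\mathit{HF}^*(L_i,L_i,\gamma,\epsilon)$ together with a Lagrangian analogue of Lemma \ref{th:rotation-translation}, fits \eqref{eq:combined-oc} into a commutative diagram whose vertical arrows are closed-to-open restriction maps, observes that the bottom row consists of isomorphisms of one-dimensional spaces concentrated in degree $0$, and concludes because the leftmost vertical is the ordinary restriction $H^0(E)\to H^0(L_i)$, which takes $1$ to $1$. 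Your proposal would need an equivalent commutative diagram (or its dual) to make the ``unwinding the monodromy'' claim precise, so dualizing does not save any work; as written, the proof has a genuine gap at both the computation of the target and the identification of the map.
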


\begin{proof}[Sketch of proof]
One can define a version of Lagrangian Floer homology perturbed by a rotational Hamiltonian (in parallel with Section \ref{subsec:rotate}), which we will denote by $\mathit{HF}^*(L_i,L_i,\gamma,\epsilon)$. Then, the map \eqref{eq:rotation-translation} has a Lagrangian counterpart, which fits into a commutative diagram of the form including all the maps in \eqref{eq:combined-oc}:
\begin{equation}
\xymatrix{
H^*(E) \ar[d] \ar[r] & \mathit{HF}^*(E,\gamma,\epsilon) \ar[r] \ar[d] & \mathit{HF}^*(\nu,\delta,\epsilon) \ar[d] \\
H^*(L) \ar[r] & \mathit{HF}^*(L_i,L_i,\gamma,\epsilon) \ar[r] & \mathit{HF}^*(\nu(L_i),L_i,\delta,\epsilon).
}
\end{equation}
The leftmost $\downarrow$ is the standard restriction map; and all the $\rightarrow$ in the bottom row are isomorphisms (of one-dimensional vector spaces, concentrated in degree $0$).
\end{proof}

\begin{lemma}
Suppose that $\epsilon>0$ is small, and that $\delta < o_m - o_1$. Take the element of $\mathit{HF}^0(\nu,\delta,\epsilon)$ produced from $1 \in H^0(E)$ as in \eqref{eq:combined-oc}. Applying the open-closed string map in its dual form \eqref{eq:dual-open-closed-string-map} to that element yields a quasi-isomorphism $\scrA \rightarrow \scrP_{\nu,\delta,\epsilon}$.
\end{lemma}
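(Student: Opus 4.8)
The plan is to show that the bimodule morphism $\Phi \in H^0(\hom_{[\scrA,\scrA]}(\scrA,\scrP_{\nu,\delta,\epsilon}))$ obtained by feeding the distinguished class $\xi \in \mathit{HF}^0(\nu,\delta,\epsilon)$ (the image of $1 \in H^0(E)$, built as in the first two arrows of \eqref{eq:combined-oc}) into the dual open--closed map \eqref{eq:dual-open-closed-string-map} is a quasi-isomorphism. Since $\scrA$ and $\scrP_{\nu,\delta,\epsilon}$ are bimodules over the semisimple ring \eqref{eq:semisimple} attached to the basis $(L_1,\dots,L_m)$, it suffices to check that the underlying chain map $\Phi^{0;1;0}$ is a quasi-isomorphism on each matrix entry, i.e.\ that the induced map $\mathit{HF}^*(L_i,L_j) \to \mathit{HF}^*(\nu(L_i),L_j,\delta,\epsilon)$ is an isomorphism for all $(i,j)$.

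First I would pin down that cohomology-level map. Since $H(\Phi)$ is a homomorphism of graded $H(\scrA)$-bimodules, it is determined by the images $u_i := H(\Phi)(e_{\scrA,i}) \in \mathit{HF}^0(\nu(L_i),L_i,\delta,\epsilon)$ of the strict units, and on the $(i,j)$-entry it is the triangle product $a \mapsto a\cdot u_i$. Unwinding the duality manipulations defining \eqref{eq:dual-open-closed-string-map} — whose component with no auxiliary Lagrangian inputs is \eqref{eq:dual-oc0} — one sees that $u_i$ is precisely the image of $1 \in H^0(E)$ under \eqref{eq:combined-oc}. By Lemma \ref{th:last-lemma} (whose argument applies verbatim for $\delta<0$, every map in \eqref{eq:combined-oc} being canonical there by Lemmas \ref{th:bv-vanish-3} and \ref{th:rotation-translation}) we get $u_i \neq 0$, and by Lemma \ref{th:li-floer} the group $\mathit{HF}^0(\nu(L_i),L_i,\delta,\epsilon)$ is one-dimensional; hence $u_i$ is a generator. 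This already settles the diagonal entries $i=j$, where source and target are both $\bK$ concentrated in degree $0$.

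For $i<j$ the hypothesis $\delta < o_m - o_1 \leq o_j - o_i < 0$ holds, the minimum of $o_j-o_i$ over $i<j$ being $o_m-o_1$; so Lemma \ref{th:li-floer} applies and multiplication by the generator $u_i$ is exactly the triangle product \eqref{eq:isomorphic-triangle-product}, an isomorphism $\mathit{HF}^*(L_i,L_j) \xrightarrow{\sim} \mathit{HF}^*(\nu(L_i),L_j,\delta,\epsilon)$. It remains to treat $i>j$, where the source $\mathit{HF}^*(L_i,L_j)$ vanishes by directedness, so that I must show $\mathit{HF}^*(\nu(L_i),L_j,\delta,\epsilon)=0$ as well. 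For this I would use Poincar\'e duality \eqref{eq:lagrangian-floer-duality} together with invariance under $\nu^{-1}$ to rewrite this group as $\mathit{HF}^{n-*}(\nu^{-1}(L_j),L_i,-\delta,-\epsilon)^\vee$ (equivalently, use \eqref{eq:dual-bimodule} to identify the $i>j$ part of $\scrP_{\nu,\delta,\epsilon}$ with that of $\scrP_{\nu^{-1},-\delta,-\epsilon}^\vee[-n]$), and then invoke the mirror of Lemma \ref{th:li-floer} obtained by reversing the orientation of the base — which interchanges $\nu$ with $\nu^{-1}$ and $\delta$ with $-\delta$: for $-\delta \gg 0$ it exhibits $\mathit{HF}^*(\nu^{-1}(L_j),L_i,-\delta,-\epsilon)$ as the triangle-product image of $\mathit{HF}^*(L_j,L_i)$, which is $0$ again by directedness (now for $j<i$).

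The main obstacle I expect is the first step: identifying the cohomology-level behaviour of the off-diagonal entries of $\Phi$ with multiplication by $u_i$ and matching it with the normalization implicit in Lemmas \ref{th:li-floer} and \ref{th:last-lemma}, which requires carefully tracking the sign and duality conventions threading through \eqref{eq:open-closed-string-map}, \eqref{eq:dual-open-closed-string-map}, \eqref{eq:translation-duality}, \eqref{eq:lagrangian-floer-duality} and \eqref{eq:dual-bimodule} (together with the routine verification that Lemma \ref{th:last-lemma} and its reflected version are available for the relevant signs of $\delta$). Once that identification is in place, the conclusion is a formal consequence of the cited Floer-cohomology computations, essentially the recognition criterion of Lemma \ref{th:recognize-diagonal} applied in the present setting.
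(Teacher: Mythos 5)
Your proposal follows the paper's proof in all essentials: the paper likewise reduces to the recognition criterion of Lemma \ref{th:recognize-diagonal} (applied with $\scrP = \scrP_{\nu^{-1},-\delta,-\epsilon}[n] \iso \scrP_{\nu,\delta,\epsilon}^\vee$), identifies the $(i,j)$-entries of the resulting map as the triangle product with $u_i$, and quotes Lemmas \ref{th:li-floer} and \ref{th:last-lemma} for $i\leq j$, leaving the vanishing of $\mathit{HF}^*(\nu(L_i),L_j,\delta,\epsilon)$ for $i>j$ as a straightforward fact. The one place where your write-up goes astray is in that last case: after dualizing to $\mathit{HF}^{n-*}(\nu^{-1}(L_j),L_i,-\delta,-\epsilon)^\vee$ you appeal to the vanishing of $\mathit{HF}^*(L_j,L_i)$ for $j<i$ ``by directedness'' --- but under the convention \eqref{eq:directed-2} the group $\mathit{HF}^*(L_a,L_b)$ that vanishes is the one with $a>b$, so $\mathit{HF}^*(L_j,L_i)$ with $j<i$ is precisely a (generically nonzero) morphism space of the directed algebra, not zero. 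The intended vanishing is still true --- e.g.\ \eqref{eq:p-dual-p} identifies $\scrP_{\nu^{-1},-\delta,-\epsilon}$ with $\scrA^\vee[-n]$, whose $(a,b)$-entries vanish for $a<b$, or one can argue directly by disjoining the perturbed $\nu(L_i)$ from $L_j$ as in Figure \ref{fig:triangle} --- so the slip is repairable, but the justification as written does not hold.
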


\begin{proof}
This is a direct application of Lemma \ref{th:recognize-diagonal} (with $\scrP = \scrP_{\nu^{-1},-\delta,-\epsilon}[n] \iso \scrP_{\nu,\delta,\epsilon}^\vee$). To use that criterion, one has to consider the maps
\begin{equation}
e_j H^*(\scrA) e_i \longrightarrow e_j H^*(\scrP_{\nu,\delta,\epsilon}) e_i = \mathit{HF}^*(\nu(L_i),L_j,\delta,\epsilon)
\end{equation}
given by
\begin{equation}
\left\{
\begin{aligned}
& \text{the product with $u_i$, if $i<j$;} \\
& \text{taking the unit $e_i$ to $u_i$, if $i = j$;} \\
& \text{zero, if $i>j$.}
\end{aligned}
\right.
\end{equation}
The first two cases yield an isomorphism, by Lemmas \ref{th:li-floer} and \ref{th:last-lemma}. As for the last case, it is straightforward to show that $\mathit{HF}^*(\nu(L_i),L_j,\delta,\epsilon) = 0$ for $i>j$.
\end{proof}

To summarize, we now know that 
\begin{equation} \label{eq:p-dual-p}
\text{for $\delta \ll 0$,}\quad \left\{
\begin{aligned}
& \scrP_{\nu,\delta,\epsilon} \htp \scrA, \\ 
& \scrP_{\nu^{-1},-\delta,-\epsilon} \htp \scrA^\vee[-n],
\end{aligned}
\right.
\end{equation}
where the second part of the statement follows from the first one by \eqref{eq:dual-bimodule}. Note that $\epsilon$ can be arbitrary, since the bimodules are independent of $\epsilon$ up to quasi-isomorphism. Applying \eqref{eq:dual-double}, one therefore gets a map
\begin{equation} \label{eq:omg}
\begin{aligned}
\mathit{HF}^*(\nu^2,\delta,\epsilon) \longrightarrow & H^*\big(\mathit{hom}_{[\scrA,\scrA]}(\scrP_{\nu^{-1},-\delta/2,-\epsilon/2}, \scrP_{\nu,\delta/2,\epsilon/2})\big) \\ & \quad \iso H^*(\mathit{hom}_{[\scrA,\scrA]}(\scrA^\vee[-n],\scrA)).
\end{aligned}
\end{equation}

\begin{proof}[Proof of Theorem \ref{th:main}]
For small $\epsilon>0$, Proposition \ref{th:partial-splitting} and Lemma \ref{th:rotation-translation} yield a map
\begin{equation}
\mathit{HF}^{*+2}(\mu,\epsilon) \longrightarrow \mathit{HF}^*(\nu^2,\delta,\epsilon).
\end{equation}
One combines this with \eqref{eq:omg} to get the desired construction.
\end{proof}

\begin{proof}[Proof of Theorem \ref{th:fano}]
This is the same argument as before, but instead of Proposition \ref{th:partial-splitting}, one uses Proposition \ref{th:partial-splitting-2} (and $1<\epsilon<2$).
\end{proof}

\begin{remark}
By comparing \eqref{eq:g-les} and Conjecture \ref{th:conjecture}, one sees that for the conjecture to hold, the bimodule map $\scrA^\vee \rightarrow \scrA$ constructed in the proof of Theorem \ref{th:fano} must necessarily be invariant under self-conjugation \eqref{eq:c-automorphism}. Equivalently by Lemma \ref{th:z-action}, if one thinks of that map as an element of $\mathit{HH}_*(\scrA,\scrA^\vee \otimes_{\scrA} \scrA^\vee)^\vee$, it must be invariant under the $\bZ/2$-action which exchanges the two tensor factors. Going back to the geometric definition, which means using \eqref{eq:p-dual-p}, the desired statement is that the image of a specific element under the map
\begin{equation} \label{eq:zz}
\mathit{HF}^0(\nu^2,\delta,\epsilon) \longrightarrow
\mathit{HH}_0(\scrA, \scrP_{\nu^{-1},-\delta/2,-\epsilon/2} \otimes_{\scrA} \scrP_{\nu^{-1},-\delta/2,-\epsilon/2})^\vee
\end{equation}
is $\bZ/2$-invariant. Assuming $\mathrm{char}(\bK) \neq 2$, it follows from Remark \ref{th:swap-factors} and Example \ref{th:trivial-involution} that the entire image of \eqref{eq:zz} is $\bZ/2$-invariant.
\end{remark}

\begin{remark} 
As a generalization of Conjecture \ref{th:decomposition-of-diagonal}, one could consider maps \eqref{eq:double-open-closed}, but with an arbitrary number of bimodules involved. We will be interested only in one special case, written in dual form as in \eqref{eq:dual-double}:
\begin{equation} \label{eq:i-power}
\begin{aligned}
\mathit{HF}^*(\nu^k,\delta,\epsilon) \longrightarrow & H^*\big(\mathit{hom}_{[\scrA,\scrA]}(
\scrP_{\nu^{-1},-\delta/k,-\epsilon/k}^{\otimes_{\scrA} k-1}, \scrP_{\nu,\delta/k,\epsilon/k})\big) 
\\ & \quad \iso H^*(\mathit{hom}_{[\scrA,\scrA]}((\scrA^\vee)^{\otimes_{\scrA} k-1}, \scrA)),
\quad \delta \ll 0.
\end{aligned}
\end{equation}
Suppose that \eqref{eq:i-power} is an isomorphism. Let's specialize to Lefschetz fibrations coming from anticanonical Lefschetz pencils. In that case, we have seen in Example \ref{th:negative-degrees} that, for geometric reasons, the Floer cohomology groups of $\nu^k$ are concentrated in nonnegative degrees. It would then follow that \eqref{eq:no-negative-degree} is satisfied, so that Lemmas \ref{th:obstructions} and \ref{th:obstructions-2} would become applicable, leading to a proof of Conjecture \ref{th:conjecture} (however, this approach would not apply to other situations, such as that of Remark \ref{th:fractional-cy-2}).
\end{remark}


\end{document}